\newif\ifvorticity 
\title[Statistical Solutions of the incompressible Euler equations]{
  Statistical Solutions of the \\ incompressible Euler equations
}
\author{S.~Lanthaler, \; S.~Mishra, \and C.~Par\'es-Pulido}
\address[S. Lanthaler, S. Mishra and C. Par\'es-Pulido]{Seminar for Applied Mathematics, ETH Z\"urich, R\"amistrasse 101, 8092 Z\"urich, Switzerland}
\newcommand{\explain}[2]{\overset{\mathclap{\underset{\downarrow}{#2}}}{#1}}
\newcommand{\supp}{\mathrm{supp}}
\newcommand{\defeq}{{:=}}
\renewcommand{\d}{{\mathrm{d}}}
\newcommand{\intF}{{\int_{L^p}}}
\newcommand{\weaklyto}{{\rightharpoonup}}
\newcommand{\T}{\mathbb{T}}
\newcommand{\R}{\mathbb{R}}
\newcommand{\N}{\mathbb{N}}
\renewcommand{\div}{{\mathrm{div}}}
\newcommand{\curl}{{\mathrm{curl}}}
\renewcommand{\P}{{\mathcal{P}}}
\newcommand{\define}{\textbf}
\newcommand{\Prob}{\mathcal{P}}
\newcommand{\U}{U}
\newcommand{\ip}[2]{{\langle #1, #2 \rangle}}
\newcommand{\intavg}{\fint}
\newcommand{\Lp}{{\mathfrak{L}}}
\declaretheoremstyle[
  headfont=\normalfont\bfseries\itshape,
  numbered=unless unique,
  bodyfont=\normalfont,
  spaceabove=1em plus 0.75em minus 0.25em,
  spacebelow=1em plus 0.75em minus 0.25em,
  qed={},
]{deflt}
\theoremstyle{deflt}
\newtheorem{theorem}{Theorem}[section]
\newtheorem{assumption}[theorem]{Assumption}
\newtheorem{remark}[theorem]{Remark}
\newtheorem{algorithm}[theorem]{Algorithm}
\newtheorem{definition}[theorem]{Definition}
\newtheorem{lemma}[theorem]{Lemma}
\newtheorem{proposition}[theorem]{Proposition}
\newtheorem{corollary}[theorem]{Corollary}
\newcommand{\embeds}{{\hookrightarrow}}
\newcommand{\embedsc}{\overset{comp}{\hookrightarrow}}
\def\grd@save@target#1{%
  \def\grd@target{#1}}
\def\grd@save@start#1{%
  \def\grd@start{#1}}
\tikzset{
  grid with coordinates/.style={
    to path={%
      \pgfextra{%
        \edef\grd@@target{(\tikztotarget)}%
        \tikz@scan@one@point\grd@save@target\grd@@target\relax
        \edef\grd@@start{(\tikztostart)}%
        \tikz@scan@one@point\grd@save@start\grd@@start\relax
        \draw[minor help lines] (\tikztostart) grid (\tikztotarget);
        \draw[major help lines] (\tikztostart) grid (\tikztotarget);
        \grd@start
        \pgfmathsetmacro{\grd@xa}{\the\pgf@x/1cm}
        \pgfmathsetmacro{\grd@ya}{\the\pgf@y/1cm}
        \grd@target
        \pgfmathsetmacro{\grd@xb}{\the\pgf@x/1cm}
        \pgfmathsetmacro{\grd@yb}{\the\pgf@y/1cm}
        \pgfmathsetmacro{\grd@xc}{\grd@xa + \pgfkeysvalueof{/tikz/grid with coordinates/major step}}
        \pgfmathsetmacro{\grd@yc}{\grd@ya + \pgfkeysvalueof{/tikz/grid with coordinates/major step}}
        \foreach \x in {\grd@xa,\grd@xc,...,\grd@xb}
        \node[anchor=north] at (\x,\grd@ya) {\pgfmathprintnumber{\x}};
        \foreach \y in {\grd@ya,\grd@yc,...,\grd@yb}
        \node[anchor=east] at (\grd@xa,\y) {\pgfmathprintnumber{\y}};
      }
    }
  },
  minor help lines/.style={
    help lines,
    step=\pgfkeysvalueof{/tikz/grid with coordinates/minor step}
  },
  major help lines/.style={
    help lines,
    line width=\pgfkeysvalueof{/tikz/grid with coordinates/major line width},
    step=\pgfkeysvalueof{/tikz/grid with coordinates/major step}
  },
  grid with coordinates/.cd,
  minor step/.initial=.2,
  major step/.initial=1,
  major line width/.initial=2pt,
}
\newcommand\xqed[1]{%
  \leavevmode\unskip\penalty9999 \hbox{}\nobreak\hfill
  \quad\hbox{#1}}
\begin{document}
\maketitle
\begin{abstract}
We propose and study the framework of dissipative statistical solutions for the incompressible Euler equations. Statistical solutions are time-parameterized probability measures on the space of square-integrable functions, whose time-evolution is determined from the underlying Euler equations. We prove partial well-posedness results for dissipative statistical solutions and propose a Monte Carlo type algorithm, based on spectral viscosity spatial discretizations, to approximate them. Under verifiable hypotheses on the computations, we prove that the approximations converge to a statistical solution in a suitable topology. In particular, multi-point statistical quantities of interest converge on increasing resolution. We present several numerical experiments to illustrate the theory. 
\end{abstract}
\section{Introduction} \label{sec:intro}
Many interesting incompressible fluid flows are characterized by high to very high values of the \emph{Reynolds number}. Taking the infinite Reynolds number limit of the underlying Navier-Stokes equations (formally), results in the \emph{Incompressible Euler} equations governing the motion of an ideal i.e. inviscid and incompressible fluid, given by
\begin{gather} \label{eq:Eulerfull}
\left\{
\begin{aligned}
\partial_t {u} 
+{u}\cdot \nabla {u} 
+ \nabla p
&=
0, 
\\
\div({u}) 
&= 
0, 
\\
{u}|_{t=0} 
&=
\overline{u}. 
\end{aligned}
\right.
\end{gather}
Here, the velocity field is denoted by ${u} \in \R^d$ (for $d=2,3$), and the pressure is denoted by $p \in \R_+$. The pressure acts as a Lagrange multiplier to enforce the divergence-free constraint \cite{majda2001}. The equations need to be supplemented with
suitable boundary conditions. Throughout this work, we shall assume periodic boundary conditions, and take as our domain $D$, the $d$-dimensional torus $D=\mathbb{T}^d$, $d\in \{2,3\}$.
\subsection{Well-posedness results}
The question of global (in time) well-posedness of classical i.e. $C^1$ solutions of the incompressible Euler equations \eqref{eq:Eulerfull} in three space dimensions, even with sufficiently smooth initial data $\bar{u}$, is not yet resolved. Moreover in two space dimensions, where one can prove well-posedness of classical solutions as long as the initial data is sufficiently regular, many interesting initial data of interest, such as vortex sheets, do not possess this regularity. Hence, it is imperative to consider the so-called \emph{weak solutions} of \eqref{eq:Eulerfull}, defined as,
\begin{definition} \label{def:Eulerweak}
A vector field ${u}\in L^\infty([0,T);L^2(D;\R^d))$ is a weak solution of the incompressible Euler equations with initial data $\overline{u} \in L^2(D;\R^d)$, if
\begin{gather}\label{eq:Eulerweak}
\int_0^T\int_{D}
{u}\cdot \partial_t {\phi} 
+ ({u}\otimes {u}):\nabla {\phi} \, dx\, dt
= -\int_{D} \overline{u}\cdot {\phi}(x,0) \, dx,
\end{gather}
for all test vector fields, ${\phi}\in C_c^\infty([0,T)\times D;\R^d)$, $\div({\phi})=0$, and 
\begin{gather}\label{eq:incomprweak}
\int_{D} {u}\cdot \nabla \psi \, dx = 0,
\end{gather}
for all test functions $\psi\in C^\infty(D)$.
\end{definition}
It is customary to require additional admissibility criteria in order to recover uniqueness of weak solutions. A natural criterion in this context is given by the so-called dissipative or admissible weak solutions, i.e. weak solutions $u$ such that $\Vert {u}(t) \Vert_{L^2} \le \Vert \overline{u} \Vert_{L^2}$. Although the global existence of admissible weak solutions in three space dimensions is open, one can prove global existence of admissible weak solutions in two dimensions with very general initial data. The most general result of this kind is the celebrated work of Delort \cite{Delort1991}, where he proved global existence of weak solutions of \eqref{eq:Eulerfull} as long as the initial data was in the so-called \emph{Delort class}, i.e. with initial vorticity being the sum of a signed Radon measure (in space) and a $p$-integrable function. On the other hand, it is well-known that admissible weak solutions, in both two and three space dimensions are not unique \cite{Sch,scn,DS1} and references therein. 
\subsection{Numerical schemes.} A variety of numerical methods have been proposed in order to approximate the incompressible Euler equations \eqref{eq:Eulerfull}. These include the spectral and spectral viscosity methods \cite{Tadmor1989}, that are particularly suitable for periodic boundary conditions. On bounded domains, efficient methods such as the finite difference projection method \cite{Chorin,BCG} and discontinuous Galerkin (DG) finite element method \cite{Guzshu} have been proposed. Alternatives include numerically approximating the Euler equations in the vorticity-stream function formulation. Methods such as the Lagrangian vortex blob \cite{LiuXin2001}, point vortex \cite{Kras1} and central finite difference schemes \cite{Levy1997} have been proposed in this context.  

Rigorous convergence results for numerical approximations of the incompressible Euler equations \eqref{eq:Eulerfull} are mostly restricted to two space dimensions and to sufficiently regular initial data, see \cite{BardosTadmor} for spectral viscosity methods, \cite{majda2001} for vortex methods and \cite{Guzshu} for DG methods. Notable exceptions include \cite{TadmorNussenzveig} where the central schemes of \cite{Levy1997} were shown to converge to weak solutions of the two-dimensional Euler equations as long as the initial vorticity was in $L^p$, for $p>1$. Similarly in \cite{LiuXin2001,LiuXin1995}, the authors showed convergence of vortex point and vortex blob methods in two space dimensions as long as the initial vorticity was a signed Radon measure. Finally in a recent paper \cite{LM2019}, the authors showed convergence of a spectral viscosity method for initial data in Delort class. 

Furthermore, careful numerical experiments, for instance those presented in \cite{LM2015,FMT16} have shown that numerical schemes may not necessarily converge, even in two space dimensions, for rough initial data. Even when the numerical approximations converge (such as in \cite{LM2019}), the inherent instabilities of the Euler equations lead to a very slow convergence rate (see also Figure \ref{fig:svs_conv} (Left)) and render the computation of weak solutions of \eqref{eq:Eulerfull} prohibitively expensive. 
\subsection{Measure-valued and Statistical solutions.} 
Given the lack of well-posedness results for weak solutions and the lack of convergent numerical approximations, there is considerable scope for the design of alternative solution frameworks for \eqref{eq:Eulerfull}. One such framework is that of measure-valued solutions \cite{DipernaMajda}, where the sought for solutions are no longer functions but space-time parametrized probability measures on state space. The global existence of measure-valued solutions, even in three space dimensions, was shown in \cite{DipernaMajda}. A convergent numerical method (of the spectral viscosity) type and an efficient algorithm to compute measure-valued solutions was proposed in \cite{LM2015}. 

However, measure-valued solutions are generically non-unique. This holds true even for the much simpler case of the one-dimensional Burgers' equation \cite{Schochet1990}. In \cite{FLM17}, the authors implicated the lack of information about multi-point (spatial) correlations in the non-uniqueness of measure-valued solutions. Moreover, they also proposed a framework of \emph{statistical solutions} as an attempt to recover uniqueness.

In the formulation of \cite{FLM17}, statistical solutions are time-parameterized probability measures on $L^p$, for $1 \leq p < \infty$, that are consistent with the underlying PDE in a weak sense. They were shown to be equivalent to a family of \emph{correlation measures}, with the $k$-th member of this family is a Young measure representing correlations (or joint probabilities) of the solution at $k$ distinct spatial points. Thus, one can interpret statistical solutions as measure-valued solutions, augmented with information about all possible multi-point correlations. A priori, statistical solutions contain much more information than a measure-valued solution. Moreover, statistical solutions encode statistical (ensemble averaged) properties of the solutions of the underlying PDE. Thus, statistical solutions provide a suitable framework for uncertainty quantification (UQ) \cite{FLM17,AM2018}. This is particularly relevant for the incompressible Euler equations as it is well-known that the flow of fluids, at very high-Reynolds numbers, can be turbulent and only averaged (or statistical) properties can be inferred from measurements \cite{Frisch}. 

Statistical solutions for scalar conservation laws were considered in \cite{FLM17}, wherein well-posedness was shown under an entropy condition. In \cite{FLYM,FLMW1}, a Monte Carlo algorithm, based on the ensemble averaging algorithm of \cite{FKMT17}, was proposed and analyzed for scalar conservation laws and multi-dimensional hyperbolic systems of conservation laws, respectively. 

Independent notions of statistical solutions of the incompressible Navier-Stokes equations have been proposed in \cite{FoiasRosaTemam2010} and in \cite{VF1}. The zero-viscosity limit of these statistical solutions of Navier-Stokes equations are considered in a forthcoming paper \cite{FMW1}. \subsection{Aims and scope of the current article.} The main goal of this article is to propose a suitable notion of statistical solutions for the incompressible Euler equations \eqref{eq:Eulerfull} and to analyze and approximate these statistical solutions. To this end, we will
\begin{itemize}
    \item propose a notion of dissipative statistical solutions, i.e. a time-parameterized probability measure on $L^p(D)$ that is consistent with \eqref{eq:Eulerfull} in a suitable sense and prove well-posedness in some special cases, namely local in time well-posedness and global well-posedness in two space dimensions for sufficiently regular initial data,
    \item propose a numerical algorithm, based on ensemble averaging and a spectral viscosity spatial discretization, to approximate statistical solutions of \eqref{eq:Eulerfull} and prove that the approximations converge in an appropriate topology to a statistical solution, under reasonable and verifiable hypotheses on the numerical method,
    \item perform and present extensive numerical experiments to verify the theory and to illustrate interesting properties of statistical solutions.

\end{itemize}

The rest of the paper is organized as follows: in section \ref{sec:timecompact}, we present time-parameterized probability measures on $L^2(D;\mathbb{R}^d)$ and characterize convergence in a suitable topology on this space of measures. In section \ref{sec:weakstrong}, we define statistical solutions of \eqref{eq:Eulerfull} and present well-posedness results. The numerical approximation of statistical solutions and its convergence is presented in section \ref{sec:spectralvisc} and numerical experiments are presented in section \ref{sec:numerical}.

\section{Time-parameterized probability measures on $L^2(D;\mathbb{R}^d)$} \label{sec:timecompact}
As mentioned in the introduction, statistical solutions are time-parameterized probability measures on $L^p$. For incompressible Euler equations, the energy bound enforces that $p=2$. In this section, we will describe time-parameterized probability measures, characterize them and describe a suitable topology on them.

\subsection{Notation}
For spatial dimensions $d=2,3$, we denote the spatial domain $D = \mathbb{T}^d$, i.e. $d$-dimensional torus $[-\pi,\pi]^d$, with endpoints identified, the co-domain (or phase space) $U = \mathbb{R}^d$, so that a solution of \eqref{eq:Eulerfull} is given as a vector field $u: D \times [0,T) \to U$. On occasion $x\mapsto u(x,t)$ will also be identified with a $2\pi$-periodic function on $\mathbb{R}^d$ in the obvious way.

As we are interested in time-dependent vector fields $u(x,t)$, we will fix a final time $T>0$ and consider $u(x,t)$ as a mapping $u: [0,T) \to L^2_x=L^2(D;U)$. In general, for a Banach space $Y$, we will denote by $L^q_t(Y)$ the space of measurable functions $u: [0,T) \to Y$, such that 
\[
\Vert u \Vert_{L^q_t(Y)} = \left(\int_0^T \Vert u \Vert_{Y}^q \, dt \right)^{1/q} < \infty.
\]
Of particular interest in the context of the incompressible Euler equations are spaces $L^q_tL^2_x = L^q_t(L^2_x)$, for $1\le q \le \infty$.

For a function $u\in L^2_x$, we will denote by $\widehat{u}(k)$ the $k$-th Fourier coefficient ($k\in \mathbb{Z}^d$), i.e.
\[
\widehat{u}(k) = \frac{1}{(2\pi)^d}\int_D u(x) e^{-ikx} \, dx
\quad
\Rightarrow
\quad
u(x) = \sum_{k \in \mathbb{Z}^d} \widehat{u}(k) e^{ikx}.
\]

We will say that $\omega(r)$ is a \define{modulus of continuity} if $\omega: [0,\infty) \to [0,\infty)$ is a map such that $\lim_{r\to 0} \omega(r) = 0$.

On multiple occasions in this paper, we will need to mollify a given function $x \mapsto u(x)$. Let us therefore fix a standard mollifier $\rho(x) \in C^\infty(\mathbb{R}^d)$, supported in a ball of radius $1$ around the origin, i.e. $\rho \ge 0$ is a function, such that
\[
\int_{\mathbb{R}^d} \rho(x) \, dx = 1, \quad \rho(x) = 0, \text{ if } |x|>1.
\]
Given $\epsilon > 0$, we denote by $\rho_\epsilon \in C_c^\infty(\mathbb{R}^d)$ the function $\rho_\epsilon(x) = \epsilon^{-d} \rho(x/\epsilon)$. The \define{mollification} of $u(x)$ is now defined by convolution with $\rho_\epsilon(x)$, as 
\[
u_\epsilon(x) := (u\ast\rho_\epsilon)(x) = \int_{\mathbb{R}^d} u(x-y) \rho_\epsilon(y) \, dy.
\]
\subsection{Weak convergence of probability measures}

Given a topological space $X$, we denote by $\mathcal{P}(X)$ the space of all probability measures on the Borel $\sigma$-algebra of $X$. Given a sequence $\mu_n \in \P(X)$ ($n\in \mathbb{N}$) and $\mu \in \P(X)$, we say that $\mu_n$ \define{converges weakly} to $\mu$, written $\mu_n \weaklyto \mu$, if 
\begin{align}
\int_X F(u) \, d\mu_n(u) \to \int_X F(u) \, d\mu(u),
\end{align}
for all $F\in C_b(X)$, where $C_b(X)$ is the space of bounded, continuous functions on $X$. 

We shall call a family of probability measures $\{\mu^\Delta\}_{\Delta>0} \subset \P(X)$ defined on a separable Banach space $X$ \define{tight}, provided that for any $\epsilon > 0$ there exists a compact subset $K\subset X$ such that 
\[
\mu^\Delta(K) \ge 1 - \epsilon, \quad \forall \, \Delta > 0.
\]
It is a classical result due to Prokhorov (see e.g. theorems 8.6.7, 8.6.8 of the monograph \cite{Bogachev}) that a family $\mu^\Delta \in \P(X)$, with $X$ a separable Banach space, is tight if and only if $\mu^\Delta$ is relatively compact under the weak topology.

We recall that weak convergence on a Banach space $X$ is metrized by the \textbf{$p$-Wasserstein metric} $W_p$:
\begin{align} \label{eq:wasserstein}
W_p(\mu,\nu) 
&:=
\left(
\inf_{\pi \in \Pi(\mu,\nu)} 
\int_{X\times X} \Vert u - v \Vert_X^p \, d \pi(u,v)
\right)^{1/p},
\end{align}
where the infimum is taken over all \textbf{transfer plans} $\pi \in \Pi$ defined as,
$$
\Pi(\mu,\nu):= \left\{\pi \in \P(X\times X): \int_{X \times X} (F(u) + G(v)) d\pi(u,v) = \int_X F(u) d\mu(u) + \int_X G(v) d\nu(v)\right\},
$$
for all measurable $F,G$. More precisely, let $\mu^\Delta$ be a sequence of probability measures on $X$. If there exists a $M>0$, such that $\mu^\Delta \in \P(X)$ are uniformly concentrated on the ball $B_M(0) = \{u \in X \, | \, \Vert u \Vert_{X} < M\}$ of radius $M$ and centered at the origin in $X$, then 
\[
\mu^\Delta \weaklyto \mu 
\;
\Leftrightarrow
\;
W_p(\mu^\Delta, \mu) \to 0,
\]
where $1\le p < \infty$. In the present work, we shall only consider $p=1$ and $p=2$.

We also note that for $p=1$, the following \textbf{duality formula} holds:
\begin{align} \label{eq:W1duality}
W_1(\mu,\nu) 
=
\sup_{\Phi} \int_{X} \Phi(u) \, [d\mu(u) - d\nu(u)],
\end{align}
where the supremum is taken over all $1$-Lipschitz continuous functions $\Phi: X \to \mathbb{R}$.

Next, we characterize the compactness properties of families of probability measures $\mu^\Delta \in \P(L^2_x)$. The following quantity, the so-called \emph{structure function}, is of particular interest in the present context:
\[
S^2_r(\mu) := 
\left(
\int_{L^2_x} \int_D \fint_{B_r(0)} 
|u(x+h) - u(x)|^2 
\, dh \, dx \, d\mu(u)
\right)^{1/2},
\]
where $B_r(0) = \{ \xi \in \mathbb{R}^d \, |\, |\xi|< r\}$.

We first state the following uniform approximation principle, whose proof is an exercise in general topology. 
\begin{lemma} \label{lem:cpctapprox}
Let $(X,d)$ be a complete metric space with metric $d$. Let $K\subset X$. If for any $\epsilon > 0$, there exists a mapping $i_\epsilon: K \to X$, such that the image $i_\epsilon(K) \subset X$ is precompact (i.e. has compact closure), and $d(x,i_\epsilon(x)) < \epsilon$ for all $x\in K$, then $K$ is precompact.
\end{lemma}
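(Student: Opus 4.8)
The plan is to reduce the statement to the standard characterization of precompactness in a complete metric space: a subset of a complete metric space has compact closure if and only if it is totally bounded. Since $X$ is complete, $\overline{K}$ is a closed subset of a complete space and hence itself complete; it therefore suffices to show that $K$ is totally bounded, i.e. that for every $\delta > 0$ the set $K$ can be covered by finitely many balls of radius $\delta$.

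First I would fix $\delta > 0$ and apply the hypothesis with $\epsilon = \delta/2$. This yields a map $i_\epsilon : K \to X$ whose image $i_\epsilon(K)$ is precompact and which satisfies $d(x, i_\epsilon(x)) < \epsilon$ for every $x \in K$. Because $i_\epsilon(K)$ is precompact, its closure is compact and therefore totally bounded; in particular there exist finitely many points $y_1, \dots, y_N \in X$ such that every element of $i_\epsilon(K)$ lies within distance $\epsilon$ of some $y_j$.

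The final step is a triangle-inequality argument transferring this finite cover from $i_\epsilon(K)$ to $K$. Given any $x \in K$, the point $i_\epsilon(x)$ belongs to $i_\epsilon(K)$, so there is an index $j$ with $d(i_\epsilon(x), y_j) < \epsilon$, and then
\[
d(x, y_j) \le d(x, i_\epsilon(x)) + d(i_\epsilon(x), y_j) < \epsilon + \epsilon = \delta.
\]
Hence $K \subset \bigcup_{j=1}^N B_\delta(y_j)$, which shows that $K$ is totally bounded. Combined with the completeness of $\overline{K}$, this gives that $\overline{K}$ is compact, i.e. $K$ is precompact.

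I do not expect any genuine obstacle here, since the argument is entirely routine general topology, as the authors note. The only points requiring a little care are the correct choice of radius ($\epsilon = \delta/2$), so that the doubled radius coming from the triangle inequality matches the target $\delta$, and the harmless observation that the covering centers $y_j$ need not themselves lie in $K$.
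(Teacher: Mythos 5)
Your proof is correct and is exactly the standard argument the authors have in mind when they leave the lemma as ``an exercise in general topology'': reduce precompactness in the complete space $X$ to total boundedness, transfer a finite $\epsilon$-net for $i_\epsilon(K)$ to a finite $\delta$-net for $K$ via the triangle inequality with $\epsilon = \delta/2$. The only implicit step, that total boundedness of $K$ passes to $\overline{K}$ so that completeness yields compactness of the closure, is routine and does not affect correctness.
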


We now state our main result characterizing certain compact subsets of $\P(L^2_x)$.

\begin{theorem} \label{thm:compactnessL2}
Let $\mathcal{F} \subset \P(L^2_x)$ be a family of probability measures on $L^2_x$. Assume that there exists $M>0$, such that $\mu(B_M(0))=1$ for all $\mu \in \mathcal{F}$, where $B_M(0) = \{u\in L^2_x\, | \, \Vert u \Vert_{L^2_x} < M\}$. Then the following statements are equivalent:
\begin{enumerate}
\item $\mathcal{F}\subset L^2_x$ has compact closure (with respect to the weak topology),
\item There exists a modulus of continuity $\omega$, such that we have a uniform bound on the structure function:
\[
S^2_r(\mu)
\le 
\omega(r), \quad \forall \, \mu \in \mathcal{F}.
\]
\end{enumerate}
\end{theorem}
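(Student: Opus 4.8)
The plan is to reduce the equivalence to a tightness statement via the Prokhorov theorem quoted above, and then to recognize the structure function as an averaged version of the translation modulus from the Kolmogorov--Riesz--Fr\'echet compactness criterion in $L^2_x$. Throughout, write
\[
s_r(u)^2 := \int_D \fint_{B_r(0)} |u(x+h)-u(x)|^2 \, dh \, dx ,
\]
so that $s_r : L^2_x \to [0,\infty)$ is continuous (by the reverse triangle inequality $|s_r(u)-s_r(v)| \le s_r(u-v) \le 2\Vert u-v\Vert_{L^2_x}$), satisfies $s_r(u) \le 2\Vert u\Vert_{L^2_x}$, and $S^2_r(\mu) = \big(\int_{L^2_x} s_r(u)^2 \, d\mu(u)\big)^{1/2}$ by Tonelli. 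By Prokhorov, statement (1) is equivalent to tightness of $\mathcal{F}$, so it suffices to show that tightness is equivalent to the uniform structure-function bound in (2).

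The workhorse is the mollification estimate $\Vert u - u\ast\rho_\epsilon\Vert_{L^2_x} \le C\, s_\epsilon(u)$ with $C = C(d,\rho)$, obtained by writing $u - u\ast\rho_\epsilon = \int \rho_\epsilon(y)\,(u - u(\cdot-y))\,dy$, applying Cauchy--Schwarz in $y$ against the probability weight $\rho_\epsilon\,dy$, and using $\rho_\epsilon \le \Vert\rho\Vert_\infty\epsilon^{-d}\mathbf{1}_{B_\epsilon(0)}$ together with $|B_\epsilon(0)| = c_d\epsilon^d$. Since $\{u\ast\rho_\epsilon : u\in B_M(0)\}$ is precompact in $L^2_x$ for each fixed $\epsilon$ (it is bounded and equi-Lipschitz, hence precompact in $C(D)$ by Arzel\`a--Ascoli), Lemma \ref{lem:cpctapprox} applied with $i_\epsilon = {\cdot}\ast\rho_\epsilon$ yields the first compactness fact: a bounded set $A\subset B_M(0)$ is precompact in $L^2_x$ whenever $\sup_{u\in A} s_{r_n}(u)\to 0$ for some sequence $r_n\to 0$. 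Conversely, for a compact $K$ one has $\sup_{u\in K} s_r(u)\to 0$ as $r\to 0$: indeed $s_r(u)\to 0$ pointwise by strong continuity of translations in $L^2_x$, and uniformity over $K$ follows from $s_r(u)\le s_r(u_i)+2\Vert u-u_i\Vert_{L^2_x}$ applied to a finite $\delta$-net $\{u_i\}$ of $K$.

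For (2) $\Rightarrow$ (1), given $\epsilon>0$ I would choose radii $r_n \downarrow 0$ with $\omega(r_n)\le \epsilon\,2^{-n}$ and set $\lambda_n := \sqrt{\omega(r_n)}\to 0$. Chebyshev's inequality gives $\mu(\{s_{r_n} > \lambda_n\}) \le S^2_{r_n}(\mu)^2/\lambda_n^2 \le \omega(r_n)^2/\lambda_n^2 = \omega(r_n)$, uniformly in $\mu\in\mathcal{F}$. Hence $A := B_M(0)\cap\bigcap_n\{u : s_{r_n}(u)\le\lambda_n\}$ satisfies $\mu(A)\ge 1-\sum_n\omega(r_n)\ge 1-\epsilon$ for every $\mu$, and $\sup_{u\in A} s_{r_n}(u)\le \lambda_n\to 0$, so $K := \overline{A}$ is precompact by the first fact while $\mu(K)\ge 1-\epsilon$; this is tightness. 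For (1) $\Rightarrow$ (2), fix $\delta>0$, set $\epsilon := \delta/(8M^2)$, and use tightness to pick a compact $K$ with $\mu(K^c)\le\epsilon$ for all $\mu\in\mathcal{F}$. Splitting the integral and using $s_r\le 2M$ on $B_M(0)$,
\[
S^2_r(\mu)^2 = \int_K s_r^2\,d\mu + \int_{K^c} s_r^2\,d\mu \le \Big(\sup_{u\in K}s_r(u)\Big)^2 + 4M^2\,\epsilon .
\]
By the second fact the first term is $\le \delta/2$ for $r$ small (uniformly in $\mu$), and $4M^2\epsilon=\delta/2$, so $\sup_{\mu\in\mathcal{F}} S^2_r(\mu)\to 0$ as $r\to 0$; taking $\omega(r):=\sup_{\mu\in\mathcal{F}} S^2_r(\mu)$ furnishes the modulus.

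The main obstacle is the passage between the \emph{averaged} quantity $s_r$ and genuine $L^2_x$-compactness, i.e. the two compactness facts. The delicate point is that $s_r$ controls only the $B_r(0)$-average of $h\mapsto\Vert u(\cdot+h)-u\Vert_{L^2_x}^2$, not its supremum; it is precisely the Cauchy--Schwarz step in the mollification estimate that shows this average already dominates $\Vert u-u\ast\rho_\epsilon\Vert_{L^2_x}$, making the structure function the right object and letting Lemma \ref{lem:cpctapprox} operate along a mere sequence of radii. Care is also needed to ensure the compact set produced in the tightness direction is the same for all $\mu\in\mathcal{F}$, which holds because both $\omega$ and the constant $C$ are independent of $\mu$.
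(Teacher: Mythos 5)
Your proof is correct, and it takes a genuinely different route from the paper's --- indeed it proves more: the paper only establishes (2) $\Rightarrow$ (1) here, explicitly deferring the converse (1) $\Rightarrow$ (2) to \cite{LanthalerThesis}, whereas you give a self-contained proof of both implications. For (2) $\Rightarrow$ (1), the paper never constructs a single compact subset of $L^2_x$ carrying uniform mass; it works one level up, in the complete metric space $(\P(L^2_x),W_1)$: it pushes each $\mu$ forward under mollification, observes that the resulting family is supported on a fixed compact subset of $L^2_x$ and is therefore tight (hence precompact by Prokhorov), proves the quantitative estimate $W_1(i_\epsilon\#\mu,\mu)\le C\omega(\epsilon)$ via the duality formula \eqref{eq:W1duality}, Jensen, and the same mollification bound \eqref{eq:mollest} that you derive, and then applies Lemma \ref{lem:cpctapprox} with $X=\P(L^2_x)$ and $K=\mathcal{F}$. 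You apply the identical two ingredients one level down, with $X=L^2_x$: the mollification estimate plus Lemma \ref{lem:cpctapprox} give you a Kolmogorov--Riesz-type compactness criterion in terms of the averaged translation functional $s_r$, and your Chebyshev selection along radii $r_n$ with $\sum_n\omega(r_n)\le\epsilon$ manufactures a single compact set $K$ with $\mu(K)\ge 1-\epsilon$ simultaneously for all $\mu\in\mathcal{F}$, i.e.\ honest uniform tightness, after which Prokhorov closes the argument; your converse direction, via a finite net on $K$, the Lipschitz bound $|s_r(u)-s_r(v)|\le 2\Vert u-v\Vert_{L^2_x}$, and $s_r\le 2M$ on $B_M(0)$, is likewise sound. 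What each approach buys: yours yields the stronger structural conclusion (an explicit common compact set and the full equivalence), while the paper's yields the quantitative rate $W_1(\mu^\epsilon,\mu)\le C\omega(\epsilon)$, which it reuses later (lemma \ref{lem:tapprox} and the observable-convergence estimates in theorem \ref{thm:compactconv}) and which a tightness-based argument does not directly provide.
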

\begin{proof}
We shall only use the implication (2) $\implies$ (1) in the present work. We prove this result here. For a proof of the converse, (1) $\implies$ (2), we refer instead to \cite{LanthalerThesis}.

\textbf{Step 1:}
Fix a mollifier $\rho_\epsilon$ for $\epsilon > 0$. 
Define $i_\epsilon: \P(L^2_x) \to \P(L^2_x)$ by mollification against $\rho_\epsilon$, 
\[
u_\epsilon(x) \defeq (i_\epsilon u)(x) \defeq \int_{D} \rho_\epsilon(y) u(x-y) \, \d y.
\]
Note that for any fixed $\epsilon>0$, the image $i_\epsilon(B_M(0))$ is precompact in $L^2_x$. Let us denote by $i_\epsilon\#: \P(L^2_x) \to \P(L^2_x)$ the push-forward mapping associated with the mollification map $i_\epsilon$. Note that, since all of the probability measures $\mu \in \mathcal{F}$ are supported on $\overline{B_M(0)} \subset L^2_x$, the corresponding push-forward measures $\mu^\epsilon \defeq i_\epsilon \# \mu$ have support in the (compact) set $\overline{i_\epsilon(B_M(0))} \subset L^2_x$. In particular, this implies that, for fixed $\epsilon > 0$, the family $\mathcal{F}_\epsilon := \{\mu^\epsilon; \mu \in \mathcal{F}\}$ is tight. By Prokhorov's theorem, this implies that $\mathcal{F}_\epsilon$ is precompact in the weak topology.

\textbf{Step 2:}
We claim that there exists a constant $C>0$ (independent of $\mathcal{F}$ and $\epsilon > 0$), such that 
\begin{align} \label{eq:W1est}
W_1(\mu^\epsilon,\mu)\le C\omega(\epsilon),
\end{align}
for all $\mu \in \mathcal{F}$. Here we recall that $\omega(r)$ is the uniform modulus of continuity which exists by assumption on $\mathcal{F}$.

 To see \eqref{eq:W1est}, we note that given any Lipschitz continuous function $F: L^2_x \to \R$ with $\Vert F\Vert_{\mathrm{Lip}}\le 1$, and $\mu \in \mathcal{F}$, we have
\begin{align*}
\int_{L^2_x} F(u) \, \d\mu^\epsilon - \int_{L^2_x} F(u)\, \d\mu
&= \int_{L^2_x} \left[F(u_\epsilon) - F(u)\right] \, \d\mu 
\\ &\explain{\le }{(\Vert F \Vert_{\mathrm{Lip}}\le 1)}
\int_{L^2_x}  \Vert u_\epsilon - u \Vert_{L^2_x} \, \d\mu
\\ &\explain{\le }{(\text{Jensen})}
\left(
\int_{L^2_x}  \Vert u_\epsilon - u \Vert^2_{L^2_x} \, \d\mu
\right)^{1/2}
\\ &=
\left(
\int_{L^2_x} \int_D  \vert u_\epsilon(x) - u(x) \vert^2 \, \d x\, \d\mu
\right)^{1/2}.
\end{align*}
The last integral can be bounded using
\begin{align*}
\vert u_\epsilon(x) - u(x) \vert^2
&\le
\int_{B_{\epsilon}(0)} \rho_\epsilon(h) |u(x+h)-u(x)|^2 \, \d h
\\ &\le 
C^2\fint_{B_\epsilon(0)} \vert u(x+h) - u(x) \vert^2 \, \d h,
\end{align*}
where $C^2 = \Vert \rho \Vert_{L^\infty}$ depends only on the (fixed) choice of mollifier $\rho$. Thus,
\begin{align} \label{eq:mollest}
\int_D |u_\epsilon - u|^2 \, dx
\le 
C^2 \int_D \fint_{B_\epsilon(0)} |u(x+h) - u(x)|^2 \, dh \, dx.
\end{align}
It follows from the uniform modulus of continuity assumption on $\mathcal{F}$ that
\begin{align*}
\intF F(u) \d \mu^\epsilon - \intF F(u) \d \mu
\le C\omega(\epsilon),
\end{align*}
for all $F: L^2_x \to \R$, with $\Vert F\Vert_{\mathrm{Lip}}\le 1$. Taking the supremum over all such $F$ on the left and recalling the duality formula for the $1$-Wasserstein distance $W_1$, eq. \eqref{eq:W1duality}, we recover the claimed estimate \eqref{eq:W1est}.

\textbf{Step 3:}
To conclude the proof of this theorem, we note that by Steps 1 and 2, $\mathcal{F}$ is uniformly approximated by precompact sets in the sense of lemma \ref{lem:cpctapprox} (with $X = \P(L^2_x)$ under the $1$-Wasserstein distance, $K=\mathcal{F}$ and $i_\epsilon\#: K \to X$ the pushforward induced by mollification), and hence is itself precompact.
\end{proof}
\subsection{Time parameterized probability measures}
As mentioned before, statistical solutions are time-parameterized probability measures. We define them below,
\begin{definition}
We denote by $L^1_t(\P) = L^1([0,T);\P)$ the space of weak-$\ast$ measurable mappings $[0,T) \to \P(L^2_x)$, namely mappings $t\mapsto \mu_t$ such $t \mapsto \int_{L^2_x} F(u) d\mu_t(u)$ is measurable for a.e $t \in [0,T)$, for all $F \in C_b(L^2_x)$ and with the property that
\[
\int_0^T \int_{L^2_x} \Vert u \Vert_{L^2_x} \, d\mu_t(u) \, dt < \infty.
\]

Denoting by $\delta_0$ the Dirac measure concentrated on $0\in L^2_x$, the above condition can equivalently be written as
\[
\int_0^T W_1(\delta_0, \mu_t) \, dt < \infty.
\]
This leads us to define a natural metric on $L^1([0,T);\P)$ by
\begin{equation}
    \label{eq:dtdef}
d_T(\mu_t,\nu_t) 
:= 
\int_0^T W_1(\mu_t, \nu_t) \, dt.
\end{equation}
\end{definition}
We then have the following proposition, whose proof is presented in appendix \ref{app:completeness}.

\begin{proposition} \label{prop:completeness}
The metric space $(L^1_t(\P),d_T)$ is a complete metric space.
\end{proposition}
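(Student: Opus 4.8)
The plan is to reduce the completeness of $(L^1_t(\P), d_T)$ to two ingredients: the completeness of the Wasserstein space on probability measures of finite first moment, and the standard ``rapidly Cauchy subsequence plus dominated convergence'' argument that upgrades pointwise-in-time convergence to convergence in the integrated metric $d_T$.

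First I would record the preliminaries. Since $L^2_x$ is a separable Banach space, it is a Polish space, and consequently the space $\P_1(L^2_x)$ of probability measures on $L^2_x$ with finite first moment, metrized by $W_1$, is itself a complete metric space (a classical fact, see e.g.\ \cite{Bogachev}). I would also verify that $d_T$ is a genuine metric on $L^1_t(\P)$: finiteness follows from $W_1(\mu_t,\nu_t)\le W_1(\delta_0,\mu_t)+W_1(\delta_0,\nu_t)$ together with the integrability built into the definition; symmetry and the triangle inequality are inherited pointwise from $W_1$; and $d_T(\mu,\nu)=0$ forces $\mu_t=\nu_t$ for a.e.\ $t$, which is the usual a.e.-identification. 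The one point requiring care is measurability of $t\mapsto W_1(\mu_t,\nu_t)$: using the Kantorovich--Rubinstein duality \eqref{eq:W1duality} and the separability of $L^2_x$, the supremum over $1$-Lipschitz $\Phi$ can be realized as a countable supremum of the measurable maps $t\mapsto \int \Phi\,[d\mu_t - d\nu_t]$, hence is measurable.

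Then comes the core argument. Given a Cauchy sequence $(\mu^n)$ in $(L^1_t(\P),d_T)$, I extract a subsequence $(\mu^{n_k})$ with $d_T(\mu^{n_k},\mu^{n_{k+1}})\le 2^{-k}$. By Tonelli, $\int_0^T \sum_k W_1(\mu^{n_k}_t,\mu^{n_{k+1}}_t)\,dt = \sum_k d_T(\mu^{n_k},\mu^{n_{k+1}}) < \infty$, so the function $g(t):=\sum_k W_1(\mu^{n_k}_t,\mu^{n_{k+1}}_t)$ is finite and integrable for a.e.\ $t$. For each such $t$ the sequence $(\mu^{n_k}_t)_k$ is Cauchy in $W_1$, hence converges to some $\mu_t$ by completeness of $\P_1(L^2_x)$; I define $\mu_t$ to be this limit, and $\delta_0$, say, on the exceptional null set. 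Since $W_1$-convergence implies weak convergence, $\int F\,d\mu^{n_k}_t \to \int F\,d\mu_t$ for every $F\in C_b(L^2_x)$ and a.e.\ $t$, so $t\mapsto \mu_t$ is weak-$\ast$ measurable as an a.e.\ pointwise limit of measurable maps. The triangle inequality gives $W_1(\delta_0,\mu_t)\le W_1(\delta_0,\mu^{n_1}_t)+g(t)$, whose right side is integrable, so $\mu\in L^1_t(\P)$. It remains to show $d_T(\mu^{n_k},\mu)\to 0$: for a.e.\ $t$, telescoping yields $W_1(\mu^{n_k}_t,\mu_t)\le \sum_{j\ge k} W_1(\mu^{n_j}_t,\mu^{n_{j+1}}_t)$, which tends to $0$ as $k\to\infty$ (tail of a convergent series) and is dominated by the integrable $g(t)$, so the dominated convergence theorem gives $d_T(\mu^{n_k},\mu)\to 0$. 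Finally, a Cauchy sequence possessing a convergent subsequence converges to the same limit, so $d_T(\mu^n,\mu)\to 0$.

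I expect the main obstacle to be the measurability bookkeeping --- both ensuring that $t\mapsto W_1(\mu_t,\nu_t)$ is measurable, so that $d_T$ is well-defined, and that the pointwise limit $t\mapsto\mu_t$ is weak-$\ast$ measurable --- together with the clean invocation of completeness of $(\P_1(L^2_x),W_1)$. Once these are in place, the remainder is the textbook $L^1$-completeness scheme.
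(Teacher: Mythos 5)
Your proof is correct and follows essentially the same route as the paper's: extract a rapidly Cauchy subsequence with $d_T(\mu^{n_{k}},\mu^{n_{k+1}})\le 2^{-k}$, deduce a.e.-in-$t$ $W_1$-Cauchyness from the integrable series $\sum_k W_1(\mu^{n_k}_t,\mu^{n_{k+1}}_t)$, pass to the pointwise limit, obtain weak-$\ast$ measurability from a.e. convergence of $t\mapsto\langle\mu^{n_k}_t,F\rangle$, and conclude $d_T$-convergence by integrating the telescoped tail bound. The only differences are cosmetic: you make explicit two points the paper leaves implicit (completeness of $(\P_1(L^2_x),W_1)$ and measurability of $t\mapsto W_1(\mu_t,\nu_t)$ via the duality \eqref{eq:W1duality} with a countable supremum), and you invoke dominated convergence where the paper integrates the tail series directly.
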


Our objective is to find a suitable topology on $L^1_t(\P)$ and characterize compactness in this topology. It would be natural to try and extend the compactness theorem \ref{thm:compactnessL2} to time-parameterized probability measures and find a suitable version of the weak topology. This necessitates formalizing some notion of time-continuity or time-regularity of underlying functions. 

To this end,  fix a (time-independent) divergence-free test function $\phi \in C^\infty_c(D;\mathbb{R}^d)$. Formally, solutions of the incompressible Euler equations \eqref{eq:Eulerfull} satisfy for $s,t\in [0,T)$,
\[
\int_D \left[u(x,t)-u(x,s)\right] \phi(x) \, dx
=
\int_s^t \int_{D} u(x,\tau) \otimes u(x,\tau) : \nabla \phi(x) \, dx \, d\tau,
\]
so that 
\[
\left|
\int_D \left[u(x,t)-u(x,s)\right] \phi(x) \, dx
\right|
\le 
C\Vert u \Vert_{L_t^\infty L^2_x} \Vert \nabla \phi \Vert_{L^\infty_{x}} |t-s|.
\]
Furthermore, we have a natural energy bound $\Vert u \Vert_{L_t^\infty L^2_x} \le \Vert u_0 \Vert_{L^2_x}$, in terms of the initial data $u_0$. If $L>0$ is large enough such that by Sobolev embedding $H^{L}_x = H^L( D;U) \embeds C^1(D;U)$, then it follows that
\[
\left|
\int_D \left[u(x,t)-u(x,s)\right] \phi(x) \, dx
\right|
\le 
C \Vert \phi \Vert_{H^L_x} |t-s|,
\]
where the constant $C>0$ depends only on the initial data. Taking the supremum over all $\phi \in H^L_x$ with $\Vert \phi \Vert_{H^L_x} \le 1$, it follows, at least formally, that
\begin{equation} \label{eq:formaltimereg}
\Vert u(t) - u(s) \Vert_{H^{-L}_x} \le C |t-s|, \quad \forall \, s,t \in [0,T).
\end{equation}

Given these considerations, it is natural to assume that statistical solutions of the Euler equations satisfy some version of this time continuity. A formalization is provided in the following definition,
\begin{definition} \label{def:timereg}
A weak-$\ast$ measurable, time-parametrized probability measure $t \mapsto \mu_t \in \P(L^2_x)$ is called \define{time-regular}, if there exists a constant $L>0$, and a mapping $s,t \mapsto \pi_{s,t} \in \P(L^2_x \times L^2_x)$, such that for almost all $s,t\in [0,T)$:
\begin{itemize}
\item The measure $\pi_{s,t}$ is a transport plan from $\mu_s$ to $\mu_t$,
\item There exists a constant $C>0$, such that $\pi_{s,t}$ satisfies the following regularity condition
\[
\int_{L^2_x \times L^2_x} \Vert u - v \Vert_{H^{-L}_x} \, d\pi_{s,t}(u,v) \le C |t - s|.
\]
\end{itemize}

A family $\{\mu_t^\Delta\}_{\Delta>0}$ of time-parametrized probability measures is \define{uniformly time-regular}, provided that each $\mu^\Delta_t$ is time-regular, and the constants $L,C>0$ above can be chosen independently of $\Delta>0$.

\end{definition}

\begin{remark}
Note that if $\mu_{t}$ is of the form 
\[
\mu_{t} = \frac{1}{M} \sum_{i=1}^M \delta_{u_i(t)},
\]
with $t\mapsto u(t)$ weak solutions of the incompressible Euler equations satisfying \eqref{eq:formaltimereg}, then we can define suitable transfer plans
\[
\pi_{s,t} = \frac{1}{M} \sum_{i=1}^M \delta_{u_i(s)} \otimes \delta_{u_i(t)}.
\]
The time-regularity property follows from the estimate \eqref{eq:formaltimereg} for the $u_i$.
\end{remark}

We now show that a family $\mu^\Delta_t$, $\Delta > 0$, of uniformly time-regular probability measures is relatively compact, provided that they satisfy a time-averaged version of the second property of theorem \ref{thm:compactnessL2}. To this end, we define the time-averaged structure function of  $(t\mapsto \mu_t)\in L^1_t(\P)$ (weak-$\ast$ measurable) as the following quantity:
\begin{equation}
    \label{eq:tasf}
S^2_r(\mu_t,T) := 
\left(
\int_0^T 
\int_{L^2_x} 
\int_D 
\fint_{B_r(0)} |u(x+h)-u(x)|^2 
\, dh \, dx \, d\mu_t(u) \, dt
\right)^{1/2}.
\end{equation}

We first state the following lemma:
\begin{lemma} \label{lem:tapprox}
There exists $C>0$, such that if $\mu_t \in L^1_t(\P)$ has structure function $S^2_r(\mu_t,T)$, bounded by a modulus of continuity $\omega(r)$, i.e.
\[
S^2_r(\mu_t,T)^2 \le \omega(r),
\]
then for any $\epsilon > 0$, we have 
\[
\int_0^T \int_{L^2_x} \int_{D}  |u_\epsilon - u|^2 \, dx \, d\mu_t(u) \, dt
\le C \omega(\epsilon).
\]
\end{lemma}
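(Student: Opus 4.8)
The plan is to lift the purely spatial mollification estimate already established inside the proof of theorem \ref{thm:compactnessL2} — namely inequality \eqref{eq:mollest} — to the present time- and measure-averaged setting. Recall that \eqref{eq:mollest} provides a constant $C>0$, depending only on the fixed mollifier $\rho$ and the dimension $d$, such that for every $u\in L^2_x$ one has
\[
\int_D |u_\epsilon - u|^2 \, dx
\le
C^2 \int_D \fint_{B_\epsilon(0)} |u(x+h) - u(x)|^2 \, dh \, dx.
\]
The crucial feature is that this bound holds pointwise in $u$ with a constant independent of $u$, so it may be integrated against an arbitrary probability measure and in time without loss.

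First I would integrate the displayed inequality with respect to $d\mu_t(u)$ and then over $t\in[0,T)$. Since every integrand that appears is nonnegative, Tonelli's theorem justifies all exchanges in the order of integration without any additional integrability hypothesis. The resulting right-hand side is
\[
C^2 \int_0^T \int_{L^2_x} \int_D \fint_{B_\epsilon(0)} |u(x+h) - u(x)|^2 \, dh \, dx \, d\mu_t(u) \, dt,
\]
which is precisely $C^2\, S^2_\epsilon(\mu_t,T)^2$ according to the definition \eqref{eq:tasf} of the time-averaged structure function evaluated at $r=\epsilon$. Invoking the hypothesis $S^2_\epsilon(\mu_t,T)^2 \le \omega(\epsilon)$ then yields the claimed bound, with the lemma's constant taken to be the $C^2$ of \eqref{eq:mollest} (depending only on $\rho$ and $d$).

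The one genuine subtlety, and hence the step I would spell out explicitly, is the measurability required to apply Tonelli: the map $F(u) = \int_D |u_\epsilon - u|^2 \, dx = \Vert u_\epsilon - u \Vert_{L^2_x}^2$ is continuous on $L^2_x$ but unbounded, whereas the weak-$\ast$ measurability of $t\mapsto \mu_t$ only guarantees measurability of $t\mapsto \int_{L^2_x} F \, d\mu_t$ for $F\in C_b(L^2_x)$. I would resolve this by truncation: each $F\wedge N$ belongs to $C_b(L^2_x)$, so $t\mapsto \int_{L^2_x} (F\wedge N)\, d\mu_t(u)$ is measurable, and the monotone convergence theorem identifies the limit $t\mapsto \int_{L^2_x} F\, d\mu_t(u)$ as an increasing limit of measurable functions, hence measurable. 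With this in place, the argument is otherwise a direct transcription of Step 2 in the proof of theorem \ref{thm:compactnessL2}, and I do not anticipate any further obstacle.
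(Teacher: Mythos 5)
Your proposal is correct and is exactly the argument the paper intends: its proof of lemma \ref{lem:tapprox} consists of the single line ``Follows directly from estimate \eqref{eq:mollest}'', i.e.\ integrating that pointwise-in-$u$ bound against $d\mu_t(u)$ and $dt$ and invoking the hypothesis on $S^2_\epsilon(\mu_t,T)$. Your extra paragraph on the measurability of $t\mapsto\int_{L^2_x}\Vert u_\epsilon-u\Vert_{L^2_x}^2\,d\mu_t(u)$ via truncation and monotone convergence is a careful justification of a step the paper leaves implicit, not a deviation from its route.
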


\begin{proof}
Follows directly from estimate \eqref{eq:mollest}.
\end{proof}

The main result of the present section is the following

\begin{theorem} \label{thm:timecompact}
Let $\mu_t^\Delta \in L^1_t(\P)$ be a family of uniformly time-regular probability measures, for $\Delta > 0$, for which there exists $M>0$, such that $\mu^\Delta_t(B_M(0)) = 1$ for all $\Delta>0$, a.e. $t\in[0,T)$. Here $B_M(0) := \{ \Vert u \Vert_{L^2_x} < M \}$.
If there exists a modulus of continuity $\omega(r)$ such that
\[
S^2_r(\mu_t^\Delta,T) \le \omega(r), \quad \forall \Delta > 0,
\]
then $\mu_t^\Delta$ is relatively compact in $L^1_t(\P)$.
\end{theorem}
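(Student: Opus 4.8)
The plan is to mirror the three-step structure used to prove Theorem \ref{thm:compactnessL2}, now carried out in the complete metric space $(L^1_t(\P), d_T)$ (completeness being Proposition \ref{prop:completeness}), and again to invoke the uniform approximation principle of Lemma \ref{lem:cpctapprox} with $X = L^1_t(\P)$ and $K = \{\mu_t^\Delta\}_{\Delta>0}$. The approximating maps will once more be spatial mollification, applied pointwise in time: for fixed $\epsilon > 0$, let $i_\epsilon\#$ send the curve $t \mapsto \mu_t^\Delta$ to $t \mapsto \mu_t^{\Delta,\epsilon} := i_\epsilon\#\mu_t^\Delta$. Two facts must then be established: that this map displaces each curve by at most a modulus of continuity in $\epsilon$, measured in $d_T$; and that its image is precompact in $(L^1_t(\P), d_T)$ for each fixed $\epsilon$.

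The first fact is Step 2 of Theorem \ref{thm:compactnessL2} integrated in time. For a.e.\ $t$, the duality formula \eqref{eq:W1duality} together with Jensen gives $W_1(\mu_t^{\Delta,\epsilon}, \mu_t^\Delta) \le \left(\int_{L^2_x} \Vert u_\epsilon - u\Vert_{L^2_x}^2\, d\mu_t^\Delta(u)\right)^{1/2}$. Integrating over $[0,T)$, applying Cauchy--Schwarz in time, and invoking Lemma \ref{lem:tapprox} yields $d_T(\mu_t^{\Delta,\epsilon}, \mu_t^\Delta) \le \sqrt{T}\left(\int_0^T\!\int_{L^2_x}\!\int_D |u_\epsilon - u|^2\, dx\, d\mu_t^\Delta\, dt\right)^{1/2} \le C\sqrt{T\,\omega(\epsilon)}$, uniformly in $\Delta$. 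Since $r \mapsto \sqrt{\omega(r)}$ is again a modulus of continuity, this is precisely the displacement bound required by Lemma \ref{lem:cpctapprox}.

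The second fact is where time-regularity enters, and I expect it to be the main obstacle. For fixed $\epsilon$ every $\mu_t^{\Delta,\epsilon}$ is supported in the single compact set $K_\epsilon := \overline{i_\epsilon(B_M(0))} \subset L^2_x$, so the curves take values in the compact metric space $(\P(K_\epsilon), W_1)$. The crucial point is that mollification upgrades the $H^{-L}_x$ control of Definition \ref{def:timereg} into $L^2_x$ control: comparing Fourier coefficients one gets $\Vert i_\epsilon w\Vert_{L^2_x} \le C_\epsilon \Vert w\Vert_{H^{-L}_x}$ with $C_\epsilon^2 = \sup_k |\widehat{\rho_\epsilon}(k)|^2 (1+|k|^2)^L < \infty$, using that $\widehat{\rho_\epsilon}$ decays faster than any polynomial. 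Pushing the time-regularity plan $\pi_{s,t}^\Delta$ forward under $(i_\epsilon, i_\epsilon)$ produces a transport plan from $\mu_s^{\Delta,\epsilon}$ to $\mu_t^{\Delta,\epsilon}$ satisfying $W_1(\mu_s^{\Delta,\epsilon}, \mu_t^{\Delta,\epsilon}) \le \int \Vert i_\epsilon u - i_\epsilon v\Vert_{L^2_x}\, d\pi_{s,t}^\Delta \le C_\epsilon C |t-s|$ for a.e.\ $s,t$. Hence, after correction on a null set, each $t \mapsto \mu_t^{\Delta,\epsilon}$ is Lipschitz into $(\P(K_\epsilon), W_1)$ with a Lipschitz constant independent of $\Delta$.

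Finally, an Arzel\`a--Ascoli argument applied to the equi-Lipschitz, pointwise-precompact family $\{t \mapsto \mu_t^{\Delta,\epsilon}\}_{\Delta>0} \subset C([0,T]; (\P(K_\epsilon), W_1))$ gives relative compactness in the uniform topology, and therefore in $d_T$, settling the second fact. Combining the two facts, Lemma \ref{lem:cpctapprox} shows that $\{\mu_t^\Delta\}_{\Delta>0}$ is precompact in $(L^1_t(\P), d_T)$, as claimed. The delicate points to verify carefully are the passage from the a.e.\ transport-plan bound to a genuine Lipschitz representative defined on all of $[0,T]$, the compactness of $(\P(K_\epsilon), W_1)$ (which follows from compactness of $K_\epsilon$), and the fact that any $d_T$-limit point is again a weak-$\ast$ measurable element of $L^1_t(\P)$.
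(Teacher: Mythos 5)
Your proof is correct, but it takes a genuinely different route from the paper's. You mollify \emph{in space}, pointwise in time, and transfer the $H^{-L}_x$ time-regularity directly into $W_1$-Lipschitz continuity of the mollified curves via the explicit Fourier smoothing bound $\Vert \rho_\epsilon \ast w \Vert_{L^2_x} \le C_\epsilon \Vert w \Vert_{H^{-L}_x}$ (valid since $\widehat{\rho}(\epsilon k)$ decays super-polynomially), so that for fixed $\epsilon$ the family $\{t\mapsto \mu^{\Delta,\epsilon}_t\}$ consists of equi-Lipschitz curves into the single compact space $(\P(K_\epsilon),W_1)$, and Arzel\`a--Ascoli applies at once. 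The paper instead mollifies \emph{in time}: it first proves an $L^1$-equicontinuity estimate $\int_0^{T-h} W_1(\mu^\Delta_{t+h},\mu^\Delta_t)\,dt < \epsilon$ uniformly in $\Delta$ (lemma \ref{lem:equicont}), using spatial mollification together with the Lions--Ehrling interpolation inequality $\Vert u \Vert_{L^2_x}\le \eta^2 \Vert u\Vert_{H^1_x}+Q(\eta)\Vert u\Vert_{H^{-L}_x}$ coming from $H^1_x \embedsc L^2_x \embeds H^{-L}_x$; it then forms the time averages $M_a\mu^\Delta_t = a^{-1}\int_0^a \mu^\Delta_{t+h}\,dh$, which uniformly approximate the family by \eqref{eq:avgdist}, are pointwise equicontinuous, and are pointwise precompact in $\P(L^2_x)$ because their structure functions are bounded by $\omega(r)/\sqrt{a}$ (theorem \ref{thm:compactnessL2}); Arzel\`a--Ascoli is then applied on $[0,T_1]$, with a time-reversal step to cover the interval near $T$. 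Your approach buys economy: a single approximation scale $\epsilon$ delivers spatial compactness and temporal Lipschitzness simultaneously, dispensing with the Lions lemma with its abstract constant $Q(\eta)$, with the two-stage (equicontinuity plus time-averaging) structure, and with the endpoint patch near $T$, since your Lipschitz representatives extend to all of $[0,T]$ automatically. The paper's approach buys robustness: it never exploits the explicit Fourier multiplier structure of the mollifier, only the compactness of the embedding triple, so it would survive in settings where your quantitative bound $C_\epsilon^2 = \sup_k |\widehat{\rho_\epsilon}(k)|^2(1+|k|^2)^L$ is unavailable. The delicate points you flag are real but handled correctly: the passage from the a.e.\ bound $\int \Vert u-v\Vert_{H^{-L}_x}\,d\pi^\Delta_{s,t}\le C|t-s|$ to a genuine Lipschitz representative (Lipschitz on a full-measure set, extended by density, unchanged in $d_T$) parallels the full-measure-set bookkeeping the paper performs in proposition \ref{prop:timereg}, and compactness of $(\P(K_\epsilon),W_1)$ follows from compactness of $K_\epsilon=\overline{i_\epsilon(B_M(0))}$ exactly as in Step 1 of theorem \ref{thm:compactnessL2}.
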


The idea behind theorem \ref{thm:timecompact} is to use the spatial regularity of the sequence to show that the weak time-regularity assumption of definition \ref{def:timereg} implies a similar time-regularity with respect to a stronger spatial norm, where $H^{-L}$ is replaced by $L^2$. The details of this argument are provided in appendix \ref{app:timecompact}.

Let us also remark that a limit $\mu_t^\Delta \to \mu_t$ of a uniformly time-regular sequence $\mu_t^\Delta$ is itself time-regular:

\begin{proposition} \label{prop:timereg}
Let $\mu_t^\Delta \in L^1_t(\P)$ be a family of uniformly time-regular probability measures,  for $\Delta > 0$. And such that there exists $M>0$ with $\mu^\Delta_t(B_M(0)) = 1$ for all $\Delta>0$, a.e. $t\in[0,T)$, where $B_M(0) := \{\Vert u \Vert_{L^2_x} < M \}$. If $\mu_t^\Delta \to \mu_t$ in $L^1_t(\P)$, then $\mu_t$ is time-regular (with the same time-regularity constants $C,L>0$ as for the family $\mu^\Delta_t$).
\end{proposition}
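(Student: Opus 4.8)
The plan is to produce, for the limit $\mu_t$, a measurable family of transport plans $(s,t)\mapsto\pi_{s,t}$ by extracting a weak limit of the plans $\pi^\Delta_{s,t}$ associated with $\mu^\Delta_t$, and then to check that the two defining properties of time-regularity pass to the limit with unchanged constants. First I would pass to a subsequence along which $W_1(\mu^\Delta_t,\mu_t)\to 0$ for a.e.\ $t$; since all measures are concentrated on the closed ball $\overline{B_M(0)}$ (which is weakly closed, so that $\mu_t(\overline{B_M(0)})=1$ as well by portmanteau), this is the same as weak convergence $\mu^\Delta_t\weaklyto\mu_t$ for a.e.\ $t$. The key analytic input is that, because $L>0$, the embedding $L^2_x\embeds H^{-L}_x$ is compact; hence $u_n\weaklyto u$ in $L^2_x$ implies $u_n\to u$ strongly in $H^{-L}_x$, and the functional $\Phi(u,v)\defeq\Vert u-v\Vert_{H^{-L}_x}$ is bounded and continuous on $Z\times Z$, where $Z\defeq\overline{B_M(0)}$ is equipped with the (metrizable, compact) weak $L^2_x$ topology.

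The main obstacle is measurability of the limiting family $(s,t)\mapsto\pi_{s,t}$: a pointwise-in-$(s,t)$ subsequence extraction destroys joint measurability. To circumvent this I would lift the plans to space-time. Assuming, as is implicit in Definition \ref{def:timereg}, that $(s,t)\mapsto\pi^\Delta_{s,t}$ is weak-$\ast$ measurable, define probability measures $\Xi^\Delta$ on the compact metric space $[0,T]^2\times Z\times Z$ by
\[
\int f\, d\Xi^\Delta \defeq \frac{1}{T^2}\int_0^T\!\!\int_0^T \int_{L^2_x\times L^2_x} f(s,t,u,v)\, d\pi^\Delta_{s,t}(u,v)\, ds\, dt .
\]
By Prokhorov's theorem a subsequence satisfies $\Xi^\Delta\weaklyto\Xi$. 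Each $\Xi^\Delta$ has $(s,t)$-marginal $T^{-2}\,ds\,dt$, hence so does $\Xi$; disintegrating $\Xi$ against this marginal yields a measurable family $(s,t)\mapsto\pi_{s,t}\in\P(Z\times Z)$ with $d\Xi = d\pi_{s,t}(u,v)\,T^{-2}\,ds\,dt$. This is the candidate family.

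It remains to verify the two properties for a.e.\ $(s,t)$. For the marginals, I would test $\Xi^\Delta\weaklyto\Xi$ against functions of the form $\psi(s,t)f(u)$ with $\psi\in C([0,T]^2)$ and $f$ weakly continuous on $Z$. Using that the first marginal of $\pi^\Delta_{s,t}$ is $\mu^\Delta_s$, the corresponding integral against $\Xi^\Delta$ equals $T^{-2}\int\!\int\psi(s,t)\big(\int_Z f\,d\mu^\Delta_s\big)\,ds\,dt$, which converges to the same expression with $\mu_s$ by the a.e.\ weak convergence of $\mu^\Delta_s$ and dominated convergence. Hence for every such $f$ the first marginal of $\pi_{s,t}$ agrees with $\mu_s$ for a.e.\ $(s,t)$; running over a countable determining family $\{f_n\}\subset C(Z)$ (available since $C(Z)$ is separable) and taking a countable union of null sets shows the first marginal equals $\mu_s$, and symmetrically the second equals $\mu_t$, for a.e.\ $(s,t)$, i.e.\ $\pi_{s,t}\in\Pi(\mu_s,\mu_t)$.

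For the regularity bound, I would test against $\psi(s,t)\Phi(u,v)$ with $\psi\ge 0$. Since $\Phi$ is bounded and weakly continuous, $\int\psi\Phi\,d\Xi^\Delta\to\int\psi\Phi\,d\Xi$, while the uniform bound $\int\Phi\,d\pi^\Delta_{s,t}\le C|t-s|$ gives
\[
\int \psi(s,t)\,\Phi(u,v)\, d\Xi^\Delta \le \frac{C}{T^2}\int_0^T\!\!\int_0^T \psi(s,t)\,|t-s|\, ds\, dt .
\]
Passing to the limit and letting $\psi$ range over nonnegative continuous functions yields $\int\Phi\,d\pi_{s,t}\le C|t-s|$ for a.e.\ $(s,t)$. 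Since the same $L$ and the same constant $C$ are used throughout, $\mu_t$ is time-regular with the same constants, as claimed. The only point needing care beyond routine measure theory is the measurability issue circumvented by the lifting-and-disintegration step; the compact-embedding continuity of $\Phi$ is what makes both limit passages legitimate.
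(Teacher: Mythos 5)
Your argument is correct, but it takes a genuinely different route from the paper's. The paper works \emph{pairwise}: it intersects the exceptional null sets along a sequence $\Delta_k\to 0$ to get a single full-measure set $I\subset[0,T)$, fixes $s,t\in I$, and extracts an $(s,t)$-dependent weakly convergent subsequence of $\pi^{\Delta_k}_{s,t}$ in $\P(H^{-L}_x\times H^{-L}_x)$, using that $\overline{B_M(0)}$ is compact in $H^{-L}_x$; it then identifies the marginals by testing against $C_b(H^{-L}_x)$, deduces concentration on $L^2_x\times L^2_x$ from the marginals, and obtains the estimate via the truncations $F_\gamma(u,v)=\min\bigl(\gamma,\Vert u-v\Vert_{H^{-L}_x}\bigr)$. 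The ``main obstacle'' you circumvent — joint measurability of $(s,t)\mapsto\pi_{s,t}$ — is therefore not one the paper faces: definition \ref{def:timereg} only asks for a mapping with the two properties for almost all $s,t$, and a pointwise, subsequence-per-pair extraction suffices. Your lifting to $\Xi^\Delta\in\P([0,T]^2\times Z\times Z)$ followed by disintegration is a valid alternative, but note it requires the \emph{extra} hypothesis that $(s,t)\mapsto\pi^\Delta_{s,t}$ is weak-$\ast$ measurable, which the definition does not state (it does hold for the empirical plans $\frac1M\sum_i\delta_{u_i(s)}\otimes\delta_{u_i(t)}$ arising in the paper's applications); in return it delivers a jointly measurable limiting family, which is strictly more than the proposition claims. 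A further small difference: your conclusion holds for a.e.\ $(s,t)$ with respect to product measure, whereas the paper produces plans for \emph{all} $s,t$ in one full-measure set $I$; either reading satisfies the definition. The analytic core is shared between the two proofs — the compact embedding $L^2_x\embedsc H^{-L}_x$, used by the paper as compactness of $\overline{B_M(0)}$ in $H^{-L}_x$ and by you as weak-to-strong continuity of $(u,v)\mapsto\Vert u-v\Vert_{H^{-L}_x}$ on $Z\times Z$ — and the automatic boundedness of this functional on $Z\times Z$ in your setup plays exactly the role of the paper's truncation $F_\gamma$. Your remaining steps (portmanteau for $\mu_t(\overline{B_M(0)})=1$, a countable determining family in $C(Z)$ to identify marginals off a null set, and nonnegative test functions $\psi$ to localize the integrated estimate) are all sound.
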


\begin{proof}
To prove proposition \ref{prop:timereg} and according to the definition of time-regularity definition \ref{def:timereg}, we need to show that there exists a map $(s,t) \mapsto \pi_{s,t} \in \P(L^2_x\times L^2_x)$, which defines a transport plan from $\mu_s$ to $\mu_t$, and satisfies the $H^{-L}$-estimate of definition \ref{def:timereg}. A proof of existence of such $\pi_{s,t}$ will be achieved by viewing each $\pi^\Delta_{s,t}$ as an element of $\P(H^{-L}_x\times H^{-L}_x)$, and observing that the $\pi^\Delta_{s,t}$ are concentrated on the compact subset $B_M(0) \times B_M(0) \subset H^{-L}_x\times H^{-L}_x$ for almost all $s,t$. Let us now choose a sequence $\Delta_k \to 0$. By assumption, for each $k\in \mathbb{N}$, there exists a subset $I_k \subset [0,T)$ of full Lebesgue measure, such that $\pi^{\Delta_k}_{s,t}$ is concentrated on $B_M(0) \times B_M(0)  \subset H^{-L}_x\times H^{-L}_x$ for all $s,t \in I_k$, and satisfies
\begin{align} \label{eq:H-Lest}
\int_{L^2_x \times L^2_x} \Vert u - v \Vert_{H^{-L}_x} \, d\pi^{\Delta_k}_{s,t}(u,v) 
\le C|t-s|.
\end{align}
Let $I = \bigcap_{k=1}^\infty I_k$. Then $I\subset [0,T)$ is of full Lebesgue measure, and $\pi^{\Delta_k}_{s,t}$ is concentrated on $B_M(0)\times B_M(0) \subset H^{-L}_x\times H^{-L}_x$ for all $s,t \in I$ and \eqref{eq:H-Lest} is satisfied for all $s,t \in I$, \emph{uniformly} for all $k\in \mathbb{N}$.

Fix now $s,t \in I$. Since the $\pi^{\Delta_k}_{s,t}$ are concentrated on $B_M(0)\times B_M(0) \subset H^{-L}_x\times H^{-L}_x$, the sequence $\pi^{\Delta_k}_{s,t}$ is tight in $\P(H^{-L}_x\times H^{-L}_x)$ (and hence relatively compact under weak convergence by Prokhorov's theorem). We can pass to a weakly convergent subsequence $\pi^{\Delta_{k_j}}_{s,t}$ in $\P(H^{-L}_x\times H^{-L}_x)$ such that $\pi^{\Delta_{k_j}}_{s,t} \weaklyto \pi_{s,t}$ for some $\pi_{s,t} \in \P(H^{-L}_x\times H^{-L}_x)$. Then, by definition of weak convergence, we have for any $F\in C_b(H^{-L}_x)$ that
\begin{align*}
\int_{H^{-L}_x\times H^{-L}_x} F(u) \, d\pi_{s,t}(u,v)
&=
\lim_{j\to \infty} \int_{H^{-L}_x \times H^{-L}_x} F(u) \, d\pi^{\Delta_{k_j}}_{s,t}(u,v)
\\
&=
\lim_{j\to \infty} \int_{L^2_x \times L^2_x} F(u) \, d\pi^{\Delta_{k_j}}_{s,t}(u,v)
\\
&=
\lim_{j\to \infty} \int_{L^2_x} F(u) \, d\mu^{\Delta_{k_j}}_{s}(u,v)
\\
&=
\int_{L^2_x} F(u) \, d\mu_s(u).
\end{align*}
When passing to the limit in the last step, we have used the convergence $\mu^{\Delta_j}_s \weaklyto \mu_s$ as elements of $\P(L^2_x)$, as well as the fact that if $F\in C_b(H^{-L}_x)$, then the restriction $F|_{L^2_x}$ to $L^2_x$ is in $C_b(L^2_x)$. It follows from $\int F(u) \, d\pi_{s,t}(u,v) = \int F(u) \, d\mu_s(u)$ for all $F\in C_b(H^{-L}_x)$ that $\mathrm{proj}_1\# \pi_{s,t} = \mu_s$. A similar argument shows that also $\mathrm{proj}_2\# \pi_{s,t} = \mu_t$. In particular, this also implies that $\pi_{s,t}$ is in fact concentrated on $L^2_x\times L^2_x$. Indeed, denote $X = L^2_x \subset H^{-L}_x$. Then the complement of the Cartesian product $X\times X$ in $H^{-L}_x\times H^{-L}_x$ satisfies $(X\times X)^c = X^c \times H^{-L}_x \cup H^{-L}_x \times X^c$. This implies that 
\begin{align*}
\pi_{s,t}((X\times X)^c) 
&\le \pi_{s,t}(X^c \times H^{-L}_x) + \pi_{s,t}(H^{-L}_x\times X^c)
\\
&= \mu_s(X^c) + \mu_t(X^c) \\
&=0.
\end{align*}
In the above estimate, the equality on the second line follows from the fact that $\mathrm{proj}_1\# \pi_{s,t} = \mu_s$ and $\mathrm{proj}_2\# \pi_{s,t} = \mu_t$, and the last equality follows from the fact that $\mu_s,\mu_t$ are supported on $X = L^2_x \subset H^{-L}_x$.

Furthermore, for any fixed $\gamma>0$, $(u,v) \mapsto F_\gamma(u,v) = \min(\gamma,\Vert u - v \Vert_{H^{-L}_x})$ is a continuous bounded function on $H^{-L}_x\times H^{-L}_x$. We can choose $\gamma>0$ sufficiently large so that $F_\gamma(u,v) = \Vert u - v \Vert_{H^{-L}_x}$ for $(u,v) \in \overline{B_M(0)} \times \overline{B_M(0)}$. It follows that 
\begin{align*}
\int_{L^2_x\times L^2_x} \Vert u - v \Vert_{H^{-L}_x} \, d\pi_{s,t}(u,v)
&= \int_{H^{-L}_x \times H^{-L}_x} F_\gamma(u,v) \, d\pi_{s,t}(u,v) \\
&= \lim_{j \to \infty} \int_{H^{-L}_x \times H^{-L}_x} F_\gamma(u,v) \, d\pi^{\Delta_{k_j}}_{s,t}(u,v) \\
&\le \sup_{k} \int_{L^2_x \times L^2_x}  \Vert u - v \Vert_{H^{-L}_x} \, d\pi^{\Delta_k}_{s,t}(u,v) \\
&\le C|s-t|.
\end{align*}
Since $s,t\in I$ were arbitrary and as $I\subset[0,T)$ is of full Lebesgue measure, we have thus shown that for almost all $s,t \in [0,T)$, there exists a transfer plan $\pi_{s,t}$ from $\mu_s$ to $\mu_t$ satisfying the assumptions of definition \ref{def:timereg}. This shows that the limit $\mu_t$ is itself time-regular.
\end{proof}

\subsection{Time-dependent Correlation measures and their compactness.}
It has been shown in \cite{FLM17} that there is a one-to-one correspondence between probability measures on $L^2_x$ and so-called correlation measures. Correlation measures are defined as infinite hierarchies of Young measures, taking into account spatial correlations, or more precisely,
\begin{definition}\label{def:corrmeas}
A \define{correlation measure} is a collection ${\bm \nu} = (\nu^1, \nu^2, \dots)$ of maps $\nu^k : D^k \to \Prob(\U^k)$ satisfying the following properties:
\begin{enumerate}
\item\textit{Weak-$\ast$ measurability:} Each map $\nu^k : D^k \to \Prob(\U^k)$ is weak-$\ast$-measurable, in the sense that the map $x \mapsto \ip{\nu^k_x}{f}$ from $x\in D^k$ into $\R$ is Borel measurable for all $f\in C_0(\U^k)$ and $k\in\N$. In other words, $\nu^k$ is a Young measure from $D^k$ to $\U^k$.
\item\textit{$L^2$-boundedness:} $\bm\nu$ is $L^2$-bounded, in the sense that
\begin{equation}\label{eq:corrlpbound}
\int_{D} \ip{\nu^1_x}{|\xi|^2}\,dx < +\infty.
\end{equation}
\item\textit{Symmetry:} If $\sigma$ is a permutation of $\{1,\dots,k\}$ and $f\in C_0(\R^k)$ then $\ip{\nu^k_{\sigma(x)}}{f(\sigma(\xi))} = \ip{\nu^k_x}{f(\xi)}$ for a.e.\ $x\in D^k$. Here, we denote $\sigma(x) = \sigma(x_1,x_2,\ldots, x_k) = (x_{\sigma_1},x_{\sigma_2},\ldots,x_{\sigma_k})$. $\sigma(\xi)$ is denoted analogously.
\item\textit{Consistency:} If $f\in C_0(\U^k)$ is of the form $f(\xi_1,\dots,\xi_k) = g(\xi_1,\dots,\xi_{k-1})$ for some $g\in C_0(\U^{k-1})$, then $\ip{\nu^k_{x_1,\dots,x_k}}{f} = \ip{\nu^{k-1}_{x_1,\dots,x_{k-1}}}{g}$ for almost every $(x_1,\dots,x_k)\in D^k$.
\item\textit{Diagonal continuity (DC):} If $B_r(x) := \bigl\{y\in D\ :\ |x-y|<r\bigr\}$ then
\begin{equation}\label{eq:dcproperty}
\lim_{r\to0}\int_D\intavg_{B_r(x)}\ip{\nu^2_{x,y}}{|\xi_1-\xi_2|^2}\,dy\,dx = 0.
\end{equation}
\end{enumerate}
Each element $\nu^k$ is called a \define{correlation marginal}. We let $\Lp^{2} = \Lp^{2}(D,\U)$ denote the set of all correlation measures from $D$ to $\U$.
\end{definition}

It has been shown in \cite{FLM17}, that if $\mu \in \P(L^2_x)$, then we can associate  to it a unique correlation measure $\bm{\nu}$, with the interpretation that for $A_1,\dots,A_k\subset U$:
\[
\mu[u(x_i)\in A_i, \; i=1,\dots,k] = \nu^k_{x_1,\dots,x_k}(A_1\times \dots \times A_k).
\]
More precisely, we have the following theorem \cite{FLM17}:
\begin{theorem}\label{thm:duality}
For every correlation measure $\bm\nu\in\Lp^2(D,\U)$ there exists a unique probability measure $\mu\in\Prob(L^2(D;U))$ satisfying
\begin{equation}\label{eq:lpbound}
\int_{L^2_x}  \Vert u\Vert_{L^2_x}^{2}\,d\mu(u) < \infty,
\end{equation}
such that
\begin{equation}\label{eq:nukdef}
\int_{D^k}\int_{\U^k}g(x,\xi)\,d\nu^k_x(\xi)\, dx = \int_{L^2_x} \int_{D^k}g(x,u(x))\,dx \, d\mu(u),
\end{equation}
for all $\ g\in C_0(D^k\times U^k)$ and $k\in\mathbb{N}$ (where $u(x)$ denotes the vector $(u(x_1), \dots, u(x_k))$). Conversely, for every probability measure $\mu\in\Prob(L^2(D;U))$ with finite moment \eqref{eq:lpbound}, there exists a unique correlation measure $\bm\nu\in\Lp^2(D,\U)$ satisfying \eqref{eq:nukdef}. The relation \eqref{eq:nukdef} is also valid for any measurable $g:D\times\U\to\R$ such that $|g(x,\xi)|\leq C|\xi|^2$ for a.e.\ $x\in D$.

Moreover, the \emph{moments}
\begin{equation}
    \label{eq:momdef}
    m^k:D^k \mapsto U^{\otimes k},\quad m^k(x) = \langle \nu^k_x, \xi_1 \otimes \xi_2 \otimes \ldots \otimes \xi_k \rangle,
    \end{equation}
uniquely determine the correlation measure ${\bf \nu}$ and hence the underlying probability measure $\mu$.
\end{theorem}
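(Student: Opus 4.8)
The statement is a correspondence theorem, so the plan is to treat the two directions separately, together with their uniqueness claims, and then the moment assertion. The forward map $\mu \mapsto \bm\nu$ is essentially a disintegration and is routine; the reconstruction $\bm\nu \mapsto \mu$ is the genuine content, and the point where the diagonal-continuity hypothesis \eqref{eq:dcproperty} is decisive is in showing that the reconstructed measure actually lives on $L^2_x$ rather than on a larger space of irregular fields.

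\textbf{Forward direction.} Given $\mu\in\Prob(L^2_x)$ with $\int\Vert u\Vert_{L^2_x}^2\,d\mu<\infty$, I would fix $k$ and define a functional on $C_0(D^k\times\U^k)$ by
\[
\Lambda_k(g) := \int_{L^2_x}\int_{D^k} g(x,u(x))\,dx\,d\mu(u),
\]
where $u(x)=(u(x_1),\dots,u(x_k))$. For $\mu$-a.e. $u$ the values $u(x_i)$ are finite for a.e. $x$, and the second-moment bound makes $\Lambda_k$ a bounded positive functional of the form required by the fundamental theorem of Young measures; this produces a weak-$*$ measurable family $x\mapsto\nu^k_x\in\Prob(\U^k)$ representing $\Lambda_k$, which is exactly \eqref{eq:nukdef}. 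Properties (1)--(4) of Definition \ref{def:corrmeas} are then checked directly: measurability is built in, symmetry comes from relabelling the integration variables in $D^k$, $L^2$-boundedness \eqref{eq:corrlpbound} is \eqref{eq:lpbound} at $k=1$, and consistency holds because a $g$ independent of $\xi_k$ integrates trivially in $x_k$. The diagonal-continuity property (5) is where the spatial structure of $\mu$ enters, reducing to
\[
\int_D \intavg_{B_r(x)}\int_{L^2_x} |u(x)-u(y)|^2\,d\mu(u)\,dy\,dx \to 0 \quad (r\to 0),
\]
which follows from continuity of translation in $L^2_x$ together with the uniform bound $\int\Vert u\Vert_{L^2_x}^2\,d\mu<\infty$ and dominated convergence. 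Uniqueness of $\bm\nu$ is immediate since \eqref{eq:nukdef} prescribes $\ip{\nu^k_x}{\cdot}$ on a dense subset of $C_0(\U^k)$ for a.e. $x$.

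\textbf{Reconstruction.} Conversely, given $\bm\nu$, the consistency and symmetry properties make the finite-point laws $\{\nu^k_{x_1,\dots,x_k}\}$ into a projective family, so Kolmogorov's extension theorem yields a probability measure $\tilde\mu$ on the product space $\U^D$ whose marginal at $(x_1,\dots,x_k)$ is $\nu^k_{x_1,\dots,x_k}$. The crux is to upgrade $\tilde\mu$ to a Borel measure on $L^2_x$. To do this I would approximate the mollification $\rho_\e*\xi$ of a realization $\xi$ by finite Riemann sums, which depend on only finitely many coordinates and are therefore $\tilde\mu$-measurable; the condition \eqref{eq:dcproperty} together with the bound \eqref{eq:corrlpbound} shows these mollifications are Cauchy in $L^2(dx\,d\tilde\mu)$ as $\e\to0$, so that $\tilde\mu$-almost every realization admits a square-integrable representative $u\in L^2_x$. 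Separability of $L^2_x$ makes the assignment $T:\xi\mapsto u$ measurable, and the pushforward $\mu := T\#\tilde\mu$ is the sought measure; \eqref{eq:nukdef} and the moment bound \eqref{eq:lpbound} then hold by construction. Uniqueness of $\mu$ follows because a Borel probability measure on the separable space $L^2_x$ is determined by the joint laws of the smeared evaluations $u\mapsto\ip{u}{\phi}$, and these are fixed by $\bm\nu$.

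\textbf{Moments, and the main obstacle.} To see that the hierarchy $m^k$ determines $\bm\nu$, I would integrate \eqref{eq:momdef} against products $\phi_1(x_1)\cdots\phi_k(x_k)$, recovering all mixed moments $\E[\ip{u}{\phi_1}\cdots\ip{u}{\phi_k}]$ of the linear functionals $u\mapsto\ip{u}{\phi}$. In the regime relevant here, where $\mu$ is concentrated on a fixed ball $B_M(0)$, each such functional is bounded and its law is compactly supported, hence moment-determinate (polynomials being dense in $C(K)$ for compact $K$); these moments thus fix all finite-dimensional laws and therefore $\mu$, by the uniqueness just established. I expect the reconstruction step to be the main obstacle: the delicate passage is from the Kolmogorov measure on the unstructured product $\U^D$ to a genuine Borel measure on $L^2_x$, and verifying that the diagonal-continuity condition \eqref{eq:dcproperty} supplies precisely the integrability needed to guarantee an $L^2$-representative $\tilde\mu$-almost surely.
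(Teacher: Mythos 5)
First, note that the paper contains no proof of Theorem \ref{thm:duality} to compare against: it is imported verbatim from \cite{FLM17}, so the benchmark is the proof given there. Your forward direction ($\mu \mapsto \bm\nu$) and both uniqueness claims are fine and essentially standard. The genuine gap is in your reconstruction step. Kolmogorov's extension theorem requires an \emph{exactly} consistent projective family indexed by all finite subsets of $D$, but each $\nu^k$ is only a weak-$\ast$ measurable Young measure: it is defined, symmetric and consistent only for \emph{almost every} $(x_1,\dots,x_k)\in D^k$, and there is no canonical everywhere-defined version. One cannot in general repair this by passing to a full-measure subset $D'\subset D$ whose finite powers avoid the exceptional null sets (for instance, if the bad set in $D^2$ has the form $\{x-y\in E\}$ with $E$ null and dense, a Steinhaus-type argument shows $D'-D'$ contains an interval and so meets $E$), and the DC property \eqref{eq:dcproperty} gives no control at individual points, so fixing versions along a countable dense set of points is not available either. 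Hence the sentence ``Kolmogorov's extension theorem yields a probability measure $\tilde\mu$ on $\U^D$'' does not go through as stated; and even granting $\tilde\mu$, your Riemann-sum device only partially addresses the measurability issue, since the law of a Riemann sum at \emph{fixed} quadrature nodes is again only determined for a.e.\ choice of nodes.

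The proof in \cite{FLM17} is designed precisely to sidestep this: instead of point evaluations, one averages over cells. For each partition of $D$ into cells $C_1,\dots,C_n$ the cell-averaged marginals $\fint_{C_{i_1}}\cdots\fint_{C_{i_k}} \nu^k_x \, dx$ are canonically defined (the a.e.\ ambiguity is killed by integration), exactly consistent, and finite-dimensional; they yield probability measures $\mu_n$ concentrated on piecewise-constant fields in $L^2_x$, and the DC property together with the $L^2$-bound \eqref{eq:corrlpbound} shows $(\mu_n)$ converges (Cauchy in the Wasserstein metric) to a measure $\mu$ satisfying \eqref{eq:nukdef} — this is where DC does its work, in integrated rather than pointwise form, exactly the repair your argument needs. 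Separately, your treatment of the moment claim is weaker than the statement: you invoke compact support of the laws of $u\mapsto\ip{u}{\phi}$ via an assumption $\mu(B_M(0))=1$, whereas the theorem assumes only \eqref{eq:lpbound}; under \eqref{eq:lpbound} alone the higher moments need not even be finite, so moment determinacy requires additional growth hypotheses as in \cite{FLM17}. Your restriction matches how the theorem is actually used in this paper (all measures here are supported in a ball of $L^2_x$), but it should be flagged as such rather than presented as a proof of the general claim.
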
 

The following result now follows from theorem \ref{thm:timecompact}:

\begin{theorem} \label{thm:compactconv}
Let $\{\mu_t^\Delta\}_{\Delta>0}$ be a family of uniformly time-regular probability measures in $L^1_t(\P)$, and assume that there exists $M>0$, such that $\mu_t^\Delta(B_M) = 1$ for all $\Delta>0$ and $t\in [0,T)$. Let $\bm{\nu}_t^\Delta = (\nu_t^{\Delta,1}, \nu_t^{\Delta,2},\dots)$ denote the corresponding time-parametrized correlation measures. If there exists a uniform modulus of continuity $\omega(r)$, such that 
\[
\int_0^T\int_{D} \fint_{B_r(x)} \langle \nu^{\Delta,2}_{t,x,y}, |\xi_1 - \xi_2|^2 \rangle \, dy \, dx \, dt \le \omega(r), \qquad \forall \Delta > 0,
\]
then $\{\mu_t^\Delta \}_{\Delta>0}$ is relatively compact in $L^1_t(\P)$, i.e. there exists a subsequence $\Delta_j \to 0$ ($j\in \N$), and a time-parametrized probability measure $\mu_t \in L^1_t(\P)$, such that 
\[
\int_0^T W_1(\mu_t^{\Delta_j}, \mu_t) \, dt \to 0, \quad \text{as }j\to \infty.
\] 

Furthermore, denoting by $\bm{\nu}_t = (\nu_t^1, \nu_t^2, \dots)$ the correlation measure corresponding to the limit $\mu$, we have
\begin{itemize}
\item $L^2$-bound:
$\int_{D} \langle \nu_{t,x}^1, |\xi|^2 \rangle \, dx \le M^2$, for a.e. $t\in [0,T)$,
\item the two-point correlations satisfy
\[
\int_0^T 
\int_{D} \fint_{B_r(x)} \langle \nu^{2}_{t,x,y}, |\xi_1 - \xi_2|^2 \rangle \, dy \, dx \, dt
 \le \omega(r),
\]
\item We define \emph{admissible observables}, in terms of test functions $g \in C([0,T)\times D^k\times U^k)$, which satisfy the following bounds,
\begin{equation}
    \label{eq:obs}
    \begin{aligned}
    |g(t,x,\xi)| &\le C \prod_{i=1}^k \left(1+|\xi_i|^2\right), \\
    |g(t,x,\xi)-g(t,x,\xi')| &\le C \sum_{i=1}^k \Pi_i(\xi,\xi') \sqrt{1+|\xi_i|^2 + |\xi_i'|^2} |\xi_i-\xi_i'|,
\end{aligned}
\end{equation}
where $C>0$ is a fixed constant, independent of $t\in [0,T)$, $x \in D^k$ and $\xi,\xi'\in U^k$. Here $\Pi_i(\xi,\xi')$ is defined as
\begin{align} \label{eq:observableLipPi}
\Pi_i(\xi,\xi') := \prod_{\substack{j=1 \\ j\ne i}}^k \left(1+|\xi_j|^2+|\xi_j'|^2\right), \quad \xi, \xi' \in U^k.
\end{align}
Then, these admissible observables converge strongly in $L^1_{t,x}$, in the sense that
\[
\lim_{j\to \infty}
\int_0^T
\int_{D^k} 
|\langle \nu_{t,x}^{\Delta_j,k}, g(x,\xi) \rangle - \langle \nu_{t,x}^{k}, g(x,\xi) \rangle| 
\, dx 
\, dt
= 0,
\]
\end{itemize}
\end{theorem}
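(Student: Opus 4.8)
The plan is to treat the three assertions in order, deriving the first two quickly from results already established and reserving the real work for the strong convergence of observables. For the \emph{relative compactness}, the first observation is that the hypothesis is exactly the time-averaged structure-function bound \eqref{eq:tasf} in disguise. Applying Theorem~\ref{thm:duality} to the two-point function $(x,y,\xi_1,\xi_2)\mapsto|\xi_1-\xi_2|^2$ (admissible, since it grows like $|\xi|^2$) gives $\ip{\nu^{\Delta,2}_{t,x,y}}{|\xi_1-\xi_2|^2}=\int_{L^2_x}|u(x)-u(y)|^2\,d\mu_t^\Delta(u)$ for a.e.\ $(x,y)$, and the substitution $y=x+h$ identifies the left-hand side of the hypothesis with $S^2_r(\mu_t^\Delta,T)^2$. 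Hence $S^2_r(\mu_t^\Delta,T)\le\sqrt{\omega(r)}$ is a uniform modulus of continuity, Theorem~\ref{thm:timecompact} applies verbatim, and I obtain the subsequence $\Delta_j\to0$ and the limit $\mu_t\in L^1_t(\P)$ with $\int_0^T W_1(\mu_t^{\Delta_j},\mu_t)\,dt\to0$. Passing to a further subsequence I may assume $W_1(\mu_t^{\Delta_j},\mu_t)\to0$, hence $\mu_t^{\Delta_j}\weaklyto\mu_t$, for a.e.\ $t$.

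For the \emph{bounds on the limit}, each $\mu_t^{\Delta_j}$ is supported in the strongly closed ball $\overline{B_M(0)}\subset L^2_x$, so by the Portmanteau inequality $\mu_t(\overline{B_M(0)})\ge\limsup_j\mu_t^{\Delta_j}(\overline{B_M(0)})=1$; together with Theorem~\ref{thm:duality} this gives $\int_D\ip{\nu^1_{t,x}}{|\xi|^2}\,dx=\int_{L^2_x}\Vert u\Vert_{L^2_x}^2\,d\mu_t(u)\le M^2$. For the two-point estimate I write $S^2_r(\mu_t,T)^2=\int_0^T\int_{L^2_x}G_r(u)\,d\mu_t\,dt$ with $G_r(u)=\int_D\fint_{B_r(0)}|u(x+h)-u(x)|^2\,dh\,dx$; since $G_r$ is continuous on $L^2_x$ and bounded by $4M^2$ on the ball, testing the a.e.-$t$ weak convergence against a bounded continuous truncation of $G_r$ and using dominated convergence in $t$ gives $S^2_r(\mu_t,T)^2\le\liminf_j S^2_r(\mu_t^{\Delta_j},T)^2\le\omega(r)$, which is the claim after undoing the duality.

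For the \emph{strong convergence of observables}, write $f^\Delta(t,x):=\ip{\nu^{\Delta,k}_{t,x}}{g}$ and $f(t,x):=\ip{\nu^k_{t,x}}{g}$; the goal is $f^{\Delta_j}\to f$ in $L^1([0,T)\times D^k)$, and the plan is a mollification-in-$x$ argument. Let $f^\Delta_\sigma$ be the mollification of $f^\Delta(t,\cdot)$ in the variable $x\in D^k$ at scale $\sigma$. By Theorem~\ref{thm:duality}, $f^\Delta_\sigma(t,x)=\int_{L^2_x}[(g(t,\cdot,u(\cdot)))\ast\rho_\sigma](x)\,d\mu_t^\Delta(u)$, and for fixed $(t,x,\sigma)$ the functional $u\mapsto[(g(t,\cdot,u(\cdot)))\ast\rho_\sigma](x)$ is bounded and continuous on $\overline{B_M(0)}$: the spatial averaging is what makes this work, since it replaces the ill-defined pointwise values $u(x_i)$ by integrals, and continuity then follows from the $\xi$-Lipschitz bound in \eqref{eq:obs} and Cauchy--Schwarz. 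For fixed $\sigma$ the mollified integrands are moreover bounded by a ($\sigma$-dependent) constant, so the a.e.-$t$ weak convergence together with dominated convergence gives $\Vert f^{\Delta_j}_\sigma-f_\sigma\Vert_{L^1_{t,x}}\to0$ for each $\sigma$.

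It remains to prove the equicontinuity estimate $\Vert f^\Delta-f^\Delta_\sigma\Vert_{L^1_{t,x}}\le C_M\sqrt{\omega(\sigma)}+\eta(\sigma)$ uniformly in $\Delta$ (and the analogue for $f$, using the limit's structure-function bound from the previous step), with $\eta(\sigma)\to0$; combining it with the triangle inequality $\Vert f^{\Delta_j}-f\Vert_{L^1_{t,x}}\le\Vert f^{\Delta_j}-f^{\Delta_j}_\sigma\Vert+\Vert f^{\Delta_j}_\sigma-f_\sigma\Vert+\Vert f_\sigma-f\Vert$ and sending first $j\to\infty$ and then $\sigma\to0$ finishes the proof. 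To estimate $f^\Delta(t,x+h)-f^\Delta(t,x)$ I split $g(t,x+h,u(x+h))-g(t,x,u(x))$ into a shift of the evaluation points and a shift of the spatial label. For the evaluation-point term I invoke the Lipschitz bound \eqref{eq:obs}, \eqref{eq:observableLipPi}, and here lies the main obstacle: one must integrate each of the $k-1$ tame factors $(1+|u(x_j)|^2+\cdots)$ in its own variable $x_j$ — producing a factor $\le|D|+2M^2$ controlled by the $L^2$-bound alone — so that only a single increment $|u(x_i+h_i)-u(x_i)|$ survives, to which Cauchy--Schwarz and Jensen are then applied; averaging over $|h|<\sigma$ reproduces $S^2_\sigma(\mu_t^\Delta,T)\le\sqrt{\omega(\sigma)}$. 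A global application of Cauchy--Schwarz would instead demand fourth moments of $u$, which are not available; the admissibility conditions \eqref{eq:obs} are evidently designed so that this variable-by-variable bookkeeping closes using only the energy bound. The remaining spatial-label term is routine: by continuity of $g$ in $x$ and the growth bound it contributes $\eta(\sigma)\to0$, after a truncation of $g$ to compact $\xi$-support justified by the uniform $L^2$-bound.
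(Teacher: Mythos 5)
Your compactness step, the $L^2$-bound via Portmanteau, and the two-point bound by lower semicontinuity under a.e.-$t$ weak convergence are all sound, and your equicontinuity bookkeeping --- integrating each tame factor of $\Pi_i$ in its own variable so that only a single increment survives Cauchy--Schwarz --- is exactly the mechanism of the paper's proof. But your route for the observables differs from the paper's in one structural respect, and that is where a genuine gap appears. The paper mollifies the \emph{velocity field}: it compares $U^\Delta(t,x)=\langle \nu^{\Delta,k}_{t,x}, g(t,x,\xi)\rangle$ with $U^\Delta_\epsilon(t,x)=\int_{L^2_x} g(t,x,u_\epsilon(x))\,d\mu^\Delta_t(u)$, so the spatial argument of $g$ is never shifted; every error is a $\xi$-increment, controlled by the Lipschitz condition \eqref{eq:obs} together with \eqref{eq:mollest}, and the pointwise bound $|u_\epsilon(x)|\le \Vert \rho_\epsilon\Vert_{L^2_x}\Vert u \Vert_{L^2_x}$ makes $u\mapsto g(t,x,u_\epsilon(x))$ genuinely Lipschitz on the ball. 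You instead mollify the \emph{observable} $f^\Delta(t,\cdot)$ in $x$, which produces, besides the $\xi$-increment you handle correctly, the extra spatial-label term $g(t,x+h,u(x))-g(t,x,u(x))$.

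It is precisely this term that you dismiss as routine, ``after a truncation of $g$ to compact $\xi$-support justified by the uniform $L^2$-bound,'' and that justification fails. After the variable-by-variable integration, the truncation error reduces to $\int_D (1+|u(y)|^2)\,1_{\{|u(y)|>R\}}\,dy$, and the $L^2$-ball does \emph{not} have uniformly integrable squares: for $u$ concentrated on a small set (e.g.\ $u\approx M\epsilon^{-d/2}\mathbf{1}_{B_\epsilon}$) one has $\int_D |u|^2 1_{\{|u|>R\}}\,dy \approx M^2$ for every $R$, so the supremum over $\Vert u\Vert_{L^2_x}\le M$ does not tend to $0$ as $R\to\infty$. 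Since the measures $\mu^\Delta_t$ vary with $\Delta$, pointwise-in-$u$ smallness of the $x$-shift error is not enough; you need smallness uniform over the supports, and the energy bound alone cannot deliver it. The gap is repairable: the structure-function hypothesis itself yields the needed uniform integrability in the time-and-measure average, since writing $u = u_\epsilon + (u-u_\epsilon)$ gives $\int_D |u|^2 1_{\{|u|>R\}}\,dy \le 2\Vert \rho_\epsilon\Vert_{L^2_x}^2 M^2\,|\{|u|>R\}| + 2\Vert u - u_\epsilon\Vert_{L^2_x}^2$, with the first term handled by Chebyshev and the second small on average by \eqref{eq:mollest} and the hypothesis --- but as written you invoke the wrong tool at this step; alternatively, adopting the paper's device of mollifying $u$ eliminates the $x$-shift term altogether. (A further small point: $[0,T)\times D^k\times \{|\xi_i|\le R\}$ is not compact in $t$, so the modulus $\eta(\sigma)$ should be obtained for fixed $t$ and then integrated by dominated convergence, rather than taken uniformly in $t$.)
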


\begin{remark}
We note that the uniform modulus of continuity estimate in theorem \ref{thm:compactconv} can equivalently be expressed as
\[
\int_0^T \int_{L^2_x} \int_D \fint_{B_r(0)} \left|u(x+h) - u(x)\right|^2 \, dh \, dx \, d\mu^\Delta_t(u) \, dt \le \omega(r),
\]
or $S^2_r(\mu^\Delta_t,T)^2 \le \omega(r)$, for all $\Delta > 0$.
\end{remark}

We now come to the proof of theorem \ref{thm:compactconv}.

\begin{proof}[Proof of theorem \ref{thm:compactconv}]
The compactness property follows easily from theorem \ref{thm:timecompact}, above. We only provide a proof of strong convergence of the observables here. We drop the subscript $j$ in the following, and assume that $\Delta \to 0$ is a sequence such that $\mu_t^\Delta \to \mu_t$ in $L^1_t(\P)$. We recall that by the assumption of this theorem, there exists $M>0$, such that $\mu^\Delta_t(B_M(0)) = 1$, for all $\Delta$. Fix now $g\in C([0,T)\times D^k \times U^k)$ as in the statement of the theorem. Denote 
\[
U^\Delta(t,x) := \langle \nu_{t,x}^{\Delta,k}, g(t,x,\xi) \rangle,
\quad
U(t,x) := \langle \nu_{t,x}^{k}, g(t,x,\xi) \rangle.
\]
By the assumed bound on $g(t,x,\xi)$, we have $U^\Delta, U\in L^1([0,T)\times D^k)$. Given $u\in L^2_x$ and $\epsilon > 0$, let $u_\epsilon$ denote the mollification of $u$. For fixed $\epsilon > 0$, and $x\in D^k$, the mapping $L^2_x \mapsto U^k$, $u \mapsto u_\epsilon(x)$ is continuous and bounded on $B_M(0) \subset L^2_x$, and so is $u \mapsto g(t,x,u_\epsilon(x))$. Here we recall that in our notation, $u_\epsilon(x) = (u_\epsilon(x_1),\dots,u_\epsilon(x_k))\in U^k$ for $x=(x_1,\dots,x_k)\in D^k$. We claim that for fixed $\epsilon > 0$, $x\in D^k$, the function
\begin{align} \label{eq:Ffun}
F_{x,\epsilon}(u) := g(t,x,u_\epsilon(x)), \; \text{ satisfies $F_{x,\epsilon}\in C_b(\overline{B_M(0)})$.}
\end{align}
Indeed, using the assumed Lipschitz bound on $\xi \mapsto g(t,x,\xi)$, we find for $u,v\in \overline{B_M(0)}$ from \eqref{eq:obs}, \eqref{eq:observableLipPi}:
\begin{align*}
\left| 
F_{x,\epsilon}(u) - F_{x,\epsilon}(v)
\right|
&=
\left|
g(t,x,u_\epsilon(x)) - g(t,x,v_\epsilon(x))
\right|
\\
&\le 
C \sum_{i=1}^k \Pi_{i}(u_\epsilon(x),v_\epsilon(x)) \sqrt{1+|u_\epsilon(x_i)|^2 + |v_\epsilon(x_i)|^2} |u_\epsilon(x_i) - v_\epsilon(x_i) |.
\end{align*}
Using the H\"older estimate $|u_\epsilon(x)| \le \Vert \rho_\epsilon \Vert_{L^2_x} \Vert u \Vert_{L^2_x}$ for the point-values of the mollification, it follows that 
\begin{align} \label{eq:FLip}
\left| 
F_{x,\epsilon}(u) - F_{x,\epsilon}(v)
\right|
\le 
C k (|D|+2M^2\Vert \rho_\epsilon \Vert_{L^2}^2)^{k-1/2}  \Vert \rho_\epsilon \Vert_{L^2_x} \Vert u - v \Vert_{L^2_x}, 
\end{align}
for all $u,v \in \overline{B_M(0)}$; this implies that $u\mapsto F_{x,\epsilon}(u)$ is Lipschitz-continuous on $\overline{B_M(0)}$, with 
\[
\Vert F_{x,\epsilon} \Vert_{\mathrm{Lip}} \le C k (|D|+2M^2\Vert \rho_\epsilon \Vert_{L^2_x}^2)^{k-1/2}  \Vert \rho_\epsilon \Vert_{L^2_x}.
\]
The previous observation allows us to define
\[
U^\Delta_\epsilon(t,x) := \int_{L^2_x} g(t,x,u_\epsilon(x)) \, d\mu^\Delta_t(u),
\]
and similarly $U_\epsilon(t,x)$ for $\mu_t$. Then, we have for any $\epsilon >0$ and $\phi \in L^\infty([0,T)\times D^k)$:
\begin{align*}
\int_0^T \int_{D^k} &\left[ U^\Delta_{\epsilon}(t,x) - U^\Delta(t,x) \right] \phi(t,x) \, dx \, dt
\\
&= 
\int_0^T \int_{L^2_x} \int_{D^k} \left[g(t,x,u_\epsilon(x)) - g(t,x,u(x)) \right]\phi(t,x) \, dx \, d\mu^\Delta_t(u) \, dt
\\
&\le 
 \int_0^T  \int_{L^2_x}  \int_{D^k} C\Vert \phi \Vert_{L^\infty_{t,x}}  \sum_{i=1}^k \Pi_i(u_\epsilon(x),u(x)) \\ 
 &\qquad
 \times \sqrt{1+|u(x_i)|^2 +|u_\epsilon(x_i)|^2}  |u_\epsilon(x_i) - u(x_i)| \, dx \, d\mu_t^\Delta(u) \, dt.
\end{align*}
According to the definition \eqref{eq:observableLipPi}, $\Pi_i(u_\epsilon(x),u(x))$ depends only on $x_1,\dots, x_{i-1},x_{i+1},\dots,x_k$. We can thus integrate over these variables to find
\[
\int_{D^{k-1}} \Pi_i(u_\epsilon(x),u(x)) \, dx_1 \dots dx_{i-1}\,dx_{i+1} \dots dx_k
\le
(|D|+2M^2)^{k-1},
\]
for all $u$ in the support of $\mu^\Delta_t$. Where we recall that $\mu_t^\Delta$ is supported on $\overline{B_M(0)} = \{\Vert u\Vert_{L^2_x}\le M\}$, by assumption. On the other hand, from H\"older's inequality, we have
\begin{align*}
\int_{D} \sqrt{1+|u(x_i)|^2+|u_\epsilon(x_i)|^2} |u_\epsilon(x_i)-u(x_i)| \, dx_i
&\le (|D|+2M^2)^{1/2} \Vert u - u_\epsilon \Vert_{L^2_x},
\end{align*}
on the support of $\mu^\Delta_t$. Combining these estimates, we conclude that 
\begin{align*}
\int_0^T \int_{D^k} &\left[ U^\Delta_{\epsilon}(t,x) - U^\Delta(t,x) \right] \phi(t,x) \, dx \, dt
\\
&\le 
C\Vert \phi \Vert_{L^\infty_{t,x}} k (|D|+2M^2)^{k-1/2} \int_0^T \int_{L^2_x} \Vert u - u_\epsilon \Vert_{L^2_x} \, d\mu^\Delta_t(u) \, dt
\\
&\le
C\Vert \phi \Vert_{L^\infty_{t,x}} k (|D|+2M^2)^{k-1/2} \sqrt{T} \left(\int_0^T \int_{L^2_x} \Vert u - u_\epsilon \Vert_{L^2_x}^2 \, d\mu^\Delta_t(u) \, dt\right)^{1/2}.
\end{align*}
In the last estimate, we have used Jensen's inequality. Finally, we recall that by \eqref{eq:mollest}, there exists a constant $C>0$, such that
\begin{align*}
 \Bigg(\int_0^T &\int_{L^2_x} \Vert u - u_\epsilon \Vert_{L^2_x}^2 \, d\mu^\Delta_t(u) \, dt\Bigg)^{1/2}
 \\
 &\le 
 C \left(\int_0^T \int_{L^2_x} \int_D \fint_{B_\epsilon(0)} |u(x+h)-u(x)|^2 \, dh \, dx \, d\mu^\Delta_t(u) \, dt \right)^{1/2}
 \\
 &=
 C S^2_\epsilon(\mu^\Delta_t;T).
\end{align*}

We have thus shown that
\begin{align*}
\int_0^T \int_{D^k} \left[ U^\Delta_{\epsilon}(t,x) - U^\Delta(t,x) \right] \phi(t,x) \, dx \, dt
&\le
C\Vert \phi \Vert_{L^\infty_{x,t}} S_{\epsilon}^2(\mu_t^\Delta,T),
\end{align*}
where $C = C(g,k,M,T)$ depends on the observable $g(t,x,\xi)$, the number of point-correlations $k$, the uniform a priori $L^2_x$-bound $M>0$, and the maximal time $T>0$, but is independent of $\Delta$, $\epsilon$ and $\phi$.

Taking the supremum over all $\phi$ s.t. $\Vert \phi \Vert_{L^\infty_{t,x}} \le 1$ on the left-hand side, we obtain
\begin{align} \label{eq:epsest}
\Vert U^\Delta_\epsilon(t,x) - U^\Delta(t,x) \Vert_{L^1_{t,x}}
\le C S_\epsilon^2(\mu_t^\Delta,T).
\end{align}
The same inequality holds also true for $U(x,t)$, following the same argument but dropping the subscript $\Delta$ in the above estimates. By lemma \ref{lem:tapprox}, and the uniform (in $\Delta$) upper bound on the structure function, it now follows that there exists an absolute constant $C>0$, such that
\begin{align} \label{eq:modest}
\Vert U^\Delta_\epsilon(t,x) - U^\Delta(t,x) \Vert_{L^1_{t,x}}
\le C\omega(\epsilon),
\quad
\Vert U_\epsilon(t,x) - U(t,x) \Vert_{L^1_{t,x}}
\le C\omega(\epsilon).
\end{align}
From the convergence $\mu_t^\Delta \to \mu_t$, it furthermore follows that for any $\epsilon > 0$ fixed:
\begin{align} \label{eq:deltest}
\Vert U^\Delta_\epsilon - U_\epsilon \Vert_{L^1_{t,x}}
\to 0, \text{ as } \Delta \to 0.
\end{align}
Indeed, this follows from the fact that 
\[
U^\Delta(t,x) = \int_{L^2_x} F_{x,\epsilon}(u) \, d\mu^\Delta_t(u),
\]
where $F_{x,\epsilon}(u)$ has been defined in \eqref{eq:Ffun} above, 
 and the following estimate:
\begin{align*}
\Vert U^\Delta_\epsilon - U_\epsilon \Vert_{L^1_{t,x}}
&=
\int_{D^k} \int_0^T
\left|
U^\Delta_\epsilon(t,x) - U_\epsilon(t,x)
\right| 
\, dt \, dx
\\
&=
\int_{D^k} \int_0^T 
\left|
\int_{L^2_x} F_{x,\epsilon}(u) \left[d\mu^\Delta_t(u) - d\mu_t(u)\right]
\right| 
\, dt \, dx
\\
&\le \int_{D^k} \left( \int_0^T \Vert F_{x,\epsilon}\Vert_{\mathrm{Lip}} W_1(\mu^\Delta_t,\mu_t) \, dt \right) \, dx
\\
&\le |D|^k \Vert F_{x,\epsilon}\Vert_{\mathrm{Lip}}  \left( \int_0^T W_1(\mu^\Delta_t,\mu_t) \, dt \right).
\end{align*}
By \eqref{eq:FLip}, we have $\Vert F_{x,\epsilon}\Vert_{\mathrm{Lip}} < \infty$, so that last term converges to $0$ as $\Delta \to 0$, as follows from the fact that $\mu^\Delta_t \to \mu_t$ in $L^1(\P)$.

Thus, combining \eqref{eq:epsest}, \eqref{eq:modest} and \eqref{eq:deltest}, we find
\begin{align*}
\Vert U^\Delta(t,x) - U(t,x) \Vert_{L^1_{t,x}}
&\le 
\Vert U^\Delta(t,x) - U^\Delta_\epsilon(t,x) \Vert_{L^1_{t,x}}
\\
&\quad
+ \Vert U^\Delta_\epsilon(t,x) - U_\epsilon(t,x) \Vert_{L^1_{t,x}}
\\
&\quad
+ \Vert U_\epsilon(t,x) - U(t,x) \Vert_{L^1_{t,x}}
\\
&\le 2C \omega(\epsilon) + \Vert U^\Delta_\epsilon(t,x) - U_\epsilon(t,x) \Vert_{L^1_{t,x}}.
\end{align*}
Which implies that 
\begin{align*}
\limsup_{\Delta \to 0} \Vert U^\Delta(t,x) - U(t,x) \Vert_{L^1([0,T)\times D^k)}
&\le 2C \omega(\epsilon),
\end{align*}
for any fixed $\epsilon > 0$. Finally, noting that the left-hand side is independent of $\epsilon$, we may let $\epsilon \to 0$ to show that 
\[
\limsup_{\Delta \to 0}\Vert U^\Delta(t,x) - U(t,x) \Vert_{L^1([0,T)\times D^k)} \le 0,
\]
or, equivalently, that $U^\Delta(t,x) \to U(t,x)$ in $L^1([0,T)\times D^k)$, as $\Delta \to 0$.
\end{proof}
\section{Dissipative Statistical solutions and their well-posedness}
\label{sec:weakstrong}
Given the discussion on time-parameterized probability measures in the last section, we can now define statistical solutions of \eqref{eq:Eulerfull} as,
\begin{definition}\label{def:statsol}
A time-parametrized probability measure $\mu_t \in L^1_t(\P)$ is a \define{statistical solution} of the incompressible Euler equations with initial data $\bar{\mu}$, if $t \mapsto \mu_t$ is time-regular, and the associated correlation measure $\bm{\nu}_t$ satisfies:
\begin{enumerate}
\item Given ${\phi}_1, \dots, {\phi}_k \in C^\infty([0,T)\times D;\R^d)$ with $\div({\phi}_i) = 0$ for all $i=1,\dots,k$, set
\[
{\phi}(t,x)
= 
{\phi}_1(t,{x}_1) \otimes \dots \otimes {\phi}_k(t,{x}_k),
\quad
\text{where }x=({x}_1,\dots,{x}_k).
\]
Then $\nu^k = \nu^{k}_{t,{x}_1,\dots,{x}_k}$ satisfies
\begin{gather*} 
\begin{aligned}
\int_0^T \int_{D^k} &\langle \nu^{k}, {\xi}_1 \otimes \dots \otimes {\xi}_k \rangle : \partial_t {\phi}
 \\
 &\quad 
 + \sum_i \langle \nu^{k}, {\xi}_1 \otimes \dots \otimes {F}({\xi}_i) \otimes \dots \otimes  {\xi}_k \rangle : \nabla_{{x}_i} {\phi}
\, dx \, dt
\\
&\quad + \int_{D^k} \langle \bar{\nu}^{k}, {\xi}_1 \otimes \dots \otimes {\xi}_k \rangle : {\phi}(0,x) \, dx
 =
0.
\end{aligned}
\end{gather*}
Here $\bar{\nu}$ is the correlation measure corresponding to the initial data $\bar{\mu}$. We denote ${F}({\xi}) := {\xi}\otimes {\xi}$ and the contraction in the second term is more explicitly given by
\[
 \left({\xi}_1 \otimes \dots \otimes {F}({\xi}_i) \otimes \dots \otimes  {\xi}_k \right): \nabla_{{x}_i} {\phi}
 =
\left[
 \textstyle\prod_{j\ne i} \left({\xi}_j \cdot {\phi}_j\right)
 \right] 
 \left(
 {\xi}_i\cdot \nabla_{{x}_i} {\phi}_i 
 \right)\cdot {\xi}_i. 
\]
\item For all $\psi \in C_c^\infty(D)$, we have
\[
\int_{D^2} \langle \nu^2_{t,{x}_1,{x}_2}, {\xi}_1 \otimes {\xi}_2 \rangle : \left(\nabla \psi({x}_1) \otimes \nabla \psi({x}_2)\right)
 \, dx_1 \, dx_2 = 0, 
\]
for a.e. $t\in [0,T)$.
\end{enumerate}
\end{definition}
The above PDEs specify the time-evolution of the moments \eqref{eq:momdef} for all $k$ and by 
theorem \ref{thm:duality}, determine the evolution of the probability measure $\mu_t$.
\begin{remark}
As $\nu^1$ above is a standard Young measure, it is straightforward to observe that the corresponding identity for the evolution of $\nu^1$ corresponds to the definition of measure-valued solution of \eqref{eq:Eulerfull} in the sense of \cite{DipernaMajda} under the further assumption that there is no concentration. Hence, one can think of statistical solutions as measure-valued solutions coupled with information about all possible multi-point correlations.

\end{remark}

We first show that the second property of definition \ref{def:statsol} is equivalent to the requirement that $\mu_t$ be supported on divergence-free vector fields for almost all $t$.
\begin{lemma} \label{lem:charincomp}
Let $\mu \in \mathcal{P}(L^2_x)$, with associated correlation measure $\bm{\nu}$. Then $\mu$ is concentrated on divergence-free vector fields if, and only if, 
\[
\int_{D^2} \langle \nu^2_{{x}_1,{x}_2}, {\xi}_1 \otimes {\xi}_2 \rangle : \left(\nabla \psi({x}_1) \otimes \nabla \psi({x}_2)\right)
 \, dx_1 \, dx_2 = 0, 
\]
for all $\psi \in C_c^\infty(D)$.
\end{lemma}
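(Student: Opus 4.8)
The plan is to transport the correlation-measure identity back to the level of $\mu$ via the duality theorem~\ref{thm:duality}, and then to observe that the resulting expression is a perfect square in the weak divergence $\int_D u\cdot\nabla\psi\,dx$. Since $\mu$ carries the correlation measure $\bm{\nu}$, it has finite second moment~\eqref{eq:lpbound}, so for fixed $\psi\in C_c^\infty(D)$ I would apply~\eqref{eq:nukdef} (with $k=2$) to the observable $g(x_1,x_2,\xi_1,\xi_2) = (\xi_1\otimes\xi_2):(\nabla\psi(x_1)\otimes\nabla\psi(x_2))$. This $g$ obeys the quadratic bound $|g|\le C(1+|\xi_1|^2)(1+|\xi_2|^2)$, for which~\eqref{eq:nukdef} extends by a routine truncation argument controlled by~\eqref{eq:lpbound}. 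The upshot is
\[
\int_{D^2}\langle \nu^2_{x_1,x_2},\xi_1\otimes\xi_2\rangle:(\nabla\psi(x_1)\otimes\nabla\psi(x_2))\,dx
=
\int_{L^2_x}\int_{D^2}\bigl(u(x_1)\cdot\nabla\psi(x_1)\bigr)\bigl(u(x_2)\cdot\nabla\psi(x_2)\bigr)\,dx_1\,dx_2\,d\mu(u).
\]

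The key observation is that the integrand on the right factorizes across the two copies of $D$, so the inner double integral equals $\bigl(\int_D u\cdot\nabla\psi\,dx\bigr)^2$. Hence the quantity appearing in the lemma is
\[
\int_{L^2_x}\Bigl(\int_D u\cdot\nabla\psi\,dx\Bigr)^2\,d\mu(u),
\]
which is manifestly nonnegative. Both implications then read off from this formula. If $\mu$ is concentrated on (weakly) divergence-free fields, then by~\eqref{eq:incomprweak} the inner integral vanishes for $\mu$-a.e.\ $u$ and every $\psi$, so the expression is zero; conversely, if the expression vanishes for a given $\psi$, nonnegativity of the integrand forces $\int_D u\cdot\nabla\psi\,dx = 0$ for $\mu$-a.e.\ $u$.

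The only genuine subtlety, and the step I would treat most carefully, is the exchange of quantifiers: the hypothesis produces, for each fixed $\psi$, a $\mu$-null set $N_\psi$ off which $\int_D u\cdot\nabla\psi\,dx=0$, whereas concentration on divergence-free fields requires a single null set serving all $\psi$ simultaneously. To bridge this I would invoke separability. For fixed $u\in L^2_x$ the functional $\psi\mapsto\int_D u\cdot\nabla\psi\,dx$ is continuous in the $H^1$-norm, since $|\int_D u\cdot\nabla\psi\,dx|\le\|u\|_{L^2_x}\|\nabla\psi\|_{L^2_x}$; I therefore fix a countable $H^1$-dense family $\{\psi_n\}\subset C_c^\infty(D)$, set $N=\bigcup_n N_{\psi_n}$, which is again $\mu$-null, and note that for every $u\notin N$ the relation $\int_D u\cdot\nabla\psi_n\,dx=0$ holds for all $n$, hence by density and the above continuity for all $\psi\in C_c^\infty(D)$. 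Thus every such $u$ is divergence-free, so $\mu$ assigns full mass to the set of divergence-free vector fields (its complement being contained in $N$), completing the equivalence.
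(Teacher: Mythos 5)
Your proposal is correct and follows essentially the same route as the paper: both rest on the identity
\[
\int_{D^2} \langle \nu^2_{x_1,x_2}, \xi_1\otimes\xi_2\rangle : (\nabla\psi(x_1)\otimes\nabla\psi(x_2))\,dx_1\,dx_2 = \int_{L^2_x}\Bigl(\int_D u\cdot\nabla\psi\,dx\Bigr)^2\,d\mu(u),
\]
deduce the forward direction immediately, and handle the converse by nonnegativity (Chebyshev) together with a countable $H^1$-dense family of test functions and countable subadditivity of $\mu$. Your explicit treatment of the quantifier exchange via continuity of $\psi\mapsto\int_D u\cdot\nabla\psi\,dx$ in the $H^1$-norm is exactly the point the paper uses implicitly when it writes the set of non-divergence-free fields as a countable union over the dense family.
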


\begin{proof}
Let $\psi \in C_c^\infty(D)$. Then we have the following identity
\begin{gather}  \label{eq:relation}
\begin{aligned}
\int_{L^2_x} &\left[\int_{D} {u} \cdot \nabla \psi \, dx\right]^2 \, d\mu({u})
\\
&=
\int_{L^2_x} \int_{D} 
\left(
{u}({x}_1)\otimes {u}({x}_2) 
\right) 
: 
\left(
\nabla \psi({x}_1)\otimes \nabla \psi({x}_2)
\right)
 \, dx_1 \, dx_2 \, d\mu(u)
 \\
 &=
  \int_{D} 
\left\langle
\nu^2_{{x}_1,{x}_2}, 
{\xi}_1\otimes {\xi}_2 
\right\rangle 
: 
\left(
\nabla \psi({x}_1)\otimes \nabla \psi({x}_2)
\right)
 \, dx_1 \, dx_2.
\end{aligned}
\end{gather}
Therefore, if $\mu$ is concentrated on the divergence-free vector fields, then 
\[
\int_{D} {u} \cdot \nabla \psi \, dx = 0, \quad \text{$\mu$-almost surely},
\]
and hence, from equation \eqref{eq:relation}, we obtain
\begin{align} \label{eq:nuvanish}
  \int_{D} 
\left\langle
\nu^2_{{x}_1,{x}_2}, 
{\xi}_1\otimes {\xi}_2 
\right\rangle 
: 
\left(
\nabla \psi({x}_1)\otimes \nabla \psi({x}_2)
\right)
 \, dx_1 \, dx_2
=0.
\end{align}

To prove the converse, let us assume that relation \eqref{eq:nuvanish} holds for all $\psi\in C^\infty_c$. Let $\psi_n\in C^\infty_c$, $n\in \mathbb{N}$, be a countable, dense subset of $H^1(D)$.
Then, we have 
\[
\left\{ {u}\in L^2_x \, | \, \div({u}) \ne 0 \text{ distributionally}\right\}
=
\bigcup_{n\in \mathbb{N}} 
\left\{ {u}\in L^2_x \, \Big| \,\textstyle \int_{D} {u}\cdot \nabla \psi_n \, dx \ne 0\right\}.
\]
Set now $F_n({u}) := \left[\int_{D} {u}\cdot \nabla \psi_n \, dx\right]^2$. We note that for any $\epsilon>0$, we have
\[
\mu\left[F_n({u}) > \epsilon\right] 
\le 
\frac{1}{\epsilon} \int_{L^2_x} F_n({u}) \, d\mu({u})
= 0,
\]
where the last equality follows from \eqref{eq:relation} and the assumption \eqref{eq:nuvanish}. Letting $\epsilon \to 0$, we conclude that $\mu\left[F_n({u}) > 0\right] = 0$ for all $n\in \mathbb{N}$. Equivalently, we have 
\[
\mu\left[\int_{D} {u}\cdot \nabla \psi_n \ne 0 \right] = 0, \quad \text{for all }n\in \mathbb{N}.
\]
Finally, we conclude that
\begin{align*}
\mu\left[\div({u}) \ne 0\right]
&= 
\mu \left[\textstyle\bigcup_{n\in \mathbb{N}} \left\{\textstyle \int_{D} {u}\cdot \nabla \psi_n \ne 0\right\}\right]
\\
&\le 
\sum_{n\in \mathbb{N}} \mu\left[\textstyle \int_{D} {u}\cdot \nabla \psi_n \ne 0\right]
\\
&= 0.
\end{align*}
Hence $\mu\left[\div({u}) = 0\right] = 1$, i.e. $\mu$ is concentrated on divergence-free vector fields.
\end{proof}

Note that if $\rho, \mu\in \P(L^2_x)$ are probability measures, and if $\rho$ is of the form 
\[
\rho = \sum_{i=1}^M \alpha_i \delta_{u_i},
\]
where $\alpha_i > 0$, $\sum_{i=1}^M \alpha_i = 1$, and $u_i \in L^2_x$, then a transport plan from $\mu$ to $\rho$ is necessarily of the form \cite{FLM17}:
\[
\pi = \sum_{i=1}^M \alpha_i \mu_i \otimes \delta_{u_i},
\]
where $\mu_i \in P(L^2_x)$, and $\sum_{i=1}^M \alpha_i \mu_i = \mu$. Therefore, given $\alpha = (\alpha_1, \dots, \alpha_M)$ as above, and $\mu \in \P(L^2_x)$, we denote 
\[
\Lambda(\alpha,\mu) 
:=
\left\{
(\mu_1, \dots, \mu_M) 
\, \Big| \, 
\mu_i \in \P(L^2_x), \; \textstyle\sum_{i=1}^M \alpha_i \mu_i = \mu
\right\}.
\]
Note that the set $\Lambda(\alpha,\mu)$ is non-empty, since it contains $(\mu, \dots, \mu)$.

In analogy with work of \cite{FLM17,FLMW1} on entropy statistical solutions for hyperbolic systems of conservation laws, we define
\begin{definition}[Dissipative statistical solution] \label{def:dissipative}
A statistical solution $\mu_t \in L^1_t(\P)$ is called \emph{dissipative}, if for every choice of coefficients $\alpha_i > 0$ with $\sum_{i=1}^M \alpha_i = 1$ and for every $(\overline{\mu}_1, \dots, \overline{\mu}_M)\in \Lambda(\alpha,\overline{\mu})$, there exists a function $t \mapsto (\mu_{1,t},\dots,\mu_{M,t}) \in \Lambda(\alpha,\mu_t)$, such that $t \mapsto \mu_{i,t}$ is weak-$\ast$ measurable, $\mu_{i,t}|_{t=0} = \overline{\mu}_i$, such that each $\mu_{i,t}$ satisfies
\[
 \int_0^T \int_{L^2_x} \int_{D} 
\left[
{u}\cdot \partial_t {\phi}
+ 
({u}\otimes {u}) : \nabla {\phi}
\right]
\, dx \, d \mu_{i,t}(u) \, dt 
=
-\int_{L^2_x} \int_{D} 
{u}\cdot {\phi}(0,x) \, dx \, d\overline{\mu}_i({u}),
\]
for all ${\phi}\in C^\infty_c([0,T)\times D)$, $\div({\phi}) = 0$, and all $i=1,\dots, M$. And, in addition, we have for almost every $t\in [0,T)$:
\[
\int_{L^2_x} \Vert {u} \Vert^2_{L^2_x} \, d\mu_{i,t} ({u})
\le 
\int_{L^2_x} \Vert {u} \Vert^2_{L^2_x} \, d\overline{\mu}_i({u}), 
\quad i=1,\dots, M.
\]
\end{definition}

\subsection{Existence and uniqueness of dissipative solutions}
\label{sec:existuniq}
In this section, we show based on purely topological arguments, that if the set of $C^1$-regular initial data admitting classical solutions of \eqref{eq:Eulerfull}, over a given time-interval $[0,T)$ is dense in $L^2_x$, then there exists a (topologically) generic set $\mathcal{G} \subset L^2_x$, containing these regular initial data, with the following property: For any initial data $\bar{\mu}\in \P(L^2_x)$ which is concentrated on this generic set $\mathcal{G}\subset L^2_x$, we have \emph{existence and uniqueness in the class of dissipative statistical solutions}. By a ``generic'' set $\mathcal{G}$, we denote a set whose complement $\mathcal{E} = L^2_x \setminus \mathcal{G}$ is a countable union of nowhere dense sets (implying that $\mathcal{E}$ is a meagre set in the topological sense). We say that $\bar{\mu}$ is concentrated on $\mathcal{G}$, if $\bar{\mu}(\mathcal{G}) = 1$.

The construction of such a generic $\mathcal{G}$ under the above mentioned assumption has first been carried out in \cite{Lions}. Let us first review the construction of $\mathcal{G}$. We let $\mathcal{C} \subset C^1(D;U)$ denote the set of initial data $\overline{v}$ admitting a classical solution $v(t)$ on $[0,T)$, with $C(\overline{v}) := \sup_{t\in[0,T)}\Vert \nabla v(t) \Vert_{L^\infty}$ finite, i.e. $C(\overline{v})< \infty$. For $n\in \mathbb{N}$, define the open set $\mathcal{G}_n$, by
\begin{align} \label{eq:genericn}
\mathcal{G}_n
:=
\left\{
\overline{u} \in L^2_x
\, \Big|\, 
\exists\, \overline{v}\in \mathcal{C} \text{ s.t. } \Vert \overline{u}-\overline{v} \Vert_{L^2_x} < \frac 1n e^{-C(\overline{v}) T} 
\right\}
\end{align}
Finally, we let $\mathcal{G} = \bigcap_{n\in \mathbb{N}} \mathcal{G}_n$. 

\begin{remark}
If there exists a dense set of initial data $\overline{v} \in \mathcal{C}$, then $\mathcal{G}$ is generic in the topological sense (more precisely a $G_\delta$ set), being the countable intersection of the \emph{dense} open sets $\mathcal{G}_n$. By the Baire category theorem, the set $\mathcal{G}$ is non-empty and dense in this case. In particular, this would hold true if there is no finite-time blow-up for sufficiently smooth classical solutions of the incompressible Euler equations (e.g. for $C^{1,\alpha}$ initial data $\overline{v}$ possessing a H\"older continuous derivative), which is an established fact in two space dimensions but an open question in three space dimensions.
\end{remark}

We can now state the main theorem of the present section:

\begin{theorem} \label{thm:existuniq}
Define the generic set $\mathcal{G}$ as above (cp. equation \eqref{eq:genericn}). If $\bar{\mu} \in \P(L^2_x)$ is an initial datum such that $\bar{\mu}(\mathcal{G}) = 1$ and there exists $M>0$ such that $\bar{\mu}(B_M(0))=1$, then there exists a unique dissipative statistical solution $\mu_t$ of the incompressible Euler equations with initial data $\bar{\mu}$. 
\end{theorem}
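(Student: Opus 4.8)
The plan is to reduce the statistical statement to the deterministic theory on $\mathcal{G}$ and then lift it to probability measures through the pushforward by the (deterministic) solution operator. The single tool driving everything is the \emph{relative energy} (weak--strong) stability estimate for \eqref{eq:Eulerfull}: if $v$ is a classical solution with data $\bar v\in\mathcal C$ and $w$ is any \emph{dissipative} weak solution (or, more generally, any measure-valued solution satisfying the energy inequality) with data $\bar w$, then the relative energy is controlled by a Gronwall inequality with rate $\|\nabla v\|_{L^\infty}$, giving $\|w(t)-v(t)\|_{L^2_x}\le \|\bar w-\bar v\|_{L^2_x}\,e^{C(\bar v)T}$ with $C(\bar v)=\sup_t\|\nabla v(t)\|_{L^\infty}$. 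The exponential weight $e^{-C(\bar v)T}$ in the definition \eqref{eq:genericn} of $\mathcal G_n$ is tailored precisely so that this estimate becomes quantitatively useful on $\mathcal G$.

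First I would develop the deterministic theory. For $\bar u\in\mathcal G=\bigcap_n\mathcal G_n$ there is, for each $n$, some $\bar v_n\in\mathcal C$ with $\|\bar u-\bar v_n\|_{L^2_x}e^{C(\bar v_n)T}<1/n$; writing $v_n$ for the associated classical solution, the stability estimate (applied symmetrically between $v_n$ and $v_m$, bounding by the smaller of $C(\bar v_n),C(\bar v_m)$) shows that $(v_n)_n$ is Cauchy in $C([0,T);L^2_x)$. Its limit $S(\bar u)$ is a dissipative weak solution (strong $L^2$ convergence passes the nonlinearity to the limit, and in fact the energy is conserved along $S(\bar u)$ since $\|v_n(t)\|_{L^2_x}=\|\bar v_n\|_{L^2_x}\to\|\bar u\|_{L^2_x}$), and the same estimate shows it is the \emph{unique} such solution, as any two dissipative weak solutions with data $\bar u$ both lie within $1/n$ of $v_n$ for every $n$. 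A short argument --- fix $\bar v_n$, let $\bar u'\to\bar u$ within $\mathcal G$ and compare both solutions to the single classical solution $v_n$ --- shows that $S:\mathcal G\to C([0,T);L^2_x)$ is continuous, hence Borel measurable.

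With the measurable solution operator in hand, existence follows by pushforward: set $\mu_t:=(S_t)_\#\bar\mu$. Since $\|S_t(\bar u)\|_{L^2_x}\le M$ holds $\bar\mu$-a.s., we get $\mu_t(B_M(0))=1$ and $\mu_t\in L^1_t(\P)$; time-regularity follows by taking $\pi_{s,t}=(S_s,S_t)_\#\bar\mu$ and using \eqref{eq:formaltimereg} for each realization with the uniform constant coming from $\|\bar u\|_{L^2_x}\le M$. To verify that $\mu_t$ is a \emph{dissipative} statistical solution in the sense of definition \ref{def:dissipative}, given any decomposition $\bar\mu=\sum_i\alpha_i\bar\mu_i$ I would take $\mu_{i,t}:=(S_t)_\#\bar\mu_i$; the averaged weak form and the energy inequality then hold because every realization $S_\cdot(\bar u)$ is individually a dissipative weak solution, while $\sum_i\alpha_i\mu_{i,t}=\mu_t$ by construction. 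The correlation-measure identities of definition \ref{def:statsol} are obtained by integrating the weak form of each realization against $\bar\mu$ and invoking theorem \ref{thm:duality}.

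The main work, and the principal obstacle, is \emph{uniqueness}. Let $\mu_t$ be any dissipative statistical solution with data $\bar\mu$ and fix $n$. Since $\bar\mu(\mathcal G_n)=1$ and the balls $B(\bar v):=\{\|\bar u-\bar v\|_{L^2_x}<\tfrac1n e^{-C(\bar v)T}\}$, $\bar v\in\mathcal C$, cover $\mathcal G_n$, the Lindel\"of property of the separable space $L^2_x$ yields a countable subcover, from which I would extract a disjoint Borel partition $\{A_j\}$ of a full-measure subset of $\mathcal G_n$ with $A_j\subset B(\bar v_j)$, truncated to finitely many pieces $A_1,\dots,A_{M_0}$ carrying all but mass $\delta$, the remainder lumped into one set $A_0$. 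Setting $\alpha_j=\bar\mu(A_j)$ and $\bar\mu_j=\bar\mu|_{A_j}/\alpha_j$ defines an admissible decomposition, so dissipativity furnishes $\mu_{j,t}$ with $\sum_j\alpha_j\mu_{j,t}=\mu_t$. Applying the measure-valued relative energy estimate to each piece, and using that $\bar\mu_j$ is supported in $B(\bar v_j)$, gives $\int\|u-v_j(t)\|^2_{L^2_x}\,d\mu_{j,t}(u)\le \tfrac1{n^2}$ for a.e.\ $t$ and each $j\ge1$. Comparing $\mu_t$ with the reference measure $\sigma^n_t:=\sum_j\alpha_j\delta_{v_j(t)}$ through the plan $\sum_j\alpha_j(\mu_{j,t}\otimes\delta_{v_j(t)})$ yields $W_2(\mu_t,\sigma^n_t)\lesssim \tfrac1n + M\sqrt\delta$; the identical computation for the pushforward, using the \emph{deterministic} estimate $\|S_t(\bar u)-v_j(t)\|_{L^2_x}\le\tfrac1n$ for $\bar u\in A_j$, gives $W_2((S_t)_\#\bar\mu,\sigma^n_t)\lesssim\tfrac1n+M\sqrt\delta$. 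The triangle inequality, on letting first $\delta\to0$ and then $n\to\infty$, produces $\int_0^T W_1(\mu_t,(S_t)_\#\bar\mu)\,dt=0$, i.e.\ $\mu_t=(S_t)_\#\bar\mu$. The delicate points to get right are the a.e.-in-$t$ validity and the precise exponential constant in the relative energy estimate (which must match the weight in \eqref{eq:genericn}), the measurability of the partition and of $S$, and the crude but sufficient control of the small-mass remainder $A_0$ using only the uniform bound $\|u\|_{L^2_x}\le M$.
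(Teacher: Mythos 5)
Your proposal is correct, and its uniqueness half is essentially the paper's own argument: the paper likewise covers $\mathcal{G}_n$ by the exponentially weighted balls from \eqref{eq:genericn}, disjointifies the cover into a countable Borel partition, truncates to finitely many pieces with the small-mass remainder sent to the trivial atom $\delta_0$ and controlled by the energy bound $M$ exactly as you do, invokes the dissipativity decomposition of definition \ref{def:dissipative} together with the measure-valued weak--strong (relative energy) estimate of \cite{Brenier2011} on each piece, and closes with the same $W_2$ triangle inequality against an atomic reference measure. Where you genuinely diverge is the existence half: the paper never constructs a solution operator, but instead builds atomic dissipative statistical solutions $\rho^{(n)}_t = \sum_i \alpha^{(n)}_i \delta_{u_i(t)}$ from the same partition, shows $(\rho^{(n)}_t)_n$ is $W_2$-Cauchy uniformly in $t$ via the pairwise stability estimate for classical solutions, and declares the limit to be a dissipative statistical solution ``since the definition is linear in the probability measure''. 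Your route --- first establishing, for each $\bar u \in \mathcal{G}$, a unique dissipative weak solution $S(\bar u)$ as a Cauchy limit of classical solutions, proving continuity of $S$ on $\mathcal{G}$ (hence Borel measurability, since $\mathcal{G}$ is a $G_\delta$), and defining $\mu_t = (S_t)_\#\bar\mu$ --- is arguably more airtight at precisely the point the paper glosses over: passing the dissipativity property (which quantifies over \emph{all} decompositions of the initial measure) through a $W_2$-limit of measures whose initial data $\bar\rho^{(n)} \ne \bar\mu$ is not immediate, whereas in the pushforward construction every required decomposition $\mu_{i,t} = (S_t)_\#\bar\mu_i$ is explicit (note your observation that $\alpha_i\bar\mu_i \le \bar\mu$ forces $\bar\mu_i(\mathcal{G})=1$ is exactly what makes this legitimate). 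What the paper's route buys in exchange is that it sidesteps all measurability questions for $S$ and yields the deterministic theory as a byproduct rather than a prerequisite; your route buys a cleaner verification of the statistical-solution and dissipativity axioms, though your one-line justification of the $k$-point identities of definition \ref{def:statsol} should be expanded along the lines of the paper's Lax--Wendroff computation (product rule over realizations, valid a.e.\ since $t\mapsto (u(t),\phi)$ is Lipschitz). Two bookkeeping points to fix in a write-up: the Gronwall rate for the \emph{squared} relative energy produces $e^{2C(\bar v)t}$, so either define $C(\bar v)$ with the factor of two absorbed or adjust the weight in \eqref{eq:genericn} accordingly (the paper carries the same harmless slack), and make explicit that the a.e.-in-$t$ exceptional sets coming from each $n$ and each truncation level are countably many, so their union is still null.
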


The proof of theorem \ref{thm:existuniq} is somewhat technical and is left to appendix \ref{app:existuniqgeneric}.

\begin{remark}
Even though a topologically generic set $\mathcal{G}$ is large in some sense (e.g. in the sense that a countable intersection of generic sets is again generic, and in particular non-empty), it might be very small from a different point of view. For example, there exist generic subsets of $\mathbb{R}^n$, which have Lebesgue measure zero. It is therefore a priori not clear whether there are any ``interesting'' $\bar{\mu}$, such that $\bar{\mu}(\mathcal{G}) = 1$, beyond those $\bar{\mu}$ which are concentrated on smooth initial data. Nevertheless, theorem \ref{thm:existuniq} implies suitable weak-strong uniqueness results as we show below.
\end{remark}
We can derive the following corollaries from theorem \ref{thm:existuniq}.

\begin{corollary}[Short-time existence and uniqueness] \label{cor:shorttime}
If $m \ge \lfloor d/2 \rfloor + 2$, and if there exists a $C>0$, such that $\overline{\mu} \in \P(L^2_x)$ is concentrated on 
\[
\{ 
\overline{u} \in H^m_x \, | \, \Vert \overline{u} \Vert_{H^m_x} \le C
\},
\]
then there exists $T^\ast > 0$ (depending only on $C$) and a statistical solution $\mu_t: [0,T^\ast] \to \P(L^2_x)$ with initial data $\overline{\mu}$. Furthermore, $\mu_t$ is unique in the class of dissipative statistical solutions for $t\in [0,T^\ast]$.
\end{corollary}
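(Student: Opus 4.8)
The plan is to derive Corollary~\ref{cor:shorttime} directly from Theorem~\ref{thm:existuniq}. The only hypotheses of that theorem are that $\overline\mu$ be concentrated on the generic set $\mathcal{G}$ (built for a fixed time horizon $[0,T)$) and on some $L^2_x$-ball $B_M(0)$. The $L^2$-concentration is immediate from the Sobolev embedding $\|\overline u\|_{L^2_x}\le \|\overline u\|_{H^m_x}\le C$, so that $\overline\mu(B_M(0))=1$ with $M=C+1$. Thus the entire content of the reduction is to produce a horizon $T^\ast>0$, depending only on $C$, for which the $H^m$-ball $B:=\{\overline u\in H^m_x\ :\ \|\overline u\|_{H^m_x}\le C\}$ on which $\overline\mu$ lives is contained in $\mathcal{G}$.

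First I would invoke the classical local well-posedness theory for the incompressible Euler equations: for $m>d/2+1$ (which is guaranteed by $m\ge\lfloor d/2\rfloor+2$, since for $d\in\{2,3\}$ this forces $m\ge 3>d/2+1$), every initial datum $\overline u\in H^m_x$ admits a unique classical solution on a time interval whose length $T_0=T_0(\|\overline u\|_{H^m_x})>0$ depends only on the $H^m_x$-norm and is nonincreasing in it. By the Sobolev embedding $H^m_x\embeds C^1(D;U)$, this solution lies in $C([0,T_0);C^1)$, so its gradient is bounded on compact subintervals. Fixing any $T^\ast\in(0,T_0(C))$, the monotonicity of the existence time in the norm shows that every $\overline u\in B$ possesses a classical solution $v(t)$ on $[0,T^\ast]$ with $C(\overline u)=\sup_{t\in[0,T^\ast)}\|\nabla v(t)\|_{L^\infty}<\infty$. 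In other words, taking the time horizon in the construction \eqref{eq:genericn} to be $T=T^\ast$, we obtain $B\subset\mathcal{C}$.

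Next I would observe that $\mathcal{C}\subset\mathcal{G}$: given any $\overline v\in\mathcal{C}$, choosing $\overline u=\overline v$ in the definition of $\mathcal{G}_n$ gives $\|\overline u-\overline v\|_{L^2_x}=0<\tfrac1n e^{-C(\overline v)T^\ast}$ for every $n\in\N$, so $\overline v\in\bigcap_n\mathcal{G}_n=\mathcal{G}$. Combining with the previous step yields $B\subset\mathcal{C}\subset\mathcal{G}$, and hence $\overline\mu(\mathcal{G})=1$. Theorem~\ref{thm:existuniq}, applied on the interval $[0,T^\ast)$, then furnishes a unique dissipative statistical solution $\mu_t$ with initial data $\overline\mu$, and the existence time $T^\ast$ depends only on $C$ as required.

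The only genuinely nontrivial input is the classical well-posedness statement with existence time controlled solely by the $H^m_x$-norm; everything else is bookkeeping. I expect the main technical point to be this uniform existence time together with the $H^m_x\embeds C^1$ embedding that makes $C(\overline u)$ finite, both of which are standard (Kato--Lichtenstein, cf.\ \cite{majda2001}). A secondary, purely cosmetic, subtlety is the closed endpoint: Theorem~\ref{thm:existuniq} yields a solution on the half-open interval $[0,T^\ast)$, so to state the result on $[0,T^\ast]$ one chooses $T^\ast$ strictly below the uniform existence time $T_0(C)$, which guarantees $C(\overline u)<\infty$ up to and including $t=T^\ast$ and lets the construction run on a slightly larger horizon.
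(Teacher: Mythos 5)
Your proof is correct and follows essentially the same route as the paper: both invoke the classical local existence theory of \cite{majda2001} to get a uniform time $T^\ast=T^\ast(C)$, use the embedding $H^m_x \embeds C^1$ to place the $H^m_x$-ball inside $\mathcal{C}\subset\mathcal{G}$, and then apply theorem \ref{thm:existuniq}. Your extra bookkeeping (checking the $B_M(0)$-concentration hypothesis via $\Vert \overline{u}\Vert_{L^2_x}\le \Vert \overline{u}\Vert_{H^m_x}$, and choosing $T^\ast$ strictly below the uniform existence time to cover the closed endpoint) only makes explicit details the paper leaves implicit.
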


\begin{proof}
Classical short-time existence results for the Euler equations show \cite{majda2001} that there exists $T^\ast > 0$, such that for initial data $\overline{u}$ with $\Vert \overline{u} \Vert_{H^m_x} \le C$, there exists a unique solution $u(t)$ such that
\[
\sup_{t\in [0,T^\ast]} \Vert u(t) \Vert_{H^m_x} 
\le C' \Vert \overline{u} \Vert_{H^m_x}.
\]
Since $H^m_x \embeds C^1$, this implies that $\overline{\mu}$ is concentrated on $\overline{u} \in \mathcal{C}$. In particular, we conclude that $\overline{\mu}(\mathcal{G}) =1$, and the result now follows from theorem \ref{thm:existuniq}.
\end{proof}

\begin{corollary}[Weak-Strong uniqueness in 2d] \label{cor:weakstrong}
Let $d=2$, and let $\alpha \in (0,1)$. If $\overline{\mu}$ is concentrated on $C^{1,\alpha}(D;U)$ and if there exists $M>0$, such that $\overline{\mu}(B_M(0))=1$, then there exists a dissipative statistical solution $\mu_t$ with initial data $\overline{\mu}$. Furthermore, $\mu_t$ is unique in the class of dissipative statistical solutions with initial data $\overline{\mu}$.
\end{corollary}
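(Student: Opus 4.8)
The plan is to deduce the corollary directly from Theorem \ref{thm:existuniq}: since $\overline{\mu}(B_M(0)) = 1$ is assumed, it suffices to verify the remaining hypothesis $\overline{\mu}(\mathcal{G}) = 1$, where $\mathcal{G} = \bigcap_{n} \mathcal{G}_n$ is the generic set defined in \eqref{eq:genericn}. Because $\overline{\mu}$ is (tacitly, as forced by the incompressibility constraint of Definition \ref{def:statsol} via Lemma \ref{lem:charincomp} for any statistical solution to exist) concentrated on divergence-free fields in $C^{1,\alpha}(D;U)$, the whole argument reduces to the pointwise inclusion: every divergence-free $\overline{u} \in C^{1,\alpha}(D;U)$ lies in $\mathcal{G}$. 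Once this is established, $\overline{\mu}(\mathcal{G}) = 1$ follows since $\overline{\mu}$-almost every field is of this form, and then Theorem \ref{thm:existuniq} yields both existence and uniqueness of a dissipative statistical solution with initial data $\overline{\mu}$.

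The key analytic input is the classical global-in-time well-posedness theory for the two-dimensional Euler equations in H\"older spaces. First I would recall that for a divergence-free $\overline{u} \in C^{1,\alpha}(D;U)$ the scalar vorticity $\omega_0 = \curl \overline{u}$ lies in $C^\alpha(D)$; in $d=2$ the vorticity is transported by the (volume-preserving) flow, so $\Vert \omega(t)\Vert_{L^\infty} = \Vert \omega_0\Vert_{L^\infty}$ is conserved, while the $C^\alpha$-seminorm remains finite for all time (with at most double-exponential growth). Via the Biot--Savart law together with the log-Lipschitz / Calder\'on--Zygmund estimates, this yields a unique global classical solution $u(t)$ for which $\Vert \nabla u(t)\Vert_{L^\infty}$ stays bounded on any finite interval. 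Consequently $C(\overline{u}) := \sup_{t\in[0,T)}\Vert \nabla u(t)\Vert_{L^\infty} < \infty$, i.e.\ $\overline{u} \in \mathcal{C}$.

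It then remains to note the elementary inclusion $\mathcal{C} \subset \mathcal{G}$. Indeed, given $\overline{u} \in \mathcal{C}$, we may take $\overline{v} = \overline{u}$ in the definition \eqref{eq:genericn} of $\mathcal{G}_n$; since $C(\overline{u}) < \infty$ the threshold $\tfrac1n e^{-C(\overline{u})T}$ is strictly positive, so $\Vert \overline{u} - \overline{v}\Vert_{L^2_x} = 0 < \tfrac1n e^{-C(\overline{u})T}$ shows $\overline{u}\in\mathcal{G}_n$ for every $n$, hence $\overline{u}\in\mathcal{G}$. Combining the two previous steps gives $C^{1,\alpha}\cap\{\div = 0\}\subset\mathcal{C}\subset\mathcal{G}$ and therefore $\overline{\mu}(\mathcal{G})=1$, completing the reduction. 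I expect the only genuine difficulty to reside in the second paragraph: invoking (and, if needed, quoting precisely) the 2D H\"older well-posedness theory together with the quantitative bound $C(\overline{u})<\infty$ on $[0,T)$; the remaining steps are soft topological and measure-theoretic bookkeeping, the only subtleties there being the implicit divergence-free requirement on $\overline{\mu}$ and the Borel measurability of $\mathcal{G}$ (which holds as $\mathcal{G}$ is a $G_\delta$ set).
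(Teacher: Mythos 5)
Your proof is correct and takes essentially the same route as the paper's: the paper likewise observes that in $d=2$ every $\overline{u}\in C^{1,\alpha}$ admits a unique global solution with $C(\overline{u})<\infty$, hence $\overline{u}\in\mathcal{C}\subset\mathcal{G}$, so that $\overline{\mu}(\mathcal{G})=1$ and theorem \ref{thm:existuniq} applies. The details you supply --- the H\"older well-posedness input and the elementary inclusion $\mathcal{C}\subset\mathcal{G}$ obtained by taking $\overline{v}=\overline{u}$ in \eqref{eq:genericn} --- are exactly what the paper's terse proof leaves implicit.
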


\begin{proof}
Again, we observe that for any $\overline{u}\in C^{1,\alpha}$, there exists a unique solution $u(t) \in C^{1,\alpha}$. Hence, we have $\overline{u} \in \mathcal{C}$ for all such $\overline{u}$. In particular, it follows that $\overline{\mu}$ is concentrated on $\mathcal{G}$. The claim follows from theorem \ref{thm:existuniq}.
\end{proof}
\section{Numerical approximation of statistical solutions}
\label{sec:spectralvisc}
In this section, we will propose an algorithm for computing statistical solutions of the incompressible Euler equations \eqref{eq:Eulerfull}. As mentioned before, this algorithm is very similar to the one proposed in \cite{FLMW1} for computing statistical solutions of hyperbolic systems of conservation laws, which in turn was inspired by the ensemble averaging algorithms of \cite{FKMT17,LM2015} for computing measure-valued solutions. This algorithm requires a spatio-temporal discretization and a Monte Carlo sampling of the underlying probability space. We propose to use a spectral viscosity spatial discretization which is described below.

\subsection{Spectral hyper-viscosity scheme}
We write ${u}^\Delta(x,t) = \sum_{|{k}|_\infty\le N} \widehat{{u}}^\Delta_{{k}}(t) e^{i{k}\cdot{x}}$, where now and in the following we shall consistently denote $\Delta = 1/N$, and we denote $|{k}|_\infty\defeq \max_{i=1,\dots,d} |k_i|$. We consider the following spectral viscosity approximation \cite{Tadmor1989,Tadmor2004} of the incompressible Euler equations,
\begin{gather} \label{eq:spectralvisc}
\left\{
\begin{aligned}
\partial_t {u}^\Delta
+\P_N({u}^\Delta\cdot \nabla {u}^\Delta) 
+ \nabla p^\Delta 
&=
-\epsilon_N |\nabla|^{2s} (Q_N \ast {u}^\Delta), 
\\
\div({u}^\Delta) 
&= 
0, 
\\
{u}^\Delta|_{t=0} 
&=
\P_N \overline{u}. 
\end{aligned}
\right.
\end{gather}
Here $\P_N$ is the spatial Fourier projection operator, mapping an arbitrary function $f(x,t)$ onto the first $N$ Fourier modes: $\P_N f(x,t) = \sum_{|{k}|_\infty\le N} \widehat{f}_{{k}}(t) e^{i{k}\cdot{x}}$.
$Q_N$ is a Fourier multiplier of the form 
\begin{gather}
Q_N(x) = \sum_{m_N < |{k}| \le N} \widehat{Q}_{{k}} e^{i{k}\cdot{x}}, 
\end{gather}
and we assume $0 \le \widehat{Q}_{{k}} \le 1$. 

The idea behind the SV method is that dissipation is only applied on the upper part of the spectrum, i.e. for $|{k}| > m_N$, thus preserving the formal spectral accuracy of the method, while at the same time enabling us to enforce a sufficient amount of energy dissipation on the small scale Fourier modes which is needed to stabilize the method. The additional hyperviscosity parameter $s\ge 1$ in \eqref{eq:spectralvisc} can be chosen larger to enforce more numerical dissipation on the high Fourier modes, thus allowing a larger part of the Fourier spectrum to remain free of numerical diffusion, while still ensuring stability of the resulting numerical scheme.

Note that $Q_N$ is defined via its Fourier coefficients $\widehat{Q}_k$, which, given an additional parameter $\theta>0$, are assumed to satisfy the following constraints:
\begin{gather} \label{eq:prereq}
\widehat{Q}_k = 0, \quad \text{for }|k|\le m_N, \quad
1-\left(\frac{m_N}{|k|}\right)^{(2s-1)/\theta} 
\le \widehat{Q}_k \le 1.
\end{gather}
In \cite{Tadmor2004}, the parameters $m_N$, $\epsilon_N$, $\theta$ are chosen such that
\begin{align} \label{eq:prereq2}
m_N \sim N^\theta, \quad \epsilon_N \sim \frac{1}{N^{2s-1}}, \quad 0\le \theta < \frac{2s-1}{2s}.
\end{align}

Multiplying the evolution equation \eqref{eq:spectralvisc} by $u^\Delta$ and integrating by parts, we obtain the following energy balance,
\begin{align} \label{eq:Eest}
\Vert u^\Delta(t) \Vert^2_{L^2_x} 
+ 2\epsilon_N \int_0^t \sum_{|k|_\infty \le N} \widehat{Q}_k |k|^{2s} |\widehat{u}^\Delta_k(\tau)|^2 \,\d \tau 
= \Vert u^\Delta(t=0) \Vert^2_{L^2_x} 
\le \Vert \overline{u} \Vert_{L^2_x}.
\end{align}
\subsection{Monte Carlo algorithm}
Following \cite{FLMW1}, the computation of statistical solutions of \eqref{eq:Eulerfull} requires combining the spectral viscosity scheme \eqref{eq:spectralvisc} with the following Monte Carlo sampling, 

\begin{algorithm}[Monte Carlo] \label{alg:MC}
Given $\bar{\mu}\in \mathcal{P}(L^2_x)$, and a grid scale $\Delta = 1/N$, we determine an approximate statistical solution $\mu^\Delta_t$, as follows: For $m=m(N)$, 
\begin{itemize}
\item Generate i.i.d. samples $\bar{u}_1,\dots, \bar{u}_m \sim \bar{\mu}$,
\item Evolve the samples, using the numerical scheme $u_i^\Delta(t) := S^{\Delta}\bar{u}_i$, where $S^\Delta_t$ denotes the solution operator, defined by the scheme \eqref{eq:spectralvisc}.
\item The approximate statistical solution $\mu^\Delta_t$ is given by the so-called \emph{empirical measure},
\begin{equation}
    \label{eq:emeas}
\mu^\Delta_t := \frac{1}{m} \sum_{i=1}^m \delta_{u_i^\Delta(t)}.
\end{equation}
\end{itemize}
\end{algorithm}
We remark that in practice, the samples $\bar{u}_i$ for $1 \leq i \leq m$ are random realizations with respect to a certain underlying probability space.
\begin{remark}
The Monte Carlo algorithm \ref{alg:MC}, when restricted only to the computation of the first correlation marginal $\nu^1$, reduces to the ensemble averaging algorithm proposed in \cite{LM2015} for computing measure-valued solutions of the incompressible Euler equations.
\end{remark}
\subsection{Convergence to Statistical Solutions.} In this section, we will investigate the convergence of the empirical measure $\mu^{\Delta}_t$ \eqref{eq:emeas}, generated by the Monte Carlo algorithm \ref{alg:MC}, to a statistical solution of \eqref{eq:Eulerfull}. To this end, we seek to apply the convergence theorem \ref{thm:timecompact} to these approximations. We start by verifying the temporal regularity of the empirical measures in the following lemma,
\begin{lemma} \label{lem:timeregularity}
There exists $L\in \mathbb{N}$, such that if ${u}^\Delta$ is obtained from the spectral hyper-viscosity method \eqref{eq:spectralvisc}, with $\Delta = 1/N$ and initial data $\overline{u}\in L^2_x$, then 
\[
\partial_t {u}^\Delta + \div( {u}^\Delta\otimes {u}^\Delta) + \nabla p^\Delta = E^\Delta,
\] 
where $\Vert E^\Delta \Vert_{H^{-L}_x} \le C \Delta (1+\Vert {u}^\Delta \Vert_{L^2_x}^2)$. Furthermore, there exists a constant $C'$, such that 
\[
\Vert u^\Delta(t) - u^\Delta(s) \Vert_{H^{-L}_x} \le C' (1+\Vert \overline{u} \Vert_{L^2_x}^2) |t-s|.
\]
\end{lemma}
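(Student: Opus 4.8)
The plan is to first identify the error term $E^\Delta$ explicitly by rewriting the scheme \eqref{eq:spectralvisc} in conservation form, then estimate its two constituent pieces in $H^{-L}_x$ separately; the time-continuity estimate will follow by integrating the equation in time and testing against divergence-free functions. The key is to choose $L$ large enough that the negative Sobolev weight compensates for the two sources of high-frequency growth in $E^\Delta$.

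For the first assertion I would use that $\div(u^\Delta)=0$ implies $u^\Delta\cdot\nabla u^\Delta=\div(u^\Delta\otimes u^\Delta)$, so that adding and subtracting $\div(u^\Delta\otimes u^\Delta)$ in \eqref{eq:spectralvisc} produces the claimed identity with
\[
E^\Delta = (I-\mathcal{P}_N)\div(u^\Delta\otimes u^\Delta) - \epsilon_N|\nabla|^{2s}(Q_N\ast u^\Delta).
\]
The first (aliasing) term is the crux. Since $u^\Delta$ is a trigonometric polynomial of degree $N$, the product $u^\Delta\otimes u^\Delta$ has degree $2N$, and $(I-\mathcal{P}_N)$ localizes it to the frequency shell $N<|k|_\infty\le 2N$. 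On this shell Plancherel gives $\|(I-\mathcal{P}_N)\div(u^\Delta\otimes u^\Delta)\|_{H^{-L}_x}^2\lesssim N^{2-2L}\|u^\Delta\otimes u^\Delta\|_{L^2_x}^2$, and the super-linearity of the nonlinearity is absorbed by the Bernstein (inverse) inequality $\|u^\Delta\|_{L^\infty_x}\lesssim N^{d/2}\|u^\Delta\|_{L^2_x}$, which yields $\|u^\Delta\otimes u^\Delta\|_{L^2_x}=\|u^\Delta\|_{L^4_x}^2\lesssim N^{d/2}\|u^\Delta\|_{L^2_x}^2$. Combining gives $\|(I-\mathcal{P}_N)\div(u^\Delta\otimes u^\Delta)\|_{H^{-L}_x}\lesssim N^{1+d/2-L}\|u^\Delta\|_{L^2_x}^2$, which is $\le C\Delta\|u^\Delta\|_{L^2_x}^2$ once $L\ge 2+d/2$. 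For the viscous term I would use $0\le\widehat{Q}_k\le 1$ and the multiplier bound $|k|^{2s}(1+|k|^2)^{-L/2}\le 1$, valid for $L\ge 2s$, together with $\epsilon_N\lesssim N^{-(2s-1)}\le N^{-1}=\Delta$ (using $s\ge 1$), to get $\|\epsilon_N|\nabla|^{2s}(Q_N\ast u^\Delta)\|_{H^{-L}_x}\lesssim\Delta\|u^\Delta\|_{L^2_x}$. Choosing $L=\max\{2s,\lceil 2+d/2\rceil\}$ and combining the two bounds gives $\|E^\Delta\|_{H^{-L}_x}\le C\Delta(1+\|u^\Delta\|_{L^2_x}^2)$.

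For the second assertion, since \eqref{eq:spectralvisc} reduces to a finite system of ODEs for the Fourier coefficients, $t\mapsto u^\Delta(t)$ is smooth; I would integrate $\partial_t u^\Delta=-\div(u^\Delta\otimes u^\Delta)-\nabla p^\Delta+E^\Delta$ from $s$ to $t$ and test against a divergence-free $\phi$ with $\|\phi\|_{H^L_x}\le 1$, so that the pressure gradient drops out since $\langle\nabla p^\Delta,\phi\rangle=0$. Because $u^\Delta(t)-u^\Delta(s)$ is itself divergence-free, its $H^{-L}_x$ norm is recovered as the supremum over exactly such test functions. The transport term is controlled by duality, $\langle\div(u^\Delta\otimes u^\Delta),\phi\rangle=-\langle u^\Delta\otimes u^\Delta,\nabla\phi\rangle\le\|u^\Delta\|_{L^2_x}^2\|\nabla\phi\|_{L^\infty_x}\lesssim\|u^\Delta\|_{L^2_x}^2$, using the Sobolev embedding $H^L_x\embeds W^{1,\infty}_x$ valid for $L>1+d/2$ (which is implied by the choice above); the contribution of $E^\Delta$ is handled by the bound just proved. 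Finally, the energy inequality \eqref{eq:Eest} supplies the uniform-in-time control $\|u^\Delta(\tau)\|_{L^2_x}\le\|\overline{u}\|_{L^2_x}$, so integrating over $[s,t]$ yields $\|u^\Delta(t)-u^\Delta(s)\|_{H^{-L}_x}\le C'(1+\|\overline{u}\|_{L^2_x}^2)|t-s|$ with $C'$ independent of $\Delta$.

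The main obstacle is the aliasing term $(I-\mathcal{P}_N)\div(u^\Delta\otimes u^\Delta)$: it requires simultaneously exploiting the frequency localization to the shell $|k|\sim N$ (to extract the gain $N^{-L}$ from the negative Sobolev weight) and the inverse estimate controlling the quadratic nonlinearity at the cost of $N^{d/2}$. Balancing these competing powers of $N$ is precisely what dictates how large $L$ must be chosen, and is the only genuinely delicate point; the viscous term, which merely needs the weight to dominate $|k|^{2s}$, and the time integration are comparatively routine.
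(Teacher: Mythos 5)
Your proposal is correct, and the statement, decomposition $E^\Delta = (I-\P_N)\div(u^\Delta\otimes u^\Delta) - \epsilon_N|\nabla|^{2s}(Q_N\ast u^\Delta)$, and treatment of the viscous piece all match the paper; but your handling of the two delicate steps is genuinely different. For the aliasing term, the paper argues by duality: it pairs $E_2^\Delta$ against a smooth test field $\phi$, moves $(I-\P_N)$ onto $\phi$, and extracts the factor $N^{-1}$ from the test function via $\Vert (I-\P_N)\nabla\phi\Vert_{L^\infty_x} \le C\Vert(I-\P_N)\phi\Vert_{H^{\ell+1}_x}\le CN^{-1}\Vert\phi\Vert_{H^{\ell+2}_x}$, bounding the quadratic term merely in $L^1_x$ by $\Vert u^\Delta\Vert_{L^2_x}^2$; you instead exploit band-limitedness of $u^\Delta$, localizing $(I-\P_N)(u^\Delta\otimes u^\Delta)$ to the shell $N<|k|_\infty\le 2N$, where Plancherel and the $H^{-L}$ weight give $N^{1-L}$, at the price of a Bernstein factor $N^{d/2}$ to place $u^\Delta\otimes u^\Delta$ in $L^2_x$. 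The two routes end up with the same threshold (the paper needs $L\ge \ell+2$ with $\ell>d/2$ from the embedding $H^\ell_x\embeds L^\infty_x$; you need $L\ge 2+d/2$ from Bernstein), but the paper's argument is "softer" in that it never uses the spectral localization of the product and would apply to any $L^2_x$ velocity field, whereas yours leans on the specific structure of the Fourier–Galerkin approximation. For the time-regularity estimate, the paper bounds $\Vert\partial_t u^\Delta\Vert_{H^{-L}_x}$ directly, keeping $\nabla p^\Delta$ inside the norm and asserting an "analogous argument" (which implicitly requires controlling the pressure, e.g.\ through the Leray projection); your variant—testing only against divergence-free $\phi$ so the pressure pairs to zero, and invoking the fact that for the divergence-free field $u^\Delta(t)-u^\Delta(s)$ the $H^{-L}_x$ norm is attained over divergence-free test functions (boundedness of the Leray projector on $H^L_x$)—sidesteps the pressure estimate entirely and is, if anything, the more carefully justified of the two.
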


\begin{proof}
From the definition of the spectral hyperviscosity scheme \eqref{eq:spectralvisc}, we obtain,
\[
\partial_t u^\Delta + \div(u^\Delta \otimes u^\Delta) + \nabla p^\Delta = E^\Delta,
\]
with $E^\Delta = E^\Delta_1 + E^\Delta_2$, where
\[
E^\Delta_1 = \epsilon_N |\nabla|^{2s} u^\Delta, \qquad
E^\Delta_2 = (I-\P_N) \div(u^\Delta \otimes u^\Delta).
\]
Clearly, the first error term $E_1^\Delta$ can be estimated by
\begin{align*}
\Vert E_1^\Delta \Vert_{H^{-2s}_x}
&= \epsilon_N \Vert |\nabla|^{2s} u^\Delta \Vert_{H^{-2s}_x}
\\
&\le \epsilon_N \Vert u^\Delta \Vert_{L^2_x}
\\
&\le \frac{1}{N} (1 + \Vert u^\Delta \Vert_{L^2_x}^2),
\end{align*}
where we have used that $\epsilon_N \sim N^{-2s+1} \le N^{-1}$ in the last step (assuming $s\ge 1$).
On the other hand, to estimate the second term, let $\phi \in C^\infty(D;U)$ be a given vector field. Then
\begin{align*}
\int_D \phi \cdot E^\Delta_2 \, dx
&=
\int_D \phi \cdot (I-\P_N) \div(u^\Delta \otimes u^\Delta) \, dx
\\
&=
-\int_D (I-\P_N)\nabla\phi :(u^\Delta \otimes u^\Delta) \, dx
\\
&\le 
\Vert (I-\P_N) \nabla \phi \Vert_{L^\infty_x} \Vert u^\Delta \Vert_{L^2_x}^2.
\end{align*}
Choosing $\ell>0$ sufficiently large, we have by the Sobolev embedding theorem $H^\ell_x \embeds L^\infty_x$. Hence, we can further estimate for some absolute constant $C>0$:
\begin{align*}
\Vert (I-\P_N) \nabla \phi \Vert_{L^\infty_x}
&\le 
C \Vert (I-\P_N) \nabla \phi \Vert_{H^\ell_x}
\\
&\le 
C \Vert (I-\P_N) \phi \Vert_{H^{\ell+1}_x}
\\
&\le 
C N^{-1} \Vert \phi \Vert_{H^{\ell+2}_x}.
\end{align*}
Thus, we have shown that 
\[
\int_D \phi \cdot E^\Delta_2 \, dx
\le 
\Vert \phi \Vert_{H^{\ell+2}_x} CN^{-1} \Vert u^\Delta \Vert_{L^2_x}^2,
\]
for all $\phi \in C^\infty(D;U)$, which implies by duality that $\Vert E^\Delta_2 \Vert_{H^{-(\ell+2)}_x} \le CN^{-1} \Vert u^\Delta \Vert_{L^2_x}^2$.

Let now $L := \max(2s,\ell+2)$. Then, combining the above two estimates, and noting that $H^{-(\ell+s)}_x, H^{-2s}_x \embeds H^{-L}_x$, we now conclude that there exists an absolute constant $C>0$, such that 
\[
\Vert E^\Delta \Vert_{H^{-L}_x} \le CN^{-1} (1+\Vert u^\Delta \Vert_{L^2_x}^2).
\]
Since $\Delta = 1/N$, this is the claimed estimate for $E^\Delta$.

For the time-regularity estimate, we write 
\[
\partial_t u^\Delta = -\P_N \div(u^\Delta \otimes u^\Delta) - \nabla p^\Delta + E_1^\Delta.
\]
By an argument analogous to the above, we then find that 
\[
\Vert \partial_t u^\Delta \Vert_{H^{-L}_x} 
= 
\Vert \P_N \div(u^\Delta \otimes u^\Delta) + \nabla p^\Delta + E_1^\Delta\Vert_{H^{-L}_x} 
\le C' (1+\Vert u^\Delta \Vert_{L^2_x}^2).
\]
Recalling that also $\Vert u^\Delta \Vert_{L^2_x} \le \Vert \overline{u}\Vert_{L^2_x}$, it follows that
\begin{align*}
\Vert u^\Delta(t) - u^\Delta(s) \Vert_{H^{-L}_x}
&= \left\Vert \int_s^t \partial_t u^\Delta(\tau) \, d\tau \right\Vert_{H^{-L}_x}
\\
&\le
\int_s^t\left\Vert \partial_t u^\Delta(\tau) \right\Vert_{H^{-L}_x} \, d\tau 
\\
&\le C' (1+\Vert \overline{u} \Vert_{L^2_x}^2) |t-s|.
\end{align*}
This concludes our proof.
\end{proof}

From lemma \ref{lem:timeregularity}, it is now easy to see that if $\mu_t^\Delta$ is generated by the Monte-Carlo algorithm \ref{alg:MC}, i.e.
\[
\mu^\Delta_t = \frac 1M \sum_{i=1}^M \delta_{u_i^\Delta(t)},
\]
with $u_i^\Delta(t)$ computed by the spectral hyper-viscosity scheme \eqref{eq:spectralvisc}, then the transport plan defined by
\[
\pi^\Delta_{s,t} := \frac 1M \sum_{i=1}^M \delta_{u_i^\Delta(s)} \otimes \delta_{u_i^\Delta(t)},
\]
satisfies the properties required by the definition of time-regularity, definition \ref{def:timereg}. This provides the required temporal regularity required by theorem \ref{thm:compactconv}.

Next, we turn our attention to the spatial regularity bounds of theorem \ref{thm:timecompact}. In particular, we need to obtain uniform estimates on the structure function \eqref{eq:tasf}. We start with the following simple observation,
\begin{lemma} \label{lem:simpleest}
For any $r\ge 0$, we have
\[
\fint_{B_r(0)} \left| e^{ik\cdot h} - 1 \right|^2 \, dh
\le C \min(|k|^2 r^2, 1) 
\le C |k|^2 r^2,
\]
where $C=4$.
\end{lemma}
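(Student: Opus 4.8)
The plan is to bound the integrand $|e^{ik\cdot h}-1|^2$ pointwise by two elementary constants that do not depend on $h$, and then average over $B_r(0)$. The two assertions in the statement will then follow immediately, the second one ($\min(|k|^2 r^2,1)\le |k|^2 r^2$) being trivial.

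First I would record two pointwise estimates, each valid for every $h$. The triangle inequality gives the crude bound
\[
|e^{ik\cdot h}-1| \le |e^{ik\cdot h}| + 1 = 2,
\qquad\text{hence}\qquad
|e^{ik\cdot h}-1|^2 \le 4.
\]
For the complementary estimate I would use the elementary inequality $|e^{i\theta}-1|\le |\theta|$ (which follows either from $e^{i\theta}-1=\int_0^\theta i e^{is}\,ds$ or from $|e^{i\theta}-1|=2|\sin(\theta/2)|$) together with the Cauchy--Schwarz bound $|k\cdot h|\le |k|\,|h|$. Since $|h|<r$ on $B_r(0)$, this yields
\[
|e^{ik\cdot h}-1| \le |k\cdot h| \le |k|\,|h| \le |k|\,r,
\qquad\text{hence}\qquad
|e^{ik\cdot h}-1|^2 \le |k|^2 r^2.
\]

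To conclude, I would average each pointwise bound over $B_r(0)$. Because the right-hand sides $4$ and $|k|^2 r^2$ are constant in $h$, the normalized integral $\fint_{B_r(0)}$ simply reproduces them, so that
\[
\fint_{B_r(0)} |e^{ik\cdot h}-1|^2\,dh \le \min\bigl(4,\,|k|^2 r^2\bigr) \le 4\min\bigl(|k|^2 r^2,\,1\bigr),
\]
where the last step is checked by distinguishing the two cases $|k|^2 r^2\le 1$ and $|k|^2 r^2>1$. This is exactly the claimed inequality with $C=4$. There is no genuine obstacle here: the argument is entirely elementary, and the only point requiring any care is selecting the two sharp pointwise bounds whose minimum produces the stated constant. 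In particular, the factor $C=4$ originates solely from the crude large-$|k|r$ estimate $|e^{ik\cdot h}-1|^2\le 4$, while the small-$|k|r$ regime is governed by the sharp bound $|k|^2 r^2$.
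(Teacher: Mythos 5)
Your proposal is correct and follows essentially the same route as the paper: the paper's proof also combines the trivial bound $|e^{ik\cdot h}-1|\le 2$ with the Lipschitz-type bound $|e^{ik\cdot h}-1|\le |k|\,|h|\le |k|r$ (obtained there by differentiating $f(\tau)=|e^{i\tau k\cdot h}-1|$, which is just your inequality $|e^{i\theta}-1|\le|\theta|$ in disguise), and then averages over $B_r(0)$ to get $4\min(|k|^2r^2,1)$. No gaps; your case check for $\min(4,|k|^2r^2)\le 4\min(|k|^2r^2,1)$ is the only bookkeeping step and it is handled correctly.
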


\begin{proof}
Fix $k,h \in \mathbb{R}^d$. Let $f(\tau) := |e^{i\tau k\cdot h} - 1|$. Then
\[
f(0) = 0, \; |f'(\tau)| \le  |k||h|.
\]
This implies that for $|h|\le r$, and $\tau\in [0,1]$:
\[
|f(\tau)| \le \int_0^t |k||h| \, ds \le |k| r.
\]
Furthermore, the upper bound $|f(t)|\le 2$ is obvious, so that 
\[
|e^{ik\cdot h} - 1|^2 \le 4\min(|k|^2 r^2,1).
\]
Averaging over $h\in B_r(0)$, it follows that 
\[
\fint_{B_r(0)} 
|e^{ik\cdot h} - 1|^2 \, dh
 \le 4\min(|k|^2 r^2,1).
\]
\end{proof}
The next result is an estimate on the structure function \eqref{eq:tasf} at the grid scale $\Delta$.

\begin{lemma} \label{lem:weakBV}
If $\mu^{\Delta}_t$ is an approximate statistical solution obtained from the spectral hyper-viscosity method with $\Delta = 1/N$, and initial data $\bar{\mu}$ for which there exists $M>0$ such that $\bar{\mu}(B_M(0)) = 1$ where $B_M(0) = \{ \Vert u \Vert_{L^2_x} < M\}$, then 
\[
S^2_{\Delta}(\mu^\Delta_t,T) 
\le C M \Delta^{1/(2s)},
\]
for some absolute constant $C>0$. The same estimate is also true for $r\le \Delta$, i.e. we have
\[
S^2_{r}(\mu^\Delta_t,T) \le C M r^{1/(2s)}, \quad \text{ for all } r\le \Delta.
\]
\end{lemma}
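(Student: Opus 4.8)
The plan is to reduce the claim to a Fourier-side estimate for a single trajectory and then to balance the energy bound against the spectral viscosity dissipation at the correct frequency cutoff.

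First, since $\mu^\Delta_t$ is the empirical measure \eqref{eq:emeas}, we have
\[
S^2_r(\mu^\Delta_t,T)^2 = \frac{1}{m}\sum_{i=1}^m \int_0^T\int_D\fint_{B_r(0)}|u_i^\Delta(x+h,t)-u_i^\Delta(x,t)|^2\,dh\,dx\,dt,
\]
so it suffices to bound the $t$-integrated structure function of each trajectory $u^\Delta = u_i^\Delta$ by $CM^2 r^{1/s}$, using only that $\Vert\overline{u}\Vert_{L^2_x}<M$, which holds $\bar{\mu}$-almost surely. Writing $u^\Delta(x,t) = \sum_{|k|_\infty\le N}\widehat{u}^\Delta_k(t)e^{ik\cdot x}$, Parseval's identity gives $\int_D|u^\Delta(x+h,t)-u^\Delta(x,t)|^2\,dx = (2\pi)^d\sum_{|k|_\infty\le N}|\widehat{u}^\Delta_k(t)|^2|e^{ik\cdot h}-1|^2$. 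Averaging over $h\in B_r(0)$ and invoking Lemma \ref{lem:simpleest} in its form $\fint_{B_r(0)}|e^{ik\cdot h}-1|^2\,dh\le C|k|^2r^2$, I obtain, with $J \defeq \int_0^T\sum_{|k|_\infty\le N}|k|^2|\widehat{u}^\Delta_k(t)|^2\,dt$,
\[
\int_0^T\int_D\fint_{B_r(0)}|u^\Delta(x+h,t)-u^\Delta(x,t)|^2\,dh\,dx\,dt \le C r^2 J.
\]

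The heart of the argument is bounding $J$ by splitting the Fourier sum at the dissipative cutoff $K^\ast \defeq \epsilon_N^{-1/(2s)}$. For the low modes I use the energy bound from \eqref{eq:Eest}, namely $\sum_{|k|_\infty\le N}|\widehat{u}^\Delta_k(t)|^2\le CM^2$ for a.e. $t$, to get $\int_0^T\sum_{|k|<K^\ast}|k|^2|\widehat{u}^\Delta_k|^2\,dt\le C(K^\ast)^2 M^2 T$. For the high modes I write $|k|^2 = |k|^{2-2s}|k|^{2s}\le (K^\ast)^{2-2s}|k|^{2s}$, valid since $2-2s\le0$ and $|k|\ge K^\ast$, and use the dissipation bound $\epsilon_N\int_0^T\sum_{|k|_\infty\le N}\widehat{Q}_k|k|^{2s}|\widehat{u}^\Delta_k|^2\,dt\le CM^2$, again from \eqref{eq:Eest}. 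Here the key point is that $K^\ast$ lies above the spectral viscosity threshold: since $\epsilon_N\sim N^{-(2s-1)}$ one has $K^\ast\sim N^{(2s-1)/(2s)}\gg N^\theta\sim m_N$ by the constraint $\theta<(2s-1)/(2s)$ in \eqref{eq:prereq2}, so the lower bound in \eqref{eq:prereq} forces $\widehat{Q}_k\ge1/2$ for $|k|\ge K^\ast$ (for $N$ large; the finitely many remaining $N$ are absorbed into the constant). Consequently $\int_0^T\sum_{|k|\ge K^\ast}|k|^2|\widehat{u}^\Delta_k|^2\,dt\le 2(K^\ast)^{2-2s}\cdot CM^2/\epsilon_N$, and since $(K^\ast)^{2-2s} = (K^\ast)^2\epsilon_N$ by the definition of $K^\ast$, this high-mode contribution is also $\le CM^2(K^\ast)^2$. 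Adding the two pieces yields $J\le CM^2(K^\ast)^2 = CM^2\epsilon_N^{-1/s}$.

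Combining the two displays gives $\int_0^T\int_D\fint_{B_r(0)}|u^\Delta(x+h,t)-u^\Delta(x,t)|^2\,dh\,dx\,dt\le CM^2 r^2\epsilon_N^{-1/s}$, uniformly in $r$. Finally I use $\epsilon_N\sim N^{-(2s-1)} = \Delta^{2s-1}$ to write $\epsilon_N^{-1/s} = \Delta^{-(2-1/s)}$; since $2-1/s>0$ and $r\le\Delta$, I have $r^2\epsilon_N^{-1/s}\le r^2\Delta^{-(2-1/s)}\le r^2 r^{-(2-1/s)} = r^{1/s}$. This gives the bound $CM^2 r^{1/s}$ for each trajectory, hence $S^2_r(\mu^\Delta_t,T)\le CMr^{1/(2s)}$ for all $r\le\Delta$, and in particular $S^2_\Delta(\mu^\Delta_t,T)\le CM\Delta^{1/(2s)}$. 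I expect the main obstacle to be the choice of cutoff $K^\ast$: splitting at the naive threshold $m_N$, or estimating $\min(|k|^2r^2,1)\le1$ on the tail, is too lossy and produces only the suboptimal exponent $1/(2(s+1))$. It is precisely the balance $(K^\ast)^{2s}=\epsilon_N^{-1}$ between the energy and dissipation contributions, together with the compatibility $K^\ast\gtrsim m_N$ guaranteed by $\theta<(2s-1)/(2s)$, that yields the sharp rate $r^{1/(2s)}$.
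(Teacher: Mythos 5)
Your proof is correct and follows essentially the same route as the paper's: Plancherel plus Lemma \ref{lem:simpleest}, a low/high frequency split, the energy bound \eqref{eq:Eest} on the low modes and the spectral dissipation with $\widehat{Q}_k$ bounded below on the high modes. Your cutoff $K^\ast = \epsilon_N^{-1/(2s)} \sim N^{(2s-1)/(2s)}$ coincides with the paper's optimized choice $2N^\alpha$, $\alpha = (2s-1)/(2s)$, so the difference is only that you pick the balance point directly rather than optimizing over $\alpha$ a posteriori.
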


\begin{proof}
  Our goal is to estimate the structure function $S_r^2(\mu_t^\Delta,T)$ based on the energy estimate \eqref{eq:Eest}. By construction, we have
  \[
\mu_t^\Delta = \frac{1}{m} \sum_{i=1}^m \delta_{u^\Delta_i(t)},
  \]
  and $u^\Delta_i$ is obtained by the spectral hyper-viscosity method \eqref{eq:spectralvisc} with initial data $\overline{u}_i$. By Plancherel's theorem, we obtain
\begin{align*}
  \int \fint_{B_r(0)} |u^\Delta_i(x+h)-u^\Delta_i(x)|^2 \, dh \, dx
  &= \fint_{B_r(0)} \sum_{|k|_\infty \le N} \left|e^{ik\cdot h} - 1 \right|^2 |\widehat{u}_k^{\Delta}|^2 \d h
  \\
  &\le C \sum_{|k|_\infty \le N} |k|^2r^2  |\widehat{u}_{i,k}^{\Delta}|^2,
\end{align*}
where the last estimate is shown in lemma \ref{lem:simpleest} below. Let us now fix some $\alpha$ with $\theta \le \alpha < 1$. We split the summation over modes $|k|\le 2N^\alpha$ and $|k|>2N^\alpha$:
\begin{align}
\int \fint_{B_r(0)} |u^\Delta_i(x+h)-u^\Delta_i(x)|^2 \,\d h\, \d x
&\le r^2 \left( \sum_{|k|\le 2N^\alpha}  + \sum_{|k|> 2N^\alpha} \right) |k|^2 |\widehat{u}_{i,k}^{\Delta}|^2 .
\end{align}
From the $L^2$-bound $\Vert u_i^\Delta(t) \Vert_{L^2_x}\le \Vert \overline{u}_i \Vert_{L^2_x}$, we trivially estimate
\begin{align*}
\sum_{|k|\le 2N^\alpha} r^2|k|^2 |\widehat{u}_{i,k}^\Delta|^2
\le 4r^2 N^{2\alpha} \Vert \overline{u}_i \Vert_{L^2_x}.
\end{align*}
Recall that for all $|k|\ge 2N^\alpha \ge 2m_N$ we have
\[
\widehat{Q}_k 
\ge 1-\left(\frac{m_N}{|k|}\right)^{(2s-1)/\theta}
\ge 1-2^{-(2s-1)/\theta}
\ge 1-2^{-2s} = C,
\]
as a consequence of \eqref{eq:prereq}, \eqref{eq:prereq2}. We thus find
\begin{align*}
\int_0^T \sum_{|k|> 2N^\alpha} r^2|k|^2 |\widehat{u}_{i,k}^\Delta|^2 \, \d t
&\le C r^2 \int_0^T \sum_{|k|>2N^\alpha} \widehat{Q}_k |k|^2 |\widehat{u}_{i,k}^\Delta|^2 \, \d t \\
&\le C r^2 N^{-2\alpha(s-1)} \int_0^T \sum_{|k|>2N^\alpha} \widehat{Q}_k |k|^{2s} |\widehat{u}_{i,k}^\Delta|^2 \, \d t \\
&\le Cr^2 \frac{N^{-2\alpha (s-1)}\Vert \overline{u}_i \Vert_{L^2_x}}{\epsilon_N} \\
&= Cr^2 N^{2\alpha}N^{-2\alpha s+2s-1}\Vert \overline{u}_i  \Vert_{L^2_x}.
\end{align*}
where we have used the prescribed scaling $\epsilon_N \sim N^{-2s+1}$ in the last step.

Combining both estimates, and noting that for all $i$, $\Vert u_i \Vert_{L^2} \le M$ by assumption, we conclude that
\begin{equation} \label{eq:weakBV}
\int_0^T \int \fint_{B_r(0)} 
|u_i^{\Delta}(x+h)-u_i^{\Delta}(x)|^2 
\, dh \, dx \, d t
\le C M r^2 N^{2\alpha} \left(1 + N^{-2\alpha s + 2s -1 }\right),
\end{equation}
with an implied constant that depends on $s,T$, but is independent of the initial data, and $N$. If we now choose $r = \Delta = 1/N$, so that $\Delta $ is a length at the grid-scale, then we find
\[
\int_0^T \int \fint_{B_{\Delta}(0)} 
|u_i^{\Delta}(x+h)-u_i^{\Delta}(x)|^2 
\, dh \, dx \, d t \le C M N^{2\alpha-2}(1+N^{-2\alpha s+2s-1}).
\]
We finally would like to determine a constant $\alpha$, which minimizes this last expression. This is achieved when the term in brackets is of order $1$, i.e. with  $\alpha = \frac{2s-1}{2s}$. With this choice of $\alpha$, we have $2\alpha -2 = \frac{-1}{s}$, and 
\[
\int_0^T \int \fint_{B_{\Delta}(0)} 
|u_i^{\Delta}(x+h)-u_i^{\Delta}(x)|^2 
\, dh \, dx \, d t
\le C M N^{2\alpha -2} = C M N^{\frac{-1}{s}} = C M \Delta^{1/s}.
\]
Let us finally sum over $i=1,\dots, m$. Thus, if $\mu^\Delta_t$ is an approximate statistical solution obtained from the spectral vanishing hyperviscosity method of order $s$, and initial data $\overline{\mu}$, with $\Delta = N^{-1}$, then it satisfies the following bound on the structure function \emph{at the grid scale}:
\begin{equation} \label{eq:weakBVdx}
S_{\Delta}^2(\mu^{\Delta}_t,T)
\le C M \Delta^{1/(2s)}.
\end{equation}
Inspection of \eqref{eq:weakBV} shows that also
\begin{equation*}
S_{r}^2(\mu^{\Delta}_t,T)
\le C M r^{1/(2s)},
\end{equation*}
if $r\le \Delta$.
\end{proof}
As in \cite{FLMW1} section 4.2, we have uniform estimates on the structure function at (or below) the grid scale. Large scale features are in any case independent of the resolution $\Delta$. However, we lack any information on the \emph{intermediate scales}, in between the two. To close this information gap, we follow \cite{FLMW1} and make an assumption on scaling of the structure function \eqref{eq:tasf} at intermediate scales. The resulting theorem is,
\begin{theorem} \label{thm:inertialrangeconv}
Consider the incompressible Euler equations with initial data $\bar{\mu}\in \mathcal{P}(L^2_x)$, such that $\supp(\bar{\mu}) \subset B_M$, with $B_M$ the ball of radius $M$ in $L^2_x$, for some $M>0$. Define the approximate statistical solution $\mu^\Delta_t$ by the Monte-Carlo algorithm \ref{alg:MC}. If the approximate statistical solutions $\mu^\Delta_t$ satisfy:
\begin{itemize}
\item Approximate scaling: For every $\ell > 1$, there exists a constant $0<\lambda_\ell \le 1/(2s)$, fixed $C>0$ possibly depending on the initial data, but independent of $\ell$ and the grid size $N$, such that
\[
S^2_{\ell\Delta}(\mu_t^\Delta, T)
\le 
C \ell^{\lambda_\ell} S^2_{\Delta}(\mu_t^\Delta,T), \quad (T>0).
\]
\end{itemize}
Then the approximate statistical solutions $\mu^\Delta_t$ converge (up to a subsequence still denoted by $\Delta$), as $\Delta \to 0$, to some $\mu_t\in L^1_t(\P)$.
\end{theorem}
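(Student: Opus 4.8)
The plan is to verify the three hypotheses of the compactness Theorem \ref{thm:timecompact} for the family $\mu_t^\Delta$ and then simply invoke it; two of the three are already in hand. First, the uniform $L^2$-support bound $\mu_t^\Delta(B_M(0)) = 1$ follows directly from the energy estimate \eqref{eq:Eest}: each sample satisfies $\Vert u_i^\Delta(t) \Vert_{L^2_x} \le \Vert \overline{u}_i \Vert_{L^2_x} \le M$ since $\supp(\overline{\mu}) \subset B_M$, so the empirical measure \eqref{eq:emeas} is concentrated on $\overline{B_M(0)}$ for all $t$ and all $\Delta$. Second, uniform time-regularity is provided by Lemma \ref{lem:timeregularity}: the estimate $\Vert u_i^\Delta(t) - u_i^\Delta(s)\Vert_{H^{-L}_x} \le C'(1+M^2)|t-s|$ holds with $C',L$ independent of $\Delta$, and the transport plans $\pi^\Delta_{s,t} = \frac1m \sum_i \delta_{u_i^\Delta(s)} \otimes \delta_{u_i^\Delta(t)}$ then satisfy the requirements of Definition \ref{def:timereg} uniformly in $\Delta$.

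The remaining and essential task is to produce a single modulus of continuity $\omega(r)$, independent of $\Delta$, bounding the time-averaged structure function $S^2_r(\mu_t^\Delta,T)$ at every scale $r>0$; this is where the approximate scaling assumption enters. For scales at or below the grid, $r\le\Delta$, Lemma \ref{lem:weakBV} already gives $S^2_r(\mu_t^\Delta,T) \le CMr^{1/(2s)}$. For coarser scales $r>\Delta$ I would write $r=\ell\Delta$ with $\ell = r/\Delta > 1$ and chain the approximate scaling hypothesis with the grid-scale bound \eqref{eq:weakBVdx}:
\[
S^2_{\ell\Delta}(\mu_t^\Delta,T) \le C\ell^{\lambda_\ell}\, S^2_\Delta(\mu_t^\Delta,T) \le C\ell^{\lambda_\ell}\cdot CM\Delta^{1/(2s)}.
\]
The crucial quantitative point is that the scaling exponent satisfies $\lambda_\ell \le 1/(2s)$, exactly the H\"older exponent appearing in the grid-scale estimate. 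Since $\ell>1$, this forces $\ell^{\lambda_\ell} \le \ell^{1/(2s)}$, so that $\ell^{\lambda_\ell}\Delta^{1/(2s)} \le (\ell\Delta)^{1/(2s)} = r^{1/(2s)}$ and the $\Delta$-dependence collapses, leaving $S^2_r(\mu_t^\Delta,T) \le C'Mr^{1/(2s)}$ uniformly in $\Delta$. Combining the two regimes, the function $\omega(r) := C'Mr^{1/(2s)}$ is a genuine modulus of continuity, since $\lim_{r\to0}\omega(r)=0$, and it bounds $S^2_r(\mu_t^\Delta,T)$ for all $r>0$ and all $\Delta>0$ (for very large $r$ one may in addition use the trivial bound $S^2_r \le 2M$ coming from the $L^2$-support).

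With the uniform $L^2$-bound, uniform time-regularity, and uniform structure-function modulus $\omega$ all verified, Theorem \ref{thm:timecompact} applies verbatim and yields relative compactness of $\{\mu_t^\Delta\}$ in $(L^1_t(\P),d_T)$; extracting a convergent subsequence $\mu_t^{\Delta_j} \to \mu_t \in L^1_t(\P)$ completes the argument.

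I expect the main obstacle to be conceptual rather than computational. Lemma \ref{lem:weakBV} controls \emph{only} the sub-grid scales $r\le\Delta$, while the energy estimate gives no handle whatsoever on the genuinely intermediate scales $\Delta < r \ll 1$, which is precisely where turbulent fluctuations can concentrate. The approximate scaling hypothesis is an \emph{a priori unverified} assumption (to be checked numerically) that bridges exactly this gap, and the delicacy lies entirely in the matching of exponents $\lambda_\ell \le 1/(2s)$: without this sharp bound the power of $\Delta$ would fail to cancel and no $\Delta$-uniform modulus could be extracted, so the whole compactness argument hinges on that single inequality.
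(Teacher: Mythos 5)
Your proposal is correct and follows essentially the same route as the paper: the core step in both is to combine the grid-scale bound of Lemma \ref{lem:weakBV} with the approximate scaling hypothesis by writing $r = \ell\Delta$ and using $\lambda_\ell \le 1/(2s)$, $\ell > 1$ to collapse $\ell^{\lambda_\ell}\Delta^{1/(2s)} \le r^{1/(2s)}$, yielding a $\Delta$-uniform modulus $\omega(r) \sim r^{1/(2s)}$ and hence compactness. The only cosmetic difference is that you invoke Theorem \ref{thm:timecompact} directly and verify its support and time-regularity hypotheses explicitly, while the paper cites Theorem \ref{thm:compactconv} and leaves those (previously established) hypotheses implicit.
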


\begin{proof}
By lemma \ref{lem:weakBV}, there exists a constant $C>0$, such that
\begin{equation}
    \label{eq:scal}
S^2_r(\mu_t^\Delta,T) \le \bar C r^{1/(2s)},
\end{equation}
for all $r\le \Delta$. If $r > \Delta$, then by the assumed approximate scaling property, we write $r = \ell \Delta$, with $\ell>1$, and obtain
\begin{align*}
S^2_r(\mu_t^\Delta,T)
&= S^2_{\ell \Delta} (\mu_t^\Delta,T)
\\
&\le C \ell^{\lambda_{\ell}} S^2_{\Delta} (\mu_t^\Delta,T)
\\
&\le C\bar C  \ell^{1/2s} \Delta^{1/(2s)} 
\\
&= C\bar C  r^{1/(2s)},
\end{align*}
for some constant $C\bar C>0$. The convergence now follows from theorem \ref{thm:compactconv}.
\end{proof}
\begin{remark}
The scaling assumption \eqref{eq:scal} can be interpreted as a weaker version of the scaling assumptions of Kolmogorov (see hypothesis H2, equation 6.3, page 75 of \cite{Frisch}) that was instrumental in the K41 theory for homogeneous, isotropic turbulence. We also note that the inequalities in \eqref{eq:scal} can accommodate intermittency in the form of deviations for the standard Kolmogorov determination of the exponent $1/3$ for the structure function \eqref{eq:tasf}.
\end{remark}
\subsection{Decay of energy spectrum.}
In this section, we will provide an alternative criterion to ensure convergence of probability measures with respect to the metric \eqref{eq:dtdef}. 

This criterion is motivated from well-known experimental and theoretical concepts in the study of turbulent flows and is based on the 
energy spectrum $E({u};K)$ ($K \in \mathbb{N}_0$) associated to a vector field ${u}$, defined as
\[
E({u};K) = \frac 12 \sum_{K-1< |k|\le K} |\widehat{u}(k)|^2.
\]
Note that the kinetic energy is obtained as a sum 
\[
\frac 12 
\int_D |{u}|^2 \, dx
= 
\sum_{K=1}^\infty E({u};K).
\]
Given a probability measure $\mu \in \P(L^2_x)$, let us similarly define:
\[
E(\mu;K) =  \int_{L^2_x} E(u,K) \, d\mu(u),
\]
so that $E(\delta_{u};K) = E(u;K)$, for $u\in L^2_x$. Finally, we denote by $E_T(\mu_t;K)$ the time-integrated energy spectrum
\[
E_T(\mu_t;K) = \int_0^T E(\mu_t;K) \, dt.
\]

It is an experimentally observed fact \cite{Frisch} that the typical energy spectrum of turbulent flows with a sufficiently strong dissipation mechanism at small scales, typically takes a shape similar to the one shown in Figure \ref{fig:typicalspectrum}: Visible are three parts of the energy spectrum. The left-most part (small $K$) corresponds to large-scale features for the flow, the middle part (intermediate $K$) is referred to as the inertial range, while the right-most part (large $K$) may be referred to as the dissipation range. The appearance of these three parts is heuristically explained as follows. Starting from initial data (with a sufficiently fast decay of the energy spectrum) initially fixes the large-scale features of the flow. Due to the non-linear nature of the evolution equation, these large-scale features decay to smaller scales, corresponding to energy cascading from small values of $K$ to larger values of $K$. While a satisfactory mathematical treatment of the precise nature of this energy cascade remains an outstanding challenge, there is evidence by physical reasoning and as well as from numerical and real-world experiments that typically the energy spectrum resulting from this cascade process satisfies at least an upper bound of the form $E(K) \lesssim K^{-\gamma}$, for some fixed $\gamma$ that is associated with the non-linearity. In the presence of a dissipative mechanism acting on small scale features of the flow, this ``free'' energy cascade to larger values of $K$ due to the non-linearity is finally interrupted by the dissipation. Thus, energy is dissipated at dissipative scales. 

From this heuristic point of view, we would expect the large-scale features to depend mostly on the initial data, while the decay of the energy spectrum at the largest values of $K$ can be controlled in a numerical approximation scheme by a suitable choice of the numerical dissipation. On the other hand, there is no a priori information on the decay of the spectrum in the intermediate, \emph{inertial} range. Hence, we make the following, rather natural, assumption,
\begin{assumption} \label{ass:scaling}
There exist $\beta>0$ and constant $C > 0$ such that the computed energy spectra with algorithm \ref{alg:MC} scale as,
\begin{equation}
\label{eq:esir}
E_T(\mu_t^\Delta,K)
\le C K^{-2\beta}, \qquad \forall \, \Delta > 0.
\end{equation}
\xqed{$\diamond$}
\end{assumption}
Under this assumption on the energy spectrum, we have the following convergence theorem,
\begin{theorem} \label{prop:inertialrange}
If $\mu_t^\Delta$ is obtained by the spectral viscosity method through algorithm \ref{alg:MC}, and if the energy spectra $E_T(\mu^\Delta_t;K)$ satisfy the inertial range assumption \ref{ass:scaling} with $\beta > 1/2$, then there exists a subsequence (not relabeled) $\Delta \to 0$ and a time-parametrized probability measure $\mu_t$, such that $\mu_t^\Delta \to \mu_t$ in $L^1_t(\P)$.
\end{theorem}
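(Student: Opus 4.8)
The plan is to deduce the theorem from the compactness result of Theorem~\ref{thm:compactconv}, whose hypotheses are uniform time-regularity, a uniform $L^2_x$-bound, and a uniform modulus of continuity for the time-averaged structure function $S^2_r(\mu^\Delta_t,T)$. The first two are already available: uniform time-regularity of the empirical measures $\mu^\Delta_t$ follows from Lemma~\ref{lem:timeregularity} together with the transport plan $\pi^\Delta_{s,t}$ constructed immediately after it, whose constants $C,L>0$ depend only on the uniform bound $\Vert \bar u\Vert_{L^2_x}\le M$; and the uniform $L^2_x$-bound (i.e.\ $\mu^\Delta_t$ supported in the ball of radius $M$) is an immediate consequence of the energy estimate \eqref{eq:Eest}, since each sample $\bar u_i\in\supp(\bar\mu)\subset B_M$ satisfies $\Vert u^\Delta_i(t)\Vert_{L^2_x}\le \Vert \P_N\bar u_i\Vert_{L^2_x}\le M$. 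Thus the entire content of the proof is to convert the energy-spectrum decay of Assumption~\ref{ass:scaling} into such a modulus of continuity.

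To this end I would first rewrite the structure function in Fourier variables exactly as in the proof of Lemma~\ref{lem:weakBV}. For a fixed field $u\in L^2_x$, Plancherel's theorem gives
\[
\int_D \fint_{B_r(0)} |u(x+h)-u(x)|^2 \, dh\, dx
=
\sum_{k} \left(\fint_{B_r(0)} |e^{ik\cdot h}-1|^2 \, dh\right) |\widehat u(k)|^2,
\]
and by Lemma~\ref{lem:simpleest} the bracketed factor is bounded by $C\min(|k|^2 r^2,1)$. Grouping the Fourier modes into the spherical shells $K-1<|k|\le K$, recalling that $\sum_{K-1<|k|\le K}|\widehat u(k)|^2 = 2E(u;K)$, and using $\min(|k|^2r^2,1)\le \min(K^2r^2,1)$ on each shell, I would then integrate against $\mu^\Delta_t$ and over $t\in[0,T)$ to arrive at
\[
S^2_r(\mu^\Delta_t,T)^2
\le
C\sum_{K=1}^\infty \min(K^2 r^2,1)\, E_T(\mu^\Delta_t;K).
\]

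Next I would insert the inertial-range bound $E_T(\mu^\Delta_t;K)\le CK^{-2\beta}$ and estimate the resulting deterministic sum $\sum_K \min(K^2r^2,1)K^{-2\beta}$ by splitting at the cut-off $K\sim 1/r$. On the low range $K\lesssim 1/r$ one has $\min(K^2r^2,1)=K^2r^2$, contributing $r^2\sum_{K\le 1/r}K^{2-2\beta}$, while on the high range $K\gtrsim 1/r$ the factor equals $1$, contributing the tail $\sum_{K>1/r}K^{-2\beta}$. The assumption $\beta>1/2$ (equivalently $2\beta>1$) is precisely what makes the high-range tail summable, of order $r^{2\beta-1}\to 0$; a short case analysis according to whether $\beta<3/2$, $\beta=3/2$, or $\beta>3/2$ shows the low-range term is of the same or smaller order. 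Hence there is a function $\omega(r)\to 0$ as $r\to 0$, independent of $\Delta$, with $S^2_r(\mu^\Delta_t,T)\le \sqrt{\omega(r)}$. I expect this summation estimate to be the crux, though it is elementary: the role of $\beta>1/2$ is exactly to guarantee convergence of the tail, and one should note the borderline loss of the bound precisely at $\beta=1/2$.

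With this uniform modulus of continuity in hand, all hypotheses of Theorem~\ref{thm:compactconv} are met, and I would conclude directly that $\{\mu^\Delta_t\}_{\Delta>0}$ is relatively compact in $L^1_t(\P)$, so that a (non-relabeled) subsequence converges to some $\mu_t\in L^1_t(\P)$ in the metric $d_T$, i.e.\ $\int_0^T W_1(\mu^{\Delta}_t,\mu_t)\,dt\to 0$, which is the asserted conclusion.
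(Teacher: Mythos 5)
Your proposal is correct and follows essentially the same route as the paper: Plancherel together with Lemma \ref{lem:simpleest} to bound the structure function by $\sum_K \min(K^2r^2,1)\,E_T(\mu^\Delta_t;K)$, insertion of the assumed decay $E_T(\mu^\Delta_t;K)\lesssim K^{-2\beta}$, splitting the sum at $K\sim 1/r$ (where $\beta>1/2$ controls the tail), and concluding compactness via Theorem \ref{thm:compactconv}. If anything, you are slightly more careful than the paper in explicitly verifying the remaining hypotheses of Theorem \ref{thm:compactconv} (uniform time-regularity from Lemma \ref{lem:timeregularity} and the uniform $L^2_x$-bound from the energy estimate \eqref{eq:Eest}), which the paper leaves implicit.
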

\begin{proof}
From Plancherel's identity and lemma \ref{lem:simpleest}, we have
\begin{align*}
  S^2_r(\mu_t;T)^2
  &=
  \int_0^T \int_{L^2_x} \fint_{B_r(0)} \int |u(x+h)-u(x)|^2 \, dx \, dh \, d\mu_t \, dt
  \\  
  &\lesssim \int_0^T \int_{L^2_x} \sum_k \min(|k|^2 r^2, 1) |\widehat{u}(k)|^2 \, d\mu_t \, dt
  \\
  &\sim r^2 \sum_{K \le 1/r} K^2 E_T(\mu_t;K) + \sum_{K> 1/r} E_T(\mu_t;K).
\end{align*}

Hence, based on the assumption \ref{ass:scaling}, we now obtain the estimate,
\begin{align*}
  S^2_r(\mu_t^\Delta;T)^2
  &\lesssim
  r^2 \sum_{K\le 1/r} K^2 K^{-2\beta}
  + \sum_{K>1/r} K^{-2\beta} \\
  &\sim
  r^{2}(1+r^{2\beta-3}) + r^{2\beta-1} \\
  &\sim r^{\min(2,2\beta-1)}, \quad \text{as }r\to 0.
\end{align*}
Therefore, the scaling assumption on the average energy spectrum leads to the uniform diagonal continuity:
\begin{equation}
    \label{eq:cessf}
E_T(\mu^\Delta_t, K) \lesssim K^{-2\beta} \quad \Rightarrow \quad S^2_r(\mu^\Delta_t;T) \lesssim r^{\beta-1/2}, 
\quad \text{if } 1 < 2\beta < 3.
\end{equation}

From theorem \ref{thm:compactconv}, we obtain compactness of the sequence $\mu^{\Delta}_t$.
\end{proof}

\begin{figure}[H]
\centering
\begin{subfigure}{.48\textwidth}
\begin{tikzpicture}
\pgftext{
\includegraphics[width=\textwidth]{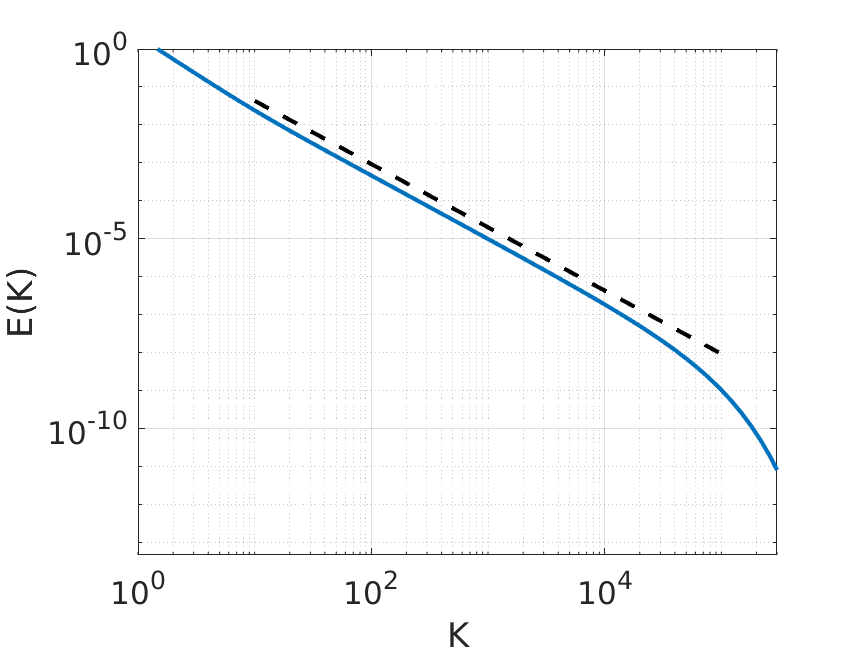}
};
\fill [red,opacity=.2] (1.1,1.97) rectangle (2.45,-1.59);
\fill [blue,opacity=.2] (-2.05,1.97) rectangle (-.8,-1.59);


\node[text opacity=1,fill=white,opacity=0.3,rectangle,draw,rounded corners] at (-1.45,-1.15) {\small small $K$};
\node[text opacity=1,text width=1cm,fill=white,opacity=0.3,rectangle,draw,rounded corners] at (.15,-1.15) {\small inertial range};
\node[text opacity=1,text width=1cm,fill=white,opacity=0.3,rectangle,draw,rounded corners] at (1.75,-1.15) {\small dissip. range};

\node[text opacity=1] at (.4,1.2) {$\sim K^{-\gamma}$};
\end{tikzpicture}
\caption{Energy spectrum $E(K)$}
\end{subfigure}
\begin{subfigure}{.48\textwidth}
\begin{tikzpicture}
\pgftext{
\includegraphics[width=\textwidth]{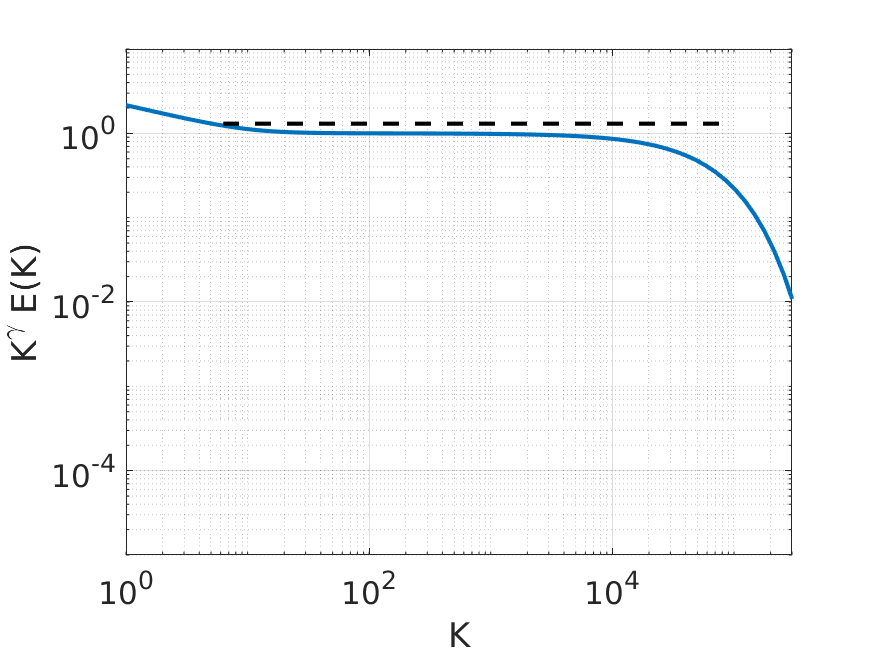}
};
\fill [red,opacity=.2] (1.1,1.93) rectangle (2.45,-1.55);
\fill [blue,opacity=.2] (-2.15,1.93) rectangle (-.8,-1.55);
\node[text opacity=1] at (.2,1.) {$\sim \mathrm{const.}$};

\node[text opacity=1,fill=white,opacity=0.3,rectangle,draw,rounded corners] at (-1.5,-1.1) {\small small $K$};
\node[text opacity=1,text width=1cm,fill=white,opacity=0.3,rectangle,draw,rounded corners] at (.15,-1.1) {\small inertial range};
\node[text opacity=1,text width=1cm,fill=white,opacity=0.3,rectangle,draw,rounded corners] at (1.8,-1.1) {\small dissip. range};

%

\end{tikzpicture}
\caption{Compensated E'spectrum $K^\gamma E(K)$}
\end{subfigure}
\caption{Typical energy spectrum for turbulent flows}
\label{fig:typicalspectrum}
 \end{figure}
\begin{remark} \label{rem:compespectrum}
As indicated in Figure \ref{fig:typicalspectrum} (B), a convenient way to check the scaling assumption \ref{ass:scaling} in practice is to consider the \define{compensated energy spectrum}, which is defined as $K^\gamma E(K)$, where $\gamma$ is the (proposed) scaling exponent in the inertial range. proposition \ref{prop:inertialrange} says that if there exists $\gamma > 1$, such that the compensated energy spectrum $K^\gamma E_T(\mu_t^\Delta;K)$ is uniformly bounded by a constant, and independently of $\Delta$, then $\{\mu^\Delta_t \, |\, \Delta>0\}$ is compact in $L^1(\P)$.
\end{remark}

\begin{remark}
If $d = 3$ and $p=2$, then Kolmogorov's theory states that for fully developed turbulence $S_r^2 \sim r^{1/3}$. Based on our estimate, this requires $\beta = \frac 56$. So that the (expected) energy spectrum is $E(K) \sim K^{-2\beta} \sim K^{-5/3}$. Such an assumed scaling is consistent with many real, as well as numerical, experiments reported in the literature, and is sufficient for compactness in the space of probability measures $L^1_t(\P)$ (cp. proposition \ref{prop:inertialrange}).
\end{remark}

\subsection{Lax-Wendroff type theorem}

We have used a compactness argument to show that under some reasonable hypotheses on the approximations, numerical solutions computed by the spectral hyper-viscosity converge to a limiting time-parametrized probability measure. In this section, we show that such a limit necessarily is a statistical solution of the incompressible Euler equations in the sense of definition \ref{def:statsol}.

\begin{theorem}[Lax-Wendroff type theorem] \label{thm:LxWendroff}
Let $\mu^{\Delta}_t$ be computed by the spectral hyper-viscosity scheme with initial data $\bar{\mu}$, and assume $\mu^\Delta_t \to \mu_t$ in $L^1_t(\P)$, as $\Delta \to 0$. Then $\mu_t$ is a statistical solution of the incompressible Euler equations with initial data $\bar{\mu}$.
\end{theorem}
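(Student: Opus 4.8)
The goal is to verify the two conditions of Definition~\ref{def:statsol} for the limit $\mu_t$. I would exploit that $\mu^\Delta_t = \frac1m\sum_{i=1}^m \delta_{u_i^\Delta(t)}$ is an empirical measure of \emph{exact} solutions of the scheme~\eqref{eq:spectralvisc}: by the duality of Theorem~\ref{thm:duality} its correlation moments are the tensor products $\langle \nu^{\Delta,k}_{t,x}, \xi_1\otimes\cdots\otimes\xi_k\rangle = \frac1m\sum_i u_i^\Delta(x_1)\otimes\cdots\otimes u_i^\Delta(x_k)$, and the factorized test field $\phi = \phi_1\otimes\cdots\otimes\phi_k$ lets me work one sample at a time. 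The incompressibility condition is the easy part: each $u_i^\Delta$ is divergence-free, so by~\eqref{eq:relation} the functional $u\mapsto(\int_D u\cdot\nabla\psi\,dx)^2$ integrates to zero against $\mu^\Delta_t$; since this functional is Lipschitz on the support ball $\overline{B_M(0)}$, the time-integrated integral passes to the limit under $d_T$-convergence, stays zero, and being the integral of a nonnegative quantity forces $\int_{L^2_x}(\int_D u\cdot\nabla\psi)^2\,d\mu_t = 0$ for a.e.\ $t$, which is exactly condition~(2) of Definition~\ref{def:statsol} via Lemma~\ref{lem:charincomp}.

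For the multi-point identity~(1), I fix a sample $u=u_i^\Delta$, write $b_j(t) := \int_D u(x,t)\cdot\phi_j(x,t)\,dx$ and $B(t) := \prod_{j=1}^k b_j(t)$. Because the scheme keeps $\div(u)=0$ and eliminates the pressure against divergence-free $\phi_j$, testing~\eqref{eq:spectralvisc} by $\phi_j$ gives $\dot b_j(t) = \int_D u\cdot\partial_t\phi_j + (u\otimes u):\nabla\phi_j\,dx + r_j(t)$, where $r_j(t) = \langle E^\Delta(t),\phi_j\rangle$ collects the de-aliasing term $(I-\P_N)\div(u\otimes u)$ and the hyper-viscosity term from Lemma~\ref{lem:timeregularity}. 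Applying the product rule to $B(t)$, integrating over $[0,T)$ with $B(T)=0$ and $u(0)=\P_N\overline{u}_i$, and summing over $j$ reproduces \emph{termwise} the tensorized weak form of Definition~\ref{def:statsol}, with initial contribution $\prod_j\int_D \P_N\overline{u}_i\cdot\phi_j(0)\,dx$ and a defect $\int_0^T\sum_j(\prod_{\ell\ne j}b_\ell)\,r_j\,dt$. Averaging over $i$, this shows that $\mu^\Delta_t$ satisfies the full $k$-point identity up to that defect.

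The defect is then estimated away. By Lemma~\ref{lem:timeregularity}, $\Vert E^\Delta(t)\Vert_{H^{-L}_x}\le C\Delta(1+\Vert u\Vert_{L^2_x}^2)\le C\Delta(1+M^2)$, while the prefactors obey $|b_\ell(t)|\le M\Vert\phi_\ell\Vert_{L^2_x}$; hence each defect is $O(\Delta)$, uniformly in the sample and in $t\in[0,T)$, and vanishes as $\Delta\to0$. The initial term matches the correlation measure of $\overline{\mu}$ because $\Vert\P_N\overline{u}_i-\overline{u}_i\Vert_{L^2_x}\to0$ and the empirical initial measure converges, $\overline{\mu}^\Delta\to\overline{\mu}$, for the i.i.d.\ sampling of Algorithm~\ref{alg:MC} (with $m\to\infty$), so the Lipschitz-on-ball functional $u\mapsto\prod_j\int_D u\cdot\phi_j(0)$ transfers the limit correctly.

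It remains to pass to the limit in each surviving term. After the product-rule reduction, every term of the $k$-point identity is of the form $\int_0^T\int_{L^2_x}\Psi(t,u)\,d\mu^\Delta_t(u)\,dt$, where $\Psi(t,\cdot)$ is a finite product of the linear functionals $u\mapsto\int_D u\cdot\phi_\ell$ and of one quadratic factor $u\mapsto\int_D(u\otimes u):\nabla\phi_j$. On the fixed ball $\overline{B_M(0)}$ each factor is bounded and Lipschitz, so $\Psi(t,\cdot)$ is Lipschitz on $\overline{B_M(0)}$ with a constant uniform in $t$; extending it to $L^2_x$ by McShane and using that both $\mu^\Delta_t$ and $\mu_t$ are concentrated on $\overline{B_M(0)}$ (the limit inheriting the $L^2$-bound from the weak convergence), the duality~\eqref{eq:W1duality} and $d_T(\mu^\Delta_t,\mu_t)\to0$ give convergence of each term. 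I expect the main obstacle to be precisely the quadratic flux factor: in the classical deterministic Lax--Wendroff argument one would need strong compactness of the $u_i^\Delta$ to pass to the limit in $u\otimes u$, which is unavailable here. The resolution is the observation that, thanks to the uniform $L^2$-bound, $u\mapsto\int_D(u\otimes u):\nabla\phi_j$ is globally Lipschitz on the support ball, so the weak ($W_1$) convergence of the measures already controls the nonlinearity; alternatively one may invoke the strong $L^1_{t,x}$-convergence of admissible observables from Theorem~\ref{thm:compactconv}, the flux being the borderline quadratic case of~\eqref{eq:obs}.
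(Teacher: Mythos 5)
Your treatment of conditions (1) and (2) of definition \ref{def:statsol} is essentially the paper's own argument: the same product-rule computation on the tensorized functionals $b_j(t)=\int_D u^\Delta\cdot\phi_j\,dx$, the same defect $\sum_j(\prod_{\ell\ne j}b_\ell)\,\langle E^\Delta,\phi_j\rangle$ controlled through lemma \ref{lem:timeregularity} and the uniform support in $\overline{B_M(0)}$, giving an $O(\Delta)$ error uniformly in the sample and in $t$. Where you diverge slightly is the limit passage: the paper invokes the strong $L^1_{t,x}$-convergence of admissible observables established in theorem \ref{thm:compactconv}, whereas your primary route --- noting that for tensorized test functions every surviving term is a cylinder functional $\Psi(t,\cdot)$, Lipschitz on $\overline{B_M(0)}$ uniformly in $t$, so that the duality \eqref{eq:W1duality} and $d_T(\mu^\Delta_t,\mu_t)\to 0$ suffice --- is correct and in fact leaner, since it does not tacitly import the uniform structure-function hypothesis on which the proof of theorem \ref{thm:compactconv} rests (a hypothesis absent from the statement of theorem \ref{thm:LxWendroff}). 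Your explicit handling of the initial term ($\Vert \P_N\overline{u}_i-\overline{u}_i\Vert_{L^2_x}\to 0$ and $\overline{\mu}^\Delta\weaklyto\overline{\mu}$ for the i.i.d.\ sampling of algorithm \ref{alg:MC} with $m(N)\to\infty$) and of the incompressibility condition via \eqref{eq:relation} and lemma \ref{lem:charincomp} supplies detail the paper leaves implicit, and both arguments are sound.

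There is, however, one genuine omission: definition \ref{def:statsol} requires, in addition to conditions (1) and (2), that $t\mapsto\mu_t$ be \emph{time-regular} in the sense of definition \ref{def:timereg}, and your proposal never verifies this for the limit. Your opening sentence (``verify the two conditions'') drops this clause, so as written you have not shown that $\mu_t$ is a statistical solution. The paper closes this step as follows: the estimate $\Vert u^\Delta(t)-u^\Delta(s)\Vert_{H^{-L}_x}\le C'(1+\Vert\overline{u}\Vert_{L^2_x}^2)|t-s|$ of lemma \ref{lem:timeregularity} shows, via the transport plans $\pi^\Delta_{s,t}=\frac1m\sum_i\delta_{u_i^\Delta(s)}\otimes\delta_{u_i^\Delta(t)}$, that the empirical measures $\mu^\Delta_t$ are \emph{uniformly} time-regular, and proposition \ref{prop:timereg} (tightness of the plans in $\P(H^{-L}_x\times H^{-L}_x)$, identification of the marginals of a weak limit $\pi_{s,t}$ with $\mu_s,\mu_t$, and passage of the $H^{-L}_x$ estimate to the limit via a truncated cost $F_\gamma$) then transfers time-regularity to $\mu_t$. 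Note that this is not automatic from $d_T$-convergence alone: one must actually produce limiting transport plans satisfying the $H^{-L}_x$ bound, which is what proposition \ref{prop:timereg} does. Given your setup the fix is short --- you already have the uniform $H^{-L}_x$ estimate --- but it must be stated for the proof to be complete.
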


\begin{proof}
Fix $k\in \mathbb{N}$. Let ${\phi}_1, \dots, {\phi}_k \in C_c^\infty(D\times [0,\infty))$ be given solenoidal test functions. Set ${\phi} := {\phi}_1 \otimes \dots \otimes {\phi}_k$ and denote $\nu^k = \nu^k_{x_1,\dots,x_k,t}$. Let ${u}^\Delta$ be obtained from the spectral method, with initial data $\bar{{u}}$. Let us denote $({u},{\phi}) := \int_{D} {u}\cdot{\phi} \, dx$. Then, as a consequence of lemma \ref{lem:timeregularity}, we can write
\[
\frac{d}{dt} ({u}^\Delta,{\phi}_i) = ({u}^\Delta, \partial_t {\phi}_i) + ({u}^\Delta\otimes {u}^\Delta, \nabla \phi_i) + (E^\Delta,{\phi}_i),
\]
where there exists $L>0$ independent of $\Delta$ and the initial data $\bar{{u}}$, such that the error term $E^\Delta$ satisfies $\Vert E^\Delta \Vert_{H^{-L}} \le C\Delta(1+\Vert \bar{{u}} \Vert_{L^2_x}^2)$. Taking the product over $i=1,\dots, k$, we find
\[
\frac{d}{dt} \prod_{i=1}^k ({u}^\Delta,{\phi}_i) = \sum_{i=1}^k \left[\prod_{j\ne i} ({u}^\Delta, {\phi}_j)\right] \left\{({u}^\Delta, \partial_t{\phi}_i) + ({F}({u}^\Delta),\nabla \phi_i) + (E^\Delta,\phi_i) \right\},
\]
where ${F}({u}) := {u}\otimes {u}$. Recognizing the special structure of the empirical measure $\mu^{\Delta}_t$ \eqref{eq:emeas} as a convex combination, denoting by $\nu^{k,\Delta} = \nu^{k,\Delta}_{x_1,\dots,x_k,t}$ the k-point correlation measure corresponding to $\mu^\Delta_t$, we obtain from the above identity that, 
\begin{gather} \label{eq:approxweak}
\begin{aligned}
\int_0^T \int_{D^k} &\langle \nu^{k,\Delta}, {\xi}_1 \otimes \dots \otimes {\xi}_k \rangle : \partial_t {\phi}
 \\
 &\quad 
 + \sum_i \langle \nu^{k,\Delta}, {\xi}_1 \otimes \dots \otimes {F}({\xi}_i) \otimes \dots \otimes  {\xi}_k \rangle : \nabla_{{x}_i} {\phi}
\, dx \, dt
\\
&\quad + \int_{D^k} \langle \bar{\nu}^{k,\Delta}, {\xi}_1 \otimes \dots \otimes {\xi}_k \rangle : {\phi}(x,0) \, dx
\\
&\quad =
\int_0^T \int_{L^2_x} \sum_{i=1}^k \left[\prod_{j\ne i} ({u}^\Delta, {\phi}_j)\right] (E^\Delta, \phi_i) \, d\mu^\Delta_t \, dt.
\end{aligned}
\end{gather}
The right-hand side can be bounded by 
\[
\int_0^T \int_{L^2_x}  \Vert {u}^\Delta\Vert_{L^2_x}^{k-1} \sum_{i=1}^k  \prod_{j\ne i}\Vert {\phi}_j\Vert_{L^2_x}  \Vert E^\Delta \Vert_{H^{-L}_x} \Vert \phi_i\Vert_{H^L_x} \, d\mu^\Delta_t \, dt,
\]
which, by lemma \ref{lem:timeregularity} is further bounded by
\[
\le C({\phi},k) \Delta \int_0^T \int_{L^2_x} (1+\Vert {u}^\Delta\Vert_{L^2_x}^2)^{k} \, d\mu^\Delta_t({u})\, dt.
\]
Note that if $\bar{\mu}$ is supported on $\overline{B_M(0)}\subset L^2_x$, then it follows that $\mu^\Delta_t$ is supported on $\overline{B_M(0)}$, as well. This is a consequence of the a priori $L^2$-bound \eqref{eq:Eest}. Hence the error term in equation \eqref{eq:approxweak} is in this case bounded by $C\Delta$, where $C = C(\phi,k,M,T)$ is a constant independent of $\Delta$.

Let us also note that the terms on the left-hand side of \eqref{eq:approxweak} converge strongly in $L^1_{t,x}$ as $\Delta \to 0$. Indeed, it is not difficult to see that all terms on the left -hand side, e.g.
\begin{align*}
g(t,x,\xi) :=
\left({\xi}_1 \otimes \dots \otimes {F}({\xi}_i) \otimes \dots \otimes  {\xi}_k \right) : \nabla_{{x}_i} {\phi}(x,t),
\end{align*}
are admissible observables in the sense of \eqref{eq:obs}. For such observables, the $L^1_{t,x}$-convergence of 
\[
\langle \nu^{k,\Delta}_{t,x}, g(t,x,\xi) \rangle 
\to 
\langle \nu^{k}_{t,x}, g(t,x,\xi) \rangle, \quad \text{as }\Delta \to 0,
\]
has been established in theorem \ref{thm:compactconv}. The same holds true for the other two terms on the left-hand side.

Passing to the limit $\mu^\Delta_t \to \mu_t$, it thus follows that 
\begin{gather*} 
\begin{aligned}
\int_0^T \int_{D^k} &\langle \nu^{k}_{t,x}, {\xi}_1 \otimes \dots \otimes {\xi}_k \rangle : \partial_t {\phi}
 \\
 &\quad 
 + \sum_i \langle \nu^{k}_{t,x}, {\xi}_1 \otimes \dots \otimes {F}({\xi}_i) \otimes \dots \otimes  {\xi}_k \rangle : \nabla_{{x}_i} {\phi}
\, dx \, dt
\\
&\quad + \int_{D^k} \langle \bar{\nu}^{k}_{x}, {\xi}_1 \otimes \dots \otimes {\xi}_k \rangle : {\phi}(x,0) \, dx
 =
0.
\end{aligned}
\end{gather*}
The fact that $\mu_t$ is concentrated on incompressible vector fields follows immediately from the corresponding property of the approximations $\mu^\Delta_t$ (cp. lemma \ref{lem:charincomp}). Furthermore, from proposition \ref{prop:timereg}, it also follows that the limit $\mu_t$ is time-regular. This finishes the proof that $\mu_t$ is a statistical solution of the incompressible Euler equations with initial data $\bar{\mu}$.
\end{proof}

\begin{remark}
It is straightforward to show that if $\mu^\Delta_t$ are generated from the spectral hyper-viscosity scheme \eqref{eq:spectralvisc}, and if they satisfy the assumptions of theorem \ref{thm:LxWendroff}, the limit $\mu_t$ is in fact a dissipative statistical solution in the sense of definition \ref{def:dissipative}.
\end{remark}
\section{Numerical Experiments}
\label{sec:numerical}
In this section, we will present a suite of numerical experiments to demonstrate the effectiveness of the Monte Carlo algorithm \ref{alg:MC} in computing statistical solutions of the incompressible Euler equations. 
\subsection{Implementation} 

For our numerical experiments, we use the implementation of the spectral hyper-viscosity scheme \eqref{eq:spectralvisc} provided by the SPHINX code, which was first presented in \cite{LeonardiPhD}. In SPHINX, the non-linear advection term is implemented using the $O(N^2 \log N)$-costly fast Fourier-transform. Aliasing is avoided via the use of a padded grid, as e.g. described in \cite{LeonardiPhD,LM2019}. The spectral scheme is implemented based on the primitive variable formulation. 

For the numerical experiments reported below, we use a spectral viscosity operator of order $s=1$ (cp. equation \eqref{eq:spectralvisc}), with $\epsilon_N = \epsilon/N$, $\epsilon = 1/20$ unless otherwise stated. The Fourier multiplier $Q_N$ is chosen with Fourier coefficients
\[
\widehat{Q}_k = 
\begin{cases}
1-N/{|k|^2} & |k|\ge \sqrt{N}, \\
0, & \text{otherwise}.
\end{cases}
\]
corresponding to $m_N = \sqrt{N}$.

For each sample, SPHINX solves the following system of ODEs for the Fourier coefficients $\hat{u}_k(t)$, $|k|_\infty \le N$ of $u(x,t) = \sum_{|k|_\infty \le N} \widehat{u}_k(t) e^{ikx}$:

\begin{gather} \label{eq:SPHINX}
\left\{
\begin{aligned}
\frac{d}{dt} \widehat{u}_k 
&= 
-ik \cdot \left(I - \frac{k\otimes k}{|k|^2} \right) \cdot \widehat{\left(u\otimes u\right)}_k
- \frac{\epsilon}{N} \max\left(|k|^2-N,0\right) \, \widehat{u}_k
,
\\
\widehat{u}_k(0) &= \widehat{\overline{u}}_k,
\end{aligned}
\right.
\end{gather}
for $|k|_\infty \le N$, $k\ne 0$. Here $\widehat{\overline{u}}_k$ denote the Fourier coefficients of the initial data. The multiplication by the matrix $(I - {(k\otimes k)}/{|k|^2} )$ implements the Leray projection onto divergence-free vector fields. The dot product with $ik\cdot (\ldots)$ corresponds to taking the divergence in Fourier space. We note that the $0$-th Fourier component of $u$ is constant in time, reflecting conservation of momentum. In SPHINX, this component is set equal to $\widehat{u}_0 \equiv 0$. The above system of ODEs \eqref{eq:SPHINX} is integrated in time using an adaptive explicit third-order Runge-Kutta method. 

Although the theory of section \ref{sec:spectralvisc} is valid for both two and three space dimensions
and the SPHINX code is available for both cases, we restrict our focus to two space dimensions in this section, on account of affordable computational costs. 
\subsection{Flat Vortex Sheet.} Vortex sheets occur in many models in physics and are an important test bed for numerical experiments for the Euler equations \cite{LM2015} and references therein. We first consider a randomly perturbed version of the \emph{flat vortex sheet} that corresponds to the following initial data also considered in \cite{LM2015},
\subsubsection{Initial data}
Given a smoothing parameter $\rho > 0$, and a parameter $\delta\ge 0$ (measuring the size of the random perturbation of the interface), this vortex sheet initial data is of the form 
\begin{equation}
\label{eq:fvs1}
\overline{u}^{\rho,\delta}(x) 
=
\mathbb{P}(U^\rho(x_1,x_2 + \sigma_\delta(x_1))),
\end{equation}
where $\mathbb{P}$ denotes the Leray projection, $U^\rho(x) = (U_1^\rho(x),U_2^\rho(x))$ is the following smoothened flat vortex sheet initial data:
\[
U^\rho(x)
:=
\begin{cases}
\tanh\left(\frac{x_2-1/4}{\rho}\right), & (x_2 \le 1/2), \\
\tanh\left(\frac{3/4-x_2}{\rho}\right), & (x_2 > 1/2),
\end{cases}
\qquad
U_2^\rho(x) = 0.
\]
and $\sigma_\delta(x)$ is a random function, which for a given (random) choice of parameters $\alpha_1, \dots, \alpha_q\in (0,\delta)$, $\beta_1, \dots, \beta_q \in [0,2\pi)$, is defined by
\begin{align} \label{eq:randompert}
\sigma_\delta(x_1) 
= 
\sum_{k=1}^q \alpha_k \sin(2\pi x_1-\beta_k).
\end{align}
We will also consider the discontinuous case of initial data that are obtained in the limit $\rho \to 0$ resulting in 
\[
U^0_1(x)
:=
\begin{cases}
+1, & (1/4 < x_2 \le 3/4), \\
-1, & (\text{otherwise}),
\end{cases}
\qquad
U_2^0(x) = 0.
\]
For our simulations, we fix $q=10$ modes for the perturbations. The coefficients $\alpha_k$ are drawn independently, uniformly in $(0,1)$, and then multiplied by $\delta$. The coefficients $\beta_k$ are i.i.d., with a uniform distribution on $[0,2\pi)$. The initial data for the statistical solution $\bar{\mu}^{\delta}_\rho \in \P(L^2_x)$ is defined as the law of these random perturbations. It depends on the two parameters $\rho \ge 0$, $\delta \ge 0$. While $\rho$ controls the smoothness of the initial data, $\delta$ measures the amplitude of the perturbation. We fix $\delta = 0.025$ in the following and consider different values of $\rho$. Note that the choice $\rho=0$ corresponds to an initial measure supported on discontinuous flows with a very sharp transition (see figure \ref{fig:sphsldiscont_sample} (A,B) for realizations (samples) of this initial data). In figure \ref{fig:sphsldiscont_sample} (C,D), we present the initial mean and variance that correspond to the random variations of the initial interface location. 

\begin{figure}[H]
	\begin{subfigure}{.33\textwidth}
		\centering
		\includegraphics[width=\textwidth]{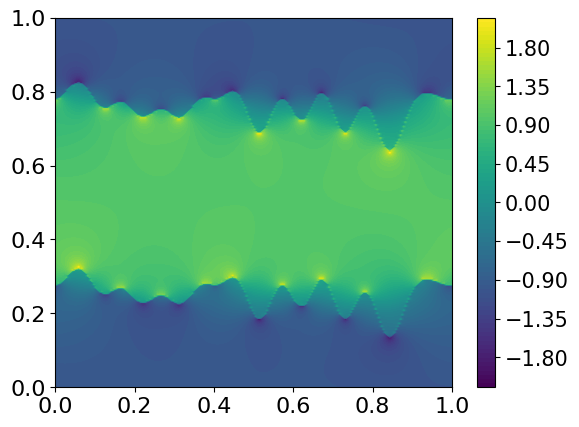}
		\caption{$u_1$-component}
	\end{subfigure}%
	\begin{subfigure}{.33\textwidth}
		\centering
		\includegraphics[width=\textwidth]{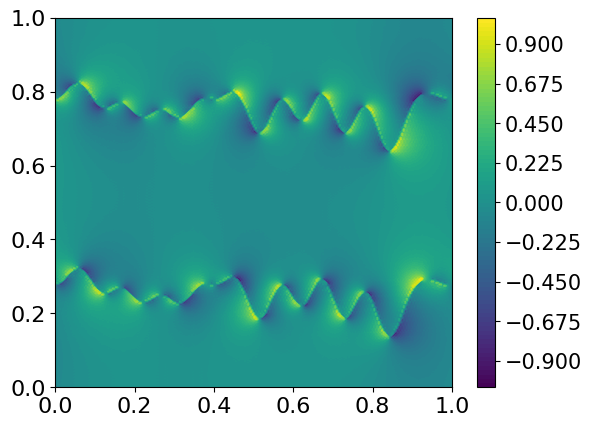}
		\caption{$u_2$-component}
	\end{subfigure}%
	
	\begin{subfigure}{.33\textwidth}
		\centering
		\includegraphics[width=\textwidth]{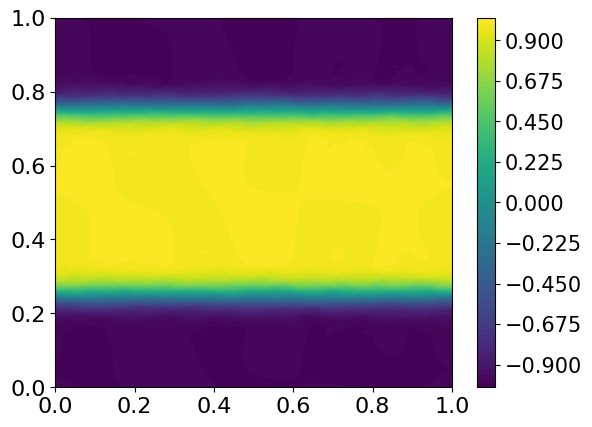}
		\caption{mean: $u_1$-comp.}
	\end{subfigure}%
	\begin{subfigure}{.33\textwidth}
		\centering
		\includegraphics[width=\textwidth]{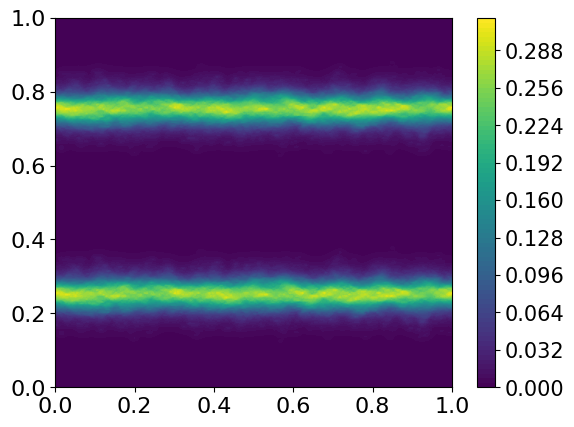}
		\caption{variance: $u_1$-comp.}
	\end{subfigure}%
	
	\caption{Initial data for the perturbed discontinuous flat vortex sheet ($\rho = 0$), samples for $u_{1,2}$, and mean and variance of $u_1$}
	\label{fig:sphsldiscont_sample}
\end{figure}
Clearly when $\rho > 0$, the corresponding initial data for every sample is smooth. Consequently, smooth solutions of \eqref{eq:Eulerfull} are well-posed and the spectral viscosity method converges to this solution as $N \rightarrow \infty$ \cite{BardosTadmor}. However for $\rho = 0$, which corresponds to the case of a discontinuous vortex sheet, there are no well-posedness results even for weak solutions, as the vorticity corresponding to the initial datum (for each sample) is a sign changing measure and does not belong to the \emph{Delort Class}. In \cite{LM2015}, the authors had presented multiple numerical experiments to illustrate the approximate solutions, computed with a spectral viscosity method, may not converge (or converge too slowly to be of practical interest) for individual samples (see figures 5 and 6 of \cite{LM2015}). Hence, it would be interesting to study if approximate statistical solutions, generated by algorithm \ref{alg:MC} converge in this case. 
\subsubsection{Structure functions and Compensated Energy spectra.} 
The convergence theorem \ref{thm:inertialrangeconv}, based on the compactness theorem \ref{thm:timecompact}, provides us with verifiable criteria to check convergence of algorithm \ref{alg:MC}. In particular, we need to check certain decay conditions on the structure function \eqref{eq:tasf} for small correlation lengths. To this end, we consider the following instantaneous version of the structure function \eqref{eq:tasf},
\begin{equation}
    \label{eq:sfin}
S^{2,\Delta}_{r,t}(\mu_t) := 
\left(
\int_{L^2_x} 
\int_D 
\fint_{B_r(0)} |u(x+h)-u(x)|^2 
\, dh \, dx \, d\mu^{\Delta}_t(u)
\right)^{1/2},
\end{equation}
Note that the above is a formal definition and it can be made rigorous in terms of the time-dependent correlation measures. It is much simpler to compute the instantaneous quantity \eqref{eq:sfin} than the time-averaged version \eqref{eq:tasf}. 

Our objective is to check whether the structure function \eqref{eq:tasf}, or rather its instantaneous version \eqref{eq:sfin}, decays (uniformly in resolution $\Delta$) as $r \rightarrow 0$. Such a decay would automatically imply convergence of the approximations to a statistical solutions by theorems \ref{thm:timecompact} and \ref{thm:LxWendroff}.

\begin{figure}[H]
\begin{subfigure}{.45\textwidth}
\includegraphics[width=\textwidth]{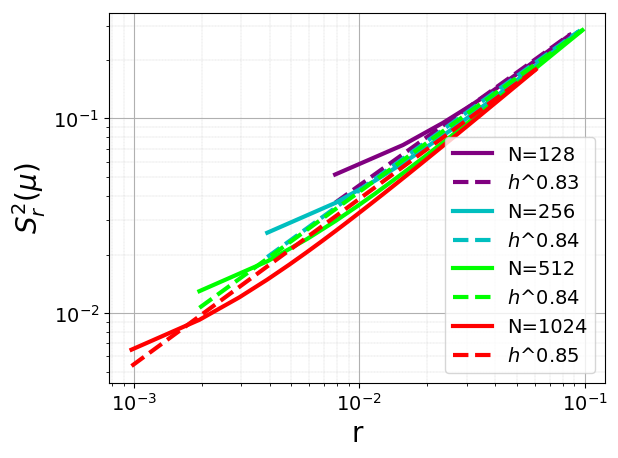}
\caption{$\rho = 0.1$}
\end{subfigure}
\begin{subfigure}{.45\textwidth}
\includegraphics[width=\textwidth]{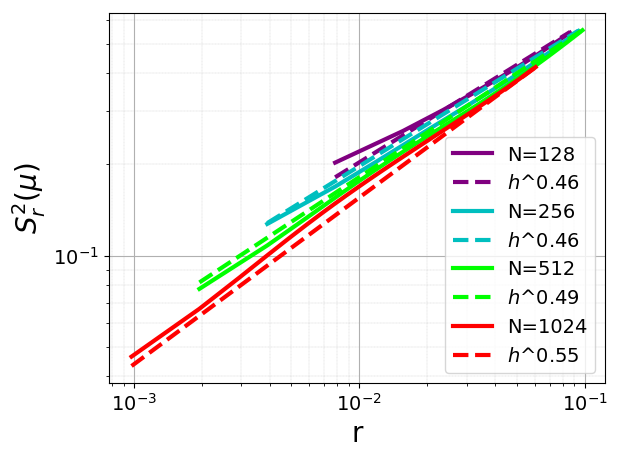}
\caption{$\rho = 0.0$}
\end{subfigure}
\caption{Instantaneous structure function \eqref{eq:sfin} vs correlation length $r$ for different resolutions ($N \sim \Delta^{-1}$) for different values of smoothness parameter $\rho$, at $t=0.4$}
\label{fig:fvs_sf}
\end{figure}

Clearly if $\rho > 0$ in \eqref{eq:fvs1}, the spectral viscosity method converges to the unique classical solution as $\Delta \rightarrow 0$. Moreover, a straightforward calculation shows that the structure function \eqref{eq:sfin} should scale as,
\begin{equation}
    \label{eq:sfins}
    S^{2,\Delta}_{r,t}(\mu_t) \approx r, \quad \forall \Delta, t.
\end{equation}

This is indeed verified from figure \ref{fig:fvs_sf} (A) where we plot the structure function \eqref{eq:sfin} at $t=0.4$ and $\rho = 0.1$ for different values of the mesh parameter. We see from this figure that $S^{2,\Delta}_{r,0.4}(\mu_t) \approx r^{0.9}$, at fine resolutions, which is very close to the expected value of $1$ for the scaling exponent of the structure function. 

On the other hand, for $\rho = 0$, corresponding to the discontinuous flat vortex sheet, the lack of smoothness inhibits us from inferring a particular form of decay of \eqref{eq:tasf} (or \eqref{eq:sfin}) a priori.

At least initially, calculations in \cite{LanthalerThesis} imply that  $ S^{2,\Delta}_{r,0}(\bar{\mu}) \sim r^{\frac{1}{2}}$, for the discontinuous flat vortex sheet. Surprisingly, we find from figure \ref{fig:fvs_sf} (B) that at fine resolutions,  $S^{2,\Delta}_{r,0.4}(\mu_t) \approx r^{0.52}$, which agrees with the decay of the structure function of the initial data. Although we do not present the results there, we observe that the structure function \eqref{eq:sfin} scales as $r^{\theta_t}$, with $\theta_t \geq 0.5$ for all $t$. This implies an uniform decay of the structure function \eqref{eq:tasf} and convergence of the approximations to a statistical solution of the Euler equations \eqref{eq:Eulerfull}, even for this case of discontinuous vortex sheet data. 
Note that the computed structure functions \eqref{eq:sfin} in figure \ref{fig:fvs_sf} clearly satisfy the approximate scaling hypothesis \eqref{eq:scal} and thus imply convergence through theorem \ref{thm:inertialrangeconv}. 
\begin{figure}[htbp]
\begin{subfigure}{.45\textwidth}
\includegraphics[width=\textwidth]{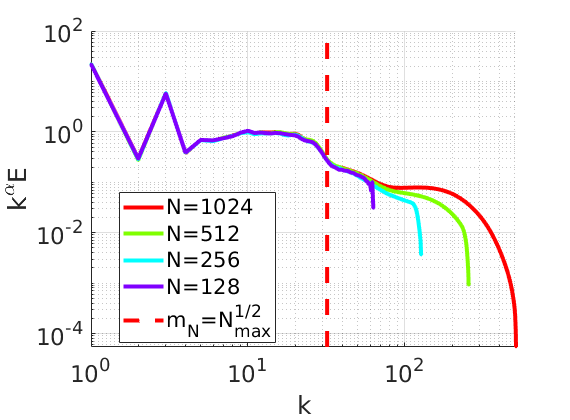}
\caption{$\rho = 0.1,\gamma = 3$}
\end{subfigure}
\begin{subfigure}{.45\textwidth}
\includegraphics[width=\textwidth]{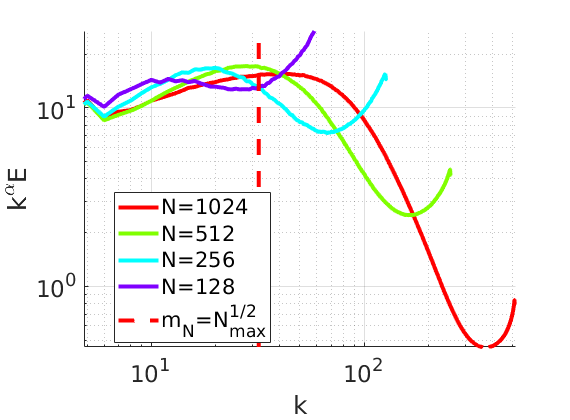}
\caption{$\rho=0.0,\gamma = 2$}
\end{subfigure}
\caption{The instantaneous compensated energy spectrum ${\mathcal C}^{\Delta}_{\gamma,t}(\mu_t;K)$ \eqref{eq:ces} for the flat vortex sheet, at time $t=0.4$. Note different values of $\gamma$ for the smooth and discontinuous vortex sheets}
\label{fig:fvs_ces}
\end{figure}

An alternative criterion for convergence of statistical solutions is provided by the energy spectrum decay in the inertial range \eqref{eq:esir}. To check whether this criterion is satisfied, we follow theorem \ref{prop:inertialrange} and compute the following instantaneous \emph{compensated energy spectrum},

\begin{equation}
    \label{eq:ces}
    {\mathcal C}^{\Delta}_{\gamma,t}(\mu_t;K) := K^{\gamma} E(\mu_t,K).
\end{equation}
Following the arguments in the proof of theorem \ref{prop:inertialrange}, we can relate the decay of the instantaneous energy spectrum to the corresponding decay of the structure function \eqref{eq:sfin} by a direct analogue of \eqref{eq:cessf}.

For $\rho = 0.1$ in \eqref{eq:fvs1}, we plot the compensated energy spectrum ${\mathcal C}^{\Delta}_{3,0.4}(\mu_t;K)$ for all $K$ and at time $t=0.4$, with compensating factor $\gamma = 3$ in figure \ref{fig:fvs_ces} (A). Note that this choice of $\gamma$ is consistent with a decay exponent of $1$ for the structure function in \eqref{eq:cessf}, i.e. $S_r^2(\mu_t^\Delta;T)\lesssim r$. We observe from this figure that as expected for this case, the compensated energy spectrum is clearly bounded and in fact, decays faster than the expected rate for the entire range of wave numbers. 

On the other hand, we plot the compensated energy spectrum ${\mathcal C}^{\Delta}_{2,0.4}(\mu_t;K)$ \eqref{eq:ces} for the discontinuous flat vortex sheet case, i.e. $\rho = 0$ in \eqref{eq:fvs1}, in figure \ref{fig:fvs_ces} (B). In this case, we expect from the structure function computations (see figure \ref{fig:fvs_sf}(B)) that the instantaneous structure function decays with an exponent of $\approx 0.5$. From \eqref{eq:cessf}, we see that this corresponds to the choice of $\gamma = 2$ as the exponent of compensation in \eqref{eq:ces}. Moreover, in figure \ref{fig:fvs_ces} (B), we also plot the line corresponding to wave number $m_N \approx \sqrt{N}$, which for the spectral viscosity method \eqref{eq:spectralvisc} represents the wave number after which the spectral viscosity is activated and hence, demarcates the separation between inertial and dissipation ranges. We observe from figure \ref{fig:fvs_ces} (B) that the compensated energy spectrum is clearly uniformly bounded (in terms of the resolution $\Delta$) for the whole inertial range and for all resolutions $\Delta$ barring the coarsest resolution, and decays fast in the dissipation range, although there is a slight kink upwards at the very end of the dissipation range, almost at the grid scale. This might be attributed to numerical errors, which are dominant at this range.  Translating these results to the energy spectrum, we see that the spectrum decays as $K^{-2}$ in the inertial range uniformly with respect to resolution. Hence, according to theorems \ref{prop:inertialrange} and \ref{thm:LxWendroff}, the sequence of approximations will converge to a statistical solution of \eqref{eq:Eulerfull}. 
\subsubsection{Convergence in Wasserstein Metrics.}
Given the computational results on the structure function and the compensated energy spectra, results in section \ref{sec:spectralvisc} clearly imply convergence of the approximations $\mu^{\Delta}_t$, generated by the Monte Carlo algorithm \ref{alg:MC} to a statistical solution of the incompressible Euler equations. Moreover from the discussion in section \ref{sec:timecompact}, we should observe with respect to the following \emph{Cauchy rates}:
\begin{equation}
    \label{eq:met1}
d_T(\mu^{\Delta}_t,\mu^{\Delta/2}_t) = \int_0^T W_1(\mu^{\Delta}_t,\mu^{\Delta/2}_t) \, dt.
\end{equation}
Unfortunately, the calculation of the Wasserstein distance between probability measured defined on high-dimensional (or indeed $\infty$-dimensional) spaces is a highly non-trivial issue, which we cannot tackle with present computational resources.

On the other hand, one can compute finite-dimensional \emph{marginals} of \eqref{eq:met1} by utilizing the complete characterization of $L^1_t(\P)$ in terms of \emph{correlation measures} as given in theorem \ref{thm:duality}. Following \cite{FLMW1} (theorem 5.7), one can prove that,
\begin{equation}
    \label{eq:met2}
\int_{D^k} W_1(\nu^{\Delta,k}_{t,x}, \nu^{\Delta/2,k}_{t,x})\, dx 
\le 
C_k W_1(\mu_t^\Delta,\mu_t^{\Delta/2}), \quad \text{a.e. } t
\end{equation}
Here, $k \geq 1$ and $\nu^{\Delta,k}_{t,x}$ is the $k$-th correlation marginal corresponding to the approximate statistical solution $\mu^{\Delta}_t$. Note that we consider instantaneous versions of the Wasserstein metric \eqref{eq:met1} for reasons of computational convenience. 

\begin{figure}[H]
\begin{subfigure}{.3\textwidth}
\includegraphics[width=\textwidth]{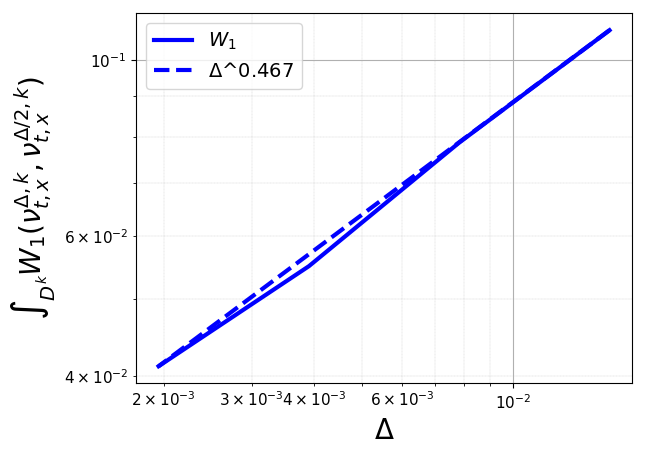}
\caption{$k=1$}
\end{subfigure}
\begin{subfigure}{.3\textwidth}
\includegraphics[width=\textwidth]{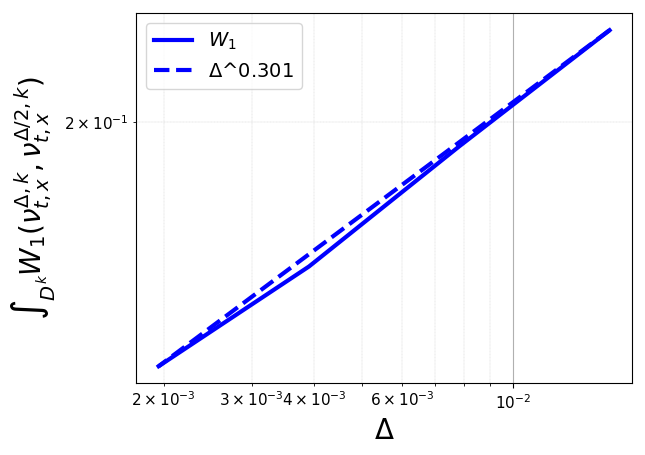}
\caption{$k=2$}
\end{subfigure}
\begin{subfigure}{.3\textwidth}
\includegraphics[width=\textwidth]{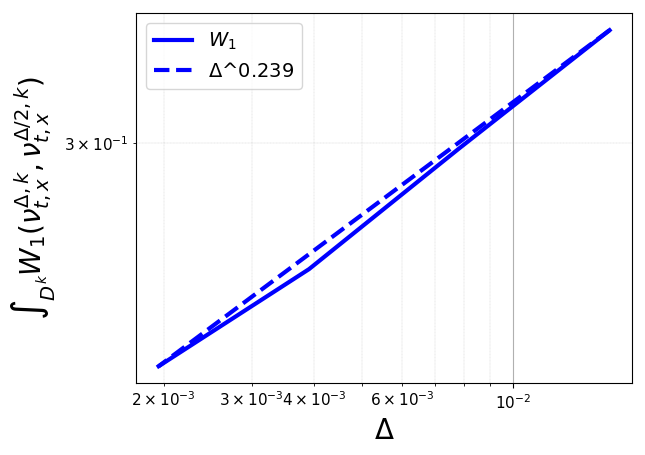}
\caption{$k=3$}
\end{subfigure}
\caption{The Wasserstein distances between correlation marginals $\int_{D^k} W_1(\nu^{\Delta,k}_{t,x}, \nu^{\Delta/2,k}_{t,x})\, dx$ for $k=1,2,3$, at time $t=0.4$ with respect to resolution} 
\label{fig:fvs_wass}
\end{figure}

We remark that computing the Wasserstein distances $W_1(\nu^{\Delta,k}_{t,x}, \nu^{\Delta/2,k}_{t,x})$ for small $k$ is much more tractable. We have computed these Wasserstein distances using the algorithm of \cite{Bonneel2011} (as implemented in \cite{Wass}) and the corresponding results for $k=1,2,3$, at time $t=0.4$ for the discontinuous flat vortex sheet, i.e. $\rho = 0$ in \eqref{eq:fvs1} are presented in figure \ref{fig:fvs_wass}. As seen from this figure, we observe a clear convergence of these Wasserstein distances (in the Cauchy sense as in \eqref{eq:met2}) for the one-point, two-point and three-point correlation measures, albeit at a slow rate for the second and third correlation marginals. This, together with the results on the structure function and compensated energy spectra, provides considerable evidence that the approximate statistical solutions, generated by algorithm \ref{alg:MC}, converge to a statistical solution of \eqref{eq:Eulerfull}. Moreover, given theorem \ref{thm:compactconv}, results shown in figure \ref{fig:fvs_wass} establish convergence with respect to any admissible observable in the sense of \eqref{eq:obs}, corresponding of one-point, two-point and three-point statistical quantities of interest. These include mean, variance, structure functions, energy spectra as well as three-point correlation functions. 

\subsection{Sinusoidal vortex sheet}
\label{sec:sinvortex}
In this section, we will consider a random perturbation of the so-called sinusoidal vortex sheet, i.e. the initial vorticity is concentrated on a sine curve. This test case was extensively studied in a recent paper \cite{LM2019} in the context of numerical approximation of weak solutions (in Delort class) of the two-dimensional incompressible Euler equations. 
\subsubsection{Initial Data.}
We fix a sinusoidally perturbed vortex sheet, where the initial vorticity is a Borel measure of the form
\[
{\omega}_0 = \delta(x-\Gamma) - \int_{\T^2} \, d\Gamma,
\]
such that $\int_{\T^2} {\omega}_0 \, dx = 0$, and up to a constant, ${\omega}_0$ is uniformly distributed along a curve $\Gamma$, which is defined as the graph:
\[
\Gamma = 
\{
(x,y) \, | \, y = d \sin(2\pi x), \; x \in [0,1] 
\}.
\]
We chose $d = 0.2$ for our simulations. 

\begin{figure}[H]
	
	\begin{subfigure}{.3\textwidth}
		\centering
		\includegraphics[width=\textwidth]{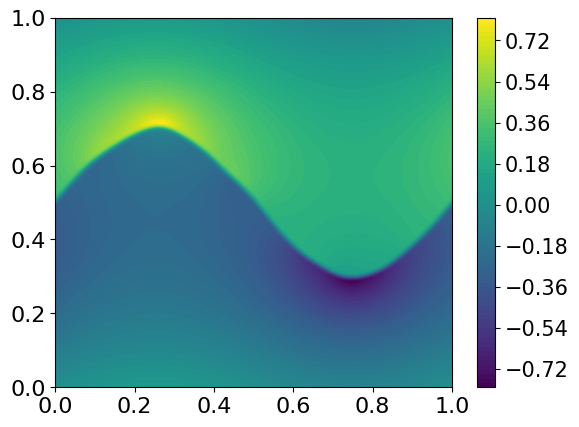}
		\caption{Sample}
	\end{subfigure}%
	\begin{subfigure}{.33\textwidth}
		\centering
		\includegraphics[width=\textwidth]{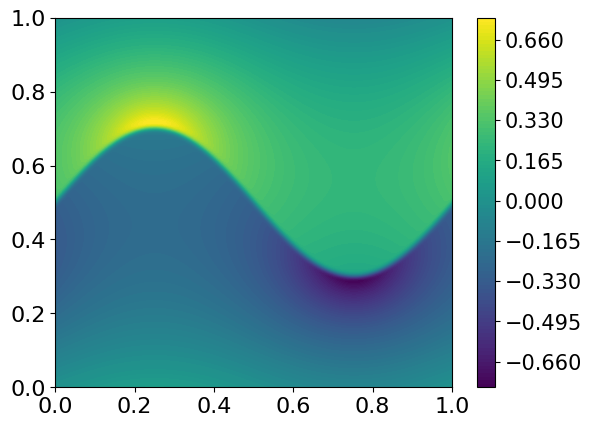}
		\caption{Mean}
	\end{subfigure}%
	\begin{subfigure}{.33\textwidth}
		\centering
		\includegraphics[width=\textwidth]{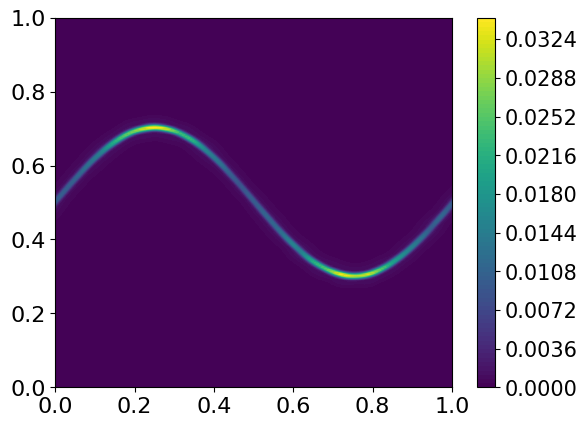}
		\caption{Variance}
	\end{subfigure}
	
	\caption{Initial conditions for the horizontal velocity $u_1$ for the sinusoidal vortex sheet.}
	\label{fig:svs_init}
\end{figure}

The numerical initial data is obtained from the mollification of this initial data with a parameter $\rho>0$. As a mollifier, we consider the third-order B-spline
\[
\psi(r) := \frac{80}{7\pi} \left[(r+1)^3_{+} - 4(r+1/2)^3_{+} + 6r^3_{+} - 4(r-1/2)^3_{+} + (r-1)^3_{+}\right].
\]
Next, we define $\psi_\rho(x) = \rho^{-2} \psi(|x|/\rho)$. The numerical approximation of the perturbed vortex sheet is now defined by setting
\[
\omega_0^\rho(x) := \int_{{\Gamma}} \psi_{\rho}(x-y) {\omega_0}(y) \, dy
\]
where $\rho$ determines the thickness (smoothness) of the approximate vortex sheet. The convolution at $x = (x_1, x_2) \in \T^2$ is evaluated via numerical quadrature:
\[
({\omega_0} \ast \psi_\rho)(x)
\approx 
\frac{\rho}{Q} \sum_{i} \psi_\rho(x-(\xi_i,{g}(\xi_i))) \sqrt{1+|{g}'(\xi_i)|^2},
\]
with $\xi_i = x_1 + i\rho/Q$ equidistant quadrature points in $x_1$, and ${g}(\xi)$ the function whose graph is ${\Gamma}$, i.e. $g(\xi) = d \sin(2\pi\xi)$. We choose $Q=400$ quadrature points. We denote by $U^\rho(x_1,x_2)$ the velocity field such that $\div(U^\rho)=0$ and $\curl(U^\rho) = \omega_0^\rho$.

Similar to the case of the flat vortex sheet, we carry out random perturbations of the sinusoidal vortex sheet as follows:
\[
\overline{u}^{\rho,\delta}(x) := \mathbb{P}(U^\rho(x_1,x_2+\sigma_\delta(x_1)).
\]
Here, $\mathbb{P}: L^2_x \to L^2_x$ denotes the Leray projection onto divergence-free vector fields, and we again fix a random function $\sigma_\delta(x)$,
\[
\sigma_\delta(x_1) = \sum_{k=1}^q \alpha_k \sin(2\pi x_1 - \beta_k),
\]
depending on a parameter $\delta \ge 0$ and a choice of (random) coefficients $\alpha_1, \dots, \alpha_q \in (0,\delta)$, $\beta_1, \dots, \beta_q \in [0,2\pi)$. For our simulations, we fix $q=10$ modes for the perturbations. In practice, the coefficients $\alpha_k$ are first drawn independently, uniformly in $(0,1)$, and then multiplied by $\delta$. The coefficients $\beta_k$ are i.i.d., with a uniform distribution on $[0,2\pi)$. The initial data for the statistical solution $\bar{\mu}^{\delta}_\rho \in \P(L^2_x)$ is defined as the law of these random perturbations. It depends on the two parameters $\rho \ge 0$, $\delta \ge 0$. While $\rho$ controls the smoothness of the initial data, $\delta$ measures the amplitude of the perturbation. We fix $\delta = 0.003125$ in the following and vary $\rho$ as a function of the grid size $N$. To approximate vortex sheet initial data, we must scale $\rho = \rho(N)$ with $N$, such that $\rho \to 0$ as $N\to \infty$. We use $\rho = 5/N$ for our simulations. The additional diffusion parameter $\epsilon$ of the spectral viscosity scheme is set to $\epsilon = 0.01$. With this choice of parameters, we will drop the sub- and superscripts and denote the initial data at a given resolution simply by $\overline{\mu}\in \P(L^2_x)$.

\begin{figure}[htbp]
	\centering
	\begin{subfigure}{.24\textwidth}
		\centering
		\includegraphics[width=\linewidth]{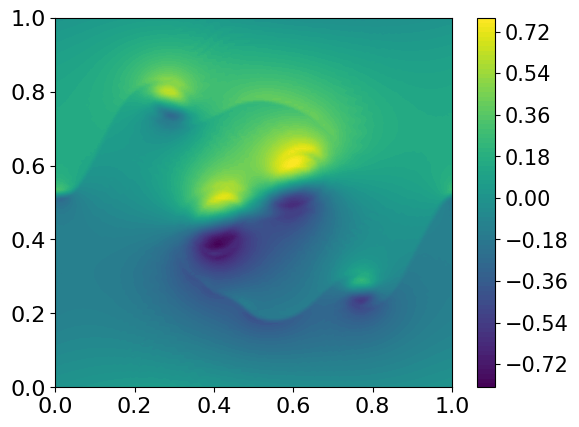}
	\end{subfigure}%
	\begin{subfigure}{.24\textwidth}
		\centering
		\includegraphics[width=\linewidth]{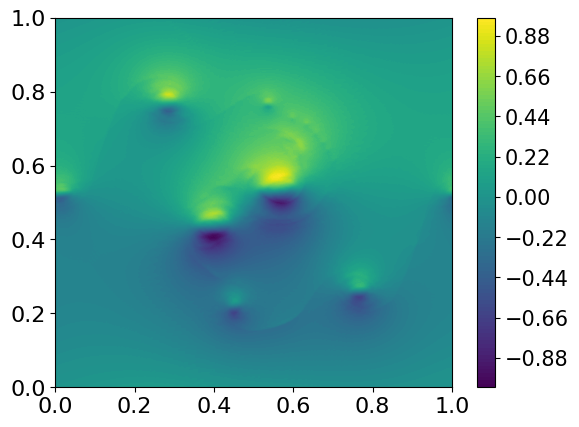}
	\end{subfigure}%
	\begin{subfigure}{.24\textwidth}
		\centering
		\includegraphics[width=\linewidth]{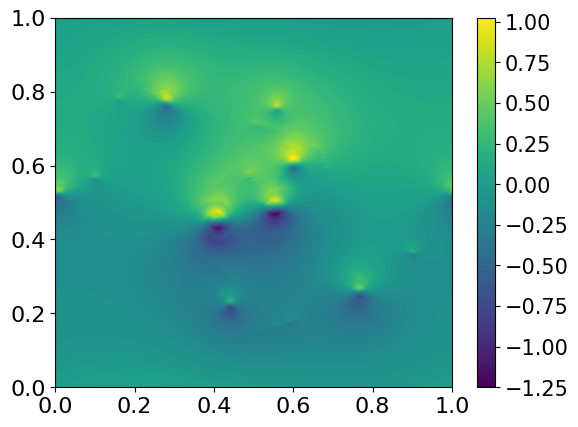}
	\end{subfigure}%
	\begin{subfigure}{.24\textwidth}
		\centering
		\includegraphics[width=\linewidth]{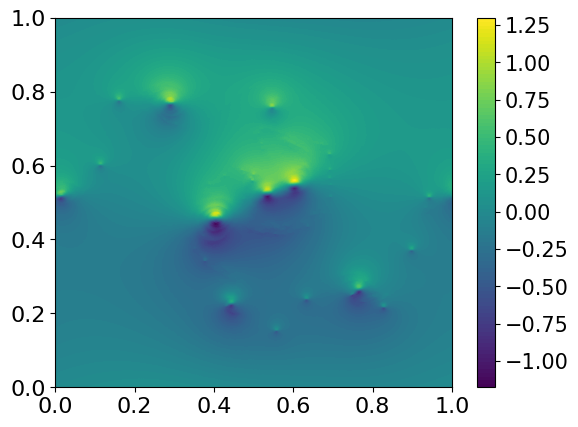}
	\end{subfigure}%
	
	\begin{subfigure}{.24\textwidth}
		\centering
		\includegraphics[width=\linewidth]{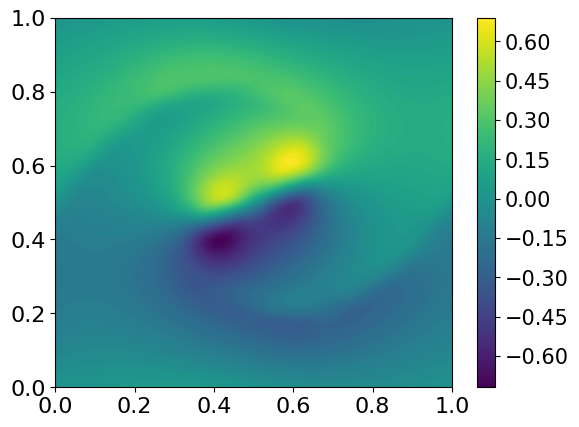}
	\end{subfigure}%
	\begin{subfigure}{.24\textwidth}
		\centering
		\includegraphics[width=\linewidth]{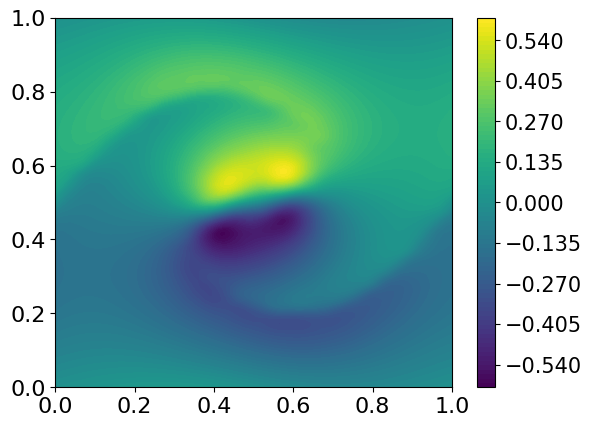}
	\end{subfigure}%
	\begin{subfigure}{.24\textwidth}
		\centering
		\includegraphics[width=\linewidth]{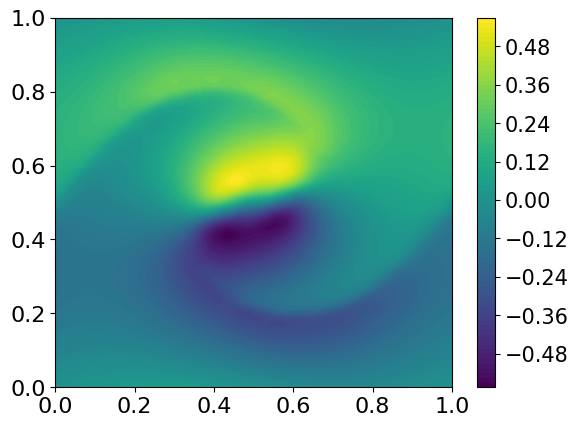}
	\end{subfigure}%
	\begin{subfigure}{.24\textwidth}
		\centering
		\includegraphics[width=\linewidth]{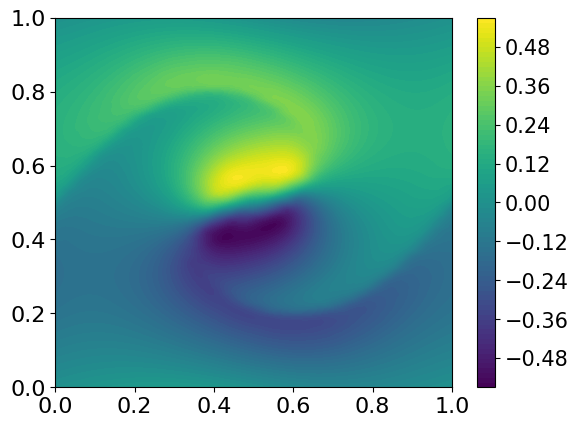}
	\end{subfigure}%

	\begin{subfigure}{.24\textwidth}
		\centering
		\includegraphics[width=\linewidth]{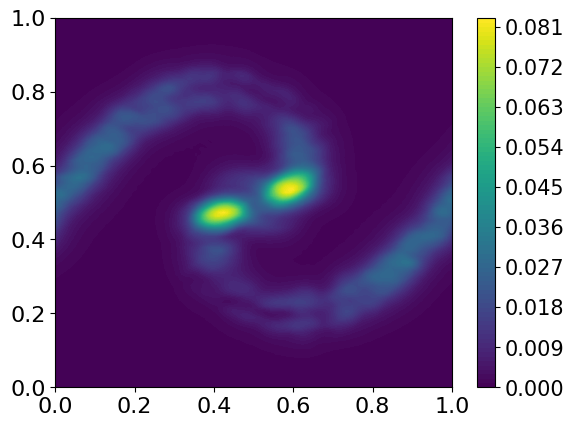}
		\caption{$N=128$}
	\end{subfigure}%
	\begin{subfigure}{.24\textwidth}
		\centering
		\includegraphics[width=\linewidth]{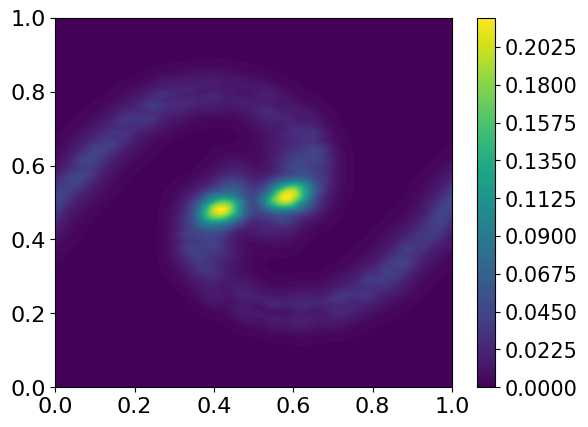}
		\caption{$N=256$}
	\end{subfigure}%
	\begin{subfigure}{.24\textwidth}
		\centering
		\includegraphics[width=\linewidth]{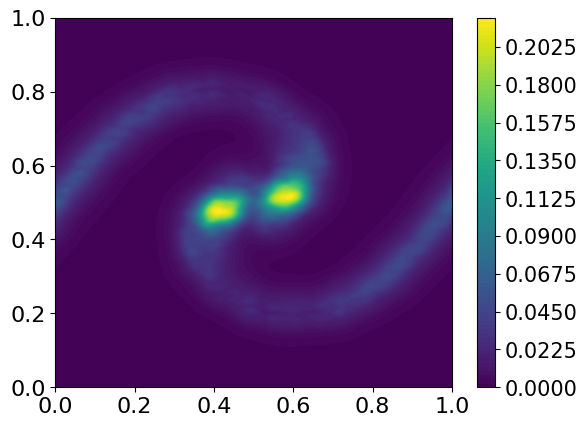}
		\caption{$N=512$}
	\end{subfigure}%
	\begin{subfigure}{.24\textwidth}
		\centering
		\includegraphics[width=\linewidth]{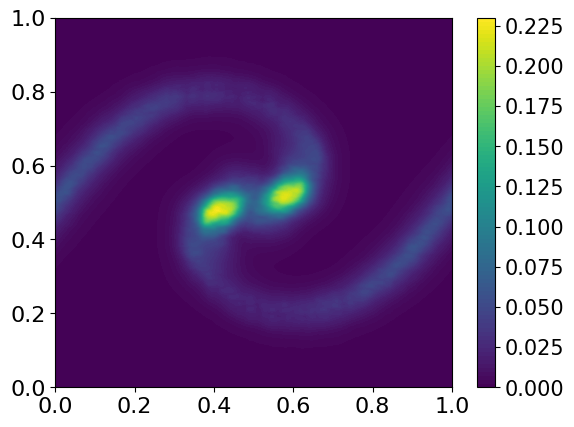}
		\caption{$N=1024$}
	\end{subfigure}%
	\caption{Results at time $T=1.2$ for the horizontal velocity $u_1$ of the sinusoidal vortex sheet, at different resolutions. Top Row: Sample; Middle Row: Mean; Bottom Row: Variance.}
	\label{fig:svs_dyn}
	\end{figure}

\begin{figure}[htbp]
	\centering
	\begin{subfigure}{.48\textwidth}
		\centering
		\includegraphics[width=\linewidth]{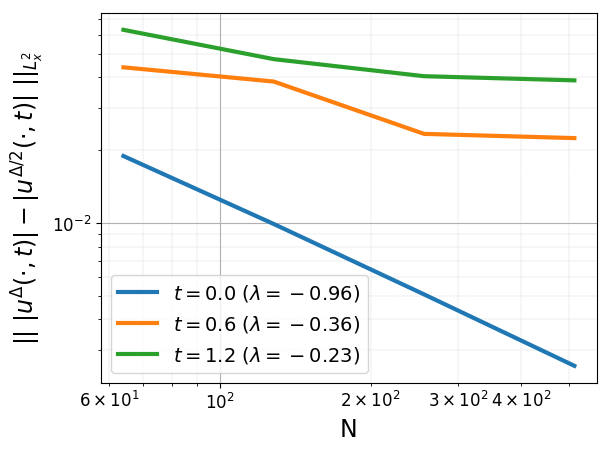}
		\caption{Sample}
	\end{subfigure}%
    \begin{subfigure}{.48\textwidth}
		\centering
		\includegraphics[width=\linewidth]{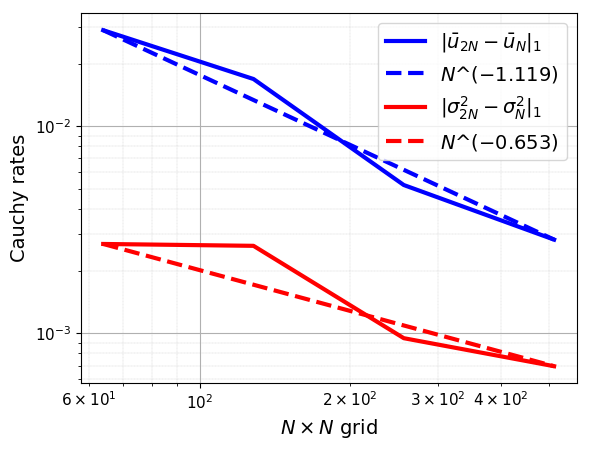}
		\caption{Mean and Variance}
		\end{subfigure}
\caption{Cauchy rates for the norm of the velocity field ($\sqrt{u_1^2+u_2^2}$) for the sinusoidal vortex sheet. Slope $\lambda$ is determined by a best fit. Left: Sample convergence rates at three different times $t=0,0.6,1.2$. Right: Convergence of mean and variance at $T=1.2$}
\label{fig:svs_conv}
\end{figure}
\subsubsection{Computation of individual samples} For any single realization of the random perturbation $\sigma_\delta(x)$, the resulting vorticity of the initial datum (sample) is a positive measure, concentrated on a sine curve (see figure \ref{fig:svs_init} (A) for horizontal component of velocity $u_1$). Hence, any single sample of the initial data in the Delort class. Therefore, by the results of \cite{LM2019}, the approximate solutions generated by the spectral viscosity method \eqref{eq:spectralvisc} will converge, on increasing resolution, to a weak solution of \eqref{eq:Eulerfull}. However, as noted in \cite{LM2019}, this convergence can be very slow as the flow breaks down into smaller and smaller vortices. In fact, this phenomenon is also seen from figure \ref{fig:svs_dyn} (Top row), where we plot the horizontal component of velocity $u_1$ at time $t=1.2$ and different resolutions. At this time, the initial vortex sheet has rolled over and broken down into a succession of small vortices, whose location and amplitude are different for different resolutions. This very slow convergence is also displayed in figure \ref{fig:svs_conv} (A), where we plot the Cauchy rates $\|u^{\Delta}(t) - u^{\Delta/2}(t)\|_{L^2_x}$, with $u^{\Delta}$ denoting the approximate solution computed with the spectral viscosity method \eqref{eq:spectralvisc}, for three different times $t=0,0.6,1.2$. As seen from this figure, the rate of convergence decreases very rapidly and at time $t=1.2$, it appears as if there is no convergence on mesh refinement. 
\subsubsection{Structure functions and Compensated energy spectra.} 
Given this apparent non-convergence of individual samples, it is pertinent to investigate if computing the statistics will be more convergent. To this end, we consider the initial data to be the initial probability measure $\bar{\mu}$. The mean and variance (of the horizontal component $u_1$) are plotted in figure \ref{fig:svs_init} (B,C). From this figure, we observe that the initial probability measure is concentrated on very small perturbations of the underlying sinusoidal vertex sheet, as reflected in the initial variance.

In order to investigate the convergence of approximations to the statistical solution, generated by the algorithm \ref{alg:MC}, we follow the template of the previous numerical experiment and compute the (instantaneous) structure function \eqref{eq:sfin} and the compensated energy spectrum \eqref{eq:ces} in figure \ref{fig:svs_sf}. From this figure, we observe that the structure function at time $t=1.2$ scales with an exponent of $\approx 0.7$ at the finest resolutions. From \eqref{eq:cessf}, this implies roughly a $\gamma = 2.4$ in the scaling of the energy spectrum \eqref{eq:ces}. A better fit to the scaling of the energy spectrum is found with $\gamma = 2.2$. We plot the compensated energy spectrum with the latter value of $\gamma$ in figure \ref{fig:svs_sf} (B). From this figure, we see that for the inertial range, the energy spectrum clearly decays (faster than) a rate of $2.2$. Thus, the assumptions of theorems \ref{thm:timecompact}, \ref{prop:inertialrange} are satisfied and the approximations will converge to a statistical solution of \eqref{eq:Eulerfull}. 

	\begin{figure}[H]
	\centering
	\begin{subfigure}{.48\textwidth}
		\centering
		\includegraphics[width=\linewidth]{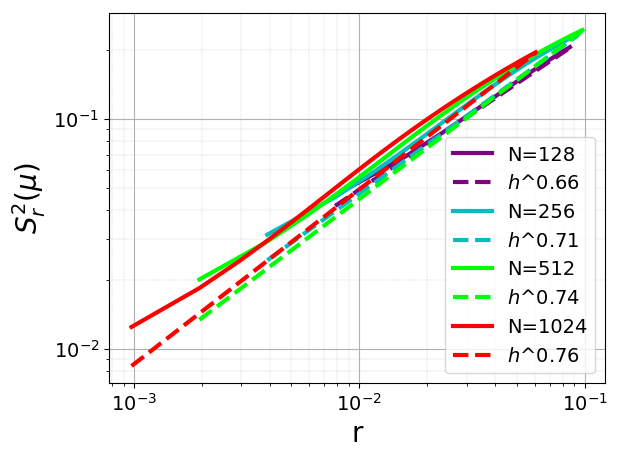}
		\caption{Instant. structure function \eqref{eq:sfin}}
	\end{subfigure}%
    \begin{subfigure}{.48\textwidth}
		\centering
		\includegraphics[width=\linewidth]{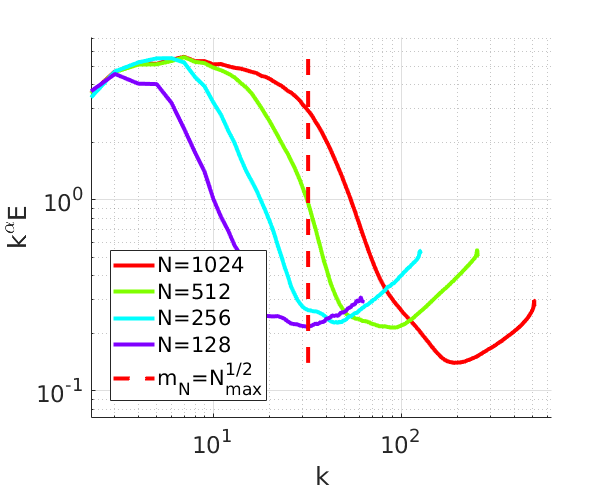}
		\caption{Compensated energy spectrum \eqref{eq:ces} with $\gamma = 2.2$ }
		\end{subfigure}
\caption{The structure function and compensated energy spectrum for the sinusoidal vortex sheet at time $T=1.2$}
\label{fig:svs_sf}
\end{figure}

\subsubsection{Convergence of observables and Wasserstein Distances.}
Given the results on the computed structure functions and energy spectra, the approximations will converge. But is this convergence at a better rate than that of single samples? To investigate this issue, we consider two different sets of computations. First, we compute the mean and the variance of the velocity field at different resolutions and plot them (for the horizontal velocity at time $t=1.2)$ in figure \ref{fig:svs_dyn} (Middle and Bottom rows). Clearly, the one-point statistics appear much more convergent than the single sample results. The mean flow consists of a coherent set of large vortices, which is in stark contrast to the large number of vortices formed in the single sample simulations. Moreover, we also plot Cauchy rates for the mean and the variance, corresponding to the norm $\sqrt{u_1^2 + u_2^2}$ at time $t=1.2$, and different resolutions in figure \ref{fig:svs_conv} (B). Again, we observe that these one-point statistics converge at a significantly faster rate than the single sample. These results indicate that one can expect significantly better convergence of approximations for statistics than for individual realizations of fluid flows, even if the initial probability measure is a small perturbation of the underlying deterministic data and further reinforces the results of \cite{FKMT17,LM2015,FLMW1} in this direction.

\begin{figure}[H]
	\centering
	\begin{subfigure}{.48\textwidth}
		\centering
		\includegraphics[width=\linewidth]{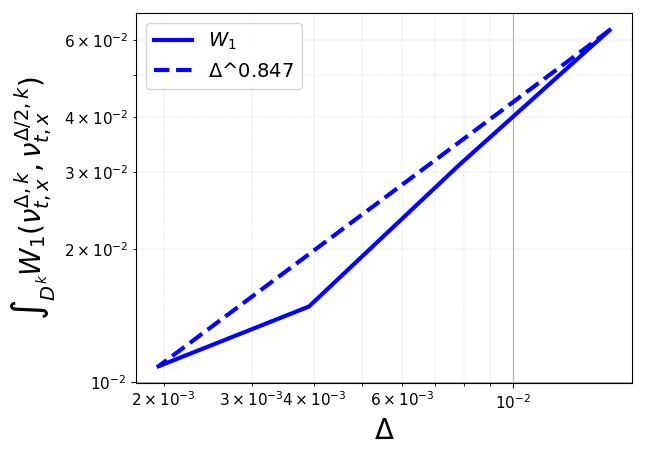}
		\caption{$k=1$}
	\end{subfigure}%
	\begin{subfigure}{.48\textwidth}
		\centering
		\includegraphics[width=\linewidth]{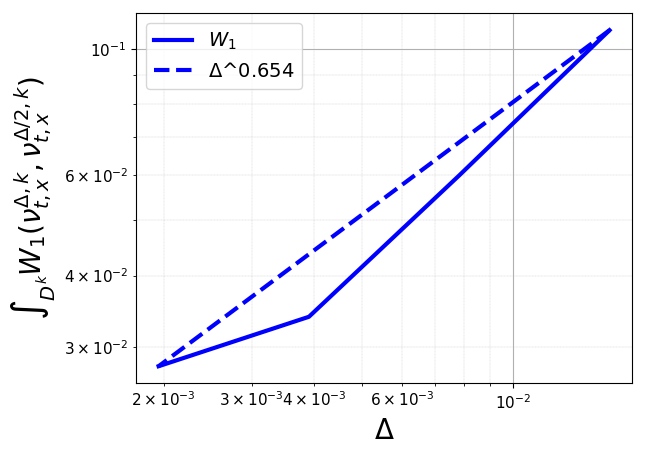}
		\caption{$k=2$}
	\end{subfigure}
\caption{The metrics $\int_{D^k}W_1(\nu^{\Delta,k}_{t,x}, \nu^{\Delta/2,k}_{t,x}) dx$ for the one- and two-point correlation marginals for the sinusoidal vortex sheet at time $t=1.2$}
\label{fig:svs_wass}
\end{figure}

Finally, we plot the Wasserstein distances \eqref{eq:met2} for $k=1,2$, corresponding to the one- and two-point correlation marginals, at time $t=1.2$, in figure \ref{fig:svs_wass}. The results clearly show convergence in these metrics at a significantly faster rate than for individual samples and indicate possible convergence in the metric \eqref{eq:dtdef} on probability measures on $L^2$.

\subsection{Fractional Brownian Motion.} 
The study of the evolution of initial ensembles corresponding to (fractional) Brownian motion stems from \cite{SAF1,Sinai}, where the authors model interesting aspects of Burgers turbulence by evolving Brownian motion initial data for the (scalar) Burgers' equation, see \cite{FLYM} for a more recent numerical study. Similarly in \cite{FLMW1}, the authors consider the compressible Euler equations with (fractional) Brownian motion initial data. Following these articles, we will consider the two-dimensional Euler equations \eqref{eq:Eulerfull} with initial data corresponding to fractional Brownian motion, i.e. the following initial data:
\begin{equation}
    \label{eq:fbmin}
u^{x,H}_0(\omega; x):=B_1^H(\omega; x),~
w^{y,H}_0(\omega;x) := B_2^H(\omega; x).
\qquad\text{for }\omega\in\Omega,\ x\in D
\end{equation}
where $B^H_1$ and $B^H_2$ are two independent two-dimensional fractional Brownian motions with the Hurst index $H\in(0,1)$. Standard Brownian motion corresponds to a Hurst index of $H=1/2$. The initial probability measure $\bar{\mu}$ is the law of the above random field.

\begin{figure}[htbp]
\begin{subfigure}{.3\textwidth}
\includegraphics[width=\textwidth]{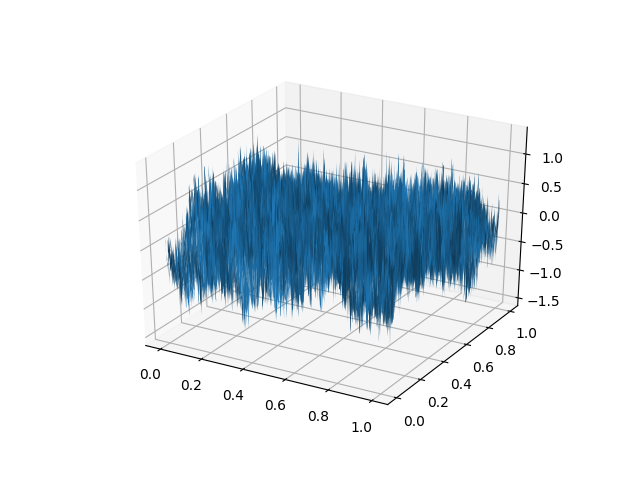}
\caption{$H=0.15$}
\end{subfigure}
\begin{subfigure}{.3\textwidth}
\includegraphics[width=\textwidth]{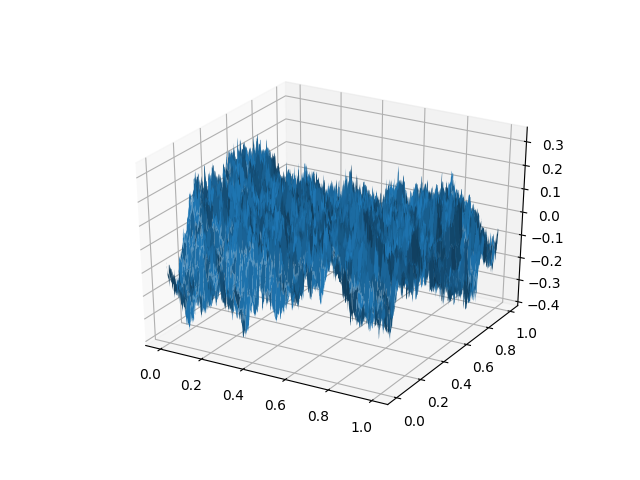}
\caption{$H=0.5$}
\end{subfigure}
\begin{subfigure}{.3\textwidth}
\includegraphics[width=\textwidth]{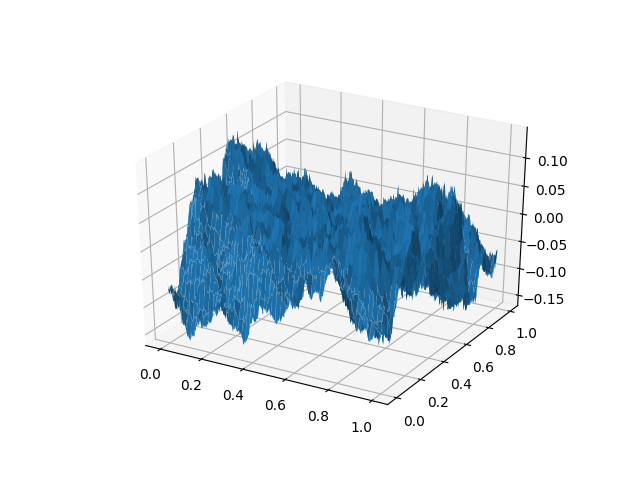}
\caption{$H=0.75$}
\end{subfigure}
\caption{A single sample of initial horizontal velocity $u_1$ for the fractional Brownian motion initial data \eqref{eq:fbmin} for three different Hurst indices}
\label{fig:fbm_init}
\end{figure}

To generate fractional Brownian motion, we use the random midpoint displacement method originally introduced by L\'evy~\cite{Levy1965} for Brownian motion, and later adapted for fractional Brownian motion, see \cite{FLMW1} section 6.6.1.

Considering fractional Brownian motion initial data \eqref{eq:fbmin} is a significant deviation from the vortex sheet initial data in the following respects,
\begin{itemize}
    \item For the vortex sheet initial data, the initial measure $\bar{\mu} \in \Prob(L^2_x)$ was concentrated on a 20-dimensional subset of $L^2_x$ (corresponding to the choice of 20 free parameters $\alpha_k$, $\beta_k$). On the other hand, in the limit of infinite resolution ($\Delta \rightarrow 0$), the fractional Brownian motion initial data corresponds to a measure concentrated on an infinite dimensional subset of $L^2_x$.
    \item For any $0 < H < 1$, and for any sample $\omega \in \Omega$, the initial vorticity for \eqref{eq:fbmin} is not a Radon measure. Consequently, the initial data does not belong to the Delort class and there are no existence results for the corresponding samples. Hence, fractional Brownian motion does not fall within the ambit of any of the available well-posedness theories for two-dimensional Euler equations.
    \item The Hurst index $H$ in \eqref{eq:fbmin} controls the regularity (and also roughness) of the initial data (pathwise). Roughly speaking, each sample is H\"older continuous with exponent $H$. Hence, we can consider a very wide range of scenarios in terms of roughness of the initial data by varying the Hurst-index $H$, see figure \ref{fig:fbm_init} for realizations of the horizontal velocity field for three different Hurst indices. In particular, one can observe from this figure that lowering the value of $H$ leads to oscillations of both higher amplitude and frequency in the initial velocity field.
    \end{itemize}

\subsubsection{Structure functions and Compensated energy spectra}
In order to verify convergence of the approximations, generated by algorithm \ref{alg:MC}, for the fractional Brownian motion initial data \eqref{eq:fbmin}, we will check if the computed structure functions \eqref{eq:sfin} decay uniformly with respect to resolution, on decreasing correlation lengths. In figure \ref{fig:fbm_sf} (Top Row), we plot the structure function at time $T=1$ for three different Hurst indices of $H=0.75,0.5,0.15$ and observe that the structure functions indeed decay to zero at a certain exponent (independent of resolution). These exponents are approximately $0.8$ for initial $H=0.75$, $0.6$ for the standard Brownian motion initial data ($H=0.5$) and $0.55$ for the initially rough $H=0.15$. These results indicate that the conditions of the compactness theorem \ref{thm:timecompact} are fulfilled and the approximations converge to a statistical solution of \eqref{eq:Eulerfull}.

\begin{figure}[H]

\begin{subfigure}{.3\textwidth}
\includegraphics[width=\textwidth]{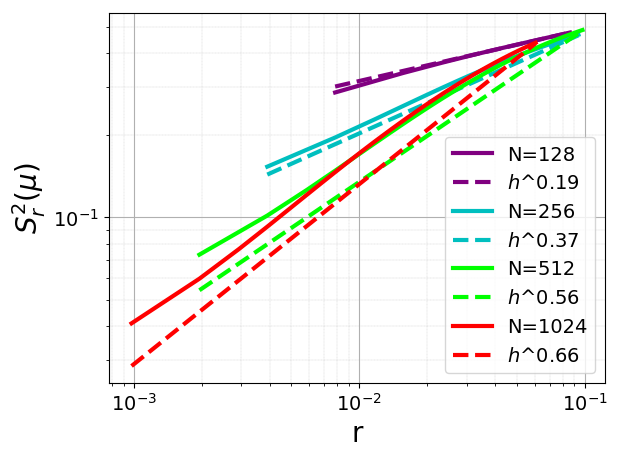}
\end{subfigure}
\begin{subfigure}{.3\textwidth}
\includegraphics[width=\textwidth]{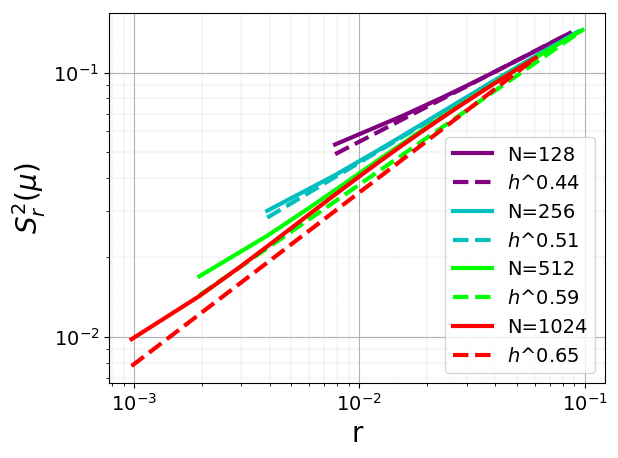}
\end{subfigure}
\begin{subfigure}{.3\textwidth}
\includegraphics[width=\textwidth]{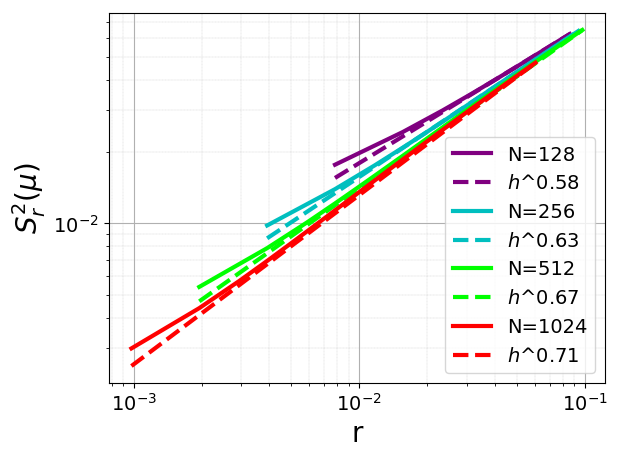}
\end{subfigure}

\begin{subfigure}{.3\textwidth}
\includegraphics[width=\textwidth]{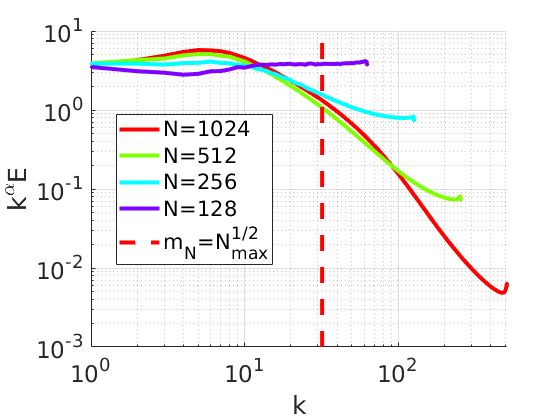} 
\caption{$H=0.15$}
\end{subfigure}
\begin{subfigure}{.3\textwidth}
\includegraphics[width=\textwidth]{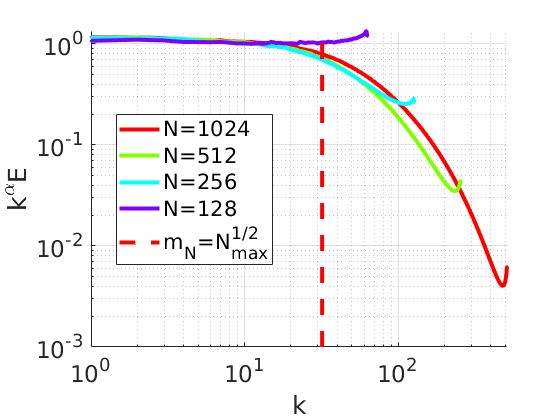}
\caption{$H=0.5$}
\end{subfigure}
\begin{subfigure}{.3\textwidth}
\includegraphics[width=\textwidth]{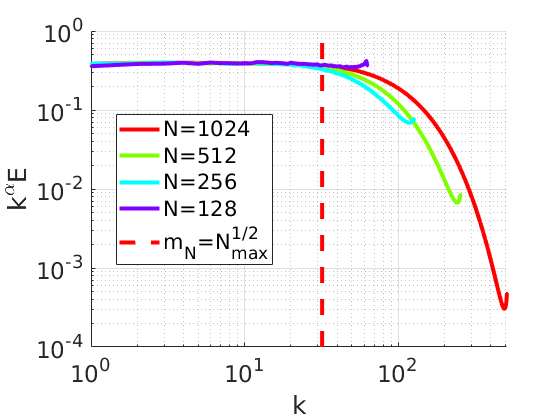}
\caption{$H=0.75$}
\end{subfigure}
\caption{Instantaneous structure function \eqref{eq:sfin} (Top Row) and Compensated energy spectrum \eqref{eq:ces} (Bottom Row) for Fractional Brownian motion initial data with three different Hurst indices at time $T=1$. The compensated energy spectrum \eqref{eq:ces} is computed with $\gamma = 1.3$  ($H=0.15$), $\gamma = 2.0$ ($H=0.5$) and $\gamma = 2.5$ ($H=0.75$)}
\label{fig:fbm_sf}
\end{figure}

This convergence is further reinforced by the computed compensated energy spectra \eqref{eq:ces}, at time $T=1$, for the three different Hurst indices shown in figure \ref{fig:fbm_sf} (Bottom Row). Based on the value of the Hurst index, we choose the compensating index  $\gamma = 2.5,2,1.3$ for the $H=0.75$,$H=0.5$,$H=0.15$, respectively. These values of $\gamma$ are chosen to provide the correct scaling of the energy spectra at the initial time $t=0$. As seen from figure \ref{fig:fbm_sf}, the compensated energy spectra remain bounded up to the final time $t=T$, independent of the spectral resolution. Hence, the energy spectrum decays at least at the rate of $K^{-\gamma}$ for increasing wave number $K$, in the inertial range. Consequently, we can readily apply proposition \ref{prop:inertialrange} and conclude that the approximations, generated by the algorithm \ref{alg:MC}, converge to a statistical solution, for all three values of the Hurst index $H$ in \eqref{eq:fbmin}. 

\subsubsection{Convergence in Wasserstein distance.} 
Next, we seek to verify convergence of observables (statistical quantities of interest). To this ends, we follow the previous section and compute the Wasserstein distances $\int_{D^k}W_1(\nu^{\Delta,k}_{t,x}, \nu^{\Delta/2,k}_{t,x}) dx$, corresponding to the $k$-point correlation marginals for the three different Hurst indices of $H=0.75,0.5,0.15$. In figure \ref{fig:fbm_wass}, these metrics are computed at time $T=1$, for $k=1,2$, corresponding to one-point and two-point statistical quantities of interest. As observed from the figure, the approximations clearly converge in this metric for both one- and two-point statistics, at rates which are independent of the underlying initial Hurst index. The two-point correlation marginals appear to converge at a slower rate than the one-point Young measures. These results validate convergence of all one- and two-point statistical quantities of interest. Taken together with the results for the structure function, compensated energy spectra and theorem \ref{thm:timecompact}, they strongly suggest convergence in metric $d_T$ \eqref{eq:dtdef} on $L^1_T(\Prob)$.
\begin{figure}[H]

\begin{subfigure}{.3\textwidth}
\includegraphics[width=\textwidth]{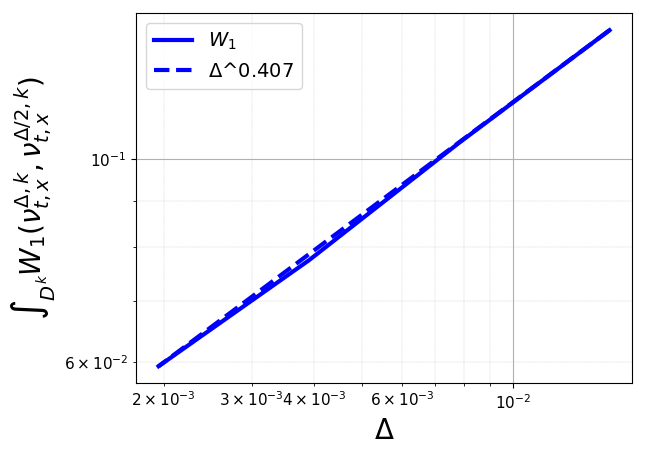}
\end{subfigure}
\begin{subfigure}{.3\textwidth}
\includegraphics[width=\textwidth]{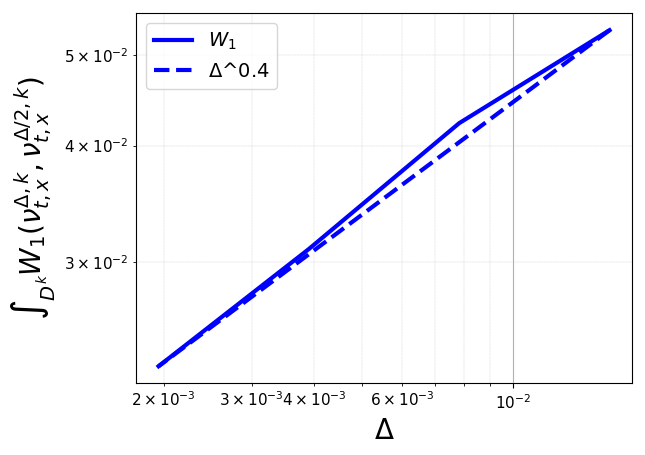}
\end{subfigure}
\begin{subfigure}{.3\textwidth}
\includegraphics[width=\textwidth]{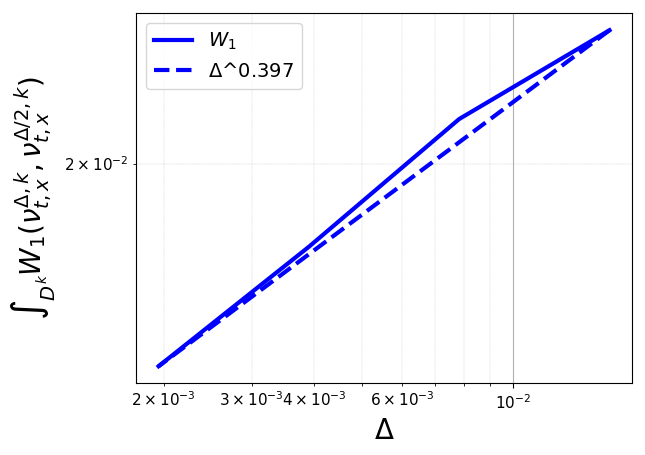}
\end{subfigure}

\begin{subfigure}{.3\textwidth}
\includegraphics[width=\textwidth]{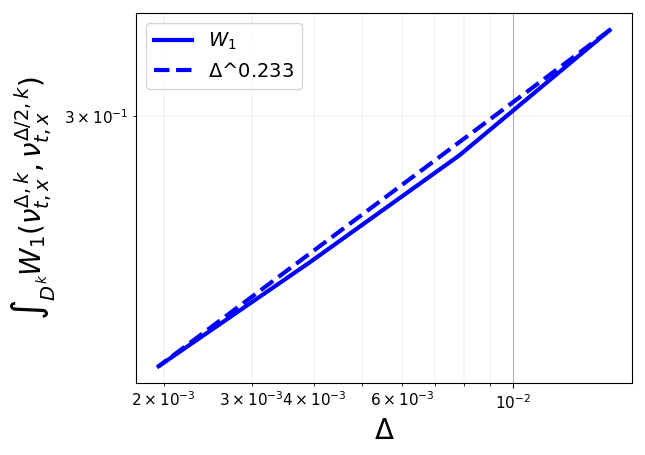}
\caption{$H=0.15$}
\end{subfigure}
\begin{subfigure}{.3\textwidth}
\includegraphics[width=\textwidth]{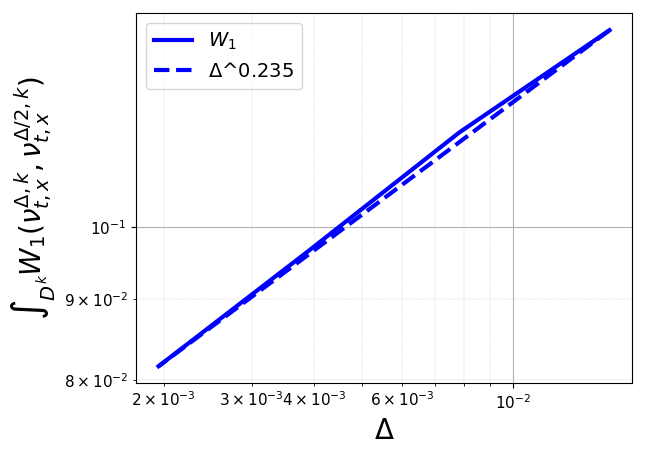}
\caption{$H=0.5$}
\end{subfigure}
\begin{subfigure}{.3\textwidth}
\includegraphics[width=\textwidth]{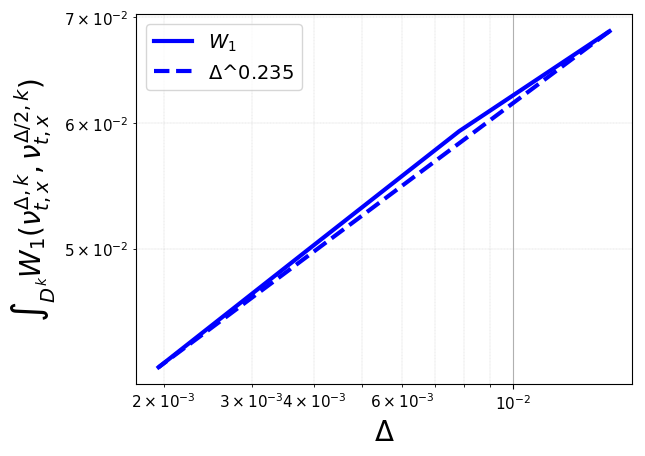}
\caption{$H=0.75$}
\end{subfigure}
\caption{Wasserstein distances $\int_{D^k}W_1(\nu^{\Delta,k}_{t,x}, \nu^{\Delta/2,k}_{t,x}) dx$ for $k=1$ (Top Row) and $k=2$ (Bottom Row)   for Fractional Brownian motion initial data with three different Hurst indices at time $t=1$.}
\label{fig:fbm_wass}
\end{figure}

\if{0}
\begin{remark}
It is straightforward to calculate that the structure function \eqref{eq:sfin} corresponding to initial fractional Brownian motion data scales as $r^H$, with $r$ being the correlation length and $H$ the initial Hurst index in \eqref{eq:fbmin}. On the other hand, the computations from algorithm \ref{alg:MC}, resulted in structure functions decaying at exponents, at time $T=1$, which were slightly more regular, i.e. $0.8$ (for $H=0.75$) and $0.6$ (for $H=0.5$), for more regular initial Hurst indices, while being significantly more regular, i.e. decay at exponent of $0.5$ for the very rough initial data with $H=0.15$. In fact, we have observed a long time decay exponent of at least $0.5$ in correlation length in all our simulations with fractional Brownian motion, indicating a possible gain in regularity in this sense. This gain is consistent with the one observed for the compressible Euler equations in \cite{FLMW1}. We will investigate the possible mechanisms and implications of this subtle gain in regularity in a forthcoming paper.  
\end{remark}
\fi

\subsection{Stability of the computed statistical solution.} The afore-presented numerical experiments clearly validate the convergence theory developed in this article. Therefore, algorithm \ref{alg:MC} provides a practical way to compute statistical solutions. We use this algorithm to study the sensitivity/stability of the computed statistical solution in the context of the discontinuous flat vortex sheet initial data ($\rho = 0$ in \eqref{eq:fvs1}) to the following perturbations,
\begin{itemize}
    \item Type of the initial perturbation, with respect to the underlying probability distribution.
    \item Type of underlying numerical method.
    \item Amplitude of the initial perturbation.
\end{itemize}

We start by changing the type of the underlying initial probability measure in the flat vortex sheet initial data. Instead of choosing the i.i.d. random variables $\alpha_k \sim \mathcal{U}[-1,1]$ according to a uniform distribution as before in \eqref{eq:fvs1}, we choose them from a normal distribution $\alpha_k \sim \mathcal{N}(0,1/3)$ with mean $0$ and variance $1/3$. The mean and variance are chosen so the distribution's mean and variance are consistent with those of $\mathcal{U}[-1,1]$. 

\begin{figure}[htbp]
\begin{subfigure}{.45\textwidth}
\includegraphics[width=\textwidth]{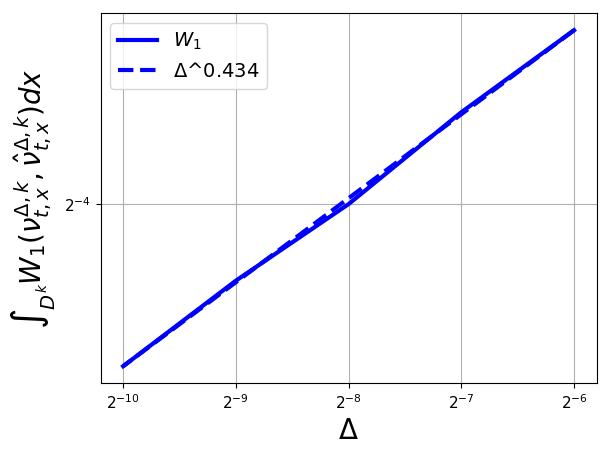}
\end{subfigure}
\begin{subfigure}{.45\textwidth}
\includegraphics[width=\textwidth]{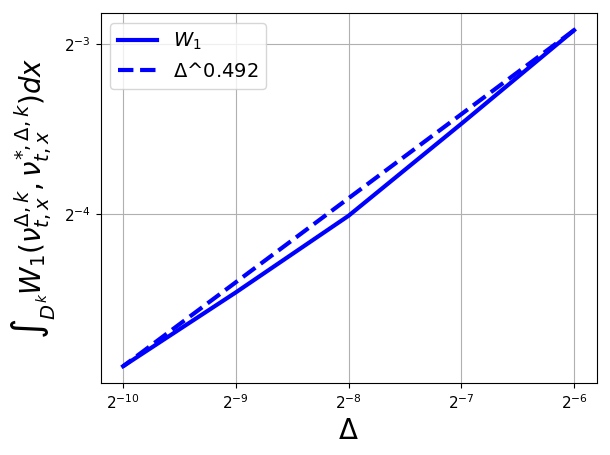}
\end{subfigure}

\begin{subfigure}{.45\textwidth}
\includegraphics[width=\textwidth]{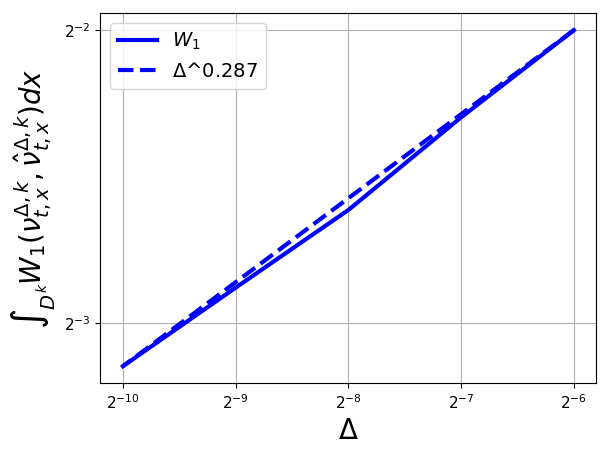}
\caption{Type}
\end{subfigure}
\begin{subfigure}{.45\textwidth}
\includegraphics[width=\textwidth]{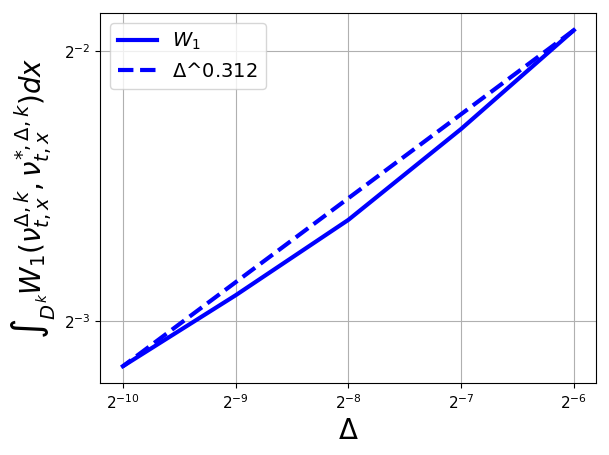}
\caption{Scheme}
\end{subfigure}
\caption{Stability of the flat vortex sheet \eqref{eq:fvs1} to perturbations. For $k=1$ (Top Row) and $k=2$ (Bottom Row) and at time $t=0.4$, we plot Left Column:  Distance $\int_{D^k}W_1(\nu^{\Delta,k}_{t,x}, \hat{\nu}^{\Delta,k}_{t,x}) dx$ with $\hat{\nu}$ the correlation marginal corresponding to a Normally distributed initial measure in \eqref{eq:fvs1}. Right Column: Distance $\int_{D^k}W_1(\nu^{\Delta,k}_{t,x}, \nu^{\ast,\Delta,k}_{t,x}) dx$ with $\nu^{\ast}$ the correlation marginal computed with the finite difference projection method. 
}
\label{fig:fvs_pert}
\end{figure}

In order to compare the corresponding approximate statistical solutions, we compute the Wasserstein distance $\int_{D^k}W_1(\nu^{\Delta,k}_{t,x}, \hat{\nu}^{\Delta/2,k}_{t,x}) dx$, at different resolutions $\Delta$. Here, $\hat{\mu}^{\Delta}$ is the statistical solution computed with normally distributed initial data and $\hat{\nu}^{\Delta}$ is the corresponding correlation measure. We set the initial perturbation amplitude $\delta = 0.05$ in \eqref{eq:fvs1}, $t=0.4$ and $k=1,2$ and plot the computed Wasserstein distances in figure \ref{fig:fvs_pert} (Left column). As seen from this figure, the two approximate solutions clearly converge to each other in this metric as the resolution is increased. This indicates that the computed statistical solutions are stable with respect to the variation of underlying initial probability measures. 

Next, we consider if the computed statistical solution depends on the underlying numerical method. To this end, we compute the statistical solution for the discontinuous flat vortex sheet initial data \eqref{eq:fvs1} with algorithm \ref{alg:MC} but replace the spectral viscosity method with a finite difference projection method \cite{Chorin,BCG,LeonardiPhD}. The convergence of this method to a statistical solution is considered in a forthcoming paper \cite{Pares-Pulido1}. We denote the computed statistical solutions with this method as $\mu^{\ast,\Delta}_t$ and the corresponding correlation measures as $\nu^{\ast,\Delta}$ and compute the Wasserstein distance $\int_{D^k}W_1(\nu^{\Delta,k}_{t,x}, \nu^{\ast,\Delta/2,k}_{t,x}) dx$ at time $t=0.4$ and $k=1,2$ and plot the results in figure \ref{fig:fvs_pert} (Right Column). From this figure, we readily conclude that the statistical solutions computed with the finite difference projection method converges to that computed with the spectral viscosity method on increasing resolution. This strongly suggests the stability of the computed statistical solutions to the underlying (convergent) numerical method. 

\begin{figure}[htbp]
\begin{subfigure}{.45\textwidth}
\includegraphics[width=\textwidth]{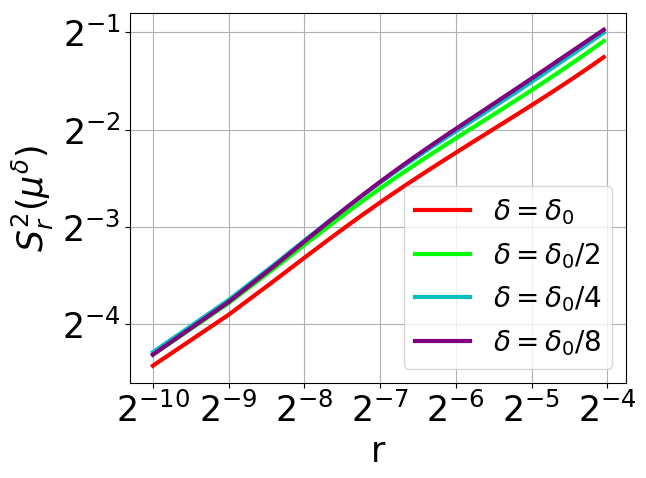}
\caption{Structure function}
\end{subfigure}
\begin{subfigure}{.45\textwidth}
\includegraphics[width=\textwidth]{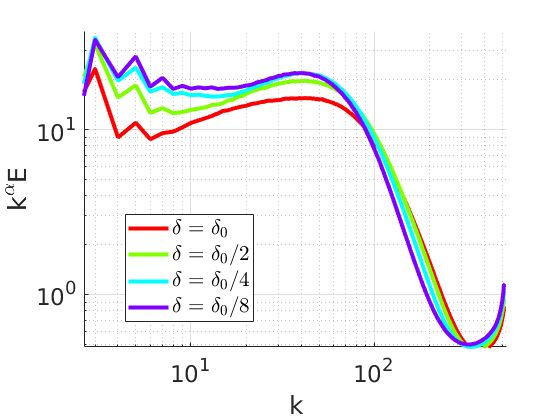}
\caption{Compensated Energy spectrum}
\end{subfigure}

\begin{subfigure}{.45\textwidth}
\includegraphics[width=\textwidth]{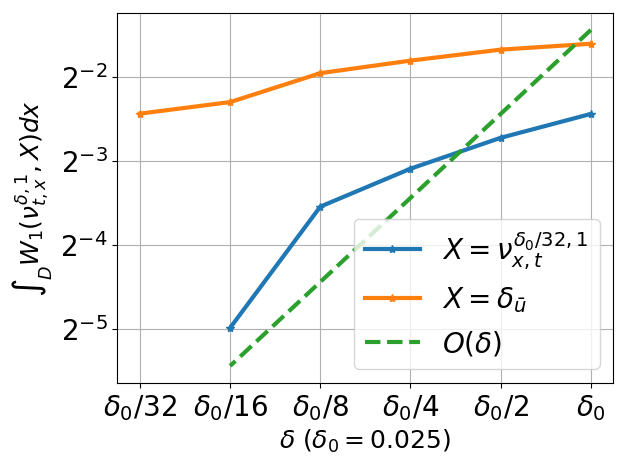}
\caption{$1$-pt Wasserstein distance}
\end{subfigure}
\begin{subfigure}{.45\textwidth}
\includegraphics[width=\textwidth]{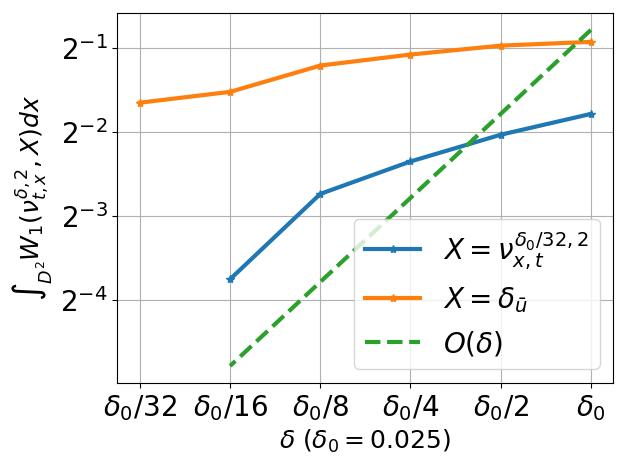}
\caption{$2$-pt Wasserstein distance}
\end{subfigure}
\caption{Stability of the flat vortex sheet \eqref{eq:fvs1} to amplitude of perturbations $\delta$. Top Row: Instantaneous structure function \eqref{eq:sfin} (Left) and Compensated energy spectrum \eqref{eq:cessf} with $\gamma = 2$ (Right) at time $T=0.4$ for different values of $\delta$. Bottom Row: Wasserstein distances with respect to the stationary solution (orange) and reference statistical solution (blue) for the $1$-point (Left) and $2$-point (Right) correlation marginals.
}
\label{fig:stabpert}
\end{figure}

Finally, we compute approximate statistical solutions for different amplitudes of the initial (random) perturbation in \eqref{eq:fvs1} by taking different values of the perturbation amplitude ranging from $\delta = 0.05$ to $\delta = 0.05/32$, corresponding to smaller and smaller perturbations of the underlying flat vortex sheet. We computed the statistical solution at the highest resolution of $N=1024$ with $m=N$ Monte Carlo samples in algorithm \ref{alg:MC}. 

First, we examine if there is convergence of the computed statistical solutions as $\delta \rightarrow 0$. To this end, we compute the instantaneous structure function \eqref{eq:sfin} at time $T=0.4$ for different values of $\delta$ and plot the results in figure \ref{fig:stabpert} (Top Row, Left). Clearly, the computed structure functions are very close and decay with decreasing correlation length with approximately the same exponent. Thus, we can appeal to theorem \ref{thm:timecompact} and claim that the approximate statistical solutions converge. This is further reinforced when we compute the instantaneous compensated energy spectrum \eqref{eq:cessf} at time $T=0.4$ and with $\gamma = 2$. We observe from figure \ref{fig:stabpert} (Top Row, Right) that the computed energy spectra with different values of $\delta$ are very close and decay at the approximately the same rate. Hence, from theorem \ref{prop:inertialrange}, the underlying approximate statistical solutions will converge. 

After establishing convergence of statistical solutions when the amplitude of perturbations in the initial datum \eqref{eq:fvs1} is decreased, it is natural to ask what these solutions converge to. One possibility is the initial discontinuous flat vortex sheet itself. After all, setting $\rho,\delta \rightarrow 0$ in \eqref{eq:fvs1}, it is easy to see that the discontinuous flat vortex sheet is a stationary solution of the incompressible Euler equations. To check if this stationary solution is indeed the zero perturbation limit of the computed statistical solution, we compute the Wasserstein distances $\int_{D}W_1(\nu^{\delta,1}_{t,x}, \delta_{\bar{u}(x)}) \, dx$ and $\int_{D^2}W_1(\nu^{\delta,2}_{t,x,y}, \delta_{\bar{u}(x)\otimes\bar{u}(y)}) \, dx \, dy$ for different values of the perturbation parameter $\delta$ with $t=0.4$ and $k=1,2$. Here, $\bar{u}$ is the stationary solution corresponding to the flat vortex sheet initial data, i.e. $\rho,\delta \rightarrow 0$. We display these distances in figure \ref{fig:stabpert} (Bottom Row). We observe from this figure that there does not seem to be any perceptible evidence of convergence to the stationary solution. This is consistent with the findings in \cite{LM2015}, where the authors had observed a non-atomic measure-valued solution as the limit of the perturbations to the flat vortex sheet. 

In order to further identify the limit of the computed statistical solutions as $\delta \rightarrow 0$, we compute a \emph{reference solution} $\mu_t^{\mathrm{ref}}$, by setting $\delta = \frac{0.05}{32}$ in \eqref{eq:fvs1} and $N=m=1024$ as the spectral resolution and Monte Carlo samples. We compute the Wasserstein distances $\int_{D^k}W_1(\nu^{\delta,k}_{t,x}, \nu^{\mathrm{ref},k}_{t,x}) \, dx$ for different values of the perturbation parameter $\delta$ with $t=0.4$ and $k=1,2$. Here, ${\bf \nu}^{\mathrm{ref}}$ is the correlation measure, corresponding to the reference statistical solution. The distances, plotted in figures \ref{fig:stabpert} (Bottom Row), do decrease as $\delta$ is decreased, albeit at a slow rate. However, this decrease is clearly visible when compared to the lack of convergence to the stationary solution in the same figures. Thus, we have established that the computed statistical solutions are stable with respect to the amplitude of perturbations and moreover, converge to a probability measure that is different from the one concentrated on the stationary solution.

\section{Discussion}
We consider the incompressible Euler equations \eqref{eq:Eulerfull} in this paper. The existence of classical (or weak) solutions is an outstanding open question in three space dimensions. Although weak solutions are known to exist in two space dimensions, even for very rough initial data, they may not be unique. Similarly, numerical experiments reveal that standard numerical methods may not converge, or converge very slowly, to weak solutions on increasing resolution.

Given these inadequacies of traditional notions of solutions, it is imperative to find solution concepts for \eqref{eq:Eulerfull} that are well-posed and amenable to efficient numerical approximation. In this context, we consider the solution framework of statistical solutions in this paper. Statistical solutions are time-parametrized probability measures on $L^2(D;\mathbb{R}^d)$. Given the characterization of probability measures on $L^p$ spaces in \cite{FLM17}, these measures are equivalent to so-called \emph{correlation measures}, i.e. Young measures on tensor-products of the underlying domain and phase space that represent multi-point spatial correlations. Furthermore, we require statistical solutions to satisfy an infinite number of PDEs (see definition \ref{def:statsol}) for the moments of the underlying correlation measure. Hence, a statistical solution can be interpreted as a measure-valued solution (in the sense of \cite{DipernaMajda}), augmented with information about the evolution of  all possible multi-point spatial correlations. 

Our aim in this paper was to study the well-posedness and efficient numerical approximation of statistical solutions. To this end, first, we had to characterize convergence on a weak topology on the space $L^1_t(\Prob(L^2(D;\mathbb{R}^d)))$, under an assumption of \emph{time-regularity} on the underlying measures. Convergence in this topology amounted to convergence of a very large class of observables (or statistical quantities of interest). We then proposed a notion of dissipative statistical solutions and also proved well-posedness results for them in a generic sense, namely when the initial measure is concentrated on functions sufficiently near initial data for which smooth solutions exist. This led to short-time well-posedness if the initial probability measure is concentrated on smooth functions. In two space dimensions, we proved global well-posedness for statistical solutions when the initial data is concentrated on smooth functions. Moreover, we also proved a suitable variant of weak-strong uniqueness. 

Our main contribution in this paper is the proposal of an algorithm \ref{alg:MC} to approximate statistical solutions of the Euler equations. This Monte Carlo type algorithm is a variant of the algorithms proposed recently in \cite{FKMT17,LM2015,FLMW1} and is based on an underlying spectral hyper-viscosity spatial discretization. Under verifiable hypotheses, we prove that the approximations converge in our proposed topology to a statistical solution. These hypotheses either rely on a suitable scaling (or uniform decay) for the structure function, or equivalently, on finding an inertial range (of wave numbers) on which the energy spectrum decays (uniformly in resolution). These hypotheses are very common in the extensive literature on turbulence (see \cite{Frisch} and references therein). \emph{A key novelty in this article is the rigorous proof of the fact that easily verifiable conditions on the structure functions or energy spectrum imply a rather strong form of convergence for (multi-point) statistical quantities of interest}. For instance, we observe a surprising fact that a bound on the compensated energy spectrum \eqref{eq:ces} implies that k-point statistics of interest, even for large $k$,  converge. The convergence results also provide a conditional global existence result for statistical solutions in both two and three space dimensions. 

We present results of several numerical experiments for the two-dimensional Euler equations. From the numerical experiments, we observe that
\begin{itemize}
    \item Our convergence theory is validated by all the numerical experiments. The assumptions on the structure functions and energy spectra appear to be very clearly fulfilled in practice. Moreover, the computed solutions converge to a statistical solution in suitable Wasserstein metrics on multi-point correlation marginals. In particular, all admissible observables of interest such as mean, variance, higher moments, structure functions, spectra, multi-point correlation functions, converge on increasing resolution and sample augmentation. 
    \item In clear contrast to the deterministic case where computed solutions may converge very slowly even if one can prove convergence of the underlying numerical method (see \cite{LM2019} and figure \ref{fig:svs_conv}), statistical quantities of interest seem to be better behaved and converge faster.
    \item For our numerical examples, we observe convergence of approximations even when the initial data was quite rough such as when the initial vorticity may not have definite sign (as in the flat vortex sheet) or may not even be a Radon measure (as in the fractional Brownian motion with any Hurst index $H \in (0,1)$). For such initial data, the samples are not in the Delort class and the convergence (and existence) theory for two-dimensional Euler equations is no longer valid. On the other hand, we find neat convergence to a statistical solution. 
    \item The computed statistical solutions where observed to be stable with respect to amplitude and type of perturbations of initial data. Moreover, we observed that two very different numerical methods converge to the same statistical solution for a fixed initial data. This is in contrast to the deterministic case, where the computed solutions can differ significantly \cite{LeonardiPhD}.

    \end{itemize}
Based on the above discussion, we conclude that statistical solutions are a promising solution framework for the incompressible Euler equations. In particular, there is some scope for proving well-posedness results within this class, possibly with further admissibility criteria. Moreover, numerical approximation of statistical solutions is feasible with ensemble averaging algorithms. Statistical solutions can be a suitable framework for uncertainty quantification and Bayesian inversion for the Euler equations and to encode and explain numerous computational and experimental results for turbulent fluid flows. 

There are several limitations of the current paper, which provide directions for future work. At the theoretical level, we seek to either relax the criteria on scaling of structure functions or prove it. This will pave the way for global existence results. Similarly, the weak-strong uniqueness results of this paper could be improved.

In terms of numerical approximation, the main issue with the Monte Carlo type algorithm \ref{alg:MC} is the slow convergence (in terms of number of samples). This necessitates a very high computational cost, particularly in three space dimensions. We plan to consider efficient variants such as multi-level Monte Carlo \cite{FLYM,MSS1,LMS1}, Quasi-Monte Carlo \cite{LyePhD} and deep learning algorithms \cite{LMR1}, for computing statistical solutions of the incompressible Euler equations in three space dimensions, in forthcoming papers. 

\appendix

\section{Proof of completeness of $L^1_t(\P)$, Prop. \ref{prop:completeness}} \label{app:completeness}

\begin{proof}[Proof of proposition \ref{prop:completeness}]

We wish to prove completeness of the metric space $(L^1_t(\P),d_T)$. Let $\mu^n_t$ be a Cauchy sequence, i.e.
\[
d_T(\mu^n_t,\mu^m_t) = \int_0^T W_1(\mu^n_t,\mu^m_t) \, dt \to 0, \quad \text{as }m,n \to \infty.
\]
It will suffice to prove that there exists a subsequence $\mu^{n_j}_t$ with a limit $\mu_t$, since then
\[
\limsup_{n\to \infty} d_T(\mu^{n}_t, \mu_t) \le \limsup_{n,j\to \infty} d_T(\mu^{n}_t, \mu^{n_j}_t) = 0,
\]
by the Cauchy property. 

We now choose a suitable subsequence (which we shall immediately reindex), by requiring that  $d_T(\mu^{n+1}_t,\mu^n_t) \le 2^{-n}$, for all $n\in \mathbb{N}$. It follows that $\sum_{n\in \mathbb{N}} d_T(\mu^{n+1}_t,\mu^n_t) \le 1$, and thus
\[
\sum_{n=1}^\infty W_1(\mu^{n+1}_t, \mu^{n}_t)
\]
is a convergent series in $L^1([0,T))$. In particular, this implies that
\[
\sum_{n=1}^\infty W_1(\mu^{n+1}_t, \mu^{n}_t) < \infty, \quad \text{for a.e. } t\in [0,T).
\]
For a.e. $t\in [0,T)$, we thus have that $\mu^n_t$ is Cauchy in $\P(L^p_x)$. Let $\mu_t$ denote the limit, which is defined a.e. on $[0,T)$. To see that $t \mapsto \mu_t$ is weak-$\ast$ measurable, we note that for any $F \in C_b(L^p_x)$, we have that $t \mapsto \langle \mu^n_t, F \rangle$ is measurable, and converges almost everywhere to $\langle \mu_t, F\rangle$. It follows that also $t\mapsto \langle \mu_t, F\rangle$ s measurable. By definition, this means that $t\mapsto \mu_t$ is weak-$\ast$ measurable.

To show that $\mu^n_t \to \mu_t$ in $L^1_t(\P)$, we note that for almost every $t\in [0,T)$, we have 
\begin{align*}
W_1(\mu^n_t, \mu_t) 
&= \lim_{m\to \infty} W_1(\mu^n_t, \mu^m_t) 
\\
&\le \lim_{m\to \infty} \sum_{k=n}^m W_1(\mu^{k+1}_t, \mu^{k}_t) \\
&= \sum_{k=n}^\infty W_1(\mu^{k+1}_t, \mu^{k}_t).
\end{align*}
Integrating over $[0,T)$, it follows that 
\[
d_T(\mu^n_t,\mu_t) \le \sum_{k=n}^\infty d_T(\mu^{k+1}_t, \mu^{k}_t) \le 2^{-n} \to 0, \quad \text{as }n\to \infty.
\]

\end{proof}

\section{Proof of time-compactness, Thm. \ref{thm:timecompact}} \label{app:timecompact}

\begin{lemma} \label{lem:equicont}
If $\mu_t^\Delta \in L^1_t(\P)$ is a uniformly time-regular family for which there exists a modulus of continuity $\omega(r)$, such that 
\[
S_r^2(\mu_t^\Delta, T)
\le \omega(r), \quad \forall \Delta > 0,
\]
and if there exists $M>0$, such that
\[
\int_{L^2_x} \Vert u \Vert_{L^2_x} \, d\mu_t^\Delta(u) \le M,  \quad \text{for a.e. }t\in[0,T), \quad \forall \Delta > 0, 
\]
then $t \mapsto \mu_t^\Delta$ is $L^1$-equicontinuous, in the sense that for any $\epsilon > 0$, there exists $\delta > 0$, such that if $h< \delta$, then 
\[
\int_{0}^{T-h}
W_1(\mu^\Delta_{t+h}, \mu^\Delta_t) \, dt
< \epsilon, \quad \forall \Delta > 0, 
\]
\end{lemma}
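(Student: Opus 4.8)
The plan is to upgrade the weak $H^{-L}_x$ temporal modulus supplied by Definition \ref{def:timereg} to an $L^2_x$-statement, by trading the loss of spatial derivatives against the uniform spatial regularity encoded in the structure-function bound. Fix $h>0$. Uniform time-regularity provides, for a full-measure set of times $I\subset[0,T)$, transport plans $\pi^\Delta_{t,t+h}$ from $\mu^\Delta_t$ to $\mu^\Delta_{t+h}$ satisfying $\int \Vert u-v\Vert_{H^{-L}_x}\,d\pi^\Delta_{t,t+h}(u,v)\le Ch$, with $C,L$ independent of $\Delta$; the moment bound $M$ guarantees that all the $W_1$-distances and integrals below are finite, so the manipulations are legitimate. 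Since $W_1$ is the $1$-Wasserstein distance on $L^2_x$, using $\pi^\Delta_{t,t+h}$ as a (non-optimal) competitor gives
\begin{equation*}
W_1(\mu^\Delta_{t+h},\mu^\Delta_t)\le \int_{L^2_x\times L^2_x} \Vert u-v\Vert_{L^2_x}\,d\pi^\Delta_{t,t+h}(u,v),
\end{equation*}
valid for a.e.\ $t$, and it remains to bound the right-hand side after integration in $t$.

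The key step is an interpolation inequality. For a mollifier scale $\eta>0$ and $w\in L^2_x$, write $\Vert w\Vert_{L^2_x}\le \Vert w-w_\eta\Vert_{L^2_x}+\Vert w_\eta\Vert_{L^2_x}$, where $w_\eta=w\ast\rho_\eta$. On the Fourier side $\widehat{w_\eta}(k)=\widehat{\rho_\eta}(k)\widehat{w}(k)$, and since $\rho_\eta\in C^\infty$ its Fourier coefficients decay faster than any polynomial, so that $\Vert w_\eta\Vert_{L^2_x}\le C_\eta\Vert w\Vert_{H^{-L}_x}$ with $C_\eta:=\sup_k |\widehat{\rho_\eta}(k)|(1+|k|^2)^{L/2}<\infty$. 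Applying this with $w=u-v$, using $(u-v)_\eta=u_\eta-v_\eta$ and the triangle inequality, yields the pointwise estimate
\begin{equation*}
\Vert u-v\Vert_{L^2_x}\le \Vert u-u_\eta\Vert_{L^2_x}+\Vert v-v_\eta\Vert_{L^2_x}+C_\eta\Vert u-v\Vert_{H^{-L}_x}.
\end{equation*}

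Now I integrate against $\pi^\Delta_{t,t+h}$ and over $t\in[0,T-h]$. The last term contributes at most $C_\eta\int_0^{T-h}Ch\,dt\le C_\eta C T h$ by the time-regularity estimate. For the first two terms I use that the marginals of $\pi^\Delta_{t,t+h}$ are $\mu^\Delta_t$ and $\mu^\Delta_{t+h}$, so each reduces, after a change of variables in time, to $\int_0^T\int_{L^2_x}\Vert u-u_\eta\Vert_{L^2_x}\,d\mu^\Delta_\tau(u)\,d\tau$; by Cauchy--Schwarz in $\tau$ and Lemma \ref{lem:tapprox} this is at most $\sqrt{T}\,(C'\omega(\eta))^{1/2}$. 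Collecting terms gives, uniformly in $\Delta$,
\begin{equation*}
\int_0^{T-h} W_1(\mu^\Delta_{t+h},\mu^\Delta_t)\,dt\le 2\sqrt{C'T\,\omega(\eta)}+C_\eta C T h.
\end{equation*}
Given $\epsilon>0$, I first choose $\eta$ so small that $2\sqrt{C'T\omega(\eta)}<\epsilon/2$ (possible since $\omega(\eta)\to0$ as $\eta\to0$), then set $\delta:=\epsilon/(2C_\eta C T)$; for every $h<\delta$ the bound is below $\epsilon$, which is precisely the claimed $L^1$-equicontinuity.

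I expect the interpolation inequality to be the heart of the argument: it converts the weak ($H^{-L}_x$) temporal modulus into a strong ($L^2_x$) one, paying for it with the spatial remainder $\Vert u-u_\eta\Vert_{L^2_x}$ that the structure-function hypothesis controls through Lemma \ref{lem:tapprox}. The only secondary point to check is the measure-theoretic compatibility of the two "almost everywhere" time sets: for fixed $h$ the plan $\pi^\Delta_{t,t+h}$ is available exactly when both $t$ and $t+h$ lie in the full-measure set $I$, i.e.\ for $t\in I\cap(I-h)$, which is again of full measure, so the time integral above is well defined for a.e.\ $t$.
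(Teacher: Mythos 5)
Your proof is correct, and its overall architecture matches the paper's: bound $W_1(\mu^\Delta_{t+h},\mu^\Delta_t)$ by the $L^2_x$-transport cost of the time-regularity plan $\pi^\Delta_{t,t+h}$, split $\Vert u-v\Vert_{L^2_x}$ into mollification remainders plus a mollified term, control the remainders through Lemma \ref{lem:tapprox} and the structure-function hypothesis, control the mollified term through the $H^{-L}_x$ time-regularity estimate, and then choose $\eta$ first and $\delta$ second. The genuine difference lies in how the mollified term is handled. The paper invokes Lions' (Ehrling-type) interpolation inequality, $\Vert w \Vert_{L^2_x}\le \eta^2\Vert w\Vert_{H^1_x}+Q(\eta)\Vert w\Vert_{H^{-L}_x}$, applied to $w=u_\eta-v_\eta$ together with the inverse estimate $\Vert u_\eta - v_\eta\Vert_{H^1_x}\le C\eta^{-1}\Vert u-v\Vert_{L^2_x}$; this produces a feedback term $C\eta\left(\Vert u\Vert_{L^2_x}+\Vert v\Vert_{L^2_x}\right)$ which is then absorbed using the moment hypothesis $\int_{L^2_x}\Vert u\Vert_{L^2_x}\,d\mu^\Delta_t\le M$, leading to the bound $C'\omega(\eta)+2C\eta TM+CQ(\eta)h$. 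You instead bound the mollification operator directly as a smoothing map $H^{-L}_x\to L^2_x$ via its Fourier multiplier, $\Vert (u-v)_\eta\Vert_{L^2_x}\le C_\eta\Vert u-v\Vert_{H^{-L}_x}$, which eliminates the feedback term entirely; as a consequence your main estimate $2\sqrt{C'T\omega(\eta)}+C_\eta CTh$ never uses the moment bound $M$ except to guarantee finiteness, so your argument is both more elementary (no appeal to the Lions/Simon compactness lemma) and marginally stronger in its hypotheses. What the paper's abstract interpolation route buys in exchange is independence from the Fourier/translation-invariant structure of the torus: it would survive verbatim if the triple $H^1_x\hookrightarrow L^2_x\hookrightarrow H^{-L}_x$ were replaced by any spaces with a compact embedding, whereas your multiplier estimate exploits the periodic setting. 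The minor points you flag are handled consistently with the paper: the intersection of the full-measure time sets is treated the same way in the paper's proof of Proposition \ref{prop:timereg}, and the discrepancy between $\omega$ and $\omega^2$ coming from the unsquared hypothesis $S^2_r(\mu^\Delta_t,T)\le\omega(r)$ versus the squared hypothesis of Lemma \ref{lem:tapprox} is immaterial, since both are moduli of continuity; the paper is equally loose on this point.
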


\begin{proof}
Since $\mu_t^\Delta$ is uniformly time-regular, by definition, there exists $L>0$ and $C>0$, and transfer plans $\pi^\Delta_{s,t}$ between $\mu^\Delta_s$ and $\mu^\Delta_t$, such that
\[
\int_{L^2_x\times L^2_x} 
\Vert u - v \Vert_{H^{-L}_x}
\, d\pi^\Delta_{s,t}(u,v) 
\le C |t-s|.
\] 
We note that by definition of $W_1$ as the infimum over all transport plans, we have
\begin{align*}
W_1(\mu^\Delta_{t+h},\mu^\Delta_t)
&\le 
\int_{L^2_x \times L^2_x}
\Vert u - v \Vert_{L^2_x} 
\, 
d\pi^\Delta_{t+h,t}(u,v).
\end{align*}
Integrating in time, we thus find
\begin{align*}
\int_0^{T-h}
W_1(\mu^\Delta_{t+h},\mu^\Delta_t)
\, dt
&\le 
\int_0^{T-h}
\int_{L^2_x \times L^2_x}
\Vert u - v \Vert_{L^2_x} 
\, 
d\pi^\Delta_{t+h,t}(u,v)
\, 
dt.
\end{align*}
Our goal is to show that the last term converges to $0$ as $h\to 0$, uniformly for all $\Delta>0$. Fix $\epsilon > 0$, arbitrary. We want to find $\delta > 0$, such that $h<\delta $ implies 
\begin{align} \label{eq:equicontest}
\int_0^{T-h}
\int_{L^2_x \times L^2_x}
\Vert u - v \Vert_{L^2_x} 
\, 
d\pi^\Delta_{t+h,t}(u,v)
\, 
dt
< \epsilon, \quad \forall \Delta > 0.
\end{align}
Given $u \in L^2_x$, let $u_\eta := \rho_\eta \ast u$ denote mollification with a standard mollifier $\rho_\eta(x) = \eta^{-d} \rho(\eta^{-1} x) \ge 0$, supported on a ball $B_\eta$ of radius $\eta$ around the origin. Then for all    $u, v \in L^2_x$,
\[
\Vert u - v \Vert_{L^2_x}
\le 
\Vert u - u_\eta \Vert_{L^2_x}
+ 
\Vert u_\eta - v_\eta \Vert_{L^2_x}
+
\Vert v_\eta - v \Vert_{L^2_x},
\]
together with the fact that $\mathrm{proj_1}\#\pi^\Delta_{t+h,t}(u,v) = \mu^\Delta_{t+h}(u)$ and $\mathrm{proj_2}\# \pi^{\Delta}_{t+h,t}(u,v) = \mu^\Delta_t(v)$, implies that
\begin{align*}
\int_0^{T-h}
\int_{L^2_x \times L^2_x}
&\Vert u - v \Vert_{L^2_x} 
\, 
d\pi^\Delta_{t+h,t}(u,v)
\, 
dt
\\
&\le 
\int_0^{T-h}
 \int_{L^2_x}
\Vert u - u_\eta \Vert_{L^2_x} 
\, d\mu_{t+h} \, dt
\\
&\quad
+
\int_0^{T-h}
\int_{L^2_x \times L^2_x}
\Vert u_\eta - v_\eta \Vert_{L^2_x}
\, d\pi^\Delta_{t+h,t}
\, dt
\\
&\quad
+
\int_0^{T-h}
 \int_{L^2_x}
\Vert v - v_\eta \Vert_{L^2_x} 
\, d\mu_{t} \, dt
\\
&\le 2 \int_0^T  \int_{L^2_x} \Vert u - u_\eta \Vert_{L^2_x} \, d\mu_t(u) \, dt
\\
&\quad
+
\int_0^{T-h}
\int_{L^2_x \times L^2_x}
\Vert u_\eta - v_\eta \Vert_{L^2_x}
\, d\pi^\Delta_{t+h,t}
\, dt
\end{align*}
Using the decay of the structure function, we have for all $\Delta > 0$:
\begin{align*}
\int_0^T \int_{L^2_x} \Vert u - u_\eta \Vert_{L^2_x} \, d\mu_t(u) \, dt
&\le 
C'\int_0^T S_\eta^2(\mu^\Delta_t) \, dt
= C' \sqrt{T} S_r^2(\mu^\Delta_t,T)
\le C' \sqrt{T} \omega(\eta),
\end{align*}
where the constant $C'=\Vert \rho \Vert_{L^\infty}$.
Next, we note that since $H^1_x \embedsc L^2_x \embeds H^{-L}_x$, it follows from a result originally due to J.L. Lions (cp. \cite[p. 59]{lions1961}, \cite[p. 58, Lemma 5.1]{lions1969}, or \cite[lemma 8, p.84]{Simon1986}) that for any given $\eta>0$, there exists a constant $Q(\eta)>0$ such that
\begin{align} \label{eq:Lionsest}
\Vert u \Vert_{L^2_x}
\le 
\eta^2 \Vert u \Vert_{H^1_x}
+ Q(\eta) \Vert u \Vert_{H^{-L}_x}, 
\quad \forall u \in H^{1}_x.
\end{align}
Thus, using also that $\Vert u_\eta - v_\eta \Vert_{H^1_x} \le C\eta^{-1} \Vert u - v \Vert_{L^2_x}$, we can estimate 
\begin{align*}
\Vert u_\eta - v_\eta \Vert_{L^2_x}
&\le 
\eta^2 \Vert u_\eta - v_\eta \Vert_{H^1_x}
+ Q(\eta) \Vert u_\eta - v_\eta \Vert_{H^{-L}_x}
\\
&\le 
C\eta \Vert u - v \Vert_{L^2_x}
+ Q(\eta) \Vert u - v \Vert_{H^{-L}_x}
\\
&\le 
C\eta \left(\Vert u \Vert_{L^2_x} + \Vert v \Vert_{L^2_x} \right)
+ Q(\eta) \Vert u - v \Vert_{H^{-L}_x}.
\end{align*}
Using this estimate, and the uniform time-regularity, we find
\begin{align*}
\int_0^{T-h}
\int_{L^2_x \times L^2_x}
\Vert u_\eta - v_\eta \Vert_{L^2_x}
\, d\pi^\Delta_{t+h,t}
\, dt
&\le 
2C\eta 
\int_0^{T}
\left(
\int_{L^2_x}
\Vert u \Vert_{L^2_x}
\, d\mu^\Delta_t(u)
\right)
\, dt
+ 
C Q(\eta) h.
\end{align*}
uniformly in $\Delta>0$. By assumption, the first term on the right-hand side is bounded by $2C\eta TM$.

Combining these estimates, we find
\[
\int_0^{T-h}
\int_{L^2_x \times L^2_x}
\Vert u - v \Vert_{L^2_x}
\, d\pi^\Delta_{t+h,t}
\, dt
\le 
C' \omega(\eta) + 2C\eta TM + CQ(\eta) h.
\]
Given $\epsilon > 0$, we can first choose $\eta>0$ sufficiently small so that $C'\omega(\eta) + 2\eta TM < \epsilon/2$. This fixes a value of $Q = Q(\eta)$ according to Lions' estimate \eqref{eq:Lionsest}. Next, we set $\delta := \epsilon/(2CQ)$. It then follows that for $h< \delta$, we have
\[
\int_0^{T-h}
\int_{L^2_x \times L^2_x}
\Vert u - v \Vert_{L^2_x}
\, d\pi^\Delta_{t+h,t}
\, dt
< \epsilon, \quad \forall \Delta > 0.
\]
Thus, given $\epsilon >0$, we have found $\delta>0$, such that for all $h<\delta$, we have \eqref{eq:equicontest}. This is what we set out to prove, and concludes our proof of the $L^1$-equicontinuity of $\mu^\Delta_t$.
\end{proof}

Next, we note that for any $0<a<T$, we can associate to any $\mu_t \in L^1([0,T);\P)$ a $M_a\mu_t \in L^1([0,T-a];\P)$, by defining 
\begin{align} \label{eq:timeav}
M_a \mu_t = \frac{1}{a}\int_0^a \mu_{t+h} \, dh.
\end{align}
The expression on the right-hand side is a straight-forward adaption of the Bochner integral for $L^1_t(\P)$ \cite{LanthalerThesis}. It is then easy to show that
\[
W_1(M_a \mu_t,\mu_t) 
\le 
\frac 1a \int_0^a W_1(\mu_{t+h},\mu_t) \, dh.
\]
Fix $T_1<T$. Integrating in time over $[0,T_1]$, it follows that
\begin{align} \label{eq:avgdist}
d_{T_1}(M_a \mu_t,\mu_t) 
\le 
\frac 1a \int_0^a d_{T_1}(\mu_{t+h},\mu_t) \, dh,
\end{align}
for all sufficiently small $a>0$. In particular, the following lemma is now immediate from this estimate and lemma \ref{lem:equicont}:

\begin{lemma} \label{lem:uniformapprox}
If $\mu^\Delta_t$ is a equicontinuous family, as in the conclusion of lemma \ref{lem:equicont}, then for any $\epsilon > 0$, we can find $\delta > 0$, such that for any $a<\delta$, we have
\[
d_{T_1}(M_a \mu^\Delta_t,\mu^\Delta_t) < \epsilon, \quad \forall \Delta > 0.
\]
Thus, the family $\{M_a \mu^\Delta_t \}_{\Delta>0}$ is a uniform approximation of $\{\mu^\Delta_t\}_{\Delta>0}$, in this case. 
\end{lemma}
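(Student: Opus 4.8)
The plan is to obtain this statement directly from the time-averaging estimate \eqref{eq:avgdist}, combined with the $L^1$-equicontinuity that is assumed as hypothesis here (namely, the conclusion of lemma \ref{lem:equicont}). First I would fix $\epsilon > 0$ and recall that the truncation horizon $T_1 < T$ is given. By the assumed uniform equicontinuity, there is a $\delta_0 > 0$, independent of $\Delta$, such that $h < \delta_0$ implies
\[
\int_0^{T-h} W_1(\mu^\Delta_{t+h}, \mu^\Delta_t)\,dt < \tfrac{\epsilon}{2}, \qquad \forall\, \Delta > 0.
\]

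Next I would reconcile the two integration ranges appearing in the problem. Since $T_1 < T$ and the integrand $W_1(\mu^\Delta_{t+h},\mu^\Delta_t)$ is nonnegative, for every $h \le T - T_1$ the monotonicity of the integral gives $\int_0^{T_1}(\cdots)\,dt \le \int_0^{T-h}(\cdots)\,dt$. Setting $\delta := \min(\delta_0, T - T_1)$, it follows that for all $h < \delta$,
\[
d_{T_1}(\mu^\Delta_{t+h}, \mu^\Delta_t) = \int_0^{T_1} W_1(\mu^\Delta_{t+h}, \mu^\Delta_t)\,dt < \tfrac{\epsilon}{2}, \qquad \forall\, \Delta > 0.
\]
Finally I would feed this pointwise-in-$h$ bound into the time-averaging estimate \eqref{eq:avgdist}: for any $a < \delta$, the averaged measure $M_a \mu^\Delta_t$ satisfies
\[
d_{T_1}(M_a \mu^\Delta_t, \mu^\Delta_t) \le \frac{1}{a} \int_0^a d_{T_1}(\mu^\Delta_{t+h}, \mu^\Delta_t)\,dh \le \frac{1}{a} \int_0^a \tfrac{\epsilon}{2}\,dh = \tfrac{\epsilon}{2} < \epsilon,
\]
uniformly in $\Delta > 0$, which is exactly the claim.

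There is essentially no genuine obstacle: the lemma is a formal corollary, and the role of estimate \eqref{eq:avgdist} is precisely to transfer the equicontinuity bound (a statement about the shifts $\mu^\Delta_{t+h}$) to the time-average $M_a\mu^\Delta_t$. The only two points requiring minor care are (i) the \emph{uniformity in} $\Delta$, which is inherited verbatim from the uniform choice of $\delta_0$ supplied by lemma \ref{lem:equicont}, and (ii) matching the upper limit $T_1$ against the $T-h$ that appears in the equicontinuity estimate, which is handled by restricting to $h < T - T_1$ and using nonnegativity of $W_1$. Both are routine, so I expect the entire argument to be two or three lines once the references are in place.
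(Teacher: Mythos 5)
Your proof is correct and follows exactly the route the paper intends: the paper states lemma \ref{lem:uniformapprox} as ``immediate'' from the averaging estimate \eqref{eq:avgdist} together with the $L^1$-equicontinuity of lemma \ref{lem:equicont}, and your argument simply writes out that combination, with the right handling of the uniformity in $\Delta$ and of the mismatch between the ranges $[0,T_1]$ and $[0,T-h]$.
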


To show that $\{\mu^\Delta_t\}_{\Delta>0}$ is relatively compact in $L^1([0,T_1];\P)$, it will thus suffice to prove that $\{M_a \mu^\Delta_t \}_{\Delta>0}$ is relatively compact, for any given (fixed) $a>0$. This is a consequence of the following lemmas and the Arzel\`a-Ascoli theorem.

\begin{lemma} \label{lem:mollequicont}
Let $T_1<T$, and fix $0<a<T-T_1$. Under the assumptions of theorem \ref{thm:timecompact}, the family $M_a\mu^\Delta_t$ is equicontinuous with respect to the $W_1$-norm, i.e. for any $\epsilon > 0$, there exists $\delta > 0$, such that
\[
W_1(M_a\mu^\Delta_t,M_a\mu^\Delta_s) < \epsilon, \quad \text{if } |s-t| < \delta.
\]
\end{lemma}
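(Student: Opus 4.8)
The plan is to deduce the pointwise-in-time equicontinuity of the averaged family $M_a\mu_t^\Delta$ from the integrated-in-time equicontinuity that was already proved in Lemma \ref{lem:equicont}. The bridge between the two is a convexity (Jensen-type) property of the $1$-Wasserstein distance: averaging probability measures over the window $[0,a]$ can only contract distances, so the distance between the two time-averages is controlled by the average of the pointwise distances.

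First I would record the convexity inequality
\[
W_1(M_a\mu_t^\Delta, M_a\mu_s^\Delta) \le \frac{1}{a}\int_0^a W_1(\mu_{t+h}^\Delta, \mu_{s+h}^\Delta)\,dh.
\]
The cleanest route, which sidesteps any measurable selection of optimal transport plans, is through the Kantorovich--Rubinstein duality \eqref{eq:W1duality}: for every $1$-Lipschitz $\Phi\colon L^2_x\to\R$ the defining property of the integral \eqref{eq:timeav} gives $\int \Phi\,d(M_a\mu_t^\Delta-M_a\mu_s^\Delta)=\frac{1}{a}\int_0^a\int\Phi\,d(\mu_{t+h}^\Delta-\mu_{s+h}^\Delta)\,dh\le\frac{1}{a}\int_0^a W_1(\mu_{t+h}^\Delta,\mu_{s+h}^\Delta)\,dh$, where the last step uses $\int\Phi\,d(\mu-\nu)\le W_1(\mu,\nu)$; taking the supremum over $\Phi$ yields the claim. (Since all measures are supported on $\overline{B_M(0)}$ it suffices to test against bounded Lipschitz $\Phi$, so convergence of all integrals is automatic.)

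Next, assuming without loss of generality $s\le t$ and writing $\tau=t-s$, I would change variables $r=s+h$ to obtain $\frac{1}{a}\int_0^a W_1(\mu_{s+h+\tau}^\Delta,\mu_{s+h}^\Delta)\,dh=\frac{1}{a}\int_s^{s+a} W_1(\mu_{r+\tau}^\Delta,\mu_r^\Delta)\,dr$. Because $s,t\in[0,T_1]$ and $0<a<T-T_1$, the window $[s,s+a]$ sits inside $[0,T-\tau)$ for $\tau$ small, so by non-negativity of $W_1$ the last expression is bounded above by $\frac{1}{a}\int_0^{T-\tau} W_1(\mu_{r+\tau}^\Delta,\mu_r^\Delta)\,dr$. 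Now Lemma \ref{lem:equicont} applies verbatim: given $\epsilon>0$ it furnishes $\delta>0$, independent of $\Delta$, so that this integral is smaller than $a\epsilon$ whenever $\tau<\delta$. Chaining the three steps gives $W_1(M_a\mu_t^\Delta,M_a\mu_s^\Delta)<\epsilon$ for $|t-s|<\delta$, uniformly in $\Delta$, which is precisely the asserted equicontinuity.

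The genuinely hard analytic step --- turning the weak $H^{-L}$ time-regularity of Definition \ref{def:timereg} into an $L^2$-Wasserstein modulus of continuity via the Lions interpolation inequality \eqref{eq:Lionsest} --- has already been carried out inside Lemma \ref{lem:equicont}, so no new delicate estimate is required here; the fixed averaging width $a>0$ only contributes the harmless constant factor $1/a$. The two points I would be careful about are therefore technical rather than substantive: first, checking that the manipulations of the Bochner-type integral \eqref{eq:timeav} (linearity of $\int\Phi\,dM_a\mu_t$ and the change of variables, via Fubini) are legitimate on $L^1_t(\P)$; and second, verifying that every constant inherited from Lemma \ref{lem:equicont} is independent of $\Delta$, since this $\Delta$-uniformity is exactly what upgrades the conclusion from equicontinuity of each member to equicontinuity of the entire family --- the property needed to invoke Arzel\`a--Ascoli in the subsequent argument.
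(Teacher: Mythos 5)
Your proposal is correct and follows essentially the same route as the paper's proof: both reduce the pointwise $W_1$-distance between the time-averages to an averaged distance via the Kantorovich--Rubinstein duality \eqref{eq:W1duality} and linearity of the averaging integral \eqref{eq:timeav}, then enlarge the integration window and invoke the $\Delta$-uniform $L^1$-equicontinuity of Lemma \ref{lem:equicont}. The only difference is presentational -- you isolate the convexity inequality as a separate step before applying it, whereas the paper carries out the duality computation and the domain enlargement in a single chain of inequalities.
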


\begin{proof}
Fix $a>0$. Let $\epsilon > 0$ be given. By assumption on the uniform decay of the structure functions $S^2_r(\mu_t^\Delta,T)$, it follows from lemma \ref{lem:equicont} that the $\mu_t^\Delta$ are $L^1$-equicontinuous. Thus, we can find $\delta > 0$, such that 
\[
\int_0^{T-h} W_1(\mu_{t+h}^\Delta,\mu_t^\Delta) \, dt < a\, \epsilon, \quad \forall \Delta > 0,
\]
whenever $h< \delta$. Given $h<\delta$, we note that for any 1-Lipschitz continuous $\Phi: L^2_x \to \mathbb{R}$, we have
\begin{align*}
\int_{L^2_x} \Phi(u) d(M_a\mu^\Delta_{t+h}-M_a\mu^\Delta_{t})
&=
\frac{1}{a}
\int_0^a 
\int_{L^2_x}
\Phi(u) 
\, d(\mu_{t+h+s} - \mu_{t+s})
\, dt
\\
&\le 
\frac{1}{a}
\int_0^a 
W_1(\mu_{t+h+s}, \mu_{t+s})
\, ds
\\
&\le 
\frac{1}{a}
\int_0^{T-h} 
W_1(\mu_{h+s}, \mu_{s})
\, ds
\\
&< \epsilon.
\end{align*}
Taking the supremum over all $1$-Lipschitz continuous $\Phi$, we conclude that 
\begin{align*}
W_1(M_a\mu^\Delta_{t+h},M_a\mu^\Delta_{t}) \le \epsilon, \quad \forall \Delta>0,
\end{align*}
whenever $h< \delta$. Thus, $t\mapsto M_a\mu^\Delta_t$ is equicontinuous as an element of $C([0,T-h];\P(L^2_x))$.
\end{proof}

\begin{lemma} \label{lem:mollcompact}
Let $T_1<T$, and fix $0<a<T-T_1$. Under the assumptions of theorem \ref{thm:timecompact}, and for any $t\in [0,T_1]$, we have that
\[
\{
M_a\mu_t^\Delta \, | \, \Delta > 0
\}
\subset P(L^2_x)
\]
is relatively compact.
\end{lemma}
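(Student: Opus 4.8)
The plan is to reduce the relative compactness of the fixed-time family $\{M_a\mu_t^\Delta\}_{\Delta>0}\subset\P(L^2_x)$ to an application of the static compactness criterion, Theorem \ref{thm:compactnessL2}. That theorem requires two ingredients: uniform concentration of the measures on a fixed ball, and a uniform bound on the static structure function $S^2_r$ by a single modulus of continuity. Both will be inherited from the corresponding uniform hypotheses on the $\mu_s^\Delta$, by exploiting that $M_a\mu_t^\Delta$ is, by its definition \eqref{eq:timeav}, nothing but a time-average of the translates $\mu_{t+h}^\Delta$.

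First I would check the concentration property. Since $a<T-T_1$ and $t\le T_1$, the measure $\mu_{t+h}^\Delta$ is defined for a.e.\ $h\in[0,a]$ and is concentrated on $\overline{B_M(0)}$ by hypothesis. Testing the averaged measure against the indicator of $\overline{B_M(0)}$ gives $M_a\mu_t^\Delta(\overline{B_M(0)})=\frac{1}{a}\int_0^a\mu_{t+h}^\Delta(\overline{B_M(0)})\,dh=1$, so that every $M_a\mu_t^\Delta$ is concentrated on $\overline{B_M(0)}\subset B_{M'}(0)$ for any fixed $M'>M$, which supplies the concentration hypothesis of Theorem \ref{thm:compactnessL2}.

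The central step is to transfer the structure-function bound. The key observation is that the functional $G_r(u):=\int_D\fint_{B_r(0)}|u(x+\xi)-u(x)|^2\,d\xi\,dx$ is continuous on $L^2_x$ (in fact locally Lipschitz, since $|G_r(u)-G_r(v)|\le 4\Vert u-v\Vert_{L^2_x}(\Vert u\Vert_{L^2_x}+\Vert v\Vert_{L^2_x})$) and bounded by $4M^2$ on $\overline{B_M(0)}$; after multiplication by a cutoff it becomes an element of $C_b(L^2_x)$ agreeing with $G_r$ on the common support. Applying the defining property of the time-average \eqref{eq:timeav} to this test function and invoking Tonelli, I obtain
\[
S^2_r(M_a\mu_t^\Delta)^2=\frac{1}{a}\int_0^a\int_{L^2_x}G_r(u)\,d\mu_{t+h}^\Delta(u)\,dh=\frac{1}{a}\int_t^{t+a}S^2_r(\mu_s^\Delta)^2\,ds\le\frac{1}{a}\,S^2_r(\mu_s^\Delta,T)^2\le\frac{\omega(r)}{a},
\]
where the integration interval was enlarged to $[0,T]$ (legitimate because $t+a<T$) so as to recognize the time-averaged structure function of Theorem \ref{thm:timecompact}. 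Hence $S^2_r(M_a\mu_t^\Delta)\le\widetilde\omega(r):=\sqrt{\omega(r)/a}$, and $\widetilde\omega$ is again a modulus of continuity for fixed $a>0$. With the concentration on $B_{M'}(0)$ and this uniform bound $\widetilde\omega$ in hand, the implication (2)$\,\Rightarrow\,$(1) of Theorem \ref{thm:compactnessL2} yields that $\{M_a\mu_t^\Delta\}_{\Delta>0}$ has compact closure in the weak topology, as claimed.

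The step I expect to demand the most care is the interchange of the $h$-average with the structure-function integrand, i.e.\ justifying that the Bochner-type integral defining $M_a\mu_t^\Delta$ really integrates $G_r$ against each $\mu_{t+h}^\Delta$ in the expected way. This is purely the measure-theoretic bookkeeping of extending the defining identity from $C_b(L^2_x)$ test functions to the bounded-on-support continuous functional $G_r$, combined with a routine Tonelli argument; the genuine analytic content of the lemma is confined to the elementary monotonicity $\int_t^{t+a}\le\int_0^T$ used in the final display.
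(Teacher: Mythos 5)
Your proof is correct and follows essentially the same route as the paper's: express $S^2_r(M_a\mu_t^\Delta)^2$ via Fubini as the time-average of the instantaneous structure functions, enlarge the integration interval $[t,t+a]\subset[0,T)$ to bound it by $\tfrac{1}{a}S^2_r(\mu_t^\Delta,T)^2$, note that concentration on the ball is inherited by the average, and invoke the static compactness criterion of Theorem \ref{thm:compactnessL2}. The only differences are cosmetic (your extra care in justifying the interchange, and bounding $S^2_r(\cdot,T)^2$ rather than $S^2_r(\cdot,T)$ by $\omega(r)$, which merely replaces the modulus $\omega(r)/\sqrt{a}$ by $\sqrt{\omega(r)/a}$).
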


\begin{proof}
We note that for any $\tau\in [0,T_1]$, we have
\begin{align*}
S_r^2(M_a\mu_{\tau}^\Delta)^2
&=
\int_{L^2_x} \int_{D} \fint_{B_r(0)} |u(x+h)-u(x)|^2 \, dh \, dx \, d(M_a\mu_{\tau}^\Delta)
\\
&=
\frac{1}{a} \int_0^a \int_{L^2_x} \int_{D} \fint_{B_r(0)} |u(x+h)-u(x)|^2 \, dh \, dx \, d\mu_{\tau+s}^\Delta \, ds
\\
&\le
\frac{1}{a} \int_0^T \int_{L^2_x} \int_{D} \fint_{B_r(0)} |u(x+h)-u(x)|^2 \, dh \, dx \, d\mu_{t}^\Delta \, dt
\\
&= \frac{1}{a} S_r^2(\mu^\Delta_t, T)^2.
\end{align*}
By assumption, there exists a modulus of continuity $\omega(r)$, such that we have $S_r^2(\mu^\Delta_t, T) \le \omega(r)$, uniformly for all $\Delta$. It follows that
\[
S_r^2(M_a\mu_{\tau}^\Delta) \le \frac{1}{\sqrt{a}} \omega(r).
\]
Furthermore, it is clear from the definition of $M_a\mu^\Delta_{\tau}$ (cp. eq. \eqref{eq:timeav}), that if $\mu^\Delta_{t}(B_M) = 1$ for almost all $t\in [0,T)$, then also $M_a \mu^\Delta_{\tau}(B_M) = 1$. By theorem \ref{thm:compactnessL2}, it now follows that the family $\{M_a\mu^\Delta_{\tau} \, | \, \Delta > 0\}$ is pre-compact in $\P(L^2_x)$.
\end{proof}

We are now in a position to prove theorem \ref{thm:timecompact}.

\begin{proof}[Proof of theorem \ref{thm:timecompact}]

By lemma \ref{lem:equicont}, the family $\mu^\Delta_t$ is equicontinuous in $L^1_x(\P)$ (in the sense made precise in lemma \ref{lem:equicont}). Then by lemma \ref{lem:uniformapprox}, the mollifications $\{M_a \mu^\Delta_t\, |\, \Delta > 0\}_{a>0}$ form a uniform approximation of $\{\mu^\Delta_t\, |\, \Delta > 0\}_{a>0}$ in the sense of lemma \ref{lem:cpctapprox}. By the uniform approximation lemma \ref{lem:cpctapprox}, to show that $\{\mu^\Delta_t\, |\, \Delta > 0\}$ is precompact it therefore suffices to show that for $a>0$ fixed, the set $\{M_a \mu^\Delta_t\, |\, \Delta > 0\}$ is precompact. As was shown in lemma \ref{lem:mollequicont} and \ref{lem:mollcompact}, the mollifications $t \mapsto M_a\mu^\Delta_t$ satisfy:
\begin{itemize}
\item The mappings $t \mapsto M_a\mu^\Delta_t$ are (pointwise) equicontinuous wrt. $\Delta>0$, with values in the metric space $\P(L^2_x)$,
\item For each $t\in [0,T_1]$, the set 
\[
\{
M_a\mu_t^\Delta \, | \, \Delta > 0
\}
\subset \P(L^2_x)
\] is precompact.
\end{itemize}
Since $\P(L^2_x)$ is a complete metric space under the $W_1$-metric, it follows from the Arzel\`a-Ascoli characterization of compact subsets of $C([0,T_1];\P(L^2_x))$ that $\{M_a \mu_t^\Delta \, |\, \Delta > 0\}$ is precompact in $C([0,T_1];\P(L^2_x))$, and hence in particular that $\{M_a\mu_t^\Delta \, | \, \Delta > 0\} \subset L^1([0,T_1];\P)$ is precompact. By the uniform approximation lemma, we conclude that we also have that 
\[
\{\mu_t^\Delta \, | \, \Delta > 0\} \subset L^1([0,T_1];\P)
\]
is precompact for any $T_1<T$.

To conclude the proof, we note that the same argument also applies when time is reversed (or defining the regularization $M_a\mu_t^\Delta$ by averaging over the interval $[t-a,t]$, rather than over $[t,t+a]$). This implies that also 
\[
\{\mu_t^\Delta \, | \, \Delta > 0\} \subset L^1([T-T_1,T];\P)
\]
is precompact for any $T_1<T$. Combining these results (e.g. setting $T_1=T/2$), we finally conclude that 
\[
\{\mu_t^\Delta \, | \, \Delta > 0\} \subset L^1([0,T);\P)
\]
is precompact.
\end{proof}

\if{0}{
\section{Proof of weak/strong uniqueness, theorem \ref{thm:weakstrongunique}} \label{app:weakstrongunique}

\begin{proof}[Proof of theorem \ref{thm:weakstrongunique}]
Let $\bar{\mu}^\ast = (\bar{\mu}_1^\ast,\dots,\bar{\mu}_M^\ast) \in \Lambda(\alpha,\bar{\mu})$ define a transport plan which minimizes the transport cost, i.e.
\[
W_2(\bar{\mu},\bar{\rho})
=
\left[
\sum_{i=1}^M
\alpha_i
\int_{L^2_x}
\Vert {u} - \bar{{v}}_i \Vert^2 
\, d\bar{\mu}_i^\ast ({u})
\right]^{1/2}.
\] 
Since $\mu_t$ is a dissipative statistical solution, we have
\[
\int_{L^2_x} \Vert {u} \Vert^2 \, d\mu_t({u})
\le 
\int_{L^2_x} \Vert {u} \Vert^2 \, d\bar{\mu}({u}),
\]
and there exists a mapping $t\mapsto (\mu_{1,t},\dots,\mu_{M,t})$, such that
\[
 \int_0^T \int_{L^2_x} \int_{D} 
\left[
{u}\cdot \partial_t {\phi}
+ 
({u}\otimes {u}) : \nabla {\phi}
\right]
\, dx \, d \mu_{i,t}(u) \, dt 
=
-\int_{L^2_x} \int_{D} 
{u}\cdot {\phi}(x,0) \, dx \, d\overline{\mu}_i({u}),
\]
for all ${\phi}\in C^\infty_c$, $\div({\phi}) = 0$, and all $i=1,\dots, M$. Testing $\mu_{i,t}$ against ${v}_i(x,t) \theta(t)$ (with $\theta(t)$ to be specified later), it follows that
\begin{align*}
 \int_0^T \int_{L^2_x} \int_{D} 
&\left[
\left({u}\cdot \partial_t {v}_i\right)\theta 
+ 
\left({u}\cdot {v}_i\right) \theta'
\right]
\, dx  \, d \mu_{i,t}(u) \, dt
\\
&=
- \int_0^T \int_{L^2_x} \int_{D} 
\left(
({u}\otimes {u}) : \nabla {v}_i
\right) \theta
\, dx  \, d \mu_{i,t}(u) \, dt
\\
&\quad 
-\int_{L^2_x} \int_{D} 
{u}\cdot \bar{{v}}_i \, dx \, d\overline{\mu}_i({u}).
\end{align*}
We now fix $t\in [0,T)$ and choose $\theta \to 1_{(-\infty,t]}$. Then for a.e. $t$, it follows that
\begin{align*}
\int_0^t \int_{L^2_x} \int_{D} 
&
\left({u}\cdot \partial_t {v}_i\right)
\, dx \, d \mu_{i,s}(u) \, ds
-
\int_{L^2_x} \int_D \left({u}\cdot {v}_i\right) \, dx \, d\mu_{i,t}
\\
&=
- \int_0^t\int_{L^2_x} \int_{D} 
\left(
({u}\otimes {u}) : \nabla {v}_i
\right)
\, dx \, d \mu_{i,t}(u) \, ds
\\
&\quad 
-\int_{L^2_x} \int_{D} 
{u}\cdot \bar{{v}}_i \, dx \, d\overline{\mu}_i({u}).
\end{align*}
Re-arranging terms, and recalling that $\partial_t {v}_i = -{v}_i \cdot \nabla {v}_i - \nabla p$, we find
\begin{align*}
\int_{L^2_x} &\int_D \left({u}\cdot {v}_i\right) \, dx \, d\mu_{i,t}
\\
&=
\int_0^t\int_{L^2_x} \int_{D} 
\left({u}-{v}_i\right) \cdot \nabla {v}_i \cdot {u}
\, dx \, d \mu_{i,t}(u) \, ds
\\
&\quad 
+\int_{L^2_x} \int_{D} 
{u}\cdot \bar{{v}}_i \, dx \, d\overline{\mu}_i({u}).
\end{align*}
Noting also that $\int {w}\cdot \nabla {v}_i \cdot {v}_i \, dx = 0$ for any divergence-free ${w}\in L^2$, we can now write
\begin{gather} \label{eq:crossterm}
\begin{aligned}
\int_{L^2_x} &\int_D \left({u}\cdot {v}_i\right) \, dx \, d\mu_{i,t}
\\
&=
\int_0^t\int_{L^2_x} \int_{D} 
\left({u}-{v}_i\right) \cdot \nabla {v}_i \cdot \left({u}-{v}_i\right)
\, dx \, d \mu_{i,t}(u) \, ds
\\
&\quad 
+\int_{L^2_x} \int_{D} 
{u}\cdot \bar{{v}}_i \, dx \, d\overline{\mu}_i({u}).
\end{aligned}
\end{gather}

It follows from the fact that $\sum_{i=1}^M \alpha_i = 1$, each $\mu_{i,t}$ is a probability measure, and $\sum_{i=1}^M \alpha_i \mu_{i,t} = \mu_t$,  that
\begin{align*}
\sum_{i=1}^M \alpha_i \int_{L^2_x}\int_D \frac{1}{2}\vert {u}-{v}_i \vert^2 \, dx \, d\mu_{i,t}
&=
\int_{L^2_x}\int_D \frac{1}{2}\vert {u} \vert^2 \, dx \, d\mu_{t}
+
\sum_{i=1}^M \alpha_i \int_D \frac12 \vert {v}_i(x,t) \vert^2 \, dx
\\
&\quad - 
\sum_{i=1}^M \alpha_i \int_{L^2_x}\int_D \left( {u}\cdot {v}_i \right) \, dx \, d\mu_{i,t}
\end{align*}
Since $\mu_t$ is dissipative, the first term is estimate from above by $\int_{L^2_x} \int_D \frac 12 |{u}|^2 \, dx \, d\overline{\mu}$. Since ${v}_i$ is a strong solution, the second term is 
\[
\sum_{i=1}^M \alpha_i \int_D \frac12 \vert {v}_i(x,t) \vert^2 \, dx
= 
\sum_{i=1}^M \alpha_i \int_D \frac12 \vert \overline{{v}}_i(x) \vert^2 \, dx.
\]
Finally, using also equation \eqref{eq:crossterm}, and estimating 
\[
\int_{L^2_x} \int_{D} ({u}-{v}_i)\cdot \nabla {v}_i \cdot ({u}-{v}_i) \, dx \, d\mu_{i,t}({u})
\le 
\Vert \nabla {v}_i \Vert_{L^\infty}
\int_{L^2_x} \int_{D} |{u}-{v}_i|^2 \, dx \, d\mu_{i,t}({u}),
\]
we find
\begin{gather*}
\begin{aligned}
\sum_{i=1}^M \alpha_i &\int_{L^2_x}\int_D \frac{1}{2}\vert {u}-{v}_i(\cdot,t) \vert^2 \, dx \, d\mu_{i,t}
\\ 
&\le 
2K
\int_0^s 
\left(\sum_{i=1}^M \alpha_i \int_{L^2_x} \int_{D} \frac12|{u}-{v}_i(\cdot,s)|^2 \, dx \, d\mu_{i,s}\right) \, ds \\
&\quad 
+ \sum_{i=1}^M \alpha_i \int_{L^2_x} \int_{D} \frac12|{u}-\overline{{v}}_i|^2 \, dx \, d\overline{\mu}_{i},
\end{aligned}
\end{gather*}
for a.e. $t\in [0,T)$. From Gronwall's inequality, this implies that the 2-Wasserstein distance between $\mu_t$ and $\rho_t = \sum_{i=1}^M\alpha_i \delta_{{v}_i(t)}$ can be bounded by
\begin{align*}
W_2(\mu_t, \rho_t)^2
&\le \sum_{i=1}^M \alpha_i \int_{L^2_x} \int_D \frac 12 | {u}- {v}_i(\cdot,t) |^2 \, dx \, d\mu_{i,t}
\\
&\le e^{2Kt} \sum_{i=1}^M \alpha_i \int_{L^2_x} \int_D \frac12 |{u}-\overline{{v}}_i|^2 \, dx \, d\overline{\mu}_i \\
&= e^{2Kt} \, W_2(\bar{\mu},\bar{\rho})^2,
\end{align*}
where the last equality follows from our choice of $\alpha_i$ and $\overline{\mu}_i$. Taking square-roots, we obtain the claimed stability estimate.

\end{proof}
}\fi

\section{Proof of existence and uniqueness, Thm. \ref{thm:existuniq}} \label{app:existuniqgeneric}

We will follow the notation used in section \ref{sec:existuniq}. 
Before coming to the proof of theorem \ref{thm:existuniq}, we observe that
\[
\mathcal{G}_n 
= 
\bigcup_{\overline{v}\in \mathcal{C}} B_{r(\overline{v})/n}(\overline{v}), \quad r(\overline{v}) := e^{-C(\overline{v})T}.
\]
In particular, $\mathcal{G}_n$ is covered by the open balls $B_{r(\overline{v})/n}(\overline{v})$.
Since $L^2_x$ is a separable metric space, we can choose a countable subcovering, i.e. we can find a dense set of initial data $\{\overline{u}_i\}_{i\in \mathbb{N}} \subset \mathcal{C}$ in such a way that 
\begin{gather}\label{eq:Gndecomp}
\begin{aligned}
\mathcal{G}_n
&=
\bigcup_{i=1}^\infty B_{r(\overline{u}_i)/n}(\overline{u}_i)
\\
&= 
\left\{\overline{u} \, \Big| \, \exists i \in \mathbb{N} \text{ s.t. } \Vert \overline{u} - \overline{u}_i \Vert_{L^2_x} < \frac1n e^{-C(\overline{u}_i) T} \, \right\}.
\end{aligned}
\end{gather}
Furthermore, choosing such a countable subset $\{ \overline{u}_i^{(n)}\}_{i\in \mathbb{N}}$ for each $\mathcal{G}_n$, $n\in \mathbb{N}$, and considering finally the union $\bigcup_{n=1}^\infty \{\overline{u}_i^{(n)}\}_{i\in \mathbb{N}}$, we can in fact find a  single countable subset of $\mathcal{C}$ (not depending on $n$), such that \eqref{eq:Gndecomp} holds for all $\mathcal{G}_n$, $n\in \mathbb{N}$.

After this preliminary observation, we now come to the proof of theorem \ref{thm:existuniq}.

\begin{proof}[Proof of theorem \ref{thm:existuniq}]

Let $\bar{\mu} \in \P(L^2_x)$ be given, such that $\bar{\mu}(\mathcal{G}) = 1$. The idea of the proof is to first choose a countable set $\{\overline{u}_i\}_{i\in\mathbb{N}}$ satisfying \eqref{eq:Gndecomp} for all $n\in \mathbb{N}$, and to observe that $\bar{\mu}$ can be well approximated by atomic measures of the form $\bar{\rho} = \sum_{i=1}^\infty \alpha_i \delta_{\overline{u}_i}$. We then use the regularity of the solution $t \mapsto u_i(t)$ of the Euler equations with initial data $\overline{u}_i$ to show that a dissipative solution $\mu_t$ exists and that it is unique.

\textbf{Construction of suitable approximants:}
For any $n \in \mathbb{N}$, let us first construct a suitable approximant $\bar{\rho}^{(n)} \approx \bar{\mu}$, of the form 
\begin{align}
\bar{\rho}^{(n)} = \sum_{i=1}^\infty \alpha_i^{(n)} \delta_{\overline{u}_i}.
\end{align}
We define the coefficients $\alpha_i^{(n)}$, as well as a decomposition $\mathcal{G} = \bigcup_{i=1}^n S_i$, recursively as follows: First, denote 
\begin{align}
r_i^{(n)} := \frac 1n e^{-C(\overline{u}_i)T},
\end{align}
such that $\mathcal{G}_n = \bigcup_{i} B_{r_i^{(n)}}(\overline{u}_i)$. Then we set
\begin{gather}
\left\{
\begin{aligned}
S^{(n)}_1 &:= B_{r_1^{(n)}}(\overline{u}_1) \cap \mathcal{G}, \\
\Sigma^{(n)}_1 &:= S^{(n)}_1, \\
\alpha^{(n)}_1 &:= \bar{\mu}(S^{(n)}_1),
\end{aligned}
\right.
\end{gather}
and inductively
\begin{gather} 
\left\{
\begin{aligned}
S^{(n)}_{i+1} &:= \left[ B_{r_{i+1}^{(n)}}(\overline{u}_{i+1}) \setminus \Sigma^{(n)}_{i} \right] \cap \mathcal{G}, \\
\Sigma^{(n)}_{i+1} &:= \Sigma_i^{(n)} \cup S^{(n)}_{i+1}, \\
\alpha^{(n)}_{i+1} &:= \bar{\mu}(S^{(n)}_{i+1}).
\end{aligned}
\right.
\end{gather}
Note that $S_i^{(n)}$ can be thought of as the support of $\bar{\mu}$ close to $\overline{u}_i$, and $\Sigma_i^{(n)}$ keeps track of the set that has already been assigned to $\overline{u}_j$ for some $j< i$. By this construction, we have
\begin{gather} \label{eq:disjoint}
\begin{aligned}
S^{(n)}_i \cap S^{(n)}_j &= \emptyset, \quad \text{for }i\ne j, \\
\bigcup_{i=1}^\infty S^{(n)}_i &= \mathcal{G},
\end{aligned}
\end{gather}
and furthermore
\[
\sum_{i=1}^\infty \alpha^{(n)}_i = \sum_{i=1}^\infty \bar{\mu}(S^{(n)}_i) = \bar{\mu}\left(\mathcal{G}\right)= 1,
\]
for all $n\in \mathbb{N}$. Thus, setting $\bar{\mu}^{(n)}_i := \bar{\mu}|_{S^{(n)}_i}$, it follows that 
\[
\bar{\mu} = \sum_{i=1}^\infty \alpha_i^{(n)} \bar{\mu}^{(n)}_i.
\]
Since $\bar{\mu}^{(n)}_i$ is supported in a small neighborhood of $\overline{u}_i$, we expect $\bar{\mu}^{(n)}_i \approx \delta_{\overline{u}_i}$, and it is now natural to define 
\[
\bar{\rho}^{(n)} := \sum_{i=1}^\infty \alpha_i^{(n)} \delta_{\overline{u}_i}.
\]
A dissipative statistical solution with initial data $\bar{\rho}^{(n)}$ is given by
\[
\rho_t^{(n)} := \sum_{i=1}^\infty \alpha_i^{(n)} \delta_{u_i(t)}.
\]
In the following, we will show that $(\rho_t^{(n)})_{n=1,2,\dots}$ is Cauchy for any $t \in [0,T)$, implying the existence of a limit $\rho_t^{(n)} \weaklyto \mu_t$. Since the definition of a dissipative statistical solutions is linear in the probability measure, it is not hard to see that such a limit $\mu_t$ must itself be a dissipative statistical solution. Furthermore, we will show that the limit $\mu_t$ has $\bar{\mu}$ as initial data. Finally, we will show that this $\mu_t$ is in fact unique in the class of dissipative statistical solutions.

\textbf{The sequence $\rho_t^{(n)}$ is Cauchy:} Let $m,n\in \mathbb{N}$ be arbitrary. Note that 
\[
\mathcal{G} = \bigcup_{i=1}^\infty S_i^{(m)} = \bigcup_{i=1}^\infty S_i^{(n)},
\]
implies that upon defining $S^{(m,n)}_{i,j} := S^{(m)}_i \cap S^{(n)}_j$, we have
\[
\mathcal{G} = \bigcup_{i,j=1}^\infty S_{i,j}^{(m,n)},
\quad
S_{i}^{(m)} = \bigcup_{j=1}^\infty S_{i,j}^{(m,n)},
\quad
S_{j}^{(n)} = \bigcup_{i=1}^\infty S_{i,j}^{(m,n)}.
\]
Furthermore, the $S_{i,j}^{(m,n)}$ are pairwise disjoint by \eqref{eq:disjoint}. Thus, setting 
\[
\alpha_{i,j}^{(m,n)} := \bar{\mu}(S_{i,j}^{(m,n)}),
\]
we can define a transport plan $\pi_t \in \P(L^2_x \times L^2_x)$ from $\rho^{(m)}_t \in \P(L^2_x)$ to $\rho^{(n)}_t \in \P(L^2_x)$, by 
\[
\pi_t := \sum_{i,j} \alpha_{i,j}^{(m,n)} \delta_{u_i(t)} \otimes \delta_{u_j(t)}.
\]
Note that if $S_{i,j}^{(m,n)} \ne \emptyset$, then since $S_{i}^{(m)} \subset B_{r_i^{(m)}}(\overline{u}_i)$ and  $S_{j}^{(n)} \subset B_{r_j^{(n)}}(\overline{u}_j)$, we must in particular have
\[
B_{r_i^{(m)}}(\overline{u}_i) \cap B_{r_j^{(n)}}(\overline{u}_j) \ne \emptyset.
\]
This implies that
\[
\Vert \overline{u}_i - \overline{u}_j \Vert_{L^2_x} 
\le r_i^{(m)} + r_j^{(n)}
\le 2\max(r_i^{(m)},r_j^{(n)}).
\]
Assuming now wlog that the maximum is $\max(r_i^{(m)},r_j^{(n)})=r_i^{(m)}$, we find from the stability estimate for Lipschitz continuous solutions of the Euler equations that
\[
\Vert u_i(t) - u_j(t) \Vert_{L^2_x} 
\le 2 r_i^{(m)} e^{C(u_i)T} = \frac{2}{m} \le 2 \max\left(\frac 1m, \frac 1n\right).
\]
It follows that
\begin{align*}
W_2(\rho^{(m)}_t,\rho^{(n)}_t)^2
&\le 
\int_{L^2_x \times L^2_x} \Vert u - v \Vert_{L^2_x}^2 \, d\pi_t(u,v)
\\
&= 
\sum_{i,j} \alpha_{i,j}^{(m,n)} \Vert u_i - u_j \Vert_{L^2_x}^2 
\\
&\le 
4\max\left(\frac 1m, \frac 1n\right)^2 \sum_{i,j} \alpha_{i,j}^{(m,n)}
\\
&= 4\max\left(\frac 1m, \frac 1n\right)^2.
\end{align*}
In particular, this shows that $\rho^{(n)}_t$ is Cauchy $\P(L^2_x)$, and has a limit $\rho^{(n)}_t \to \mu_t$. 

\textbf{$\mu_t$ is a dissipative statistical solutions with initial data $\bar{\mu}$:} 
To see this, we first note that the 2-Wasserstein distance  between $\bar{\rho}^{(n)} = \rho_0^{(n)}$ and $\bar{\mu}$ can be bounded by
\begin{align*}
W_2(\bar{\mu},\bar{\rho}^{(n)})^2
&\le
\sum_{i=1}^\infty \alpha_i^{(n)} \int_{L^2_x} \Vert u - \overline{u}_i \Vert_{L^2_x}^2 \, d\bar{\mu}_i 
\\
&\le 
\sum_{i=1}^\infty \alpha_i^{(n)} \int_{L^2_x} \left[r_i^{(n)}\right]^2 \, d\bar{\mu}_i
\le 
\sum_{i=1}^\infty \alpha_i^{(n)} \frac{1}{n^2} = \frac{1}{n^2}.
\end{align*}
Thus, $\rho_0^{(n)}$ converges to $\bar{\mu}$. Since $\rho^{(n)}_t \to \mu_t$ uniformly for $t\in [0,T)$, and each $\rho^{(n)}_t$ is a dissipative statistical solution, this implies that $\mu_t$ is a statistical solution with initial data $\bar{\mu}$.

\textbf{$\mu_t$ is unique:} Let $\widetilde{\mu}_t$ be any dissipative statistical solution. We have to show that $\widetilde{\mu}_t = \mu_t$. To this end, we fix $\epsilon >0$ for the moment. Since $\rho^{(n)}_t \weaklyto \mu_t$, we have
\[
W_2(\mu_t,\widetilde{\mu}_t) \le \epsilon +  W_2(\widetilde{\mu}_t,\rho^{(n)}_t),
\]
for all sufficiently large $n$. It will thus suffice to show that $W_2(\widetilde{\mu}_t,\rho^{(n)}_t)$ can be made smaller than any $\epsilon$ for $n$ large. We now fix $n$, such that $1/n < \epsilon$. We will from now on denote $\alpha_i = \alpha_i^{(n)}$ for $i=1,2,\dots$. Define a finite convex combination of Dirac measures by
\[
\widetilde{\rho}_t = \sum_{i=1}^N \alpha_i \delta_{u_i(t)} + \alpha_0 \delta_{u_0(t)},
\]
where we set $\alpha_0= \sum_{i>N} \alpha_i$, and $u_0(t) \equiv 0$. Here, $N$ is chosen sufficiently large to ensure that 
\[
\alpha_0 = \sum_{i>N} \alpha_i <  \epsilon^2/M^2,
\]
where $M \ge 1$ is chosen such that $\bar{\mu}(B_M^c) = 0$. Such $M$ exists by assumption on $\bar{\mu}$. Note in particular, that for this choice of $\widetilde{\rho}_t$, we have
\[
W_2(\widetilde{\rho}_t, \rho_t^{(n)}) \le \epsilon,
\]
for all $t\in[0,T)$.

Define now $\bar{\mu}_i \in \Lambda(\alpha,\bar{\mu})$ by 
\begin{gather*}
\bar{\mu}_i = \bar{\mu}^{(n)}_i, \text{ for i=1,\ldots, N,}
\quad
\text{and}
\quad
\bar{\mu}_0 = 
\begin{cases}
\frac{1}{\alpha_0} \sum_{i>N} \alpha_i \bar{\mu}_i, & \alpha_0 >0, \\
0, & \alpha_0 = 0.\\
\end{cases}
\end{gather*}
Corresponding to this decomposition 
\[
\overline{\mu} = \sum_{i=0}^N \overline{\mu}_i,
\]
and by assumption on the diffusivity of $\widetilde{\mu}_t$ (cp. definition \ref{def:dissipative} on page~\pageref{def:dissipative}), there exists a decomposition $\hat{\mu}_{i,t}$ of $\widetilde{\mu}_t$, such that for each $i=0,\dots, N$:
\[
\int_0^T \int_{L^2_x} \int_D [u\cdot \partial_t \phi + (u\otimes u):\nabla \phi] \, dx\, d\hat{\mu}_{i,t}(u) \, dt
=
-\int_{L^2_x} \int_D u\cdot \phi(x,0) \, dx \, d\overline{\mu}_i(u),
\]
and
\[
\int_{L^2_x} \Vert u \Vert^2_{L^2_x} \, d\hat{\mu}_{i,t}(u) 
\le
\int_{L^2_x} \Vert u \Vert^2_{L^2_x} \, d\overline{\mu}_i(u).
\]
Repeating the argument used in the proof of weak-strong uniqueness for measure-valued solutions \cite{Brenier2011}, it follows from the fact that $u_i(t)$ is a strong solution, that 
\[
\int_{L^2_x} \Vert u - u_i(t) \Vert_{L^2_x}^2 \, d\hat{\mu}_{i,t}(u)
\le 
e^{C(\overline{u}_i)T} \int_{L^2_x} \Vert u-\overline{u}_i \Vert_{L^2_x}^2 \, d\bar{\mu}_i(u),
\]
for almost all $t\in [0,T)$, and all $i=0,\dots,N$. By our choice of $\bar{\mu}_i$, which is supported in $B_{r_i^{(n)}}(\overline{u}_i)$ and $r_i^{(n)} = e^{-C(\overline{u}_i)T}/n$, this implies 
\[
W_2(\hat{\mu}_{i,t},\delta_{u_i(t)})^2
\le 
\int_{L^2_x} \Vert u - u_i(t) \Vert_{L^2_x}^2 \, d\hat{\mu}_{i,t}(u)
\le \frac{1}{n^2} \le \epsilon^2, 
\]
for $i=1,\dots,N$. For $i=0$, we have instead
\[
W_2(\hat{\mu}_{i,t},\delta_{u_i(t)})^2
\le 
\int_{L^2_x} \Vert u - \underbrace{u_i(t)}_{=0} \Vert_{L^2_x}^2 \, d\hat{\mu}_{i,t}(u)
\le M^2,
\]
by assumption on the $L^2$-boundedness of $\overline{\mu}$.

We conclude that for almost all $t\in [0,T)$:
\begin{align*}
W_2(\widetilde{\mu}_t, \widetilde{\rho}_t)^2
&\le 
\alpha_0 W_2(\hat{\mu}_{0,t},\delta_{u_0(t)})^2 +  \sum_{i=1}^N \alpha_i W_2(\hat{\mu}_{i,t},\delta_{u_i(t)})^2
\\
&\le \frac{\epsilon^2}{M^2} M^2 + \sum_{i=1}^N \alpha_i \epsilon^2
\le 2\epsilon^2,
\end{align*}
and hence
\begin{align*}
W_2(\widetilde{\mu}_t,\mu_t)
&\le W_2(\widetilde{\mu}_t,\widetilde{\rho}_t) + W_2(\widetilde{\rho}_t,\rho_t^{(n)}) + W_2(\rho_t^{(n)},\mu_t)
\\
&\le \sqrt{2}\epsilon + \epsilon + \epsilon \le 4\epsilon.
\end{align*}
Since $\epsilon >0$ was arbitrary, it follows that $W_2(\widetilde{\mu}_t,\mu_t) = 0$ for almost all $t\in [0,T)$, i.e. $\mu_t$ is the unique dissipative statistical solution with initial data $\bar{\mu}$.
\end{proof}

\ifvorticity

\section{Derivation of $p=2$ structure function}
\label{app:structexpression}

The goal of this section is to derive an expression for the structure function 
\[
r \mapsto \int_{\mathbb{T}^2} \fint_{B_r(0)} |u(x+h)-u(x)|^2 \, dh \, dx
\]
in terms of the vorticity $\omega$ (in two spatial dimensions). Let us throughout assume that $u\in C^\infty$ is divergence-free. We denote $\omega = \curl(u)$. 

Let us now denote by $\Gamma(x)$ the fundamental solution of the Laplacian on $\T^2$, i.e. upon identifying $\T^2 = [-\pi,\pi]^2$, the function $\Gamma(x)$ solves
\[
\Delta \Gamma(x) = \delta(x), \qquad \int_{\T^2} \Gamma(x) \, dx = 0,
\]
with periodic boundary conditions. Here, we denote by $\delta(x)$ the Dirac delta distribution. We start by deriving the following identity.

\begin{lemma} \label{lem:structintermediate}
If $u\in C^\infty(\T^2)$ is divergence-free, and $\omega = \curl(u)$, then
\begin{gather} \label{eq:structintermediate}
\begin{aligned}
\frac 12 \fint_{B_r(0)} &\int_{\T^2} |u(x+h) - u(x)|^2 \, dx \, dh
\\
&=
\int_{\T^2} \int_{B_r(0)} \left(\fint_{B_r(0)} [\Gamma(z+h)-\Gamma(z)] \, dh\right) \omega(x) \omega(x-z) \, dz \, dx.
\end{aligned}
\end{gather}
\end{lemma}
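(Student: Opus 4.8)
The plan is to reduce the left-hand side to the autocorrelation of $u$ and then to express everything through the vorticity via the stream function. First I would expand the integrand pointwise,
\[
\tfrac12|u(x+h)-u(x)|^2 = \tfrac12|u(x+h)|^2 + \tfrac12|u(x)|^2 - u(x+h)\cdot u(x),
\]
and integrate over $x\in\T^2$. By translation invariance on the torus $\int_{\T^2}|u(x+h)|^2\,dx = \int_{\T^2}|u(x)|^2\,dx$, so introducing the autocorrelation $C(h) := \int_{\T^2} u(x+h)\cdot u(x)\,dx$ gives $\tfrac12\int_{\T^2}|u(x+h)-u(x)|^2\,dx = C(0)-C(h)$. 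Averaging over $h\in B_r(0)$ turns the left-hand side of \eqref{eq:structintermediate} into $C(0) - \fint_{B_r(0)} C(h)\,dh$.

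Next I would bring in the vorticity. Since the constant (mean) component of $u$ cancels in the difference $u(x+h)-u(x)$, we may assume $u=\nabla^\perp\psi$ with stream function $\psi$, so that $\omega=\curl(u)=\Delta\psi$ and, because $\int_{\T^2}\omega\,dx=0$, $\psi=\Gamma\ast\omega$. Using $\nabla^\perp a\cdot\nabla^\perp b=\nabla a\cdot\nabla b$ and integrating by parts on the torus,
\[
C(h) = \int_{\T^2}\nabla\psi(x+h)\cdot\nabla\psi(x)\,dx = -\int_{\T^2}\psi(x+h)\,\omega(x)\,dx.
\]
Substituting $\psi(x+h)=\int_{\T^2}\Gamma(x+h-x')\omega(x')\,dx'$ and forming $C(0)-\fint_{B_r(0)}C(h)\,dh$ yields
\[
\int_{\T^2}\int_{\T^2}\left(\fint_{B_r(0)}[\Gamma(x+h-x')-\Gamma(x-x')]\,dh\right)\omega(x)\,\omega(x')\,dx'\,dx,
\]
and the change of variables $z=x-x'$ in the inner integral rewrites this over $z\in\T^2$ with kernel $\fint_{B_r(0)}[\Gamma(z+h)-\Gamma(z)]\,dh$ and weight $\omega(x)\omega(x-z)$.

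The key step, and the main obstacle, is to show that the $z$-integral may be restricted from $\T^2$ to $B_r(0)$. Writing $\fint_{B_r(0)}\Gamma(z+h)\,dh=\fint_{B_r(z)}\Gamma\,dy$, I would invoke the mean value property: since $\Delta\Gamma=\delta$, $\Gamma$ is harmonic on $B_r(z)$ whenever $0\notin\overline{B_r(z)}$, i.e.\ for $|z|>r$, and hence $\fint_{B_r(z)}\Gamma\,dy=\Gamma(z)$ there. Thus the kernel $\fint_{B_r(0)}[\Gamma(z+h)-\Gamma(z)]\,dh$ vanishes for $|z|>r$, so the $z$-integration collapses to $B_r(0)$, giving exactly \eqref{eq:structintermediate}. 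The one subtlety to monitor is the zero-mode normalization of the torus Green's function; this is harmless, since $\omega$ has zero mean and any additive constant in $\Gamma$ therefore drops out of the double integral against $\omega$.
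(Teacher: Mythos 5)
Your proof is correct and follows essentially the same route as the paper's: pass to the stream function $\psi$ with $\Delta\psi=\omega$, substitute $\psi=\Gamma\ast\omega$, and kill the kernel for $|z|>r$ by the mean-value property of $\Gamma$, which is harmonic away from the origin. The only difference is cosmetic: you reach the intermediate identity $\tfrac12\int_{\T^2}|u(x+h)-u(x)|^2\,dx=\int_{\T^2}[\psi(x+h)-\psi(x)]\,\omega(x)\,dx$ via the autocorrelation $C(h)$ and translation invariance (pointwise in $h$), whereas the paper integrates by parts against $\omega(x+h)-\omega(x)$ and combines the resulting terms under the average over $B_r(0)$.
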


\begin{proof}
Let $\psi\in C^\infty$, such that $\Delta \psi = \omega$, and $\int_{\T^2} \psi \, dx = 0$. After integration by parts, we find
\begin{align*}
\int_{\mathbb{T}^2}  |u(x+h)-u(x)|^2\, dx
&=
-\int_{\mathbb{T}^2}  [\psi(x+h)-\psi(x)][\omega(x+h)-\omega(x)] \, dx
\\
&=
-\int_{\mathbb{T}^2}  [\psi(x+h)\omega(x+h) + \psi(x)\omega(x)] \, dx 
\\
&\qquad 
+ \int_{\mathbb{T}^2}  [\psi(x+h)\omega(x) + \psi(x)\omega(x+h)] \, dx
\\
&\explain={\text{(change of variables)}}
-\int_{\mathbb{T}^2}  2[\psi(x)\omega(x)] \, dx 
\\
&\qquad 
+ \int_{\mathbb{T}^2}  [\psi(x+h)\omega(x) + \psi(x-h)\omega(x)] \, dx
\\
&= 
\int_{\mathbb{T}^2}  [\psi(x+h)-\psi(x)]\omega(x) \, dx
\\
&\qquad 
+ \int_{\mathbb{T}^2}  [\psi(x-h)-\psi(x)]\omega(x) \, dx
\end{align*}
It follows that, upon integration over $h\in B_r(0)$, the two last terms can be combined and 
\begin{align}
\fint_{B_r(0)} \int_{\mathbb{T}^2}  |u(x+h)-u(x)|^2\, dx \, dh
&=
2\fint_{B_r(0)} \int_{\mathbb{T}^2}  [\psi(x+h)-\psi(x)]\omega(x) \, dx \, dh.
\end{align}

Next, we note that 
\[
\psi(x) = \int_{\T^2} \Gamma(|x-y|) \omega(y) \, dy.
\]
Therefore, we can write 
\begin{align*}
\int_{\T^2} [\psi(x+h)-\psi(x)]\omega(x) \, dx
&=
\int_{\T^2} \int_{\T^2} \left[\Gamma(|x-y+h|) - \Gamma(|x-y|)\right] \omega(x)\omega(y) \, dx \, dy.
\end{align*}
Thus,
\begin{align*}
\frac{1}{2}\fint_{B_r(0)} &\int_{\T^2} |u(x+h)-u(x)|^2\, dx \, dh
\\
&= 
\fint_{B_r(0)} \int_{\T^2} \int_{\T^2} \left[\Gamma(|x-y+h|) - \Gamma(|x-y|)\right] \omega(x)\omega(y) \, dx \, dy \, dh 
\\
&=
\int_{\T^2} \int_{\T^2} \left(\fint_{B_r(0)}  \left[\Gamma(|x-y+h|) - \Gamma(|x-y|)\right]\, dh \right) \omega(x)\omega(y) \, dx \, dy.
\end{align*}
Let us furthermore introduce $z := x-y$, so that 
\begin{gather} \label{eq:diffu}
\begin{aligned}
\frac{1}{2}\fint_{B_r(0)} &\int_{\T^2} |u(x+h)-u(x)|^2\, dx \, dh
\\
&=
\int_{\T^2} \int_{\T^2} \left(\fint_{B_r(0)}  \left[\Gamma(|z+h|) - \Gamma(|z|)\right]\, dh \right) \omega(x)\omega(x-z) \, dx \, dz.
\end{aligned}
\end{gather}
To finish the proof, we observe that 
\[
h \mapsto L(z,h):= \Gamma(|z+h|)-\Gamma(|z|)
\]
is \emph{harmonic (and smooth)} on the set (of regularity) $\mathcal{R} = \{(z,h) \, |\,  |z| > |h| \ge 0\}$, since singularities occur only if either $|z+h|=0$, or $|z|=0$. The first case is ruled out by $|z+h|\ge |z|-|h|>0$ on $\mathcal{R}$. As is the second case, because $|z|>|h|\ge 0$, there.

Furthermore, we observe that $L(z,0)=0$ for all $|z|>0$. In particular, by the mean value property of harmonic functions on disks, this implies that 
\[
\fint_{B_r(0)}  \left[\Gamma(|z+h|) - \Gamma(|z|)\right]\, dh = 0, \quad \text{for } |z|>r.
\]
Therefore, the additional averaging over $h$ manages to `localize' the last expression to $|z|,|h|\le r$:
\begin{gather*} 
\begin{aligned}
\frac 12 \fint_{B_r(0)} &\int_{\T^2} |u(x+h) - u(x)|^2 \, dx \, dh
\\
&=
\int_{\T^2} \int_{B_r(0)} \left(\fint_{B_r(0)} [\Gamma(z+h)-\Gamma(z)] \, dh\right) \omega(x) \omega(x-z) \, dz \, dx
\end{aligned}
\end{gather*}
This is the claimed identity.
\end{proof}

To continue, we wish to find a more explicit form of the last expression in parentheses. To this end, we use the fact that on $\mathbb{R}^2$, the fundamental solution of the Laplacian is explicitly given by $x\mapsto (2\pi)^{-1} \log(|x|)$. Fix now $R<\pi$, and choose $\epsilon > 0$ such that $R+\epsilon < \pi$. Pick a smooth cut-off function $\rho_R(x)$ with compact support in the interior of $[-\pi,\pi]^2$, such that 
\[
\rho_R(x) = 
\begin{cases}
1, & |x|\le R, \\
0, & |x|\ge R+\epsilon.
\end{cases}
\]
The function $x\mapsto (2\pi)^{-1} \log(|x|) \rho_R(x)$ is smooth and compactly supported in $[-\pi,\pi]^2$, and can thus be interpreted as a function in $C^\infty(\T^2)$. Define $H_R(x)$ as the (unique) solution of 
\begin{equation} \label{eq:HRcorrection}
\Delta H_R(x) = \frac{-1}{\pi} \nabla \log(|x|) \cdot \nabla \rho_R(x) - \frac{1}{2\pi} \log(|x|) \Delta \rho_R(x),
\end{equation}
with $\int_{\T^2} H_R(x) \, dx = -\int_{\T^2} \frac{1}{2\pi} \log(|x|) \rho_R(x) \, dx$. 
The right-hand side of \eqref{eq:HRcorrection} is set to $=0$ at the origin. Note that the function on the right-hand side is in fact $\equiv 0$ in a neighborhood of the origin, and hence smooth on all of $\T^2$. It follows that also $H_R(x)\in C^\infty(\T^2)$. In fact, it is easy to see that the right-hand side is chosen so that
\[
\Delta H_R(x) = \Delta \Gamma(x) - \Delta \left(\frac{1}{2\pi} \log(|x|) \rho_R(x)\right),
\]
in the sense of distributions. Thus, it follows that $F(x) := \Gamma(x) - \frac{1}{2\pi} \log(|x|) \rho_R(x) - H_R(x)$ is a distribution in $L^2(\T^2)$, which is smooth away from the origin, satisfies $\Delta F = 0$ in the distributional sense, and $\int_{\T^2} F(x) \, dx = 0$. From this, we conclude that $F(x) \equiv 0$. In particular, for any $R<\pi$, we have found a representation for $\Gamma(x)$ of the following form:
\begin{gather}
\Gamma(x) = \frac{1}{2\pi} \log(|x|) \rho_R(x) + H_R(x),
\end{gather}
where $\rho_R,H_R\in C^\infty(\T^2)$, and 
\begin{gather} \label{eq:funddecomp}
\left\{
\begin{aligned}
\rho_R(x) &= 1, \quad \text{for } |x|\le R, \\
\Delta H_R(x) &= 0, \quad \text{for }|x|\le R.
\end{aligned}
\right.
\end{gather}
We can now prove the following lemma:

\begin{lemma} \label{lem:fundsol}
Let $\Gamma(x)$ be the fundamental solution of the Laplacian on $\mathbb{T}^2 = [-\pi,\pi]^2$ (with periodic boundary conditions). If $r<\pi/2$ and $|z|\le r$, then 
\[
\fint_{B_r(0)} [\Gamma(z+h) - \Gamma(z)] \, dh
= 
\frac{1}{2\pi} 
\fint_{B_r(0)} [\log(z+h)-\log(z)] \, dh.
\]
\end{lemma}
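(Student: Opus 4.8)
The plan is to exploit the decomposition of the fundamental solution derived just above the statement, namely
\[
\Gamma(x) = \frac{1}{2\pi}\log(|x|)\rho_R(x) + H_R(x),
\]
valid for any $R<\pi$, where $\rho_R \equiv 1$ on $\{|x|\le R\}$ and, crucially, $H_R$ is harmonic on $\{|x|\le R\}$ (cp. \eqref{eq:funddecomp}). The key observation is that averaging over $h\in B_r(0)$ only ever samples $\Gamma$ at points $z+h$ with $|z+h|\le |z|+|h|\le 2r$. Since by hypothesis $r<\pi/2$, we have $2r<\pi$, so we may fix any radius $R$ with $2r\le R<\pi$. For such $R$ and for all $|z|\le r$, $|h|\le r$, both $z+h$ and $z$ lie in the disk $\{|x|\le R\}$ on which $\rho_R\equiv 1$; hence the decomposition collapses to
\[
\Gamma(z+h)-\Gamma(z) = \frac{1}{2\pi}\bigl[\log(|z+h|)-\log(|z|)\bigr] + \bigl[H_R(z+h)-H_R(z)\bigr].
\]

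First I would substitute this identity into the left-hand side of the claim and average over $h\in B_r(0)$, splitting the result into the logarithmic contribution — which is exactly the right-hand side of the asserted identity — and the correction term $\fint_{B_r(0)}[H_R(z+h)-H_R(z)]\,dh$. The remaining task is to show that this correction term vanishes.

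The decisive step is the mean value property of harmonic functions. I would rewrite the average of $H_R(z+h)$ over $h\in B_r(0)$ as the average of $H_R$ over the translated ball $B_r(z)$ centered at $z$. Because $|z|\le r$ and this ball has radius $r$, every point $w\in B_r(z)$ satisfies $|w|\le |z|+r\le 2r\le R$, so $B_r(z)$ is contained in the disk on which $H_R$ is harmonic (and smooth). The mean value property then gives
\[
\fint_{B_r(0)}H_R(z+h)\,dh = \fint_{B_r(z)}H_R(w)\,dw = H_R(z),
\]
so that $\fint_{B_r(0)}[H_R(z+h)-H_R(z)]\,dh = H_R(z)-H_R(z)=0$, which finishes the proof.

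The only point requiring care — and the main (though mild) obstacle — is the geometric bookkeeping guaranteeing $B_r(z)\subset\{|x|\le R\}$, i.e. that the sampled ball never leaves the harmonicity region of $H_R$; this is precisely where the hypotheses $r<\pi/2$ (to allow the choice $R<\pi$) and $|z|\le r$ (to keep the ball centered near the origin) are used. Once this containment is secured, the mean value property applies verbatim and no further estimates are needed.
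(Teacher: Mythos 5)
Your proof is correct and follows essentially the same route as the paper: the same decomposition $\Gamma(x) = \frac{1}{2\pi}\log(|x|)\rho_R(x) + H_R(x)$ with $2r \le R < \pi$, the observation that $\rho_R \equiv 1$ on all sampled points, and the mean value property of the harmonic correction $H_R$ over the translated ball $B_r(z)$ to kill the remainder term. The only (immaterial) difference is that the paper takes $R$ strictly greater than $2r$, while your choice $R = 2r$ still works since $H_R$ is harmonic on the closed disk $\{|x|\le R\}$ by \eqref{eq:funddecomp}.
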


\begin{proof}
Given $r<\pi/2$, choose $R$ such that $2r < R < \pi$. According to the discussion preceding the statement of this theorem, we can then represent 
\[
\Gamma(x) = \frac{1}{2\pi} \log(|x|) \rho_R(x) + H_R(x),
\]
with $\rho_R$, $H_R$ satisfying \eqref{eq:funddecomp}. Then, for any $|z|\le r$, and $h\in B_r(0)$, we have $|z+h| < R$, and hence the function 
\[
B_r(0) \to \mathbb{R}, \quad h \mapsto H_R(z+h) - H_R(z)
\]
is harmonic for fixed $|z|\le r$. In particular, it follows from the mean-value property that 
\[
\fint_{B_r(0)} [H_R(z+h) - H_R(z)] \, dh = 0.
\]
Thus, we find
\begin{align*}
\fint_{B_r(0)} [\Gamma(z+h) - \Gamma(z)] \, dh
&=
\frac{1}{2\pi} \fint_{B_r(0)} [\log(|z+h|)\rho_R(z+h)-\log(|z|)\rho_R(z)] \, dh.
\end{align*}
The claim now follows from the fact that $|z| \le r < R/2$, and thus $\rho_R(z) = 1$ and $\rho_R(z+h) \equiv 1$ for $h\in B_r(0)$.

\end{proof}

\begin{lemma} \label{lem:logrewrite}
Let $r>0$, then for any $0<|z|\le r$, we have
\[
\fint_{B_r(0)} [\log(z+h)-\log(z)] \, dh
= 
\frac{\rho^2}{2\pi} 
\int_{1}^{1/\rho} \int_{-\pi}^{\pi} s \log\left(1-2s\cos(\theta)+s^2\right) \, d\theta \, ds,
\]
where $\rho = |z|/r$.
\end{lemma}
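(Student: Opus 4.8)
The plan is to reduce the two-dimensional average to a single radial integral by scaling and rotational symmetry, and then to dispose of the ``interior'' part of the integral using a classical potential-theoretic identity. Throughout, as in the preceding lemmas, $\log(\cdot)$ denotes the logarithm of the Euclidean norm, and $B_r(0)\subset\mathbb{R}^2$ has area $\pi r^2$, so that $\fint_{B_r(0)}f(h)\,dh = \frac{1}{\pi r^2}\int_{B_r(0)}f(h)\,dh$.

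First I would rescale the averaging domain. Substituting $h=r\eta$ gives $\fint_{B_r(0)}f(h)\,dh=\fint_{B_1(0)}f(r\eta)\,d\eta$, so writing $z=|z|\hat z$ with $\hat z=z/|z|$ and $\rho=|z|/r\in(0,1]$ one has
\[
\fint_{B_r(0)}\log(z+h)\,dh=\log r+\fint_{B_1(0)}\log(\rho\hat z+\eta)\,d\eta,
\]
while $\log(z)=\log r+\log\rho$. Hence the left-hand side of the claim equals $\fint_{B_1(0)}\log(\rho\hat z+\eta)\,d\eta-\log\rho$. Since the unit-ball average is rotation invariant, I may take $\hat z=e_1$, and polar coordinates $\eta=(s\cos\theta,s\sin\theta)$ give $|\rho e_1+\eta|^2=\rho^2+2\rho s\cos\theta+s^2$, so that
\[
\fint_{B_1(0)}\log(\rho\hat z+\eta)\,d\eta=\frac{1}{2\pi}\int_0^1\int_{-\pi}^{\pi}s\log(\rho^2+2\rho s\cos\theta+s^2)\,d\theta\,ds.
\]

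Next I would normalize the integrand. As the inner $\theta$-integral runs over a full period, the substitution $\theta\mapsto\pi-\theta$ flips the sign of the $\cos\theta$-term, replacing $\rho^2+2\rho s\cos\theta+s^2$ by $\rho^2-2\rho s\cos\theta+s^2$. The radial substitution $s=\rho u$ then turns this into $\rho^2(1-2u\cos\theta+u^2)$ with $u\in[0,1/\rho]$, and yields a factor $\rho^2 u\,du$ from $s\,ds$. Splitting $\log\bigl(\rho^2(1-2u\cos\theta+u^2)\bigr)=\log\rho^2+\log(1-2u\cos\theta+u^2)$, the $\log\rho^2$-term contributes $\tfrac{\rho^2}{2\pi}\cdot 2\pi\,\log\rho^2\int_0^{1/\rho}u\,du=\tfrac12\log\rho^2=\log\rho$, which cancels the $-\log\rho$ above, leaving
\[
\fint_{B_r(0)}[\log(z+h)-\log(z)]\,dh=\frac{\rho^2}{2\pi}\int_0^{1/\rho}\int_{-\pi}^{\pi}s\log(1-2s\cos\theta+s^2)\,d\theta\,ds.
\]

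The only genuinely non-algebraic point is to replace the lower limit $0$ by $1$, i.e.\ to show the contribution of $s\in[0,1]$ vanishes. This follows from the classical identity $\int_{-\pi}^{\pi}\log(1-2s\cos\theta+s^2)\,d\theta=0$ for $0\le s<1$: writing $1-2s\cos\theta+s^2=|1-se^{i\theta}|^2$, the function $w\mapsto\log|1-w|$ is harmonic on the disk $\{|w|<1\}$ (the real part of the analytic $\log(1-w)$), so its mean over the circle $|w|=s<1$ equals its value $\log 1=0$ at the center. Integrating against $s\,ds$ over $[0,1]$ kills this part; since $\rho\le 1$ gives $1/\rho\ge 1$, the integral over $[0,1/\rho]$ equals that over $[1,1/\rho]$, which is the claimed formula (after renaming $u\to s$). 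The main obstacle is therefore not a hard estimate but careful bookkeeping of the scalings and the exact cancellation of the $\log\rho$ terms; the vanishing of the interior integral is standard potential theory.
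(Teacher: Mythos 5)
Your proof is correct and takes essentially the same route as the paper's: polar coordinates plus the mean-value property of the logarithmic potential, used to annihilate the contribution of $|h|\le|z|$ (your $s\in[0,1]$ after rescaling, via $\int_{-\pi}^{\pi}\log\left(1-2s\cos\theta+s^2\right)d\theta=0$ for $s<1$, which is the same fact the paper invokes as harmonicity of $h\mapsto\log|z+h|$ on the disk not enclosing the origin). The only difference is bookkeeping order — the paper discards the inner region before rescaling by writing $\log|z+h|-\log|z|=\log(|z+h|/|z|)$ on the annulus, whereas you rescale to the unit ball first and cancel the $\log\rho$ terms explicitly at the end — which changes nothing of substance.
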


\begin{proof}
We first note that, by the mean value property over circles, we have
\[
\fint_{S^1}
 \left[\log(|z+\rho\sigma|) - \log(|z|)\right]\, d\sigma = 0, \quad \text{for } |z| > \rho,
\]
since the circle of radius $\rho>0$ around $z$ doesn't enclose the origin in this case. Here $S^1 = \{x\in \mathbb{R}^2 \, | \, |x|=1\}$ denotes the unit circle. We thus find for $|z|\le r$:
\begin{align*}
\int_{B_r(0)}  \left[\log(|z+h|) - \log(|z|)\right]\, dh 
&=
\int_0^r \rho \left(
\int_{S^1} \left[\log(|z+\rho\sigma |) - \log(|z|)\right]\, d\sigma 
\right) \, d\rho
\\
&= \int_{|z|}^r \rho 
\left(
\int_{S^1} \left[\log(|z+\rho\sigma |) - \log(|z|)\right]\, d\sigma 
\right) 
\, d\rho
\\
&=
\int_{|z|\le |h|\le r}  \left[\log(|z+h|) - \log(|z|)\right]\, dh.
\end{align*}
To continue, we note that for $|z|\le r$:
\begin{align*}
\fint_{B_r(0)}  
 1_{[|z|\le |h|\le r]}(h)
  & 
 \left[\log(|z+h|) - \log(|z|)\right]\, dh
 \\
 &= 
 \frac{1}{\pi r^2} \int_{|z|}^r \rho \int_{S^1}  \log\left(\frac{|z+\rho \sigma|}{|z|}\right) \, d\sigma \, d\rho 
 \\
 &= 
 \frac{1}{\pi r^2} \int_{|z|}^r \rho \int_{S^1} \log\left(\left|\frac{z}{|z|}+\frac{\rho}{|z|} \sigma\right|\right) \, d\sigma \, d\rho.
\end{align*}
Making the change of variables $s = \rho/|z|$, we find
\begin{align*}
\fint_{B_r(0)}  
 1_{[|z|\le |h|\le r]}(h)
  & 
 \left[\log(|z+h|) - \log(|z|)\right]\, dh
 \\
 &= 
 \frac{|z|^2}{\pi r^2} \int_{1}^{r/|z|} s \int_{S^1} \log\left(\left|\frac{z}{|z|}+s \sigma\right|\right) \, d\sigma \, ds.
\end{align*}
Next, we introduce an angle $\theta$, which is measured anticlock-wise, starting from $z/|z| \in S^1$. Let us also introduce a Cartesian coordinate system $(x_1,x_2)$, such that the $x_1$-coordinate is oriented along $z/|z|$. In this coordinate system, we then have $z/|z| = (1,0)$, and $\sigma \in S^1$ is of the form $(\cos(\theta),\sin(\theta))$ for $\theta\in (-\pi,\pi]$. Hence,
\begin{align*}
\log\left(\left|\frac{z}{|z|}+s \sigma\right|\right) 
&= 
\log\left(|(1+s\cos(\theta),s\sin(\theta)|\right) 
\\
&=
\frac12 \log\left((1+s\cos(\theta))^2 + (s \sin(\theta))^2\right) \\
&=
\frac12 \log\left(1+s^2 + 2s\cos(\theta)\right),
\end{align*}
and
\begin{align*}
\fint_{B_r(0)}  
 1_{[|z|\le |h|\le r]}(h)
  & 
 \left[\log(|z+h|) - \log(|z|)\right]\, dh
 \\
 &= 
 \frac{|z|^2}{2\pi r^2} 
 \int_{1}^{r/|z|}  \int_{-\pi}^{\pi}
 s\log\left( 1+s^2 + 2s\cos(\theta)
 \right) 
 \, d\theta \, ds.
\end{align*}

\end{proof}

\begin{lemma} \label{lem:logsimplify}
Let $0< \rho \le 1$. Then
\[
\frac{\rho^2}{2\pi} \int_{1}^{\rho^{-1}} \int_{-\pi}^{\pi} s \log\left(1-2s\cos(\theta)+s^2\right) \, d\theta \, ds
=
\frac{1}{2}\left(|\log(\rho^2)| - 1 + \rho^2\right).
\]
\end{lemma}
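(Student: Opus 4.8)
The plan is to reduce the double integral to the evaluation of the inner $\theta$-integral, which is a classical logarithmic potential integral, and then to perform an elementary one-variable $s$-integration. First I would observe that $1-2s\cos\theta+s^2 = |1-se^{i\theta}|^2$, so that the whole problem hinges on computing
\[
I(s) := \int_{-\pi}^{\pi} \log\left(1-2s\cos\theta+s^2\right)\,d\theta.
\]
I would evaluate $I(s)$ by differentiating under the integral sign,
\[
I'(s) = \int_{-\pi}^{\pi} \frac{2s-2\cos\theta}{1-2s\cos\theta+s^2}\,d\theta,
\]
and then using the algebraic identity
\[
\frac{2s-2\cos\theta}{1-2s\cos\theta+s^2} = \frac{1}{s}\left(1 + \frac{s^2-1}{1-2s\cos\theta+s^2}\right),
\]
together with the standard Poisson-kernel integral $\int_{-\pi}^{\pi}(1-2s\cos\theta+s^2)^{-1}\,d\theta = 2\pi/|1-s^2|$. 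This yields $I'(s) = 4\pi/s$ for $s>1$ and $I'(s)=0$ for $0\le s<1$. Since $I(0)=0$ and $I$ is continuous at $s=1$ with $I(1)=0$ (by the classical identity $\int_{-\pi}^{\pi}\log|1-e^{i\theta}|\,d\theta = 0$), integrating gives $I(s) = 4\pi\log s$ for every $s\ge 1$.

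Because $0<\rho\le 1$, the variable $s$ in the statement ranges over $[1,\rho^{-1}]\subset[1,\infty)$, so I may substitute $I(s)=4\pi\log s$ throughout. The left-hand side then collapses to
\[
\frac{\rho^2}{2\pi}\int_{1}^{\rho^{-1}} s\,(4\pi\log s)\,ds = 2\rho^2 \int_{1}^{\rho^{-1}} s\log s\,ds.
\]
Integration by parts gives $\int s\log s\,ds = \tfrac12 s^2\log s - \tfrac14 s^2$; evaluating between $1$ and $\rho^{-1}$ and multiplying by $2\rho^2$ produces $-\log\rho - \tfrac12 + \tfrac12\rho^2$. Finally, since $0<\rho\le 1$ we have $\log\rho^2\le 0$, hence $-\log\rho = \tfrac12|\log\rho^2|$, and the expression becomes $\tfrac12\left(|\log\rho^2| - 1 + \rho^2\right)$, which is exactly the claimed identity.

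The main obstacle is the rigorous treatment of $I(s)$. Differentiation under the integral sign away from $s=1$ is routine (dominated convergence on compact subsets of $(0,1)\cup(1,\infty)$), but the genuine subtlety is the endpoint $s=1$, where the integrand acquires an integrable logarithmic singularity at $\theta=0$: one must confirm that $I$ is continuous there and that $I(1)=0$, since this pins down the constant of integration and thereby distinguishes the regime $s\ge 1$ (relevant here) from the trivial regime $s\le 1$. Once the identity $I(s)=4\pi\log s$ is in hand, the remainder is a single elementary integration and an algebraic rewriting of $-\log\rho$, with no further difficulty.
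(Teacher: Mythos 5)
Your proof is correct, and it takes a route that differs in a worthwhile way from the paper's. The paper first splits the integrand algebraically, writing $s\log(1-2s\cos\theta+s^2)=s\log(s^2)+s\log(s^{-2}-2s^{-1}\cos\theta+1)$; the first piece integrates directly to the claimed right-hand side, and the second is killed by substituting $\sigma=s^{-1}\in[\rho,1]$ and proving the vanishing $\int_{-\pi}^{\pi}\log(\sigma^2-2\sigma\cos\theta+1)\,d\theta=0$ for $\sigma<1$, which the paper obtains by writing the logarithm as $\int_0^\sigma \frac{2t-2\cos\theta}{t^2-2t\cos\theta+1}\,dt$ and expanding the Poisson kernel in its Fourier series $\frac{1-t^2}{t^2-2t\cos\theta+1}=\sum_{n}t^{|n|}e^{in\theta}$, integrating term by term in $\theta$. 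Your argument instead computes $I(s)=\int_{-\pi}^{\pi}\log(1-2s\cos\theta+s^2)\,d\theta$ in closed form by differentiating under the integral and invoking the closed-form Poisson integral $\int_{-\pi}^{\pi}\frac{d\theta}{1-2s\cos\theta+s^2}=\frac{2\pi}{|1-s^2|}$; this yields $I(s)=4\pi\log s$ on $[1,\infty)$, after which the $s$-integration is the same elementary computation as in the paper. The two proofs rest on the same classical fact about the Poisson kernel (equivalently, the mean-value property of $\log|z|$), but yours buys the full closed form of $I(s)$ in one stroke, while the paper's algebraic pre-splitting avoids ever needing $I$ at or above $s=1$: in the paper the exceptional endpoint $\sigma=1$ sits under a $\sigma$-integral, so a crude integrability bound of the type $|\log(\sigma^2-2\sigma\cos\theta+1)|\le C(\rho)+2|\log\theta|$ suffices and the value $I(1)$ is irrelevant (a Lebesgue-null point), whereas your argument genuinely requires continuity of $I$ at $s=1$ together with $I(1)=0$ to pin down the integration constant. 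You identified exactly this as the delicate point and supplied the correct justifications (dominated convergence with a logarithmic majorant near $s=1$, and the classical identity $\int_{-\pi}^{\pi}\log|1-e^{i\theta}|\,d\theta=0$), so the proof is complete.
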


\begin{proof}
Let us first write
\begin{align*} 
s \log\left(1-2s\cos(\theta)+s^2\right)
&=
s \log(s^2) + s \log\left(s^{-2}-2s^{-1}\cos(\theta)+1\right).
\end{align*}
Integration over the first term on the right-hand side yields
\begin{align*}
\frac{\rho^2}{2\pi} \int_{1}^{\rho^{-1}} \int_{-\pi}^{\pi} s \log(s^2) \, d\theta \, ds
&=
2\rho^2 \int_{1}^{\rho^{-1}} s\log(s) \, ds
\\
&= 
2\rho^2 \left(\frac14 s\log(s^2) - \frac14 s^2\right)\Bigg|_{1}^{\rho^{-1}}
\\
&=
\frac 12 \left(|\log(\rho^2)| - 1 + \rho^2\right).
\end{align*}
This is precisely the expression that appears in the claimed identity of this lemma. To prove the claim, it therefore remains to show that the integration over 
\[
s \log\left(s^{-2}-2s^{-1}\cos(\theta)+1\right)
\]
vanishes. We make the change of variables $\sigma = s^{-1}$. Then 
\begin{align*} 
\frac{\rho^2}{2\pi} \int_{1}^{\rho^{-1}} &\int_{-\pi}^{\pi} 
s \log\left(s^{-2}-2s^{-1}\cos(\theta)+1\right) \, d\theta \, ds
\\
&=
\frac{\rho^2}{2\pi} \int_{\rho}^1 \int_{-\pi}^{\pi} \log(\sigma^2 - 2\sigma\cos(\theta) + 1) \, d\theta \, \frac{d\sigma}{\sigma^3}.
\end{align*}
Note that for any $\sigma \in [\rho,1]$, the integration 
\[
 \int_{-\pi}^{\pi} \log(\sigma^2 - 2\sigma\cos(\theta) + 1) \, d\theta
\]
is well-defined and bounded, since the integrand is either smooth ($\sigma < 1$), or has at worst a logarithmic singularity as a function of $\theta$, for $\sigma = 1$ and at $\cos(\theta) = 1$, which is integrable. 
More precisely, we note that we can estimate 
\[
\sigma^2 - 2\sigma\cos(\theta) + 1
\le 
(1+\sigma)^2
\le 4,
\]
and
\begin{align*}
\sigma^2 - 2\sigma\cos(\theta) + 1
= 
(1-\sigma)^2 + 2\sigma(1-\cos(\theta))
\ge 2\sigma c \theta^2,
\end{align*}
for $c>0$ chosen so that $\cos(\theta) \le 1 - c\theta^2$ for $\theta \in [-\pi,\pi]$, and hence $1-\cos(\theta) \ge c\theta^2$. It thus follows that
\[
|\log(\sigma^2-2\sigma\cos(\theta)+1)|
\le |\log(4)| + |\log(2\sigma c\theta^2)|
\le C(\rho) + 2|\log(\theta)|,
\]
for all $\sigma \in [\rho,1]$. To prove the claimed equality of this lemma, it will thus be sufficient to show that the integral over $\theta$ vanishes for any $\sigma < 1$. Fix $\rho\le \sigma < 1$ and write
\begin{align*}
 \int_{-\pi}^{\pi} \log(\sigma^2 - 2\sigma\cos(\theta) + 1) \, d\theta
 &=
 \int_{-\pi}^{\pi} \int_0^\sigma \frac{2t -2\cos(\theta)}{t^2 - 2 t \cos(\theta) + 1} \, dt \, d\theta.
\end{align*}
The integrand is recognized to be closely related to the Poisson kernel, and we can use the well-known identity (valid for $|t|<1$):
\[
\frac{1-t^2}{t^2 - 2 t \cos(\theta) + 1}
= \sum_{n = -\infty}^{\infty} t^{|n|} e^{in\theta}.
\]
Changing the order of integration, we find for the $\theta$-integration:
\begin{align*}
\int_{-\pi}^{\pi}
\frac{2t-2\cos(\theta)}{t^2 - 2 t \cos(\theta) + 1}
\, d\theta
&= 
\int_{-\pi}^{\pi}
\frac{2t-2\cos(\theta)}{1-t^2} \sum_{n = -\infty}^{\infty} t^{|n|} e^{in\theta}
\, d\theta
\\
&= 
\frac{4\pi t}{1-t^2} - \frac{2\pi t^{|n|}}{1-t^2}\Bigg|_{n=-1} - \frac{2\pi t^{|n|}}{1-t^2}\Bigg|_{n=+1}
\\
&= 0.
\end{align*}

\end{proof}

\fi

\section*{Acknowledgments.}
The research of SL and SM is partially supported by the European Research Council Consolidator grant ERC-CoG 770880 COMANFLO.

\bibliographystyle{abbrv}

\bibliography{SpectralViscosity}

\end{document}

\end{document}